\def\titlerunning#1{\gdef\titrun{#1}}
\def\author#1{\gdef\autrun{\def\and{\unskip, }#1}\gdef\@author{#1}}
\def\address#1{{\def\and{\\\hspace*{18pt}}\renewcommand{\thefootnote}{}%
\footnote {#1}}%
\markboth{\autrun}{\titrun}}
\def\email#1{e-mail: #1}
\def\subjclass#1{{\renewcommand{\thefootnote}{}%
\footnote{\emph{Mathematics Subject Classification (2010):} #1}}}
\newtheorem{theorem}[subsection]{Theorem}
\newtheorem{proposition}[subsection]{Proposition}
\newtheorem{lemma}[subsection]{Lemma}
\theoremstyle{definition}
\newtheorem{definition}[subsection]{Definition}
\theoremstyle{remark}
\newtheorem{remark}[subsection]{Remark}
\theoremstyle{definition}
\numberwithin{equation}{section}
\renewcommand{\leq}{\leqslant}
\renewcommand{\geq}{\geqslant}
\newsavebox{\proofbox}
\savebox{\proofbox}{\begin{picture}(7,7)  \put(0,0){\framebox(7,7){}}\end{picture}}
\newcommand\E{\mathbb{E}}
\newcommand\Z{\mathbb{Z}}
\newcommand\R{\mathbb{R}}
\newcommand\T{\mathbf{T}}
\newcommand\C{\mathbb{C}}
\newcommand\N{\mathbf{N}}
\newcommand\D{\mathbb{D}}
\newcommand\G{\mathbf{G}}
\newcommand\HH{\mathbf{H}}
\newcommand\B{\mathbf{B}}
\newcommand\SL{\operatorname{SL}}
\newcommand\PSL{\operatorname{PSL}}
\newcommand\SO{\operatorname{SO}}
\newcommand\Suz{\operatorname{Suz}}
\newcommand\End{\operatorname{End}}
\newcommand\GL{\operatorname{GL}}
\newcommand\PGL{\operatorname{PGL}}
\newcommand\PSU{\operatorname{PSU}}
\newcommand\SU{\operatorname{SU}}
\newcommand\Mat{\operatorname{Mat}}
\newcommand\rk{\operatorname{rk}}
\newcommand\Supp{\operatorname{Supp}}
\newcommand\tr{\operatorname{tr}}
\newcommand\Id{\operatorname{Id}}
\newcommand\ch{\operatorname{char}}
\newcommand\Aut{\operatorname{Aut}}
\newcommand\Stab{\operatorname{Stab}}
\newcommand\Sp{\operatorname{Sp}}
\newcommand\w{{\operatorname{w}}}
\renewcommand\P{\mathbb{P}}
\newcommand\F{\mathbb{F}}
\newcommand\Q{\mathbb{Q}}
\newcommand\U{\mathbf{U}}
\newcommand\X{\mathbf{X}}
\renewcommand\SS{\mathbf{S}}
\renewcommand{\k}{\mathbf{k}}
\newcommand\eps{\varepsilon}
\newcommand\id{\operatorname{id}}
\def\endproof{\hfill{\usebox{\proofbox}}\vspace{11pt}}
\begin{document}


\baselineskip=17pt


\titlerunning{Expansion in finite simple groups}

\title{Expansion in finite simple groups of Lie type}

\author{Emmanuel Breuillard, Ben Green, Robert Guralnick, \and Terence Tao}

\date{}

\maketitle

\address{
E. Breuillard: Laboratoire de Math\'ematiques B\^atiment 425, Universit\'e Paris Sud 11 91405 Orsay FRANCE
address1; \email{emmanuel.breuillard@math.u-psud.fr};
B. Green: Mathematical Institute, Radcliffe Observatory Quarter, Woodstock Road, Oxford OX2 6GG,
England; \email{ben.green@maths.ox.ac.uk};
R. Guralnick: Department of Mathematics, University of Southern California, 3620 Vermont Avenue, Los Angeles, California 90089--2532;
\email{guralnick@usc.edu} \and
T. Tao: Department of Mathematics, UCLA, 405 Hilgard Ave, Los Angeles CA 90095, USA; \email{tao@math.ucla.edu}
}

\subjclass{Primary 20G40; Secondary 20N99}


\begin{abstract}
We show that random Cayley graphs of finite simple (or semisimple) groups of Lie type of fixed rank are expanders. The proofs are based on the Bourgain-Gamburd method and on the main result of our companion paper \cite{bggt2}.

\end{abstract}

\section{Introduction and statement of results}\label{sec1}

 The aim of this paper is to show that random pairs of elements in finite simple (or semisimple) groups of Lie type are expanding generators if the rank of the group is fixed. The precise definition of a finite simple group of Lie type is deferred to Definition \ref{dfs}, but let us remark that these are the infinite families of simple groups appearing in the classification of finite simple groups (CFSG) other than the alternating groups. They include the so-called classical or Chevalley groups as well as the families of ``twisted'' groups, such as  the Steinberg groups and the Suzuki-Ree groups.
A typical example of a classical group is the projective special linear group $A_r(q) \coloneqq  \PSL_{r+1}(\F_q)$ over a finite field $\F_q$, which is the quotient of $\SL_{r+1}(\F_q)$ by its centre $Z(\SL_{r+1}(\F_q))$, whilst an easily described example of a twisted group is the family\footnote{This family of twisted groups is one of the families of Steinberg groups.  Note that in some literature this group is denoted $A_r(\tilde q)$ instead of $A_r(\tilde q^2)$; see Remark \ref{diff}.} of projective special unitary groups ${}^2A_{r}(\tilde q^2) \coloneqq  \PSU_{r+1}(\F_{\tilde q^2})$, which are the quotients of the special unitary groups
\begin{equation}\label{surq}
 \SU_{r+1}(\F_{\tilde q^2}) \coloneqq  \{g \in \SL_{r+1}(\F_{\tilde q^2}), {}^Tg^{\sigma} g = 1\}
 \end{equation}
by their centres $Z(\SU_{r+1}(\F_{\tilde q^2}))$. Here, $\tilde q$ is a prime or a power of a prime, ${}^Tg$ is the transposed matrix, $1$ is the identity matrix, and $g^{\sigma}$ is the image of $g$ under the Frobenius map $\sigma : \F_{\tilde q^2} \rightarrow \F_{\tilde q^2}$ defined by $x \mapsto x^{\tilde q}$.   As stated earlier, we will be interested in the regime where the rank $r$ of these groups remains fixed, while the field size $q= \tilde q^2$ is allowed to go to infinity.

Let us recall what the notion of \emph{expansion} means in the context of generators for finite groups.  It will be convenient to use a spectral notion of expansion.

\begin{definition}[Spectral expansion]\label{specex}
Suppose that $\eps > 0$, that $G$ is a finite group and that $x_1,\dots, x_k \in G$. Let $\mu$ be the probability measure
$$ \mu \coloneqq  \frac{1}{2k} \sum_{i=1}^k \delta_{x_i} + \delta_{x_i^{-1}}$$
on $G$, where (by abuse of notation) we view $\delta_x$ as a function on $G$ that equals\footnote{As noted in the notation section to follow, we will use $|A|$ to denote the cardinality of a finite set $A$.} $|G|$ at $x$ and zero elsewhere.  Consider the convolution operator $T: f \mapsto f * \mu$ on the Hilbert space $L^2(G)$ of functions $f \colon G \to \C$ with norm
$$ \|f\|_{L^2(G)} \coloneqq  (\E_{x \in G} |f(x)|^2)^{1/2} = \left(\frac{1}{|G|} \sum_{x \in G} |f(x)|^2\right)^{1/2}$$
where convolution $f*\mu$ is defined by the formula
$$ f*\mu(x) \coloneqq  \E_{y \in G} f(y) \mu(y^{-1} x).$$
We say that $\{x_1,\dots,x_k\}$ is \emph{$\eps$-expanding} if one has
$$ \| Tf \|_{L^2(G)} \leq (1-\eps) \|f\|_{L^2(G)}$$
for all functions $f \colon G \to \C$ of mean zero.  Equivalently, all eigenvalues of the self-adjoint operator $T$, other than the trivial eigenvalue of $1$ coming from the constant function, lie in the interval $[-1+\eps,1-\eps]$.
\end{definition}

It is well known (see e.g. \cite[Section 2]{hoory-linial-wigderson}, or \cite[Prop. 4.2.5]{lubotzky-book}) that an $\eps$-expanding set $\{x_1,\ldots,x_k\}$ is also \emph{combinatorially expanding} in the sense that
\[ |(A x_1 \cup A x_1^{-1} \cup A x_2 \cup A x_2^{-1} \cup \dots \cup A x_k \cup Ax^{-1}_k) \setminus A| \geq \eps' |A|\]
for every set $A \subseteq G$ with $|A| \leq |G|/2$, and some $\eps'>0$ depending only on $\eps,k>0$; in particular this implies that the $x_1,\ldots,x_k$ generate $G$ (otherwise one could take $A$ to be the group generated by $x_1,\ldots,x_k$ to obtain a counterexample to combinatorial expansion).  Actually, this implication can be reversed as long as the Cayley graph generated by $x_1,\ldots,x_k$ is not bipartite, or equivalently that there does not exist an index $2$ subgroup $H$ of $G$ which is disjoint from the $x_1,\ldots,x_k$; we record this argument in Appendix \ref{equiv-sec}.  An $\eps$-expanding set is also \emph{rapidly mixing} in the sense that
\begin{equation}\label{mung}
 \| \mu^{(n)} - 1 \|_{L^\infty(G)} \leq |G|^{-10}
\end{equation}
(say) for all $n \geq C_1 \log |G|$, where $C_1$ depends only on $\eps,k$.  Conversely, if one has rapid mixing \eqref{mung} for some $n \leq C \log |G|$, then one has $\eps$-expansion for some $\eps>0$ depending only on $C,k$, as can be easily deduced from a computation of the trace of $T^{2n}$ (or equivalently, the Frobenius norm of $T^n$) and the spectral theorem.  If one has a family of finite groups $G$ and $\eps$-expanding sets $\{x_1,\ldots,x_k\} \subset G$, with $\eps, k$ uniform in the family, then the associated Cayley graphs $\operatorname{Cay}(G, \{x_1,\ldots,x_k\})$ form an expander family.  We will however not focus on the applications to expander graph constructions here, again referring the reader to \cite{hoory-linial-wigderson} and \cite{lubotzky-book} for more discussion.

Our main theorem is as follows.

\begin{theorem}[Random pairs of elements are expanding]\label{mainthm}
Suppose that $G$ is a finite simple group of Lie type and that $a,b \in G$ are selected uniformly at random. Then with probability at least $1 - C |G|^{-\delta}$, $\{a,b\}$ is $\eps$-expanding for some $C, \eps, \delta > 0$ depending only on the rank of $G$.
\end{theorem}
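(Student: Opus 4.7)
The plan is to use the Bourgain--Gamburd machine, which reduces $\eps$-expansion of $\{a,b\}$ to the rapid-mixing estimate \eqref{mung} at some time $n \asymp \log|G|$ for the walk measure $\mu = \tfrac14(\delta_a+\delta_{a^{-1}}+\delta_b+\delta_{b^{-1}})$. The machine takes three inputs: (i) \emph{quasi-randomness}: the smallest nontrivial complex representation of $G$ has dimension at least $|G|^{c}$ for some $c=c(r)>0$; (ii) a \emph{product theorem} for $G$, equivalently an $L^2$-flattening lemma for the convolution powers $\mu^{(n)}$; and (iii) \emph{non-concentration}: with probability $1-O(|G|^{-\delta})$ over the choice of $a,b$, one has $\mu^{(n_1)}(H) \leq |G|^{-\kappa}$ for some $n_1\asymp\log|G|$, some $\kappa>0$, and every proper subgroup $H<G$. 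Given (i)--(iii), the flattening lemma drives $\|\mu^{(n)}\|_{L^2(G)}$ down from its initial size $\sim|G|^{1/2}$ to $1+|G|^{-\Omega(1)}$ in $O(\log|G|)$ convolutions, after which quasi-randomness converts this $L^2$-closeness into the $L^\infty$ bound \eqref{mung} in a bounded number of further steps.

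Inputs (i) and (ii) hold deterministically and uniformly in the bounded-rank regime: input (i) is the classical Landazuri--Seitz lower bound on the minimal dimension of a faithful complex representation of a finite simple group of Lie type, and input (ii) is precisely the main theorem of the companion paper \cite{bggt2}, which establishes a product theorem -- hence the required $L^2$-flattening -- for every finite simple group of Lie type of bounded rank. Neither contributes any failure probability, so the randomness of $a,b$ enters only through input (iii).

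The main obstacle is therefore (iii). A standard Bourgain--Gamburd reduction means it suffices to control $\mu^{(n_1)}(H)$ only for \emph{maximal} subgroups $H$, and the classification of maximal subgroups of finite simple groups of Lie type (Aschbacher's theorem for classical $G$, together with its Liebeck--Seitz analogue for exceptional $G$) splits these into finitely many types: parabolic subgroups, subfield subgroups, stabilisers of tensor or imprimitivity decompositions, and almost-simple subgroups lying inside a proper closed subgroup of the ambient algebraic group $\G$ over $\overline{\F_q}$ with $\G(\F_q) \supseteq G$. For each geometric type, the locus of pairs $(a,b) \in G\times G$ with $a,b \in H$ is contained in the $\F_q$-points of a proper subvariety of $\G\times\G$, hence has relative density $O(q^{-1})$; since the number of conjugates of $H$ in $G$ is polynomial in $|G|$, a union bound shows that a random pair $(a,b)$ avoids every geometric maximal subgroup with probability $1-O(|G|^{-\delta})$. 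The almost-simple case is handled by an induction on the rank, in which input (ii) is invoked for the smaller group. Finally, to pass from the single-pair statement ``$\{a,b\} \not\subseteq H$'' to the walk bound $\mu^{(n_1)}(H) \leq |G|^{-\kappa}$, one runs an escape-from-subvarieties argument showing that most words in $a,b$ of length $\asymp\log|G|$ avoid any fixed proper closed subvariety of $\G$. I expect the principal technical effort to lie in making this escape argument uniform in $q$ and in the defining equations of the subvariety, and in carrying out the induction on the rank for the almost-simple case while preserving the uniformity of the constants.
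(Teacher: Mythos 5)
Your high-level framework (Bourgain--Gamburd machine with inputs of quasirandomness, a product theorem, and non-concentration) matches the paper's strategy, but the non-concentration argument -- the heart of the problem -- is described in a way that does not work, and two of the essential ingredients are either misattributed or missing entirely.

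First, a consequential citation error: the main theorem of the companion paper \cite{bggt2} is \emph{not} a product theorem. It is the existence of a \emph{strongly dense} free subgroup in every semisimple algebraic group, i.e.\ a free subgroup $\Gamma \leq \G(k)$ such that every non-commuting pair in $\Gamma$ generates a Zariski-dense subgroup. The product theorem comes from Pyber--Szab\'o \cite{pyber-szabo} or equivalently \cite{bgt}. This matters because the strongly dense theorem is precisely the ingredient your sketch lacks when you reach the structural (geometric) case of the non-concentration estimate.

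Second, the union-bound argument you sketch for geometric maximal subgroups does not close. For a fixed maximal $H$, the density of pairs $(a,b)$ with $a,b\in H$ is roughly $(|H|/|G|)^2$, and the number of conjugates of $H$ can be comparable to $|G|$; the naive union bound then gives a bound of order $|H|/|G|$, which is bounded away from $1$ only, not $O(|G|^{-\delta})$ -- and more importantly the event one actually needs to control is not ``$a,b\in H$'' but ``a $|G|^{-\kappa}$-fraction of the words $w(a,b)$ of length $\asymp\log|G|$ land in $H$.'' The paper handles this by a second-moment/Markov reduction to the event that two independent words $w(a,b), w'(a,b)$ both lie in a common structural subgroup, then observes that (after discarding the exponentially rare case that $w,w'$ commute in the free group) the set of $(a,b)\in\G\times\G$ for which this happens lies in a \emph{single} bounded-complexity algebraic set $X_N\subset\G\times\G$ (Proposition \ref{crit}). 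The strongly dense free subgroup theorem is exactly what shows this set is a \emph{proper} subvariety, after which Schwartz--Zippel over $\G(\F_q)^\sigma$ finishes. Your ``escape from subvarieties for words of length $\asymp\log|G|$'' gestures in the right direction but is not a substitute: the escape lemma of \cite{bgt} escapes in boundedly many steps and yields no density bound on words, and without the strongly dense input you have no way to know the relevant variety is proper.

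Third, the subfield subgroups are not a ``geometric type'' to which the algebraic-subvariety argument applies: $\G(\F_{q'})$ is Zariski-dense in $\G$, so the locus of pairs $(a,b)$ generating a conjugate of a subfield group is \emph{not} contained in a proper subvariety of $\G\times\G$. The paper instead uses the observation that if $g$ is conjugate into $\G(\F_{q'})$ then the coefficients of the characteristic polynomial of $g$ lie in $\F_{q'}$, and applies Schwartz--Zippel to each coefficient map. This is a genuinely different mechanism that your sketch omits. Finally, your sketch misses that the Schwartz--Zippel gain for twisted groups is only $O(q^{-1/d})$, which forces a separate ad hoc treatment of the triality groups ${}^3D_4(q)$ (and an exceptional construction for strongly dense subgroups of $\Sp_4$ in characteristic~3).
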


In section \ref{semisimple}, we also extend the above result to almost direct products of quasisimple groups of Lie type; see Theorem \ref{mainthm2} there.

There has been a considerable amount of prior work on expansion in finite simple groups. We offer a brief and incomplete summary now:

\begin{enumerate}
\item Using Kazhdan's property $(T)$, Margulis \cite{margulis} gave the first explicit construction of expander graphs.  In particular, he constructed explicit expanding sets of generators for $\PSL_d(\F_p)$ for any fixed $d \geq 3$, by projection from a fixed set of generators of $\SL_d(\Z)$.  Sharp analogous results for $d=2$ were later obtained by Margulis \cite{margulis-explicit} and by Lubotzky, Phillips, and Sarnak \cite{lps} using known cases of the Ramanujan-Petersson conjectures.
\item In a breakthrough paper, Bourgain and Gamburd  \cite{bourgain-gamburd} proved Theorem \ref{mainthm} in the case $G = \PSL_2(\F_p)$. A key ingredient of their proof was Helfgott's product theorem in this group \cite{helfgott-sl2}. By combining subsequent work of theirs with generalisations of Helfgott's work by Pyber-Szab\'o \cite{pyber-szabo} and Breuillard-Green-Tao \cite{bgt} one may show the existence of some expanding pairs of generators in $\SL_r(\F_p)$ and indeed in $\G(\F_p)$ for any almost simple algebraic group $\G$. Here $p$ is prime. See the paper by Varj\'u and Salehi-Golsefidy \cite{varju-salehi} for more on this aspect.
\item Kassabov, Lubotzky and Nikolov  \cite{kassabov-nikolov-lubotzky} showed that every finite simple group, with the possible exception of the Suzuki groups $\Suz(q)$, admits an $\eps$-expanding set of generators $x_1,\dots, x_k$, with $k$ and $\eps$ independent of the group (and in particular, uniform even in the rank of the group $G$).
\item The first, second and fourth authors showed in \cite{suzuki} that the previous claim also holds for the Suzuki groups with $k = 2$, and in fact that Theorem \ref{mainthm} holds in this case and for $G = \PSL_2(\F_q)$.
\item Gamburd and the first named author showed in \cite{breuillard-gamburd} that there is $\eps>0$ such that every generating pair of $\PSL_2(\F_p)$ is an $\eps$-expanding pair, whenever $p$ stays outside a set of primes $\mathcal{P}_0$ of density $0$.
\item In the converse direction, it was proved by Lubotzky and Weiss \cite[Cor. 4.4]{lubotzky-weiss} that for any fixed prime $p$ there is a $3$-element generating set of $\SL_n(\F_p)$ such that the family of resulting Cayley graphs is not an expanding family when $n$ tends to $+\infty$. Another proof of this fact, due to Y. Luz, was also given in \cite{lubotzky-weiss} and recently Somlai \cite{somlai} generalized that argument to show that every sequence of finite simple groups of Lie type with rank going to infinity admits a sequence of Cayley graphs arising from at most $10$ generators which is \emph{not} $\eps$-expanding for any uniform $\eps>0$.
\end{enumerate}

The proofs of the works of Kassabov, Lubotzky and Nikolov \cite{kassabov-nikolov-lubotzky,lubotzky} used very different arguments coming from a rather impressive range of mathematical areas. An important aspect of their proof was to make use of the existence of various copies of $\SL_2(\F_q)$ in higher rank finite simple groups. This feature required the use of more than $2$ generators to produce an expanding set and does not seem to be suited to the treatment of random sets of generators. Note however that they were able to treat all finite simple groups (except for $\Suz(q)$) uniformly, while our result falls short of saying anything when the rank goes to infinity.

The method used in the present paper follows the Bourgain-Gamburd strategy first introduced in \cite{bourgain-gamburd}, as did the paper \cite{suzuki} on Suzuki groups by three of the authors, and will be outlined in the next section.  To verify the various steps of the Bourgain-Gamburd argument, we will need a number of existing results in the literature, such as the quasirandomness properties and product theorems for finite simple groups of Lie type, as well as the existence of strongly dense free subgroups that was (mostly) established in a previous paper \cite{bggt2} of the authors.

In most of our argument, the finite simple groups of Lie type can be treated in a unified manner, albeit with some additional technical complications when dealing with twisted groups rather than classical groups.  However, there are two exceptional cases which need special attention at various stages of the argument, which we briefly mention here.

The first exceptional case is when $G = \Sp_4(\F_q) = C_4(q)$ is the symplectic group of order $4$ over a field $\F_q$ of characteristic $3$.  This case was omitted from the results in \cite{bggt2} for a technical reason having to do with an absence of a suitable algebraic subgroup of $G$ to which a certain induction hypothesis from \cite{bggt2} could be applied.  In Appendix \ref{sp4-app} we present an alternate argument that can substitute for the arguments in \cite{bggt2} in that case.

The other exceptional case occurs when $G = {}^3D_4(q)$ is a triality group.  This is a twisted group that contains subgroups associated to fields of index $2$, and for technical reasons it turns out that such fields are too ``large'' (and the Schwartz-Zippel type bounds for twisted groups too weak) for our main argument to work in this case.  In Section \ref{d4-sec} we give the modifications to the main argument necessary to address this case.\vspace{11pt}

\emph{Remark.} We note that none of our work has anything to say at the present time about the alternating groups $\operatorname{Alt}_n$ (or the closely related symmetric groups $\operatorname{Sym}_n$). Although it was shown by Kassabov \cite{kassabov} that there are uniform $\eps>0, k>2$ such that every $\operatorname{Alt}_n$ has an $\eps$-expanding $k$-tuple, the existence of a pair of $\eps$-expanding generators for $\operatorname{Alt}_n$ (with $\eps$ not depending on $n$) remains an open problem, as does the question of whether a random pair or even a random $k$-tuple of elements in $\operatorname{Alt}_n$ has this property. Note that it has been known for some time (see \cite{dixon}) that a random pair of elements generates $\operatorname{Alt}_n$ with probability going to $1$ as $n\to \infty$.  Recent progress on understanding the diameter of Cayley graphs on such groups may be found in \cite{bghhss,helfgott-seress}.\vspace{11pt}

\emph{Remark.} Our arguments actually show the following generalisation of Theorems \ref{mainthm} and \ref{mainthm2}: if $G$ is an almost direct product of finite simple groups of Lie type, $a,b \in G$ are selected uniformly at random, and $w_1,w_2 \in F_2$ are non-commuting words of length at most $|S|^\delta$, then with probability at least $1 - C |S|^{-\delta}$, $\{w_1(a,b),w_2(a,b)\}$ is $\eps$-expanding, for some $C, \eps, \delta > 0$ depending only on the rank of $G$, where $S$ is the smallest simple factor of $G$.  See Remark \ref{word} for details.  Note that this is a non-trivial extension of the original theorem because the map $(a,b) \mapsto (w_1(a,b), w_2(a,b))$ does not need to resemble a bijection; for instance, if $G$ is a matrix group and $w_1,w_2$ are conjugate non-commuting words, then $w_1(a,b)$ and $w_2(a,b)$ necessarily have the same trace.  It is in fact conjectured that \emph{all} pairs of generators of $G$ should expand at a uniform rate, but this is not known in general, although in \cite{breuillard-gamburd}, Gamburd and the first author established this conjecture for $\SL_2(\F_p)$ for all $p$ in a density one set of primes.  From the above result and the union bound, we can at least show (after adjusting $\delta$ slightly) that with probability at least $1-C |G|^{-\delta}$, it is the case that for \emph{all} pairs of non-commuting words $w_1, w_2 \in F_2$ of length at most $\delta \log |G|$, the pair $\{w_1(a,b),w_2(a,b)\}$ is $\eps$-expanding.\vspace{11pt}

\textsc{Notation.} We use the asymptotic notation $O(X)$ to denote any quantity whose magnitude is bounded by $CX$ for some absolute constant $C$.  If we need $C$ to depend on parameters, we indicate this by subscripts, e.g. $O_k(X)$ is a quantity bounded in magnitude by $C_k(X)$ for some constant $C$ depending only on $k$.  We write $Y \ll X$ for $Y = O(X)$, $Y \ll_k X$ for $Y = O_k(X)$, etc.


We use $|E|$ to denote the cardinality of a finite set $E$.  If $E$ is finite and non-empty and $f \colon E \to \R$ is a function, we write
$$ \E_{x \in E} f(x) \coloneqq  \frac{1}{|E|} \sum_{x \in E} f(x)$$
and if $P(x)$ is a property of elements $x$ of $E$, we write
$$ \P_{x \in E} P(x) \coloneqq  \frac{1}{|E|} |\{ x \in E: P(x) \; \mbox{holds} \}|.$$

Suppose that $V$ is an affine variety defined over an algebraically closed field $\k$, thus $V$ is a subset of $\k^n$ for some $n$ that is cut out by some polynomials defined over $\k$.  If $\F$ is a subfield of $\k$, we use $V(\F)$ to denote the $\F$-points of $V$, thus $V(\F) = V \cap \F^n$.  In particular, $V = V(\k)$.  We will generally use boldface symbols such as $\G$ to denote algebraic groups defined over $\k$, while using plain symbols such as $G$ to denote finite groups.

\vspace{11pt}

\section{An outline of the argument}\label{outlinesection}

In this section we give an overview of the proof of Theorem \ref{mainthm}.

We first perform a convenient reduction.  As we will recall in Definition \ref{dfs}, all the finite simple groups $G$ of Lie type can be expressed in the form
$$ G = \tilde G / Z(\tilde G)$$
for some slightly larger group $\tilde G$ which lies inside a linear algebraic ``mother'' group $\G$, with the centre $Z(\tilde G)$ being of bounded cardinality: $|Z(\tilde G)| = O(1)$.  For instance, if $G = \PSL_{r+1}(\F_q)$, one can take $\tilde G = \SL_{r+1}(\F_q)$, in which case $\G$ is the algebraic group $\SL_{r+1}$ and the centre has order $\gcd(q,r+1)$.  For each finite simple group $G$, there are a finite number of possibilities (up to isomorphism) for the bounded cover $\tilde G$ and the mother group $\G$; in most cases, the exact choice of $\tilde G$ and $\G$ will not be too important, but in the warmup case of the projective special linear group $\PSL_{r+1}(\F_q)$ in Section \ref{class} and in the special case of the triality group ${}^3 D_4(q)$ in Section \ref{d4-sec} it will be convenient for computational purposes to work with a particular such choice.  It is easy to see that to prove Theorem \ref{mainthm} for the group $G$, it suffices to do so for the bounded cover $\tilde G$. Indeed if $\{a,b\}$ is an $\eps$-expanding pair in $\tilde G$, then its projection $\{\overline{a},\overline{b}\}$ to $G$ is also $\eps$-expanding, because every eigenvalue of the averaging operator $T_{\{\overline{a},\overline{b}\}}$ of Definition \ref{specex} is also an eigenvalue of $T_{\{a,b\}}$. Henceforth we will work with $\tilde G$, as this allows us to easily use the algebraic geometry structure of the mother group $\G$.

As mentioned in the introduction, we will establish expansion for $\tilde G$ via the ``Bourgain-Gamburd machine'', which we formalise in Section \ref{sec2}.
Roughly speaking, this machine gives sufficient conditions for rapid mixing of the iterated convolutions $\mu^{(n)}$ in a finite group $\tilde G$ associated to a bounded set of generators (which, in our case, are $\{a^{\pm 1}, b^{\pm 1}\}$ for some randomly chosen $a, b \in \tilde G$).  By standard arguments, this rapid mixing then implies expansion of the set of generators.

To obtain this mixing, one needs to establish three ingredients, which we state informally as follows:
\begin{itemize}
\item[(i)] (Non-concentration)  Most words of moderate length generated by a random pair of generators will not be concentrated in a proper subgroup.
\item[(ii)] (Product theorem) If a medium-sized set $A$ is not contained in a proper subgroup, then the product set $A \cdot A \cdot A$ is significantly larger than $A$.
\item[(iii)] (Quasirandomness) $G$ has no non-trivial low-dimensional representations, or equivalently that convolution of broadly supported probability measures on $G$ are rapidly mixing.
\end{itemize}
For a more precise version of these three hypotheses, see Proposition \ref{machine}.  Roughly speaking, the non-concentration hypothesis (i) is needed to ensure that $\mu^{(n)}$ expands for small $n$ (less than $C_0 \log |G|$ for some constant $C_0$), the product theorem (ii) is needed to show that $\mu^{(n)}$ continues to expand for medium $n$ (between $C_0 \log |G|$ and $C_1 \log |G|$ for some larger constant $C_1$), and the quasirandomness hypothesis (iii) is needed to show that $\mu^{(n)}$ rapidly approaches the uniform distribution for large $n$ (between $C_1 \log |G|$ and $C_2 \log |G|$ for some even larger constant $C_2$).  See Section \ref{sec2} for further discussion.

The quasirandomness property (iii) is an immediate consequence of the existing literature \cite{landazuri-seitz, seitz-zal} on representations of finite simple groups of Lie type; see Section \ref{sec1a}.  The product theorem for general finite simple groups (ii) of Lie type was recently established by Pyber and Szab\'o \cite{pyber-szabo} (building upon the earlier work of Helfgott \cite{helfgott-sl2,helfgott-sl3} that treated the cases $\SL_2(\F_p), \SL_3(\F_p)$); in Section \ref{sec1a} we give an alternate derivation of this theorem using the closely related result obtained by three of the authors in \cite{bgt}.  The main remaining difficulty is then to establish the non-concentration estimate, which prevents too many of the words generated by a random pair of elements from being trapped inside a proper subgroup of $G$.  More precisely, we will need to establish the following key proposition:

\begin{proposition}[Non-concentration]\label{nonc} Suppose that $G$ is a finite simple group of Lie type.  We allow implied constants to depend on the rank $r = \rk(G)$ of $G$.  Let $\tilde G$ be the bounded cover of $G$ from Definition \ref{dfs}.  Then there exists a positive even integer $n = O(\log |\tilde G|)$ and an exponent $\gamma>0$ depending only on the rank $r$ such that
 \begin{equation}\label{probaupperbound}\P_{a,b \in \tilde G}( \P_{w \in W_{n,2}}(w(a,b) \in H) \leq |\tilde G|^{-\gamma} \hbox{ for all } H < \tilde G ) = 1 - O(|\tilde G|^{-\gamma}),
 \end{equation}
where $a,b$ are drawn uniformly at random from $\tilde G$, $w$ is drawn uniformly at random from the space $W_{n,2}$ of all formal words \textup{(}not necessarily reduced\textup{)} on two generators of length exactly $n$, and $H$ ranges over all proper subgroups of $\tilde G$.
\end{proposition}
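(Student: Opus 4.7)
The plan is to stratify the proper subgroups $H < \tilde G$ via the classification of maximal subgroups (Aschbacher for classical groups, Liebeck--Seitz for exceptional types, and their twisted analogues) into two essentially different families: \emph{structural} subgroups contained in $\HH(\F_q) \cap \tilde G$ for some proper algebraic subgroup $\HH \subsetneq \G$ of complexity $O_r(1)$, and \emph{subfield} subgroups arising from $\tilde G(\F_{q_0})$ (or its twisted analogue) for a proper subfield $\F_{q_0} \subsetneq \F_q$. A handful of residual families (Aschbacher class $\mathcal S$, normalisers of small groups) are of bounded order and absorb into one of the two. It then suffices to establish non-concentration separately for each family, after throwing away two $O(|\tilde G|^{-\gamma})$-probability bad events in the $(a,b)$ side.

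For the structural family I would invoke the main theorem of the companion paper \cite{bggt2}: with probability $1 - O(|\tilde G|^{-\gamma})$ a uniformly random pair $(a,b) \in \tilde G \times \tilde G$ is \emph{strongly dense}, meaning that no two non-commuting words $w_1, w_2 \in F_2$ both satisfy $w_1(a,b), w_2(a,b) \in \HH(\F_q)$ for any proper algebraic subgroup $\HH \subsetneq \G$ of complexity $O_r(1)$. Granting this, for any such $\HH$ the set $S_\HH \coloneqq \{w \in F_2 : w(a,b) \in \HH(\F_q)\}$ must consist of pairwise-commuting words of $F_2$, hence lies in a single cyclic subgroup $\langle c \rangle \leq F_2$. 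Since $F_2$ is non-amenable, a Kesten-type cogrowth estimate shows that the proportion of formal length-$n$ words on two generators that reduce into any fixed cyclic subgroup decays like $\rho^n \cdot n^{O(1)}$ for some absolute $\rho < 1$. Taking $n = C_0 \log |\tilde G|$ with $C_0$ large enough converts this into the desired bound $\P_w(w(a,b) \in \HH(\F_q)) \leq |\tilde G|^{-\gamma}$.

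For the subfield family the subgroup $\tilde G(\F_{q_0})$ is Zariski-dense in $\G$, so strong density gives no direct leverage. Here I would use the point-count bound $|\tilde G(\F_{q_0})| \leq C_r |\tilde G|^{1/p}$, where $p \geq 2$ is the smallest prime dividing $[\F_q : \F_{q_0}]$, together with a Schwartz--Zippel / Lang--Weil analysis of the word map $(a,b) \mapsto w(a,b)$. A naive union bound over the $4^n = |\tilde G|^{O(1)}$ words and $O(\log q)$ subfields would be too lossy, so I would again exploit that the preimage $\rho^{-1}(H) = \{w \in F_2 : w(a,b) \in H\}$ under the evaluation homomorphism $\rho \colon F_2 \to \tilde G$ is itself a subgroup of $F_2$, and invoke a second strong-density-type input to force this preimage to have infinite index in $F_2$ outside a bad event of probability at most $O(\log q) \cdot |\tilde G|^{-1 + o(1)}$ (coming from excluding all conjugate pairs $(g_1 \tilde G(\F_{q_0}) g_1^{-1}) \times (g_2 \tilde G(\F_{q_0}) g_2^{-1})$). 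Once the preimage has infinite index, the same Kesten/cogrowth bound gives $\P_w(w \in \rho^{-1}(H)) \leq C n \rho^n \leq |\tilde G|^{-\gamma}$.

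The principal obstacle is the reliance on strong density from \cite{bggt2}: it is both the central engine and the only place where the rank-dependent constants enter non-trivially. Two cases where this input is missing or borderline require bespoke work. For $G = \Sp_4(\F_q) = C_4(q)$ in characteristic $3$, a substitute strong density statement must be proved directly (deferred to Appendix \ref{sp4-app}), since the inductive argument of \cite{bggt2} lacks a suitable algebraic subgroup to which to apply its hypothesis. For the triality group $G = {}^3 D_4(q)$, the associated index-$2$ subfield makes the Schwartz--Zippel bound $|\tilde G|^{1/2}$ too weak to absorb the word union bound in the subfield family, so a dedicated argument is needed and is given in Section \ref{d4-sec}. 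A secondary concern is uniformity: one must verify that the complexity bound on $\HH$, the maximal-subgroup stratification, and the Schwartz--Zippel constants all depend only on $r$ and not on $q$, so that the union bounds absorb cleanly.
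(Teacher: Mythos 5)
Your high-level stratification into structural and subfield subgroups is exactly right, as is the recognition that strong density (with the $\Sp_4$ char.~3 and ${}^3D_4$ exceptions) is the key external input and that Kesten-type random-walk estimates and Schwartz--Zippel bounds must be invoked. However, the core mechanism in both branches has a genuine gap.

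In the structural case you attribute to \cite{bggt2} a probabilistic statement over the finite group: that a random pair $(a,b) \in \tilde G^2$ is ``strongly dense'' with probability $1-O(|\tilde G|^{-\gamma})$, in the sense that no pair of non-commuting words lands in a common bounded-complexity proper algebraic subgroup. But the companion paper proves only an \emph{existence} result over an uncountable algebraically closed field $\k$ (Theorem~\ref{sec3-key-prop}): there exists \emph{one} strongly dense free two-generator subgroup of $\G(\k)$. Moreover your claimed finite-field statement is almost certainly false, since there are exponentially many pairs of non-commuting words of length $n$, each with probability roughly $q^{-c}$ of being simultaneously trapped, so for $n \gg \log q$ the expected number of trapped pairs is large. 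The paper's actual argument only needs the weaker statement that, for each \emph{fixed} pair of non-commuting words $w,w'$, most $(a,b)$ escape. Converting the $\k$-existence result into this requires two pieces of machinery you omit: a proposition (Proposition~\ref{crit}) constructing a bounded-complexity algebraic set $X_N \subset \G \times \G$ that captures ``both elements lie in a bounded-complexity proper algebraic subgroup,'' and a Schwartz--Zippel lemma over the (possibly twisted) finite group. The existence of one good pair over $\k$ then shows the variety $\Sigma_{w,w'}$ is proper, Schwartz--Zippel makes $\Sigma_{w,w'}(\F_q)$ sparse, and a second-moment/Markov argument (not a union bound over words, which would be too lossy as you note) finishes. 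The subsequent reduction to a single cyclic subgroup is correct but is applied after this machinery, not as a substitute for it.

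In the subfield case your plan to show $\rho^{-1}(H)$ has \emph{infinite index} in $F_2$ cannot work: $\rho$ evaluates into the finite group $\tilde G$, so $[F_2 : \rho^{-1}(H)] = [\rho(F_2) : \rho(F_2)\cap H]$ is always finite, and strong density says nothing here since subfield subgroups are Zariski-dense in $\G$ (as you observe). The paper uses a quite different mechanism: the coefficients $\gamma_1(w(a,b)),\dots,\gamma_m(w(a,b))$ of the characteristic polynomial all lie in $\F_{q'}$ when $w(a,b)$ is conjugate into $\G(\F_{q'})$. For each non-degenerate target value $x_i \in \F_{q'} \setminus \{\gamma_i(1)\}$, Schwartz--Zippel gives $\P_{a,b}(\gamma_i(w(a,b))=x_i) \ll n^{O(1)} q^{-1/d}$, and summing over the at most $q' \leq q^{1/2}$ (or $q^{1/3}$ when $d=2$) candidate values is acceptable. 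The degenerate values $x_i = \gamma_i(1)$ force $w(a,b)$ unipotent (Cayley--Hamilton), and Borel's dominance theorem for word maps shows this is a proper subvariety of $\G \times \G$, after which Schwartz--Zippel again applies. Your outline contains no analogue of this characteristic-polynomial trick or of Borel's theorem, and the route you propose does not close.
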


\begin{remark}  If $n \leq n'$ and $H$ is a subgroup of $G$, we have the inequality
$$ \P_{w \in W_{n',2}}(w(a,b) \in H) \leq \sup_{g \in G} \P_{w \in W_{n,2}}(w(a,b) \in gH) $$
(as can be seen by factorising a word in $W_{n',2}$ as a word in $W_{n,2}$ and another word, whose value one then conditions over), and similarly one has
$$ \sup_{g \in G} \P_{w \in W_{n,2}}(w(a,b) \in gH)^2 \leq \P_{w \in W_{2n,2}}(w(a,b) \in H)$$
since if $w(a,b), w'(a,b) \in gH$ then $w' w^{-1}(a,b) \in H$.  From this we see that the existence of some $n=O(\log |\tilde G|)$ satisfying the conclusion of the proposition implies that essentially the same bound holds for all larger $n$.
\end{remark}

In Section \ref{sec1a} we will show why Proposition \ref{nonc} implies Theorem \ref{mainthm}.

It remains to establish the proposition.  Informally, we need to show that given a random pair of generators $a$ and $b$, then the words $w(a,b)$ arising from those generators usually do not concentrate in a proper subgroup $H$ of $\tilde G$.  Of course, we may restrict attention to the maximal proper subgroups of $\tilde G$.

The first step in doing so is a classification \cite{asch, landazuri-seitz, larsen-pink, liebeck-seitz, steinberg-yale} of the maximal proper subgroups $H$ of the bounded cover $\tilde G$ of a finite simple group $G$ of Lie type, which among other things asserts that such subgroups either live in a proper Zariski-closed subgroup of the mother group $\G$ (with bounds on the algebraic complexity of this closed subgroup), or else live in a conjugate of a subgroup of the form $\G(\F')$, where $\F'$ is a proper subfield of $\F$; see Lemma \ref{max} for a precise statement.  We refer to these two cases as the \emph{structural case} and the \emph{subfield case} respectively.

We first discuss the structural case.  In some cases, such as the Suzuki groups $\Suz(q) = {}^2 B_2(q)$ or the rank one special linear groups $\PSL_2(\F_q) = A_1(q)$, this case is relatively easy to handle, because  all proper algebraic subgroups are solvable in those cases; this fact was exploited in the papers \cite{bourgain-gamburd}, \cite{suzuki} establishing Theorem \ref{mainthm} for such groups.  In the setting of the present paper, where we consider the general higher rank case, we have to address a new difficulty due to the presence of large \emph{semisimple} proper algebraic subgroups of $\G$, which could in principle trap most words of moderate length generated by a generic pair of generators.

To eliminate this possibility, we will use the following result, which (with one exception) we established in a separate paper \cite{bggt2}.  Call a subgroup $\Gamma$ of an algebraic group $\G$ \emph{strongly dense} if every pair $x, y$ of non-commuting elements of $\Gamma$ generate a Zariski-dense subgroup of $\G$.  Informally, this means that very few pairs of elements in $\Gamma$ can be simultaneously trapped inside the same proper algebraic subgroup of $G$.   We then have:

\begin{theorem}[Existence of strongly dense subgroups]\label{sec3-key-prop}
Suppose that $\G(k)$ is a semisimple algebraic group\footnote{In this paper, semisimple algebraic groups are always understood to be connected.} over an uncountable algebraically closed field $k$. Then there exists a free non-abelian subgroup $\Gamma$ of $\G(k)$ on two generators which is strongly dense.
\end{theorem}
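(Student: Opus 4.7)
The plan is to prove this by a Baire-category style argument, exploiting the fact that an irreducible algebraic variety over an uncountable algebraically closed field cannot be exhausted by a countable union of proper Zariski-closed subvarieties. Thus it suffices to exhibit the desired pair $(a,b) \in \G(k) \times \G(k)$ as lying outside a countable union of proper closed ``bad'' subvarieties of $\G \times \G$. Concretely, we need $(a,b)$ such that (A) $\langle a,b \rangle$ is a free group of rank two; and (B) for every pair of non-commuting reduced words $w_1, w_2 \in F_2$, the pair $(w_1(a,b), w_2(a,b))$ generates a Zariski-dense subgroup of $\G$. Condition (A) can be arranged by a standard Tits-style ping-pong argument applied to, say, pairs of elements near a pair of non-commuting regular unipotents, and in any case the set of $(a,b)$ failing freeness is a countable union of proper closed subvarieties (one for each non-trivial relation $w(a,b)=1$).

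For condition (B), I would proceed as follows. A subgroup $\langle x, y \rangle$ fails to be Zariski-dense iff its closure is contained in some maximal proper closed subgroup $M < \G$. For each non-commuting pair $(w_1, w_2)$ and each such $M$, introduce the bad set
$$
B_{w_1, w_2, M} \;\coloneqq\; \{(a,b) \in \G \times \G : w_1(a,b), w_2(a,b) \in gMg^{-1} \text{ for some } g \in \G(k)\},
$$
which is Zariski-closed (the incidence variety $\{(a,b,g \cdot M) : w_i(a,b) \in gMg^{-1}\}$ in $\G \times \G \times (\G/N_\G(M))$ is closed, and $\G/N_\G(M)$ is projective when $M$ is parabolic, or can be handled via Chevalley's theorem on constructible images in the general case). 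The goal is to show that for each fixed $(w_1, w_2, M)$, the set $B_{w_1,w_2,M}$ is a \emph{proper} subvariety, and that, ranging over all $(w_1, w_2)$ and over a suitable countable family of $M$'s, the union is still a countable union of proper subvarieties.

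The countability of relevant $M$'s reduces to two sub-claims. First, positive-dimensional maximal closed subgroups of a semisimple group fall into finitely many $\G$-conjugacy classes: their identity components correspond to maximal proper Lie subalgebras of $\g$, which by the work of Dynkin and by the parabolic / reductive classification come in finitely many types (maximal parabolics, reductive subgroups of maximal rank via Borel--de Siebenthal, and subsystem subgroups of smaller rank). Second, for finite maximal subgroups, Jordan's theorem and the Larsen--Pink bounds give that such a subgroup has order bounded in terms of $\rk(\G)$, and we can absorb this case by further requiring that neither $a$ nor $b$ has order less than some fixed bound $N$ (a condition defining only countably many proper subvarieties $\{g^n = 1\}$ as $n$ ranges over $\{1, \dots, N\}$).

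The main obstacle is showing that each $B_{w_1, w_2, M}$ is a \emph{proper} subvariety of $\G \times \G$, i.e.\ exhibiting a single witness pair $(a_0, b_0)$ such that $w_1(a_0,b_0)$ and $w_2(a_0,b_0)$ do not lie in a common conjugate of $M$. The delicate point is that one cannot just appeal to dominance of the individual word maps, because (as noted in the introduction's remark) $w_1$ and $w_2$ may impose correlations on the pair $(w_1(a,b), w_2(a,b))$, for instance by having equal traces when the words are conjugate. The approach I would take, following the strategy sketched in \cite{bggt2}, is a rank-based induction: choose $a_0$ to be a generic regular semisimple element, so its centralizer $T$ is a fixed maximal torus; then $w_1(a_0, b)$ varies in a large (dominant) family as $b$ varies, and the constraint that $w_1(a_0, b)$ and $w_2(a_0, b)$ lie in a common conjugate $gMg^{-1}$ cuts out a proper subvariety by a Schwartz--Zippel-type dimension count, provided $M$ is not itself all of $\G$. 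The case analysis by ``type of $M$'' (parabolic, subsystem, normalizer of a torus, etc.) is where essentially all the technical work of the companion paper \cite{bggt2} resides, including the separate argument needed for $C_4(q)$ in characteristic $3$ deferred to Appendix \ref{sp4-app}. Once every $B_{w_1,w_2,M}$ is proper, the countable union over $(w_1,w_2,M)$ cannot cover $\G \times \G$, and any pair $(a,b)$ in the complement, together with the freeness condition from (A), yields the desired strongly dense free subgroup.
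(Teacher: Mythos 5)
The paper's own ``proof'' of Theorem \ref{sec3-key-prop} is essentially a citation to the companion paper \cite{bggt2}, supplemented by Appendix \ref{sp4-app} for $\Sp_4$ in characteristic $3$, so the real content lives elsewhere. Your Baire-category framework is indeed the one used, and in that sense the outline is aligned with the true approach. But the decomposition you propose differs from what \cite{bggt2} actually does, and one of your key steps is wrong as stated.

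First, the claim that after fixing $a_0$ regular semisimple ``$w_1(a_0, b)$ varies in a large (dominant) family as $b$ varies'' fails in general. Take $w_1 = a$, $w_2 = b$: this is a perfectly valid non-commuting pair, and $b \mapsto w_1(a_0, b) = a_0$ is constant. Borel's dominance theorem concerns the full two-variable map $(a,b) \mapsto w(a,b)$; restricting one variable need not preserve dominance. Your dimension count therefore does not get off the ground for all pairs $(w_1, w_2)$.

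Second, your plan enumerates \emph{every} maximal closed subgroup $M$ of $\G$ (parabolics, normalizers of tori, lower-rank reductive subgroups, finite maximal subgroups, \dots) and tries to show each $B_{w_1,w_2,M}$ is proper. The companion paper proceeds quite differently and more economically: its Lemma 2.7 (quoted in the present paper as Lemma \ref{step3-lem}) shows that for \emph{generic} $(a,b)$, the Zariski closure of $\langle w_1(a,b), w_2(a,b) \rangle$ is either $\G$ or a proper semisimple subgroup of \emph{maximal rank}. This is proved by arranging that $w_1(a,b)$, $w_2(a,b)$ and $[w_1(a,b), w_2(a,b)]$ each generically generate a dense subgroup of a maximal torus. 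Once that is in hand, parabolics, normalizers, low-rank reductive subgroups and finite subgroups are excluded automatically; only the finitely many Borel--de Siebenthal subsystem subgroups remain to be ruled out by a case analysis over Dynkin diagrams. Your approach recovers none of this structure and would have to redo all of it by a direct check against every $M$, which is substantially harder.

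Third, your way of disposing of finite maximal subgroups --- requiring that neither $a$ nor $b$ have small order --- is neither sufficient nor needed. You must control $w_1(a,b)$ and $w_2(a,b)$, not $a$ and $b$. But the finite case is vacuous once condition (A) holds: if $\langle a,b \rangle$ is free of rank two, then for non-commuting $w_1, w_2$, the subgroup $\langle w_1(a,b), w_2(a,b) \rangle$ is a non-abelian free group, hence infinite, and in particular cannot lie in any finite subgroup. Finally, a technical point you pass over: for non-parabolic $M$, the image $B_{w_1,w_2,M}$ is only constructible, and a proper constructible subset of an irreducible variety can have dense closure, so properness alone is not enough for the Baire argument. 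You need to show the closure is proper, or use closed sets from the start --- precisely the role played by the closed sets $X_N$ constructed in Proposition \ref{crit} in the main text.
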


\begin{proof}  If $\G$ is the algebraic group $\Sp_4$ and $k$ has characteristic $3$, we establish this result in Appendix \ref{sp4-app}.  All other cases of this theorem were established as the main result of \cite{bggt2}.
\end{proof}

\emph{Remark.} Note that while Theorem \ref{sec3-key-prop} is stated for free groups on two generators, it implies the same for free groups with $m$ generators for any $m \geq 2$, since a free group on two generators contains a free group on $m$ generators (and in fact contains a countably generated free subgroup). The same applies to Theorem \ref{mainthm}, that is for every fixed $k \geq 2$ a random $k$-tuple will be $\eps$-expanding with high probability.\vspace{11pt}

By combining the above proposition with a variant of the Schwartz-Zippel lemma (see Proposition \ref{sufficiently-zdense-prop}) and an algebraic quantification of the property of generating a sufficiently Zariski-dense subgroup (see Proposition \ref{crit}), we will be able to show that for any pair of words $w, w'$ of length $n$, and for most $a, b$, $w(a,b)$ and $w'(a,b)$ will not be trapped inside the same proper algebraic subgroup of $G$, which will be sufficient to establish non-concentration in the structural case; see Section \ref{sec1a}.

Now we discuss the subfield case.  The simplest case to consider is the Chevalley group case when $\tilde G = \G(\F_q)$ is a matrix group over some finite field $\F_q$.  The starting point is then the observation that if a matrix $g$ lies in a conjugate of a subfield group $\G(\F_{q'})$, where $\F_{q'}$ is a subfield of $\F_q$, then the coefficients $\gamma_1(g),\ldots,\gamma_d(g)$ of the characteristic polynomial of $g$ all lie in $\F_{q'}$.  The idea is then to show that $\gamma_i(w(a,b))$ does not concentrate in $\F_{q'}$ for each $i$.  For most values $x_i$ of $\F_{q'}$, one can use the Schwartz-Zippel lemma for $G$ to show (roughly speaking) that $\gamma_i(w(a,b))$ only takes the value $x_i$ with probability $O(q^{-1})$; summing over all values of $x_i$, one ends up with a total concentration probability of $O(q'/q)$.  But as $\F_{q'}$ is a proper subfield of $\F_q$, one has $q' = O(q^{1/2})$, and as such the contribution of this case is acceptable.  (There is a ``degenerate'' case when $x_i = \gamma_i(1)$ which has to be treated separately, by a variant of the above argument; see Section \ref{sec1a} for details.)

In the non-Chevalley cases, $\tilde G$ is (the derived group of) the fixed points $\G(\F_q)^\sigma$ of $\G(\F_q)$ under some automorphism $\sigma$ of order $d \in \{2,3\}$.  It turns out that $\tilde G$ is ``sufficiently Zariski-dense'' in $\G$ in the sense of obeying a variant of the Schwartz-Zippel lemma; see Proposition \ref{sufficiently-zdense-prop} for a precise statement.  In principle, one can then run the same argument that was used in the classical case.  However, the presence of the automorphism $\sigma$ defining $G$ turns out to cut down the probability bound in the Schwartz-Zippel lemma from $O(q^{-1})$ to $O(q^{-1/d})$, leading to a final bound of $O(q' / q^{-1/d} )$ rather than $O(q'/q)$.  This is not a difficulty when $d=2$, as it turns out that the only relevant subfields $\F_{q'}$ in those cases have size at most $q^{1/3}$ (after excluding those cases that can also be viewed as part of the structural case); but it causes a significant problem when $G$ is a triality group ${}^3D_4(q)$, which is the unique case for which $d=3$.  To deal with this case we will apply an \emph{ad hoc} argument in Section \ref{d4-sec}, based on passing from ${}^3D_4(q)$ to a more tractable subgroup $\SL_2(\F_{q}) \circ \SL_2(\F_{\tilde q})$ in which non-concentration is easier to establish.

The rest of the paper is organised as follows.   In Section \ref{sec2} we present the abstract ``Bourgain-Gamburd machine'', which reduces the task of verifying expansion to that of verifying quasirandomness, a product theorem, and non-concentration.  Then, in Section \ref{class}, we prove these facts in the model case of the projective special linear group $G = A_r(q) = \PSL_{r+1}(\F_q)$, which is technically simpler than the general case and allows for some more explicit computations.  In Section \ref{sec1a}, we define formally the concept of a finite simple group of Lie type, and extend the arguments in Section \ref{class} to this class of groups to give a full proof of Theorem \ref{mainthm}, contingent on a certain Schwartz-Zippel type lemma which we then prove in Section \ref{non-conc-sec-untwist}, together with a separate treatment of the triality group case $G = {}^3 D_4(q)$ which requires a modification to one part of the argument. In the last section, Section \ref{semisimple}, we extend our arguments to cover the case when the group $G$ is no longer simple, but an almost direct product of simple groups. This requires adapting the product theorem to this setting and proving an analogous non-concentration estimate. 

\section{The Bourgain-Gamburd expansion machine}\label{sec2}

Bourgain and Gamburd, in their groundbreaking paper \cite{bourgain-gamburd}, supplied a new paradigm for proving that sets of generators expand, applying it to show that any set of matrices in $\SL_2(\Z)$ generating a Zariski-dense subgroup descends to give an expanding set of generators in $\SL_2(\F_p)$, and also the special case $G = \SL_2(\F_p)$ of Theorem \ref{mainthm}.

This ``Bourgain-Gamburd machine'' was also critical in \cite{suzuki}, the paper on expansion in Suzuki groups by the first, second and fourth authors.

In this section we give a version of this machine, suitable for use for finite simple groups, which will be adequate for our purposes, with the proofs deferred to Appendix \ref{bg-app}.  In that appendix we will also remark on slightly more general contexts in which one might hope to apply the machine at the end of the section, but readers looking for the most general setting in which the method is valid should consult the work of Varj\'u \cite{varju}.

Suppose that $G = (G,\cdot)$ is a finite group, and let $S = \{x_1,\dots,x_k\}$ be a symmetric set of generators for $G$. In this paper we will usually be taking $S = \{a, a^{-1}, b , b^{-1}\}$ for a random pair $a,b \in G$, and $G$ will be a finite simple group of Lie type but the discussion in this section will apply to more general types of generators $S$ and more general finite groups $G$.

Write
\[ \mu = \mu_S \coloneqq  \frac{1}{k}(\delta_{x_1} + \dots + \delta_{x_k})\]
for the uniform probability measure on the set $S$, where $\delta_x$ is the Dirac mass at $x$.  We abuse notation very slightly and identify the space of probability measures on the discrete space $G$ with the space of  functions $\mu \colon G \to \R^+$ with mean $\E_{x \in G} \mu(x) = 1$.  In particular, we identify the uniform measure with the constant function $1$, and the Dirac mass $\delta_x$ with the function that equals $|G|$ at $x$ and vanishes elsewhere.

We write
\[ \mu^{(n)} \coloneqq  \mu \ast \dots \ast \mu\]
for the $n$-fold convolution power of $\mu$ with itself, where the convolution $\mu_1 \ast \mu_2$ of two functions $\mu_1, \mu_2 \colon G \to \R^+$ is given by the formula
\begin{equation}\label{convdef}
\mu_1 \ast \mu_2(g) \coloneqq  \E_{x \in G} \mu_1(g x^{-1}) \mu_2(x).
\end{equation}
One may think of $\mu^{(n)}(x)$ as describing the normalised probability that a random walk of length $n$ starting at the identity in $G$ and with generators from $S$ hits the point $x$.  In particular, if $S  = \{a,b,a^{-1},b^{-1}\}$, and $H \subset G$, then
\begin{equation}\label{munh}
\mu^{(n)}(H) = \P_{w \in W_{n,2}}( w(a,b) \in H ),
\end{equation}
where $W_{n,2}$ is the space of all formal words (not necessarily reduced) on two generators of length exactly $n$.

Here is an instance of the Bourgain-Gamburd machine that will suffice for our paper (and for \cite{suzuki}).  Define a \emph{$K$-approximate subgroup} of a group $G$ to be a finite symmetric subset $A$ of $G$ containing the identity such that the product set $A \cdot A \coloneqq  \{a \cdot b: a,b \in A\}$ can be covered by at most $K$ left-translates of $A$.

\begin{proposition}[Bourgain-Gamburd machine]\label{machine}
Suppose that $G$ is a finite group, that $S \subseteq G$ is a symmetric set of $k$ generators, and that there are constants $0 < \kappa < 1 <  \Lambda$ with the following properties.
\begin{enumerate}
\item \textup{(Quasirandomness)}. The smallest dimension of a nontrivial representation $\rho \colon G \to \GL_d(\C)$ of $G$ is at least $|G|^{\kappa}$;
\item \textup{(Product theorem)}. For all $\delta > 0$ there is some $\delta' = \delta'(\delta)>0$ such that the following is true. If $A \subseteq G$ is a $|G|^{\delta'}$-approximate subgroup  with $|G|^{\delta} \leq |A| \leq |G|^{1 - \delta}$ then $A$ generates a proper subgroup of $G$;
\item \textup{(Non-concentration estimate)}. There is some even number $n \leq  \Lambda\log |G|$ such that
\[ \sup_{H < G}\mu^{(n)}(H) < |G|^{-\kappa},\] where the supremum is over all proper subgroups $H < G$.
\end{enumerate}
Then $S$ is $\eps$-expanding for some $\eps > 0$ depending only on $k,\kappa, \Lambda$, and the function $\delta'(\cdot )$ \textup{(}and this constant $\eps$ is in principle computable in terms of these constants\textup{)}.
\end{proposition}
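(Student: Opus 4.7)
The plan is to run the standard three-phase Bourgain-Gamburd iteration, tracking $\|\mu^{(n)}\|_{L^2(G)}^2$ as $n$ grows and then converting the final $L^2$-bound into an eigenvalue bound via quasirandomness. In the paper's normalization the uniform measure on $G$ is the constant function $1$, so $\|\mu^{(n)}\|_{L^2(G)}^2 \geq 1$ always, and the intermediate target is an even $n_1 = O(\log|G|)$ with $\|\mu^{(n_1)}\|_{L^2(G)}^2 \leq |G|^\delta$ for some small $\delta>0$ depending only on $k,\kappa,\Lambda,\delta'(\cdot)$. Phase~1 is an immediate consequence of non-concentration: every proper subgroup contains the identity, so hypothesis~(iii) forces $\mu^{(n)}(\{e\}) \leq |G|^{-\kappa}$ at the length from that hypothesis; since $\mu$ is symmetric, the identity $\mu^{(n)}(\{e\}) = \|\mu^{(n/2)}\|_{L^2(G)}^2/|G|$ gives $\|\mu^{(n/2)}\|_{L^2(G)}^2 \leq |G|^{1-\kappa}$, the starting point for the iteration.

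Phase~2 is the heart of the argument and iterates a flattening dichotomy. At each step, an application of the non-commutative Balog--Szemer\'edi--Gowers theorem of Tao yields the following alternative: either $\|\mu^{(2n)}\|_{L^2(G)}^2 \leq |G|^{-c}\|\mu^{(n)}\|_{L^2(G)}^2$ for some fixed small $c>0$ (honest flattening), or $\mu^{(n)}$ is essentially concentrated on a $|G|^{O(c)}$-approximate subgroup $A \subseteq G$ of size $|A| \approx |G|^{1\pm O(c)}/\|\mu^{(n)}\|_{L^2(G)}^2$ with $\mu^{(n)}(A) \geq |G|^{-O(c)}$. The crucial observation is that since $\|\mu^{(n)}\|_{L^2(G)}^2 \leq |G|^{1-\kappa}$ is preserved throughout the iteration, one always has $|A| \gtrsim |G|^{\kappa-O(c)}$, so $A$ never falls into the ``too small'' regime. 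Applying hypothesis~(ii) with $\delta \ll \kappa$ and $c$ chosen so that $|G|^{O(c)} \leq |G|^{\delta'(\delta)}$, either $|A| \geq |G|^{1-\delta}$, which gives the target $\|\mu^{(n)}\|_{L^2(G)}^2 \leq |G|^{\delta + O(c)}$; or $|A| \leq |G|^{1-\delta}$, in which case $A$ generates a proper subgroup and $\mu^{(n)}(\langle A\rangle) \geq |G|^{-O(c)}$ contradicts non-concentration since $O(c) < \kappa$. Thus at each doubling $n \mapsto 2n$ either the $L^2$ norm drops by a factor of $|G|^c$ or the target is achieved; starting from $|G|^{1-\kappa}$, only $O(1)$ doublings are needed and $n_1 = O(\log|G|)$.

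Phase~3 converts the $L^2$ bound into the desired spectral gap via quasirandomness. Writing $T\colon f \mapsto f\ast\mu$ for the convolution operator, $T$ is self-adjoint and commutes with the regular $G$-action on $L^2(G)$, so by Peter--Weyl every nontrivial eigenvalue of $T$ comes from some nontrivial irreducible representation $\rho$ and appears with multiplicity at least $d_\rho \geq |G|^\kappa$. Denoting by $1-\eta$ the operator norm of $T$ on the mean-zero subspace $L^2_0(G)$, one obtains
$$
\|\mu^{(n_1)}\|_{L^2(G)}^2 - 1 \;=\; \tr\bigl(T^{2n_1}|_{L^2_0(G)}\bigr) \;\geq\; |G|^\kappa (1-\eta)^{2n_1},
$$
and combining with $\|\mu^{(n_1)}\|_{L^2(G)}^2 \leq |G|^\delta$ and $n_1 = O(\log|G|)$ forces $\eta \geq \eps$ for some $\eps > 0$ depending only on $k,\kappa,\Lambda,\delta'(\cdot)$, yielding the desired $\eps$-expansion. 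The main obstacle will be Phase~2: although the ``small $A$'' case is automatically avoided by the initial bound, one still has to choose $c$ and $\delta$ small enough so that the various $|G|^{O(c)}$ losses from the BSG step are absorbed by hypotheses~(ii) and~(iii) simultaneously, in particular so that $|G|^{O(c)} \leq |G|^{\delta'(\delta)}$, while keeping the total walk length $n_1$ on the order of $\log|G|$.
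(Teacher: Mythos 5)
Your three-phase plan matches the paper's proof in broad outline; Phases I and II coincide essentially step for step. For Phase I, the paper applies non-concentration at $H=\{e\}$ to bound $\|\mu^{(n)}\|_{L^\infty(G)}$ and then uses H\"older, which is the same computation as your $\mu^{(n)}(\{e\})=\|\mu^{(n/2)}\|_{L^2(G)}^2/|G|$ since $\|\mu^{(n)}\|_{L^\infty(G)}=\mu^{(n)}(e)=\|\mu^{(n/2)}\|_{L^2(G)}^2$ for even $n$ and symmetric $\mu$. For Phase II, the paper applies the weighted BSG theorem to get an approximate group $A$ and then invokes the product theorem, with the three outcomes of Theorem \ref{mainthm-preciseform} handled exactly as you describe; the paper lists the ``$A$ small'' outcome as a formal possibility and rules it out via non-concentration, which is the same content as your observation that the $L^2$ bound $\|\mu^{(n)}\|_{L^2(G)}^2\le |G|^{1-\kappa}$, together with $\mu^{(n)}(xA)\ge|G|^{-O(c)}$ and Cauchy--Schwarz, forces $|A|\gtrsim|G|^{\kappa-O(c)}$.

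The genuine difference is Phase III. You deduce the spectral gap directly from the $L^2$ bound via the Sarnak--Xue multiplicity trick: by quasirandomness every nontrivial eigenvalue of $T$ occurs with multiplicity at least $|G|^\kappa$, and the trace identity $\|\mu^{(n_1)}\|_{L^2(G)}^2-1=\tr(T^{2n_1}|_{L^2_0(G)})\ge|G|^\kappa(1-\eta)^{2n_1}$ then forces $\eta\gg_\kappa 1$ once $\|\mu^{(n_1)}\|_{L^2(G)}^2\le|G|^{\kappa/10}$ and $n_1=O(\log|G|)$. The paper instead uses the Babai--Nikolov--Pyber convolution inequality $\Vert\nu_1\ast\nu_2-1\Vert_{L^2(G)}\le d_{\min}(G)^{-1/2}\Vert\nu_1-1\Vert_{L^2(G)}\Vert\nu_2-1\Vert_{L^2(G)}$ to drive $\|\mu^{(n)}-1\|_{L^\infty(G)}$ down to $|G|^{-10}$ in $O_\kappa(1)$ more convolutions, obtaining rapid mixing, and then invokes the standard conversion of rapid mixing to $\eps$-expansion. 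The paper explicitly notes in a remark that these two endgames are interchangeable, crediting your variant to Sarnak--Xue; the tradeoff is that the BNP route gives an explicit $L^\infty$ equidistribution statement along the way (useful if one wants that for its own sake), while the multiplicity route is a hair shorter and goes straight to the spectral gap without passing through rapid mixing. Both are correct and depend on the same inputs. One small caution on your write-up: the BSG step as stated in Appendix \ref{bsg-app-sec} gives only the upper bound $|A|\lesssim K^{O(1)}|G|/\|\mu^{(n)}\|_{L^2(G)}^2$ together with the mass lower bound $\mu^{(n)}(xA)\ge|G|^{-O(c)}$, not a two-sided estimate $|A|\approx|G|^{1\pm O(c)}/\|\mu^{(n)}\|_{L^2(G)}^2$; the lower bound on $|A|$ that you need is genuinely derived as a further consequence of the mass bound together with the persisted $L^2$ (or $L^\infty$) bound, as above, rather than being part of the BSG output.
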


We prove this proposition in Appendix \ref{bg-app}, by a variant of the techniques in \cite{bourgain-gamburd}, using a version of the Balog-Szemer\'edi-Gowers lemma which we give in Appendix \ref{bsg-app-sec}.  As mentioned in the previous section, the hypothesis (iii), the non-concentration estimate, represents the bulk of the new work in this paper, in the context when $S$ is generated by two random elements. This condition was also difficult to verify in earlier works such as \cite{bourgain-gamburd2,bourgain-gamburd3}, where deep results from algebraic geometry and random matrix products were required. The interesting feature of (iii) is that it is actually \emph{necessary} in order to verify expansion, as it is a consequence of the rapid mixing property \eqref{mung}. This is in contrast to (i) and (ii) which, although they certainly ``pull in the direction of'' expansion, are by no means strictly speaking necessary in order to establish it.   We also remark that (iii) is the only condition of the three that actually involves the set $S$.

In view of Proposition \ref{machine} (and \eqref{munh}), as well as the observation that Theorem \ref{mainthm} for a finite simple group $G$ will follow from the same theorem for the bounded cover $\tilde G$ of $G$ from Definition \ref{dfs}, we see that Theorem \ref{mainthm} will follow from Proposition \ref{nonc} and the following additional propositions.

\begin{proposition}[Quasirandomness]\label{quasi}  Let $G$ be a finite simple group of Lie type, and let $\tilde G$ be the bounded cover of $G$ coming from Definition \ref{dfs}.  Then every non-trivial irreducible representation $\rho \colon \tilde G \to \GL_d(\C)$ of $\tilde G$ has dimension $d$ at least $|G|^{\beta}$, where $\beta > 0$ depends only on the rank of $G$.
\end{proposition}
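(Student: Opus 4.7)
The plan is to invoke the classical Landazuri--Seitz--Zalesskii lower bounds \cite{landazuri-seitz, seitz-zal} on the minimal degree of a non-trivial complex projective representation of a finite quasisimple group of Lie type, and then to compare these bounds with the order of the group.

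More concretely, fix the rank $r$ and allow all constants in what follows to depend on $r$. By Definition \ref{dfs} the bounded cover $\tilde G$ sits inside the $\F_q$-points of a connected semisimple algebraic group $\G$ of dimension $D = D(r) = O_r(1)$, and a standard order count (Steinberg's formula) gives
\[
|\tilde G| \leq |\G(\F_q)| \leq C_r \, q^{D}
\]
for some $C_r > 0$ depending only on $r$. On the other hand, the Landazuri--Seitz--Zalesskii theorem asserts that any non-trivial irreducible complex representation $\rho : \tilde G \to \GL_d(\C)$ of the quasisimple group $\tilde G$ satisfies
\[
d \geq c_r \, q^{f(r)},
\]
where $f(r) > 0$ is a positive function of the rank (one may take for instance $f(r) = 1$ for the generic Chevalley families, and case-by-case values for the twisted families; in all cases the bound is polynomial in $q$ with positive exponent). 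Combining these two inequalities and choosing
\[
\beta \coloneqq \frac{f(r)}{2D(r)},
\]
one obtains $d \geq |\tilde G|^{\beta} \geq |G|^{\beta}$, at least once $q$ is larger than some threshold depending only on $r$.

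The only subtlety is the case of bounded $q$: for each rank $r$ there are only finitely many isomorphism classes of finite simple groups of Lie type with $q \leq q_0(r)$, and for each such group the non-trivial irreducible representations have some positive minimal degree, so the inequality $d \geq |G|^{\beta}$ holds trivially (possibly after shrinking $\beta$) for all such groups. I would carry out the proof in two lines: first quote Landazuri--Seitz--Zalesskii in the form above, then compare with the Steinberg order formula. The main point to be careful about is simply that the cited bounds are stated for the universal (quasisimple) covers, which is exactly why we pass to $\tilde G$ rather than to $G$ itself; since $|Z(\tilde G)| = O_r(1)$, the cover $\tilde G$ is a quasisimple group to which Landazuri--Seitz--Zalesskii applies directly, and no further reduction is needed. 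No step here is really a serious obstacle; the content of the proposition is entirely contained in the cited representation-theoretic lower bounds.
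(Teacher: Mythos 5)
Your proposal is correct and is essentially the proof given in the paper: both rest on citing the Landazuri--Seitz (and Seitz--Zalesskii) lower bounds on minimal degrees of non-trivial projective representations of the simple quotient and comparing them against the elementary order bound $|\tilde G| \ll q^{O_r(1)}$. The paper additionally remarks that the full strength of those bounds is not needed---one can bootstrap from an embedded $\SL_2(\F_{\tilde q})$ or $\PSL_2(\F_{\tilde q})$ via \cite[Theorem 4.1]{lubotzky}, treating the Suzuki case separately, and invoke only the rank-one case \cite[Lemma 4.1]{landazuri-seitz}---but the core citation and comparison are exactly as you have them.
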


\begin{proposition}[Product theorem]\label{prod}  Let  $G$ be a finite simple group of Lie type, and let $\tilde G$ be the bounded cover of $G$ coming from Definition \ref{dfs}.  For all $\delta > 0$ there is some $\delta' = \delta'(\delta) > 0$ depending only on $\delta$ and the rank of $G$ such that the following is true. If $A \subseteq \tilde G$ is a $|\tilde G|^{\delta'}$-approximate subgroup  with $|\tilde G|^{\delta} \leq |A| \leq |\tilde G|^{1 - \delta}$ then $A$ generates a proper subgroup of $\tilde G$.
\end{proposition}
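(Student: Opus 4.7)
My plan is to deduce Proposition \ref{prod} from the main theorem of \cite{bgt} (the authors' own product theorem for approximate subgroups of algebraic groups), as advertised in Section \ref{outlinesection}; Pyber--Szab\'o \cite{pyber-szabo} would give an alternative route that handles all cases in one stroke, but using \cite{bgt} matches the path the paper intends to take. The relevant statement from \cite{bgt} is: for a semisimple algebraic group $\G$ of bounded rank $r$ over an algebraically closed field, there is $C = C(r)$ such that every finite $K$-approximate subgroup $A \subseteq \G(\overline{\F_q})$ with $|A| > K^C$ is contained in a proper closed algebraic subgroup $\HH < \G$ of complexity $O_r(1)$.

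Concretely, I would proceed as follows. First embed $\tilde G$ into its mother algebraic group $\G$ as in Definition \ref{dfs}, so that $\tilde G \subseteq \G(\overline{\F_q})$. Given an approximate subgroup $A \subseteq \tilde G$ satisfying the hypotheses, set $K \coloneqq |\tilde G|^{\delta'}$ and choose $\delta' \coloneqq \delta/(2C)$, so that the lower bound $|A| \geq |\tilde G|^{\delta}$ forces $|A| > K^C$. Then \cite{bgt} produces a proper algebraic subgroup $\HH < \G$ with $A \subseteq \HH(\overline{\F_q})$, and hence
$$ \langle A \rangle \;\subseteq\; \HH(\overline{\F_q}) \cap \tilde G.$$
To finish it suffices to know that $\tilde G$ is \emph{not} contained in $\HH(\overline{\F_q})$ for any proper algebraic subgroup $\HH$; equivalently, that $\tilde G$ is Zariski-dense in $\G$. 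For the untwisted (Chevalley) case this is classical, since $\tilde G$ contains a full set of root subgroup parameters. For the twisted families one chooses the mother group $\G$ in Definition \ref{dfs} precisely so that the fixed-point set (or its derived subgroup) defining $\tilde G$ spans $\G$ Zariski-topologically; this is the content of the Lang--Steinberg type arguments underlying the standard embedding of twisted groups into their mother algebraic groups. Note that the upper bound $|A| \leq |\tilde G|^{1-\delta}$ is not actually needed for this deduction, but is retained for symmetry with the usage in Proposition \ref{machine}.

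The main obstacle is making the last step, Zariski-density of $\tilde G$ in $\G$, go through uniformly across all families without a case check. In practice I expect to handle the Chevalley groups in one line and then, for the Steinberg, Suzuki and Ree groups, to quote (or briefly verify) that the mother group in Definition \ref{dfs} is arranged so that $\tilde G$ is Zariski-dense in it. A secondary subtlety is that \cite{bgt} is stated for semisimple (or almost simple) algebraic groups of bounded rank over an algebraically closed field, and one must check that the mother group $\G$ from Definition \ref{dfs} fits this framework and that its rank is controlled by the rank of $G$ --- both of which are immediate from the construction. With these checks in place, the argument above reduces Proposition \ref{prod} to a direct citation of \cite{bgt}.
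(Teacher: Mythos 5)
Your proposal misquotes the key theorem of \cite{bgt}, and the misquotation creates a genuine gap. The actual statement (recorded in the paper as Theorem \ref{mainthm-preciseform}) has \emph{three} alternatives for a $K$-approximate subgroup $A$: (i) $A$ lies in a proper algebraic subgroup of bounded complexity; (ii) $|A| \ll K^{O(1)}$; or (iii) $\langle A\rangle$ is a finite group with $|\langle A\rangle| \ll K^{O(1)}|A|$. You have collapsed this to a two-alternative version in which a large approximate subgroup is always trapped in a proper algebraic subgroup, and that version is simply false: a subfield subgroup such as $A = \SL_n(\F_{q'}) \subset \SL_n(\F_q)$ with $q' = q^{1/2}$ is a genuine subgroup (so a $1$-approximate subgroup), has $|A| \approx |\tilde G|^{1/2}$, but is Zariski-dense in $\SL_n(\k)$ and so is not contained in any proper algebraic subgroup of bounded complexity. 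Such an $A$ falls under alternative (iii), and your argument has no branch for it.

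This also explains why your remark that "the upper bound $|A| \leq |\tilde G|^{1-\delta}$ is not actually needed" is wrong. The upper bound is exactly what disposes of alternative (iii): there one has $|\langle A\rangle| \ll |\tilde G|^{O(\delta')} |A| \leq |\tilde G|^{O(\delta')+1-\delta}$, which is strictly less than $|\tilde G|$ once $\delta'$ is chosen small enough, so $\langle A \rangle$ is proper. Without the upper bound the claim would be false — take $A = \tilde G$, a $1$-approximate subgroup generating all of $\tilde G$.

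The rest of your plan is reasonable and close in spirit to the paper's. For alternative (i) the paper's argument is essentially yours, except that instead of a qualitative Zariski-density statement it invokes a quantitative Schwartz--Zippel estimate (Lemma \ref{Schwartz} / Proposition \ref{sufficiently-zdense-prop}) to conclude $|\langle A\rangle| \leq |\HH(\k)\cap\tilde G| \ll q^{-1/d}|\tilde G| < |\tilde G|$. Your qualitative Zariski-density route would also suffice for this branch, but the Schwartz--Zippel lemma is already proved in the paper for other purposes so there is no economy in avoiding it. Alternative (ii) is ruled out by the lower bound $|A| \geq |\tilde G|^\delta$, as you indicate implicitly in your choice of $\delta'$. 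To make the proof complete, you must restore the three-alternative form of the theorem and add the treatment of alternative (iii) via the upper bound on $|A|$.
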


These two propositions will follow easily from known results in the literature on finite simple groups of Lie type, as we will discuss shortly.

\section{A model case}\label{class}

In this section, we establish Theorem \ref{mainthm} in the model case of the projective special linear group
$$ G = A_r(q) = \PSL_{r+1}(\F_q)$$
over some finite field $\F_q$ and some rank $r \geq 1$.  This case is significantly simpler than the general case, but will serve to illustrate the main ideas of the argument.  In particular, many of the arguments here will eventually be superceded by more general variants in later sections.

Henceforth we allow all implied constants to depend on the rank $r$ of $G$, thus $r=O(1)$.  We may assume that the finite field $\F_q$ is sufficiently large depending on $r$, as the claim is trivial otherwise.

It will be convenient to lift up from the finite simple group $G$ to the linear algebraic group
$$\tilde G \coloneqq  \G(\F_q) \subset \G(\k) \subset \GL_{m}(\k) \subset \Mat_m(\k)$$
over $\F_q$, where $m \coloneqq  r+1$, $\G\coloneqq  \SL_m$, $\k$ is an uncountable algebraically closed field containing $\F_q$, and $\Mat_m$ is the ring of $m \times m$ matrices.  Note that $G$ is the quotient $G = \tilde G/Z(\tilde G)$ of $\tilde G$ by its centre $Z(\tilde G)$, which has order $O(1)$ and as such will play a negligible role in the analysis that follows.  The group $\G = \SL_{m}$ is an example of an \emph{absolutely almost simple} algebraic group, in the sense that $\G$ has no non-trivial proper connected normal subgroups.

As remarked earlier, to prove Theorem \ref{mainthm} for $G = \PSL_{r+1}(\F_q)$ it will suffice to do so for $\tilde G = \SL_m(\F_q)$, so we shall henceforth work with the special linear group $\tilde G$ instead of $G$.

Now we review the structure of the special linear group $\G = \SL_{r+1}$, with an eye towards future generalisation to other finite simple groups of Lie type.  We recall the \emph{Bruhat decomposition}
$$ \G = \B W \B$$
where the \emph{Borel subgroup} $\B$ of $\G$ is the space of upper-triangular $m \times m$ matrices of determinant one, and the \emph{Weyl group} $W$ is the group of permutation matrices.  We factorise
$$ \B = \U \T = \T \U$$
where the \emph{unipotent group} $\U$ is the subgroup of $\B$ consisting of upper-triangular matrices with ones on the diagonal, and the \emph{maximal torus} is the group of diagonal matrices of determinant one.  Since $W$ normalises $\T$, we thus have
$$ \G = \U \T W \U.$$
In fact we have the more precise decomposition
$$ \G = \bigsqcup_{\w \in W} \U \T \w \U_\w^-$$
that decomposes $\G$ as the disjoint union of $\U \T \w \U_\w^-$, where if $\w$ is the permutation matrix associated to a permutation $\pi: \{1,\ldots,m\} \to \{1,\ldots,m\}$ (thus $\w$ has an entry $1$ at $(i, \pi(i))$ for $i=1,\ldots,m$, and zero elsewhere), then $U_\w^-$ is the subgroup of $U$ consisting of matrices in $U$ whose $(i,j)$ entries vanish whenever $\pi(i) < \pi(j)$.  Furthermore, the products in the above decomposition are all distinct, thus each $g \in \G$ has a unique representation of the form $g = u_1 h \w u$ with $u_1 \in \U$, $h \in \T$, $\w \in W$, and $u \in \U_w^-$; see \cite[Corollary 8.4.4]{carter} for a proof of this result (which is in fact valid in any Chevalley group).  This decomposition (which is essentially a form of Gaussian elimination) can be specialised to the field $\F_q$, thus
$$ \tilde G = \bigsqcup_{\w \in W} \U(\F_q) \T(\F_q) \w \U_\w^-(\F_q)$$
and thus
$$ |\tilde G| = \sum_{\w \in W} |\U(\F_q)| |\T(\F_q)| |\U_\w^-(\F_q)|.$$
The group $\U(\F_q)$ is the group of upper-triangular matrices with ones on the diagonal and coefficients in $\F_q$, and thus has cardinality $q^{m(m-1)/2}$.  When $\w$ is the \emph{long word} $\w_0$ that equals one on the anti-diagonal (and thus corresponds to the permutation $i \mapsto m+i-1$), $\U_\w^-(\F_q)$ is equal to $\U(\F_q)$ and thus also has cardinality $q^{m(m-1)/2}$; in all other cases it has cardinality $q^{d_\w}$ for some $d_\w < m(m-1)/2$.  From this we see that
$$ |\tilde G| = (1+O(1/q)) |\U(\F_q)| |\T(\F_q)| |\U(\F_q)|,$$
so that the ``large Bruhat cell'' $\B(\F_q) \w_0 \B(\F_q) = \U(\F_q) \T(\F_q) \w_0 \U(\F_q)$ occupies almost all of $\tilde G$:
\begin{equation}\label{gbwb}
 |\tilde G| = (1+O(1/q)) |\B(\F_q) \w_0 \B(\F_q)|.
 \end{equation}
Note that a similar argument shows that the large Bruhat cell $\B \w_0 \B$ has larger dimension than all other Bruhat cells $\B \w \B$, and so $\B \w_0 \B$ is Zariski-dense in $\G$.

Among other things, this gives the very crude bound
\begin{equation}\label{ftig}
 |\tilde G| \ll q^{O(1)}
 \end{equation}
so that any gain of the form $O(q^{-c})$ for some $c>0$ is also of the form $O( |\tilde G|^{-c'} )$ for some $c'>0$ depending on $c$ and $r$.
Indeed, one has the more precise exact formula
$$ |\tilde G| = \frac{1}{q-1} \prod_{j=0}^{m-1} (q^m - q^j),$$
although we will not need such precision in our arguments.

As discussed in the last section, by Proposition \ref{machine}, it suffices to verify the quasirandomness property (Proposition \ref{quasi}), the product theorem (Proposition \ref{prod}), and the non-concentration property (Proposition \ref{nonc}) for the projective special linear group $G$.

We begin with the quasirandomness.  It is a result of Landazuri and Seitz \cite{landazuri-seitz} that all non-trivial irreducible projective representations of $\tilde G/Z(\tilde G) = \PSL_{m}(\F_q)$ have dimension\footnote{Actually, as the precise value of $c$ is not important for applications, it suffices to establish the $m=2$ case (as $\SL_m(\F_q)$ clearly contains a copy of $\SL_2(\F_q)$ for any $m \geq 2$, and is almost simple), and this follows already from \cite[Lemma 4.1]{landazuri-seitz}.} $\gg q^c$ for some absolute constant $c>0$ (in fact the more precise lower bound of $q^{m-1}-1$ is obtained for $m > 2$, and $\frac{1}{(2,q-1)} (q-1)$ for $m=2$).  This implies that all non-trivial irreducible \emph{linear} representations of $\tilde G$ also have dimension $\gg q^c$, which gives Proposition \ref{quasi} for the special linear group.

Now we turn to the product theorem for $\SL_m$.  When $m=2$ and $\F_q$ has prime order, this result is due to Helfgott \cite{helfgott-sl2}, who then later established the case when $m=3$ and $\F_q$ has prime order in \cite{helfgott-sl3}.  The case when $m=2$ and $\F_q$ is a prime power was obtained by Dinai \cite{dinai} (see also Varj\'u \cite[sec. 4.1]{varju} for another proof), and the case of general $r$ when $\F_q$ has prime order and $A$ was somewhat small was obtained in \cite{gill}.  The general case is due independently to Pyber and Szab\'o \cite{pyber-szabo} and to the first, second and fourth authors \cite{bgt}.  We state here the main result from \cite{bgt}:

\begin{theorem}\label{mainthm-preciseform} Let $M, K \geq 1$, and let $\G(\k) \subset \GL_m(\k)$ be an absolutely almost simple linear algebraic group of complexity\footnote{An algebraic set in $\k^n$ is said to be \emph{of complexity at most $M$} if it is the boolean combination of the zero sets of at most $M$ polynomials on $\k^n$, each of degree at most $M$, and one also has $n \leq M$.} at most $M$ over an algebraically closed field $\k$, and let $A$ be a $K$-approximate subgroup of $\G(\k)$.  Then at least one of the following statements hold:
\begin{itemize}
\item[(i)] \textup{(}$A$ is not sufficiently Zariski-dense\textup{)} $A$ is contained in an algebraic subgroup $\HH(\k)$ of $\G(\k)$ of complexity $O_{M}(1)$ and dimension strictly less than $\G$.
\item[(ii)] \textup{(}$A$ is small\textup{)} $|A| \ll_M K^{O_M(1)}$.
\item[(iii)] \textup{(}$A$ controlled by $\langle A \rangle$\textup{)} The group $\langle A \rangle$ generated by $A$ is finite, and has cardinality $|\langle A \rangle| \ll_M K^{O_M(1)} |A|$.
\end{itemize}
\end{theorem}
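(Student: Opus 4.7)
The plan is to prove the trichotomy by analyzing the Zariski closure $\overline{\langle A \rangle}$ of the subgroup generated by $A$ inside $\G(\k)$. First, if $\dim \overline{\langle A \rangle} < \dim \G$, I would take $\HH$ to be the identity component of this Zariski closure (or more precisely, a translate of it containing $A$); an effective version of Chevalley's constructibility theorem ensures that $\HH$ has complexity $O_M(1)$, placing us in case (i). Otherwise $\langle A \rangle$ is Zariski-dense in $\G$, and the task becomes to deduce (ii) or (iii). If $\langle A \rangle$ is infinite I would aim for (ii); if it turns out to be finite I would aim for (iii), invoking a Jordan--Nori style classification of finite Zariski-dense subgroups of $\G(\k)$ (which in positive characteristic are essentially Chevalley groups over finite subfields) together with a product theorem for finite simple groups of Lie type.

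The technical heart of the argument is an \emph{escape from subvarieties} principle: given Zariski density of $\langle A \rangle$ and the tripling bound $|A \cdot A| \leq K |A|$, for any proper subvariety $V \subset \G$ of complexity $O_M(1)$ there exist elements of $A^n$, with $n = O_M(1)$, lying outside $V$. I would apply this to the subvariety where the characteristic polynomial has a repeated root in order to produce a regular semisimple element $r \in A^{O_M(1)}$; its centralizer $T \coloneqq C_\G(r)$ is then a maximal torus of $\G$. The conjugation orbit $\{a r a^{-1} : a \in A\}$ lies inside $A^{O_M(1)}$ and has cardinality exactly $|A|/|A \cap T|$, so the tripling hypothesis combined with the Pl\"unnecke--Ruzsa inequalities forces the lower bound $|A \cap T| \gg_M K^{-O_M(1)} |A|$.

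Next I would exploit the commutativity of $T$: since $T$ is a product of copies of $\GL_1$ modulo a finite group, Freiman--Ruzsa-type theorems for abelian groups show that the approximate subgroup $A \cap T^{O(1)}$ is contained in a coset progression of bounded rank, so its multiplicative behaviour is tightly controlled. The final step is a pivoting argument: producing several distinct maximal tori by applying escape repeatedly and exploiting the transitivity of the conjugation action on regular semisimple elements, one covers $A$ by a bounded number of translates of such torus intersections via Ruzsa's covering lemma, transferring the controlled behaviour from $A \cap T$ back to $A$ and obtaining $|A| \ll_M K^{O_M(1)}$, i.e.\ case (ii).

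The hardest part will be the quantitative bookkeeping of algebraic complexity throughout the escape and covering arguments: each intersection of subvarieties, each step of escape, and each application of Ruzsa covering must be shown to preserve complexity bounds depending only on $M$, and to preserve polynomial control of the exponents in $K^{O_M(1)}$. A subsidiary difficulty is the positive-characteristic case, where finite Zariski-dense subgroups genuinely exist and an independent finite-group product theorem (essentially a finite analogue of the above scheme, carried out inside a subfield structure) is required to dispose of them.
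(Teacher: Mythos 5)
First, a clarification: this paper does not prove Theorem~\ref{mainthm-preciseform} but quotes it from the companion paper \cite{bgt} (with an independent proof in \cite{pyber-szabo}), so there is no internal proof to compare your sketch against. Measured instead against the argument in \cite{bgt}, your sketch has the right general shape (escape from subvarieties, regular semisimple elements and maximal tori, counting the conjugation orbit) but two genuine gaps. The first is in case~(i): an ``effective Chevalley theorem'' does not bound the complexity of $\overline{\langle A\rangle}$. Chevalley's theorem controls images of constructible sets under morphisms, not Zariski closures of abstract subgroups, and the latter can have unbounded complexity (subfield subgroups in positive characteristic are the prototypical counterexample, since their closures are cut out by high-degree Frobenius equations). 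The $O_M(1)$ bound has to be obtained differently; in \cite{bgt} it comes from an ultraproduct reduction combined with the Hilbert basis theorem, and this paper's own Proposition~\ref{crit} resolves an identical issue via Kleiman's degree bound on generators of radical ideals.

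The second gap is the more serious one: your argument for the Zariski-dense case has no mechanism to close. The orbit count $|\{ara^{-1}:a\in A\}|=|A|/|A\cap T|$ is not exact in an approximate group (the fibers are intersections of $A$ with cosets of $T$, and one only gets inequalities involving $|A^2\cap T|$ via Pl\"unnecke--Ruzsa), but more importantly, a lower bound on $|A^{O(1)}\cap T|$ plus Freiman--Ruzsa structure in the abelian group $T(\k)$ does \emph{not} force $|A|\ll_M K^{O_M(1)}$: coset progressions in $(\k^\times)^{\rk\G}$ can be arbitrarily long, and covering $A$ by translates of $A\cap T$, if one could even achieve it, would tend to deliver conclusion~(iii), not~(ii). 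What actually drives both \cite{bgt} and \cite{pyber-szabo} is a Larsen--Pink-type \emph{non-concentration} estimate, roughly
\[ |A^{O(1)}\cap V(\k)| \ll_M K^{O_M(1)}\,|A|^{\dim V/\dim\G} \]
for any proper subvariety $V\subset\G$ of bounded complexity, proved by a dimension-descent argument. Applied with $V$ a maximal torus (and its conjugates), this pins $|A^{O(1)}\cap T|$ to order $|A|^{\dim T/\dim\G}$ from \emph{both} sides, and the trichotomy follows from a double count over tori. Your sketch contains no substitute for this upper-bound direction, and without it the pivoting step does not terminate. You might also note that \cite{bgt} sidesteps the complexity bookkeeping you flag as the hardest part by working in an ultraproduct, where quantitative degree bounds become qualitative finiteness statements; Pyber--Szab\'o keep everything explicit but use the Larsen--Pink inequalities just as centrally.
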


We can now prove Proposition \ref{prod} in the case $\tilde G = \SL_m(\F_q)$.  Let $\delta>0$, and let $\delta'>0$ be sufficiently small depending on $\delta$ and $r$.  We may assume that $|\F_q|$ is sufficiently large depending on $\delta,r$, as the claim is trivial otherwise (since a $K$-approximate group is automatically a group whenever $K<2$).

Let $A$ be a $|\tilde G|^{\delta'}$-approximate subgroup of $\tilde G$ with
\begin{equation}\label{gfd}
 |\tilde G|^\delta \leq |A| \leq |\tilde G|^{1-\delta}.
 \end{equation}
We apply Theorem \ref{mainthm-preciseform} in $\G(\k)$ with $K \coloneqq  |\tilde G|^{\delta'}$ and $M=O(1)$, where $\k$ is some algebraically closed field containing $\F_q$.  We conclude that one of options (i), (ii), (iii) is true.  Option (ii) is ruled out from the lower bound of $|A|$ in \eqref{gfd}, if $\delta'$ is sufficiently small (and $q$ sufficienly large).  If option (iii) holds, then from the upper bound in \eqref{gfd} we see (again for $\delta'$ sufficiently small and $q$ sufficiently large) that $|\langle A \rangle| < |\tilde G|$, and the claim follows in this case.

Finally, suppose that option (i) holds.  Then $\langle A \rangle$ is contained in the algebraic group $\HH$.  We need to recall the Schwartz-Zippel lemma \cite{schwartz}:

\begin{lemma}[Schwartz-Zippel lemma]\label{basic-schwartz-zippel}  Let $P\colon \k^d \to \k$ be a polynomial of degree at most $D$ which is not identically zero.  Then
$$ |\{ x \in \F_q^d: P(x) = 0 \}| \leq d D q^{d-1}.$$
\end{lemma}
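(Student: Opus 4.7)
The plan is induction on the number of variables $d$, using at each step the fundamental one-variable fact that a nonzero polynomial of degree at most $D$ over the field $\k$ has at most $D$ roots. The base case $d=1$ is exactly this one-variable statement, since $\F_q \subset \k$, yielding $D = 1 \cdot D \cdot q^0$ as required.

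For the inductive step $d \geq 2$, I would group $P$ by powers of its last variable, writing
$$ P(x_1,\ldots,x_d) = \sum_{j=0}^{D} Q_j(x_1,\ldots,x_{d-1}) \, x_d^j, $$
where each $Q_j$ is a polynomial in $d-1$ variables of degree at most $D-j$. Since $P$ is not identically zero, at least one $Q_j$ is nonzero; let $j_0$ be the largest such index. Then I would split the count of zeros of $P$ in $\F_q^d$ according to whether $Q_{j_0}(x') = 0$ or not, where $x' \coloneqq (x_1,\ldots,x_{d-1})$.

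On the locus where $Q_{j_0}(x') \neq 0$, the restriction $P(x',\cdot)$ is a nonzero univariate polynomial in $x_d$ of degree exactly $j_0 \leq D$, so contributes at most $D$ zeros; with at most $q^{d-1}$ choices of $x'$, the total from this region is at most $D q^{d-1}$. On the complementary locus where $Q_{j_0}(x') = 0$, the inductive hypothesis applied to $Q_{j_0}$ (a nonzero polynomial in $d-1$ variables of degree at most $D$) bounds the number of such $x'$ by $(d-1) D q^{d-2}$, and each one extends to any of the $q$ values of $x_d$, contributing at most $(d-1) D q^{d-1}$. Summing the two contributions yields $d D q^{d-1}$, as desired. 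There is no real obstacle in this argument: it is a classical estimate, and the factor of $d$ over the sharper Schwartz-Zippel bound $D q^{d-1}$ is precisely the slack that allows this elementary double-counting to succeed without any further combinatorial input.
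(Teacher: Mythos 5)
Your proof is correct and takes essentially the same route the paper sketches in a single sentence: induction on the number of variables, with the $d=1$ case following from the fundamental theorem of algebra. The only cosmetic difference is that you split on whether the leading coefficient $Q_{j_0}$ in $x_d$ vanishes, while the paper's cross-reference to Lemma~\ref{prol}(i) suggests instead fixing a test point $v_2$ with $P(v_1,v_2)\neq 0$ and splitting on whether $P(x',v_2)$ vanishes; both variants give the same double-count and the bound $d D q^{d-1}$.
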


Indeed, the $d=1$ case of this lemma follows from the fundamental theorem of algebra, and the higher $d$ cases can then be established by induction (cf. Lemma \ref{prol}(i) below).  We remark that sharper bounds can be obtained (for low values of $D$, at least) using the Lang-Weil estimates \cite{lang}, but we will not need such bounds here (particularly since we will be interested in the case when $D$ is moderately large, in which case it becomes difficult to control the error terms in the Lang-Weil estimates).

We can adapt this lemma to $\G\coloneqq \SL_m$:

\begin{lemma}[Schwartz-Zippel lemma in $\SL_m$]\label{Schwartz}\ \
\begin{itemize}
\item[(i)] If $P\colon \Mat_m(\k) \to \k$ is a polynomial of degree $D \geq 1$ that does not vanish identically on $\G(\k)$, then
$$ |\{ a \in \tilde G: P(a) = 0 \}| \ll D q^{-1} |\tilde G|.$$
\item[(ii)] Similarly, if $P\colon \Mat_m(\k) \times \Mat_m(\k) \to \k$ be a polynomial of degree $D \geq 1$, which does not vanish identically on $\G(\k) \times \G(\k)$, then
$$ |\{ (a,b) \in \tilde G \times \tilde G: P(a,b) = 0 \}| \ll D q^{-1} |\tilde G|^2.$$
\end{itemize}
\end{lemma}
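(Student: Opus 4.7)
The plan is to prove (i) by a direct Schwartz-Zippel argument based on the Bruhat decomposition, which gives an affine parameterisation of the generic (large) Bruhat cell; then (ii) is obtained by applying the same reasoning to $\G \times \G$.

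First, I would use the decomposition $\G = \bigsqcup_\w \U \T \w \U_\w^-$ to reduce matters to the large cell $\B \w_0 \B = \U \T \w_0 \U$. By \eqref{gbwb}, the complement of $(\B \w_0 \B)(\F_q)$ in $\tilde G$ has size $O(q^{-1}) |\tilde G|$, which contributes an acceptable error of size $O(D q^{-1} |\tilde G|)$ to the final count (using $D \geq 1$). On the large cell, the product map
\[
\varphi \colon \U \times \T \times \U \;\longrightarrow\; \B \w_0 \B, \qquad (u_1, h, u) \mapsto u_1 h \w_0 u
\]
is a bijective morphism of varieties. Parameterising $\T$ by the first $m-1$ diagonal entries $(t_1, \dots, t_{m-1}) \in (\k^\times)^{m-1}$ (with the last entry forced to be $(t_1 \cdots t_{m-1})^{-1}$), and $\U$ by its $\binom{m}{2}$ strictly upper-triangular entries, we obtain coordinates $(u_1, t, u)$ on an affine space of dimension $m(m-1) + (m-1) = m^2 - 1 = \dim \G$, restricted to the open set $t_1 \cdots t_{m-1} \neq 0$.

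Next, the pullback $P \circ \varphi$ is \emph{a priori} a rational function in these coordinates, with poles only along $t_1 \cdots t_{m-1} = 0$ (coming from the single entry $(t_1 \cdots t_{m-1})^{-1}$ of $h\w_0$). Since $P$ has degree $D$, the function
\[
Q(u_1, t, u) \coloneqq (t_1 \cdots t_{m-1})^{D} \, P(u_1 h \w_0 u)
\]
is a \emph{polynomial} in the $m^2-1$ coordinates of degree $O_m(D)$. Moreover $Q$ is not identically zero: the large cell is Zariski-dense in $\G$, so $P|_{\G}$ does not vanish on it, hence $P \circ \varphi$ is not identically zero, and clearing denominators on a set where the denominator is nonzero preserves non-vanishing. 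Applying the basic Schwartz-Zippel lemma (Lemma \ref{basic-schwartz-zippel}) to $Q$ on $\F_q^{m^2-1}$ gives at most $O_m(D) \, q^{m^2-2}$ zeros; multiplying the zero coordinates $t_i = 0$ only drops the count, so the same bound controls the number of $(u_1, h, u) \in \U(\F_q) \times \T(\F_q) \times \U(\F_q)$ with $P(\varphi(u_1,h,u)) = 0$. Since the bijectivity of $\varphi$ on $\F_q$-points means this equals the number of zeros of $P$ on the large cell, and $|\tilde G| \asymp q^{m^2-1}$, we conclude that the large cell contributes $\ll_m D q^{-1} |\tilde G|$, establishing (i).

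For (ii), I would run the identical argument on the algebraic group $\G \times \G$, using the product Bruhat decomposition to parameterise $(\B \w_0 \B) \times (\B \w_0 \B)$ by an affine space of dimension $2(m^2-1)$ and multiplying by a product of two copies of $(t_1 \cdots t_{m-1})^D$ to clear denominators. The only mild obstacle is bookkeeping: one has to ensure that the polynomial degree after clearing denominators is still $O_m(D)$, and that the ``small'' error from cells other than $\B\w_0 \B \times \B\w_0 \B$ is $O(q^{-1}) |\tilde G|^2$, which again follows from \eqref{gbwb} by the union bound. No step is genuinely difficult; the only thing to be careful about is verifying non-vanishing of the pullback $Q$, for which the key point is the Zariski-density of the large Bruhat cell in $\G$.
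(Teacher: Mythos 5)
Your proof of part (i) is essentially the paper's own: pass to the large Bruhat cell via \eqref{gbwb}, parameterise $\U \times \T \times \U$ by $m^2-1$ affine/torus coordinates, clear the monomial denominator coming from the last torus entry, and invoke Lemma \ref{basic-schwartz-zippel}. That part is correct and matches the paper.

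For part (ii) you deviate: you repeat the Bruhat-cell parameterisation directly on $\G \times \G$, obtaining a polynomial of degree $O_m(D)$ in $2(m^2-1)$ coordinates and applying the basic Schwartz-Zippel lemma there. The paper instead deduces (ii) from (i) by a simple Fubini-type bootstrapping: pick $(a_0,b_0)$ with $P(a_0,b_0)\neq 0$, apply (i) to $a\mapsto P(a,b_0)$ to control the bad $a$'s, then for each good $a$ apply (i) to $b\mapsto P(a,b)$, and sum. Both routes are valid and give the same bound. Your direct method is self-contained but requires redoing the bookkeeping (two copies of the denominator-clearing, a union-bound estimate for the complement of the product of large cells), while the paper's iterated application of (i) is shorter, avoids any further reference to the Bruhat decomposition, and is exactly the pattern generalised later in Lemma \ref{prol}(i) when the torus parameterisation is less explicit. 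It would be worth noting that the paper's derivation of (ii) from (i) is the one that scales cleanly to the twisted cases treated in Proposition \ref{sufficiently-zdense-prop}.
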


\begin{proof} We first prove (i).   By \eqref{gbwb} we may replace $\tilde G$ by the large Bruhat cell
$\B(\F_q) \w_0 \B(\F_q) = \U(\F_q) \T(\F_q) \w_0 \U(\F_q)$, thus it suffices to show that
\begin{align*}  |\{ (u_1,h,u) \in \U(\F_q) \times \T(\F_q) \times \U(\F_q): & P(u_1,h,u) = 0 \}|  \\
& \ll D q^{-1} |\U(\F_q)| |\T(\F_q)| |\U(\F_q)|.\end{align*}
We can parameterise an element $u$ of $\U(\F_q)$ by $q^{m(m-1)/2}$ independent coordinates in $\F_q$ by using the strictly upper triangular entries $u_{ij}, 1 \leq i < j < m$, of that element.  An element in $\T(\F_q)$ can also be parameterised by $m-1$ independent coordinates $t_1,\ldots,t_{m-1}$ in $\F_q^\times \coloneqq  \F_q \backslash \{0\}$ by identifying such a tuple of coordinates with the element
$$ \operatorname{diag}( t_1,\ldots,t_{m-1}, \frac{1}{t_1\cdot \ldots \cdot t_{m-1}})$$
in $T(\F_q)$.  In particular, $|U(\F_q)| = q^{m(m-1)/2}$ and $|T(\F_q)| = (1+O(1/q)) q^{m-1}$.  We can then view $P(u_1,h,u)$ as a polynomial
$$ Q\left( (u_{1,ij})_{1 \leq i < j \leq m}, (t_i)_{i=1}^{m-1}, (u_{ij})_{1 \leq i < j \leq m} \right) $$
of degree $O(D)$ in $\frac{m(m-1)}{2} + (m-1) + \frac{m(m-1)}{2}$ coordinates $(u_{1,ij})_{1 \leq i < j \leq m}$, $(t_i)_{i=1}^{m-1}$, $(u_{ij})_{1 \leq i < j \leq m}$ divided by a monomial in the $t_1,\ldots,t_{m-1}$ coordinates, where the $u_{1,ij}, u_{ij}$ range in $\F_q$ and the $t_i$ range in $\F_q^\times$.  Clearing denominators, it thus suffices to establish the bound
\begin{align*}
|\{
 (u_{1,ij})_{1 \leq i < j \leq m} \times (t_i)_{i=1}^{m-1} \times (u_{ij})_{1 \leq i < j \leq m} \in & \; \F_q^{m(m-1)/2} \times \\  \times (\F_q^\times)^m \times \F_q^{m(m-1)/2}: 
 Q( (u_{1,ij})_{1 \leq i < j \leq m}, & (t_i)_{i=1}^{m-1}, (u_{ij})_{1 \leq i < j \leq m} ) = 0 \}|
\\  & \ll D q^{-1} q^{m(m-1)/2} q^{m-1} q^{m(m-1)/2}.
\end{align*}
If $Q$ is non-vanishing, then this is immediate from Lemma \ref{basic-schwartz-zippel}; and when $Q$ is vanishing, then $P$ vanishes on the Zariski-dense subset $\B \w_0 \B = \U \T \w_0 \U$ of $\G$ and thus vanishes on all of $\G(\k)$, a contradiction.  This gives (i).

Now we use (i) to prove (ii).  By hypothesis, one can find $(a_0,b_0) \in \G(\k) \times \G(\k)$ such that $P(a_0,b_0) \neq 0$.  From (i), we have
$$ |\{ a \in \tilde G: P(a,b_0) = 0 \}| \ll D^{O(1)} q^{-1} |\tilde G|.$$
On the other hand, for each $a \in \tilde G$ with $P(a,b_0) \neq 0$, another application of (i) gives
$$ |\{ b \in \tilde G: P(a,b) = 0 \}| \ll D^{O(1)} q^{-1} |\tilde G|.$$
Summing over all $a$, we obtain (ii) as required.
\end{proof}

From part (i) of this lemma we see that
$$ |\langle A \rangle| \ll q^{-1} |\tilde G|.$$
If $\F_q$ is sufficiently large, we conclude that $A$ generates a proper subgroup of $\tilde G$, as required.  This concludes the proof of Proposition \ref{prod} for special linear groups (and hence for projective special linear groups).  (Part (ii) of the above lemma will be used at a later stage of the argument.)

Finally, we need to establish the non-concentration estimate, Proposition \ref{nonc}, for the projective special linear group $G$.  Again, as $\tilde G$ is a bounded cover of $G$, it suffices to establish the analogous claim for the special linear group $\tilde G = \SL_{m}(\F_q)$.

We will need the following (rough) description of the subgroups of $\tilde G$.

\begin{proposition}[Subgroups of $\SL_{m}(\F_q)$]\label{sob}  For any proper subgroup $H$ of $\tilde G$, one of the following statements hold:
\begin{itemize}
\item[(i)] \textup{(}Structural case\textup{)} $H$ lies in a proper algebraic subgroup of $\G$ of complexity $O(1)$.
\item[(ii)] \textup{(}Subfield case\textup{)} Some conjugate of $H$ lies in $\G(\F_{q'})$, where $\F_{q'}$ is a proper subfield of $\F_q$.
\end{itemize}
\end{proposition}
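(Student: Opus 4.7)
The plan is to reduce to the case of maximal proper subgroups and then invoke the classification of such subgroups for the classical group $\SL_m$. First I would observe that any proper subgroup $H$ is contained in some maximal proper subgroup $M$, and that if conclusion (i) or (ii) holds for $M$ it automatically holds for $H$. So it suffices to treat the case where $H$ is maximal in $\tilde G = \SL_m(\F_q)$.

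Next, I would apply Aschbacher's theorem \cite{asch}, together with the refinements in \cite{liebeck-seitz}, which assigns each maximal proper subgroup $H$ of $\tilde G$ to one of the geometric classes $\mathcal{C}_1,\dots,\mathcal{C}_8$, or to the almost-simple class $\mathcal{S}$. All geometric classes \emph{except} $\mathcal{C}_5$ consist of stabilizers of standard geometric structures on the natural module $\F_q^m$: a proper subspace ($\mathcal{C}_1$), a direct sum decomposition ($\mathcal{C}_2$), a field extension of $\F_q$ acting on $\F_q^m$ ($\mathcal{C}_3$), a tensor product decomposition ($\mathcal{C}_4$), an extraspecial $r$-group normalizer ($\mathcal{C}_6$), a tensor-induced decomposition ($\mathcal{C}_7$), or a nondegenerate classical form ($\mathcal{C}_8$). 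In each case, the stabilizer is the $\F_q$-points of a proper Zariski-closed subgroup of $\G = \SL_m$ whose complexity is bounded in terms of $m$ alone, placing us in case (i). The class $\mathcal{C}_5$ consists (up to conjugation) of the normalizers in $\tilde G$ of subfield subgroups $\G(\F_{q'})$ with $\F_{q'} \subsetneq \F_q$, giving case (ii) immediately.

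The main obstacle is the almost-simple class $\mathcal{S}$, where $H$ is an almost simple group acting absolutely irreducibly on $\F_q^m$. For this I would invoke the Larsen--Pink theorem \cite{larsen-pink}: every finite subgroup of $\GL_m(\bar\F_q)$ has a subgroup of index bounded in terms of $m$ that is either contained in a proper algebraic subgroup of $\GL_m$ of bounded complexity, or is (essentially) a commuting product of images of Chevalley-type constructions over subfields of $\bar\F_q$. Combining this with the Landazuri--Seitz lower bounds \cite{landazuri-seitz} on projective representation dimensions of finite simple groups of Lie type (which force any such simple subgroup in class $\mathcal{S}$ to be defined over a subfield that is small compared to $\F_q$ once $q$ is large), the first alternative lands us in case (i) while the second alternative places a conjugate of $H$ inside $\G(\F_{q'})$ for some proper subfield $\F_{q'} \subsetneq \F_q$, yielding case (ii). Cross-characteristic, alternating, and sporadic examples in class $\mathcal{S}$ have order bounded in $m$ (by Landazuri--Seitz again in the cross-characteristic case, and by the bounded representation dimensions in the others), and hence lie trivially in bounded-complexity algebraic subgroups, so these are handled by case (i) as well.
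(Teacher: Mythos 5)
Your proposal takes essentially the same route as the paper: the paper proves Proposition~\ref{sob} by deferring to the general Lemma~\ref{max}, whose proof cites exactly the same toolbox you invoke --- Aschbacher's theorem for classical groups, the Liebeck--Seitz analysis, Steinberg's description of defining-characteristic representations, the Landazuri--Seitz order bounds for cross-characteristic actions, and the CFSG-free alternative via Larsen--Pink \cite[Theorem 0.5]{larsen-pink}. Your write-up spells out the Aschbacher classes $\mathcal{C}_1,\dots,\mathcal{C}_8$ explicitly for $\SL_m$, which the paper leaves implicit.

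A few small imprecisions worth flagging, none fatal. For the geometric class $\mathcal{C}_3$ (field-extension subgroups such as $\SL_{m/b}(\F_{q^b})$), it is not true that the subgroup is literally the $\F_q$-points of a proper closed subgroup of $\SL_m$; rather, after base change to $\bar\F_q$ the $\F_{q^b}$-linear structure decomposes $\bar\F_q^m$ into $b$ blocks and the subgroup sits inside the corresponding block-diagonal (or block-permuting) algebraic group. Containment is what the statement requires, so the conclusion stands, but the phrasing overclaims. In the $\mathcal{S}$ analysis, your invocation of Landazuri--Seitz is somewhat mis-aimed: those bounds are the right tool to bound the order of cross-characteristic, alternating, and sporadic socles, but for a defining-characteristic Lie-type socle acting irreducibly the correct ingredient is Steinberg's theorem that such representations extend to the ambient algebraic group, which is what places the subgroup in either the structural or subfield case; the paper cites Steinberg \cite{steinberg-yale} here for precisely this step. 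Finally, in the $\mathcal{C}_5$ case the maximal subgroup is the normalizer of $\SL_m(\F_{q'})$, which can exceed $\SL_m(\F_{q'})$ by central scalars of $\SL_m(\F_q)$ and so need not literally sit inside $\G(\F_{q'})$; this is a bounded central discrepancy that does not affect the later non-concentration argument (and is also glossed over in the paper), but it should be acknowledged if one wanted a fully rigorous version of case (ii).
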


\begin{proof}  This is a special case of a more general statement about maximal subgroups of finite simple groups of Lie type; see Lemma \ref{max}.
\end{proof}

Let us call $H$ a \emph{structural} subgroup if the first conclusion of Proposition \ref{sob} holds, and a \emph{subfield} subgroup if the second conclusion holds.  Note that it is certainly possible for $H$ to be simultaneously structural and subfield; we will take advantage of this overlap in a subsequent part of the paper when dealing with the twisted group case.

Set $n \coloneqq  2 \lfloor c_0 \log |\tilde G| \rfloor$ for some sufficiently small $c_0>0$.  By Proposition \ref{sob}, to prove Proposition \ref{nonc} for the projective special linear group, it suffices to establish the claims
\begin{align}\nonumber
 \P_{a,b \in \tilde G}( \P_{w \in W_{n,2}}(w(a,b) \in H) \leq |\tilde G|^{-\gamma} & \hbox{ for all structural } H < \tilde G ) \\ & = 1 - O(|\tilde G|^{-\gamma}),\label{structural-case}
\end{align}
and
\begin{align}\nonumber
 \P_{a,b \in \tilde G}( \P_{w \in W_{n,2}}(w(a,b) \in H) \leq |\tilde G|^{-\gamma} & \hbox{ for all subfield } H < \tilde G ) \\ & = 1 - O(|\tilde G|^{-\gamma})\label{subfield-case}
\end{align}
for some sufficiently small $\gamma>0$.\vspace{11pt}

\textsc{The structural case}. We now establish \eqref{structural-case}, following the arguments of Bourgain and Gamburd \cite{bourgain-gamburd}.  We rewrite this estimate as
$$  \P_{a,b \in \tilde G}( \P_{w \in W_{n,2}}(w(a,b) \in H) > |\tilde G|^{-\gamma} \hbox{ for some structural } H < \tilde G ) \ll |\tilde G|^{-\gamma}.$$
Note that if $a,b$ is such that
$$ \P_{w \in W_{n,2}}(w(a,b) \in H) > |\tilde G|^{-\gamma}$$
for some structural $H < \tilde G$, then we have
$$ \P_{w,w' \in W_{n,2}}(w(a,b), w'(a,b) \in H) > |\tilde G|^{-2\gamma}.$$
Thus, by Markov's inequality, it suffices to show that
\begin{equation}\label{wawa}
 \P_{a,b \in \tilde G; w,w' \in W_{n,2}}( w(a,b), w'(a,b) \in H \hbox{ for some structural } H < \tilde G ) \ll |\tilde G|^{-3\gamma}.
\end{equation}

Let $e_1,e_2$ be generators of a free group $F_2$.
Let us first dispose of the contribution when $w(e_1,e_2), w'(e_1,e_2)$ commute.

\begin{lemma}[Generic non-commutativity]\label{noncom}  One has
$$ \P_{w,w' \in W_{n,2}}( w(e_1,e_2) w'(e_1,e_2) = w'(e_1,e_2) w(e_1,e_2) ) \ll \exp(-cn)$$
for some absolute constant $c>0$.
\end{lemma}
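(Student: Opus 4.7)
The plan is to exploit the fact that in the free group $F_2$, two nontrivial elements commute if and only if they lie in a common cyclic subgroup. More precisely, the centralizer of any nontrivial $g \in F_2$ is an infinite cyclic group $\langle g_0\rangle$, where $g_0$ is the unique primitive element of which $g$ is a power. This is a classical consequence of the Nielsen-Schreier theorem: subgroups of a free group are free, so any abelian subgroup of $F_2$ must be free abelian of rank one.

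Viewing $w \in W_{n,2}$ as specifying a random walk of length $n$ on the Cayley graph of $F_2$ with symmetric generating set $S = \{e_1, e_1^{-1}, e_2, e_2^{-1}\}$, the distribution of $w(e_1, e_2)$ is the $n$-fold convolution $\mu^{(n)}$ of the uniform measure $\mu$ on $S$. The main analytic ingredient is the uniform pointwise bound
$$ \mu^{(n)}(g) \leq C \rho^n \qquad \text{for all } g \in F_2 $$
for some absolute constants $C$ and $\rho < 1$. This follows from Kesten's theorem on the spectral radius of the simple random walk on $F_2$ (which yields $\rho = \sqrt{3}/2$); more elementarily, by symmetry of $\mu$ and Cauchy-Schwarz one has $\mu^{(n)}(g) \leq \mu^{(n)}(e)$ for even $n$, and the return probability $\mu^{(2m)}(e)$ on the $4$-regular tree is easily bounded by a direct count of non-backtracking walks returning to the origin.

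Granted these two ingredients, the rest is a short conditioning argument. Write
$$ \P_{w,w' \in W_{n,2}}\bigl(w(e_1,e_2) \text{ and } w'(e_1,e_2) \text{ commute}\bigr) = \E_{w}\bigl[\, \P_{w'}\bigl(w'(e_1,e_2) \in Z_{F_2}(w(e_1,e_2))\bigr)\bigr]. $$
The contribution of $w(e_1,e_2) = 1$ is at most $\mu^{(n)}(1) \leq C\rho^n$. When $h := w(e_1,e_2) \neq 1$ has primitive root $h_0$, write $h_0 = u v u^{-1}$ with $v$ nontrivial and cyclically reduced, so that $|h_0^k| = 2|u| + |k|\cdot|v|$ as a reduced word. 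Thus at most $O(n)$ of the powers $h_0^k$ can have length $\leq n$ and so be hit by $w'(e_1,e_2)$ at all. Summing the uniform bound $\mu^{(n)}(h_0^k) \leq C\rho^n$ over these powers yields a conditional probability of $O(n\rho^n)$ independent of $h$, and hence an overall bound of $O(n\rho^n) = O(\exp(-cn))$ for any $c < -\log \rho$.

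The only genuinely nontrivial point is the uniform pointwise decay in the second step, but this is classical and admits several short proofs, so no real obstacle arises.
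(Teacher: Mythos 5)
Your proof is correct and takes essentially the same approach as the paper: both reduce to the fact that commuting elements of $F_2$ lie in a common cyclic subgroup (Nielsen--Schreier) and then apply exponential pointwise decay of the random walk on $F_2$ (Kesten). The paper organizes the count slightly differently, taking a union bound over the $O(n^2)$ pairs of exponents $(a,b)$ with $w(e_1,e_2)=x^a$, $w'(e_1,e_2)=x^b$, whereas you condition on $w$ and bound the number of powers of the primitive root of length at most $n$ by $O(n)$ directly; this is a minor bookkeeping variation, and your version is in fact a little tighter.
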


\begin{proof} By the Nielsen-Schreier theorem, this case only occurs when $w(e_1,e_2)$ and $w'(e_1,e_2)$ lie in a cyclic group, which means that $w(e_1,e_2) = x^a$ and $w'(e_1,e_2) = x^b$ for some integers $a,b$ and some element $x \in F_2$.

It is a classical fact \cite{kesten} that a random walk on the free group $F_2$ will return to the identity in time $n$ with probability $O(\exp(-cn))$ for some absolute constant $c>0$.  (Indeed, as exactly half of the path has to consist of backtracking, one has a crude bound of $\binom{n}{n/2} 3^{n/2} = O( \exp(-cn) 4^n)$ for the number of paths that return to the identity at time $n$.)   In particular, we may assume that $w(e_1,e_2)$ and $w'(e_1,e_2)$ are not equal to the identity.  This forces $x$ to be a non-identity word, and $a,b$ to have magnitude at most $n$.  There are thus $O(n^2)$ choices for $a,b$, and once $a, b$ is fixed, $w(e_1,e_2)$ uniquely determines $x$ and hence $w'(e_1,e_2)$.  On the other hand, by another appeal to the above classical fact, any given value of $w'(e_1,e_2)$ is attained by at most $O(\exp(-cn))$ choices of $w'$.  The claim follows.
\end{proof}

In view of the above lemma, and of the choice of $n$, the contribution of the commuting case to \eqref{wawa} is acceptable.  Thus, by Fubini's theorem, it will suffice to show that
\begin{equation}\label{wawa-2}
 \P_{a,b \in \tilde G}( w(a,b), w'(a,b) \in H \hbox{ for some structural } H < \tilde G ) \ll |\tilde G|^{-3\gamma}.
\end{equation}
whenever $w,w'\in W_{n,2}$ are such that $w(e_1,e_2)$ and $w'(e_1,e_2)$ do not commute.

Fix $w,w'$.  If $w(a,b), w'(a,b)$ lie in the same structural subgroup $H$, then they are contained in a proper algebraic subgroup of $\G$ of complexity $O(1)$.  We now convert this claim into an algebraic constraint on $w(a,b), w'(a,b)$, with an eye towards eventually applying the Schwartz-Zippel lemma
(Lemma \ref{Schwartz}).  It would be very convenient if the set of all pairs $(x,y) \in \G(\k) \times \G(\k)$ for which $x,y$ were contained in a proper algebraic subgroup of $\G(\k)$ was a proper algebraic subset of $\G(\k) \times \G(\k)$.  Unfortunately, in positive characteristic this is not the case; for instance, if we replaced $\k$ with a locally finite field such as $\overline{\F_q}$, then every pair $x,y \in \G(\k) \times \G(\k)$ would be contained in $\G(k_{xy})$ for some finite field $k_{xy}$, yet $\G(k_{xy})$ is a finite group and thus obviously a proper algebraic subset of $\G(\k) \times \G(\k)$.

However, we can obtain a usable substitute for the above (false) claim by enforcing a bound on the complexity of the proper algebraic subsets involved.  More precisely, we have the following.

\begin{proposition}\label{crit}  Let $N \geq 1$ be an integer, and let $\k$ be an algebraically closed field.  Let $\G(\k) \subset \GL_d(\k)$ be a connected linear algebraic group of complexity $O(1)$.  Then there exists a closed algebraic subset $X_N(\k)$ of $\G(\k) \times \G(\k)$ of complexity $O_N(1)$ with the following properties:
\begin{itemize}
\item[(i)] If $x,y \in \G(\k)$ are such that $x$ and $y$ are contained in a proper algebraic subgroup of $\G(\k)$ of complexity at most $N$, then $(x,y) \in X_N(\k)$.
\item[(ii)] Conversely, if $(x,y) \in X_N(\k)$, then $x$ and $y$ are both contained in a proper algebraic subgroup of $\G(\k)$ of complexity $O_N(1)$.
\end{itemize}
\end{proposition}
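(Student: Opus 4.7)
The plan is to realize $X_N$ as the projection onto $\G \times \G$ of an incidence correspondence over a projective parameter space of proper algebraic subgroups of $\G$ of bounded complexity.

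First, I would parametrize the algebraic subgroups of $\G$ of complexity at most $N$ by a projective variety $\mathcal{H}_N$ of complexity $O_N(1)$, using a Hilbert-scheme type construction. Since such subgroups are cut out by at most $N$ polynomials of degree at most $N$, they have Hilbert polynomial of degree bounded in terms of $N$, so they are parametrized by finitely many Hilbert schemes of $\G$, each of which is projective of complexity $O_N(1)$. The sublocus in which the parameter corresponds to an actual subgroup (closed under multiplication and inversion) is closed, being cut out by the group axioms, and so $\mathcal{H}_N$ is projective of complexity $O_N(1)$. Crucially, the point $[\G] \in \mathcal{H}_N$ is an isolated connected component: any flat family in the Hilbert scheme has constant Hilbert polynomial, and the Hilbert polynomial of $\G$ is realized among closed subschemes of $\G$ only by $\G$ itself (because $\G$ is connected, hence irreducible, so any closed subscheme of $\G$ of the same dimension must equal $\G$). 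Thus $\mathcal{H}_N^* \coloneqq \mathcal{H}_N \setminus \{[\G]\}$ is still a projective variety of complexity $O_N(1)$, and every point of $\mathcal{H}_N^*$ parametrizes a proper algebraic subgroup of $\G$ of complexity at most $N$.

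Next, I would form the incidence
$$Z_N \coloneqq \{(x, y, H) \in \G \times \G \times \mathcal{H}_N^* : x \in H, \, y \in H\},$$
which is closed in $\G \times \G \times \mathcal{H}_N^*$ with complexity $O_N(1)$, and set $X_N \coloneqq \pi(Z_N)$, where $\pi$ denotes the projection onto $\G \times \G$. Since $\mathcal{H}_N^*$ is projective, $\pi$ is a closed map, so $X_N$ is a closed subvariety of $\G \times \G$ of complexity $O_N(1)$. Property (i) is immediate from the construction, and property (ii) holds because every $(x, y) \in X_N$ lifts to a triple $(x, y, H) \in Z_N$, producing a proper algebraic subgroup $H \in \mathcal{H}_N^*$ of complexity at most $N$ that contains both $x$ and $y$.

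The main obstacle I anticipate is setting up the parameter space $\mathcal{H}_N$ with tight control on its complexity in the sense used in this paper (number and degree of defining polynomials). This amounts to a quantitative form of standard constructions in algebraic geometry (Hilbert schemes, Chow varieties), and requires careful bookkeeping to ensure that the bound $\leq N$ on the complexity of a subgroup is reflected in a bound $O_N(1)$ on the complexity of its parameter. A more elementary alternative would be to parametrize subgroups by their tuples of defining polynomials inside a product of Grassmannians, but then one must verify that $[\G]$ remains an isolated component, which again rests on the same fact that Hilbert polynomials are constant on flat families and that $\G$ is the unique subscheme of $\G$ with its own Hilbert polynomial.
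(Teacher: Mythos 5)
Your approach is genuinely different from the paper's, and it is not unworkable in principle, but the gaps go well beyond the ``bookkeeping'' you acknowledge, and as written the proof is incomplete at a step that would require a real argument. Two issues stand out. First, the Hilbert scheme is a projective moduli space of closed subschemes of a \emph{projective} scheme, whereas $\G$ is affine; you would need to pass to a compactification $\overline{\G}$ inside a projective space, take closures, and argue that the group-axiom locus, the point $[\overline{\G}]$, and the translation between ``complexity at most $N$'' on $\Mat_m$ and Hilbert polynomials in the compactification all behave as hoped. In particular, your assertion that the subgroup locus is ``closed, being cut out by the group axioms'' is not a proof: the failure of $H \cdot H \subseteq H$ is an existence statement, and images of (relatively open) sets under projections are not closed, so one needs a genuine argument via flatness of the universal family and the behaviour of scheme-theoretic images under base change. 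Second, the ``more elementary alternative'' via tuples of defining polynomials has a more basic obstruction: passing from a tuple $(f_1,\dots,f_k)$ (or a subspace $W$ of polynomials) to the subgroup $V(f_1,\dots,f_k)$ requires taking the radical of the ideal they generate, and the radical is not an algebraic function of the coefficients, so ``the parameter corresponds to a subgroup'' is not visibly an algebraic condition on the tuple. (Moreover, if one allows defining polynomials in $I_{\G}$, the tuples cutting out all of $\G$ form a positive-dimensional linear locus, not a single point $[\G]$, and removing it destroys projectivity and hence the closedness of the projection.)

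The paper sidesteps moduli spaces entirely. It takes the left-regular action $\rho$ of $\G$ on the finite-dimensional space $V$ of polynomials of degree at most $D$ on $\Mat_m$, and defines $X_N$ by the linear-algebraic condition $\operatorname{span}(\rho(\langle x,y\rangle)) \neq \operatorname{span}(\rho(\G))$. Closedness is obtained by showing the ascending chain of subspaces $\operatorname{span}(\rho(B_n(x,y)))$ stabilizes after at most $\dim\End(V)$ steps, converting the condition to a rank condition on finitely many matrices $\rho(w(x,y))$ (polynomial in $x,y$); property (i) follows from Kleiman's bound that the radical ideal $I_H$ of a complexity-$\leq N$ subgroup is generated in degree $O_N(1)$; and property (ii) follows by duality (producing a functional $\phi$ on $\End(V)$ vanishing on $\rho(\langle x,y\rangle)$ but not on $\rho(\G)$) together with the escape-from-subvarieties lemma of \cite{bgt}. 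No compactification, no radical-taking, and no properness of a parameter space is ever invoked. If you want to pursue your route, the concrete things to supply are: a projective parameter space whose points are honest subgroups (not just tuples of equations), a proof that the group-axiom locus is closed there, and a quantitative comparison between complexity in the affine model and the invariants governing the moduli space.
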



\begin{proof}  Let $D$ be a large integer (depending on $N, d$) to be chosen later.  We view $\G(\k)$ as a subset of the ring $\Mat_m(\k)$ of $m \times m$ matrices, which is also a vector space over $\k$ of dimension $m^2 = O(1)$.  Let $V$ be the space of polynomials $P\colon \Mat_m(\k) \to \k$ of degree at most $D$ on $\Mat_m(k)$; then $V$ is a vector space over $\k$ of dimension $O_D(1)$, and $\G(\k)$ acts on $V$ by left-translation, thus
$$ \rho(g) P(x) \coloneqq  P( g^{-1} x)$$
for all $g \in \G(\k)$ and $P \in V$. Thus we have a homomorphism $\rho\colon \G(\k) \to \End(V)$ from $\G(\k)$ to the space $\End(V)$ of endomorphisms of $V$, which is another vector space over $\k$ of dimension $O_D(1)$.

We now define $X_N(\k)$ to be the set of all pairs $(x,y) \in \G(\k) \times \G(\k)$ such that
\begin{equation}\label{spain}
\operatorname{span}( \rho(\langle x,y\rangle) ) \neq \operatorname{span}( \rho(\G(\k)) ),
\end{equation}
where $\operatorname{span}$ denotes the linear span in $\End(V)$.

The fact that $X_N(\k)$ is a closed algebraic subset will be deferred to the end of the proof.  For now, let us verify property (i).  Suppose $x,y$ lie in a proper algebraic subgroup $H$ of $\G(\k)$ of complexity at most $N$.  We view $H$ as a subvariety of $\Mat_m(\k)$ of complexity at most $N$.  Let $I_H$ be the radical ideal of polynomials of $\Mat_m(\k)$ that vanish on $H$.
By a result\footnote{One can also obtain the degree bound on the polynomials here via an ultraproduct argument combined with the Hilbert basis theorem, as in \cite[Appendix A]{bgt}.} of Kleiman \cite[Corollary 6.11]{kleiman}, we know that $I_H$ has a generating set, say $f_1,\ldots,f_k$, of polynomials of degree $O_N(1)$. In particular, if $D$ is large enough, then $f_1,\ldots,f_k$ all lie in $V$.

Now, if $g$ lies in $\langle x,y \rangle$, then $g$ lies in $H$, and so $\rho(g)$ preserves $I_H \cap V$.  Thus,
$\operatorname{span}( \rho(\langle x,y\rangle) )$ preserves $I_H \cap V$ also.  Now suppose for contradiction that $(x,y)$ does not lie in $X_N(\k)$, then by \eqref{spain}, $\rho(g)$ preserves $I_H \cap V$ for all $g \in \G$.  In particular, $\rho(g) f_i \in I_H$ for all $i=1,\ldots,k$ and $g \in \G$.  But this implies that the $f_i$ all vanish on $\G$; since the $f_i$ generate $I_H$, this forces all polynomials that vanish on $H$, to vanish on $\G$ as well.  But $H$ is a proper subvariety of $\G$, giving the desired contradiction.

Next, we verify property (ii).  Suppose that $(x,y)$ lies in $X_N(\k)$.  By \eqref{spain} and duality, there is thus a linear functional $\phi\colon \End(V) \to \k$ which vanishes on $\rho(\langle x,y \rangle)$ but which does not vanish identically on $\rho(\G(\k))$.  Thus, the group $\langle x, y \rangle$ is contained in the set $\{ g \in \G(\k): \phi(\rho(g)) = 0 \}$, which is a proper algebraic subvariety of $\G(\k)$ of complexity $O_D(1)$.  Applying the ``escape from subvarieties'' lemma (see \cite[Lemma 3.11]{bgt}), this implies that $\langle x,y \rangle$ is contained in a proper algebraic \emph{subgroup} of $\G(k)$ of complexity $O_D(1)$.  If we select $D$ sufficiently large depending on $N$, $m$, we obtain the claim (ii).

Finally, we show that $X_N(\k)$ is a closed subvariety of complexity $O_D(1)$.  Consider the non-decreasing sequence of subspaces
$$\operatorname{span}( \rho(B_n(x,y) )$$
of $\End(V)$ for $n=0,1,2,\ldots$, where $B_n(x,y)$ denotes all words of $x$, $y$, $x^{-1}$, $y^{-1}$ of length at most $n$.  From the pigeonhole principle, we conclude that
$$\operatorname{span}( \rho(B_{n+1}(x,y) ) ) = \operatorname{span}( \rho(B_n(x,y) ) )$$
for some $n \leq \dim \End(V) = O_D(1)$.  But then $\rho(x^{\pm 1}), \rho(y^{\pm 1})$ leave\\ $\operatorname{span}( \rho(B_n(x,y) ))$ invariant, which implies in particular that
$$ \operatorname{span}( \rho(B_n(x,y) ) = \operatorname{span}( \rho(B_{\dim \End(V)}(x,y) ) ) = \operatorname{span}( \rho(\langle x,y\rangle ) ).$$
Thus, we may rewrite the condition \eqref{spain} as the condition that
$$  \operatorname{span}( \rho(B_{\dim \End(V)}(x,y) ) ) = \operatorname{span}( \rho(\G(k)) )$$
or equivalently that the elements $\rho(w(x,y))$ of $\End(V)$ for $w$ a word of length at most $\End(V)$ does not have full rank in $\operatorname{span}( \rho(B_{\dim \End(V)}(x,y) ) )$.  This is clearly an algebraic constraint on $x,y$ and establishes that $X_N(\k)$ is a closed subvariety of complexity $O_D(1)$ as required.\end{proof}

We apply the above proposition with $\G \coloneqq  \SL_{m}$ and $N = O(1)$ sufficiently large depending on the rank $r$.  By the preceding discussion, we know that if $w(a,b), w'(a,b)$ lie in a common structural subgroup, then the pair $(w(a,b),w'(a,b))$ lies in $X_N$, and thus $(a,b)$ lies in the set
$$ \Sigma_{w,w'}(\k) \coloneqq  \{ (a,b) \in \G(\k) \times \G(\k): (w(a,b),w'(a,b)) \in X_N \}$$
As $w,w'$ are words of length at most $n$, this is a closed subvariety of $\G(\k) \times \G(\k)$ of complexity $O(n)$.

We now make the crucial observation (using Theorem \ref{sec3-key-prop}) that $\Sigma_{w,w'}(\k)$ is a \emph{proper} subvariety of $\G(\k) \times \G(\k)$ when $w(e_1,e_2),w'(e_1,e_2)$ do not commute.  Indeed, by Theorem \ref{sec3-key-prop}, the hypothesis that $\k$ is uncountable, and the non-commutativity of $w(e_1,e_2)$ and $w'(e_1,e_2)$, we can find $(a,b) \in \G(\k) \times \G(\k)$ such that $w(a,b), w'(a,b)$ generate a Zariski-dense subgroup of $\G(\k)$.  By Proposition \ref{crit}(ii), this implies that $(a,b)$ lies outside of $\Sigma_{w,w'}(\k)$, and so $\Sigma_{w,w'}(\k)$ is a proper subvariety of $\G(\k) \times \G(\k)$ as required.

Applying the Schwartz-Zippel lemma (Lemma \ref{Schwartz}), we have
$$ |\Sigma_{w,w'}(\k) \cap \tilde G^2| \ll n^{O(1)} q^{-1} |\tilde G|^2.$$
By the choice of $n$ (recall $n \coloneqq  2 \lfloor c_0 \log |\tilde G| \rfloor$), we thus have (for $c_0$, $\gamma$ small enough) that
$$ |\Sigma_{w,w'}(\k) \cap \tilde G^2| \ll |\tilde G|^{2-3\gamma},$$
and \eqref{wawa-2} follows.  This concludes the proof of \eqref{structural-case}.\vspace{11pt}

\textsc{The subfield case.} It remains to establish \eqref{subfield-case}.  The starting point is the observation that if $g \in \tilde G$ is conjugate to an element of $\G(\F_{q'})$ for some subfield $\F_{q'}$ of $\F_q$, then the coefficients $\gamma_1(g),\ldots,\gamma_{m}(g)$ of the characteristic polynomial of the matrix $g$
$$\det(X \Id_m - g)=X^m +\gamma_i(g)X^{m-1} + \ldots \gamma_1(g)X + \gamma_m(g)$$
will lie in $\F_{q'}$.  Note that the number of proper subfields of $\F_q$ is at most $O(\log q ) = O( \log |\tilde G| )$.  Thus, by the union bound, it suffices to show that
$$
\P_{a,b \in \tilde G}( \P_{w \in W_{n,2}}(\gamma_i(w(a,b)) \in \F_{q'} \hbox{ for all } 1 \leq i \leq m ) > |\tilde G|^{-\gamma} ) \ll |\tilde G|^{-2\gamma}$$
(say) for each proper subfield $\F_{q'}$ of $\F_q$.  By Markov's inequality, it suffices to show that
$$ \P_{a,b \in \tilde G; w \in W_{n,2}} (\gamma_i(w(a,b)) \in \F_{q'} \hbox{ for all } 1 \leq i \leq m ) \ll |\tilde G|^{-3\gamma}.$$

Fix $\F_{q'}$, and let $e_1,e_2$ be the generators of a free group $F_2$.  As observed previously in the proof of Lemma \ref{noncom}, $w(e_1,e_2)$ will be the identity with probability $O(\exp(-cn))$ for some $c>0$.  By the choice of $n$, it thus suffices to show that
\begin{equation}\label{pabgf}
 \P_{a,b \in \tilde G} (\gamma_i(w(a,b)) \in \F_{q'} \hbox{ for all } 1 \leq i \leq m ) \ll |\tilde G|^{-3\gamma}
\end{equation}
whenever $w \in W_{n,2}$ is such that $w(e_1,e_2) \neq 1$.

Let us first consider the probability
$$ \P_{a,b \in \tilde G} (\gamma_i(w(a,b)) = x_i ) $$
for some fixed $1 \leq i \leq m$ and $x_i \in \F_{q'}$ with $x_i \neq \gamma_i(1)$.  Observe that
$$ \Sigma_{w,i,x_i} \coloneqq  \{ (a,b) \in \G \times \G: \gamma_i(w(a,b)) = x_i \}$$
is an algebraic variety of complexity $O(n)$.  Since $\gamma_i(w(1,1)) = \gamma_i(1) \neq x_i$, this variety is a proper subvariety of $\G \times \G$.  Applying the Schwartz-Zippel lemma (Lemma \ref{Schwartz}), we conclude that
$$ |\Sigma_{w,i,x_i}(\F_q)| \ll n^{O(1)} q^{-1} |\tilde G|^2,$$
and thus
$$ \P_{a,b \in \tilde G} (\gamma_i(w(a,b)) = x_i ) \ll n^{O(1)} q^{-1}.$$
Summing over all $x_i \in \F_{q'} \backslash \{\gamma_i(1)\}$, we can bound the left-hand side of \eqref{pabgf} by
$$ \P_{a,b \in \tilde G} (\gamma_i(w(a,b)) = \gamma_i(1) \hbox{ for all } 1 \leq i \leq m ) + O( n^{O(1)} q' q^{-1} ).$$
As $\F_{q'}$ is a proper subfield of $\F_q$, we have $q' \leq q^{1/2}$.  As such, the error term $O( n^{O(1)} q' q^{-1} )$ is $O(|\tilde G|^{-3\gamma})$ if $\gamma, c_0$ are small enough.  It thus suffices to show that
$$ \P_{a,b \in \tilde G} (\gamma_i(w(a,b)) = \gamma_i(1) \hbox{ for all } 1 \leq i \leq m ) \ll |\tilde G|^{-3\gamma}.$$
By the Cayley-Hamilton theorem, if $\gamma_i(w(a,b)) = \gamma_i(1)$ for all $1 \leq i \leq m$, then
$$ (w(a,b)-1)^{m} = 0.$$
Observe that
$$ \Sigma'_w \coloneqq  \{ (a,b) \in \G \times \G: (w(a,b)-1)^{m} = 0 \}$$
is an algebraic variety of complexity $O(n)$.  By repeating the previous arguments, it thus suffices to establish that $\Sigma'_w$ is a proper subvariety of $\G \times \G$.  Suppose that this is not the case; then for all $a,b \in \G \times \G$, one has $(w(a,b)-1)^{m}=0$; thus $w(a,b)$ is unipotent.  However, the space of all unipotent matrices forms a proper subvariety of $\G = \SL_m$.  Furthermore, by a theorem of Borel \cite{borel-dom} and the assumption that $w$ is non-trivial, the word map $w\colon \G \times \G \to \G$ is dominant.  Thus $\Sigma'_w$ is a proper subvariety of $\G \times \G$ as required.  This concludes the proof of Theorem \ref{mainthm} in the case when $G$ is a projective special linear group $G=A_r(q)$.

\begin{remark}\label{word}  An inspection of the above arguments shows not only that $\{a,b\}$ are $\eps$-expanding with probability $1-O(|G|^{-\delta})$, but furthermore that $\{w_1(a,b), w_2(a,b)\}$ are $\eps$-expanding with probability $1-O(|G|^{-\delta})$ for any non-commuting pair of words $w_1, w_2 \in F_2$ of length at most $|G|^\delta$.  The proof is essentially the same, with the only changes required being in the non-concentration portion of the argument, when one replaces $a,b$ by $w_1(a,b)$, $w_2(a,b)$ in all of the events whose probability is being computed. For instance the probability
$$ \P_{a,b \in \tilde G}( \P_{w \in W_{n,2}}(w(a,b) \in H) \leq |\tilde G|^{-\gamma} \hbox{ for all structural } H < \tilde G ) $$
must be replaced with
\begin{align*} \P_{a,b \in \tilde G}( \P_{w \in W_{n,2}}(w(w_1(a,b),w_2(a,b)) & \in H) \leq |\tilde G|^{-\gamma} \\ & \hbox{ for all structural } H < \tilde G ).\end{align*}
But one can easily verify that the above arguments proceed with essentially no change with this substitution.  Note that by the Nelson-Schreier theorem, $w_1$ and $w_2$ again generate a free group; in particular, if $w, w'$ are non-commuting words, then $w(w_1(a,b),w_2(a,b))$ and $w'(w_1(a,b), w_2(a,b))$ are also non-commuting words (and in particular non-trivial), allowing the crucial use of Theorem \ref{sec3-key-prop} (and also Borel's domination theorem) to continue to apply in this case to ensure that the variety
\begin{align*} & \Sigma_{w,w'; w_1,w_2}(\k) \coloneqq  \\ & \{ (a,b) \in \G(\k) \times \G(\k): (w(w_1(a,b),w_2(a,b)), w'(w_1(a,b),w_2(a,b))) \in X_N \}\end{align*}
remains proper.  Also,  and when viewed as polynomials in $a,b$, the degrees of $w(w_1(a,b),w_2(a,b)), w'(w_1(a,b),w_2(a,b))$ are larger by a factor of $O(|G|^\delta)$ than the degrees of $w(a,b), w'(a,b)$, allowing the Schwartz-Zippel estimates to stay essentially the same.  We leave the details to the reader.  One can similarly adapt the argument in later sections for more general finite simple groups of Lie type (including the triality groups ${}^3 D_4(q)$, which require a separate argument based on the same general techniques); again, we leave the details to the interested reader.
\end{remark}

\section{The general case}\label{sec1a}

Having concluded the proof of Theorem \ref{mainthm} in the model case $G = A_r(q)$, we now turn to the general case of finite simple groups of Lie type with some rank $r$.  As before, we allow all implied constants in the $O()$ notation to depend on $r$.

We begin by defining more precisely what we mean by a finite simple group of Lie type.  The reader may consult \cite{carter,gls3,wilson} for a more thorough treatment of this material.  Our notation has some slight differences with that in \cite{carter} or \cite{gls3}; see Remark \ref{diff} below.

\begin{definition}[Dynkin diagram]  A \emph{Dynkin diagram} is a graph of the form $A_r$ for $r \geq 1$, $B_r$ for $r \geq 2$, $C_r$ for $r \geq 3$, $D_r$ for $r \geq 4$, $E_6$, $E_7$, $E_8$, $F_4$, or $G_2$ (see Figure \ref{dynkin}).
\end{definition}

\begin{figure}\label{dynkin}
\begin{center}
\includegraphics[scale=0.3]{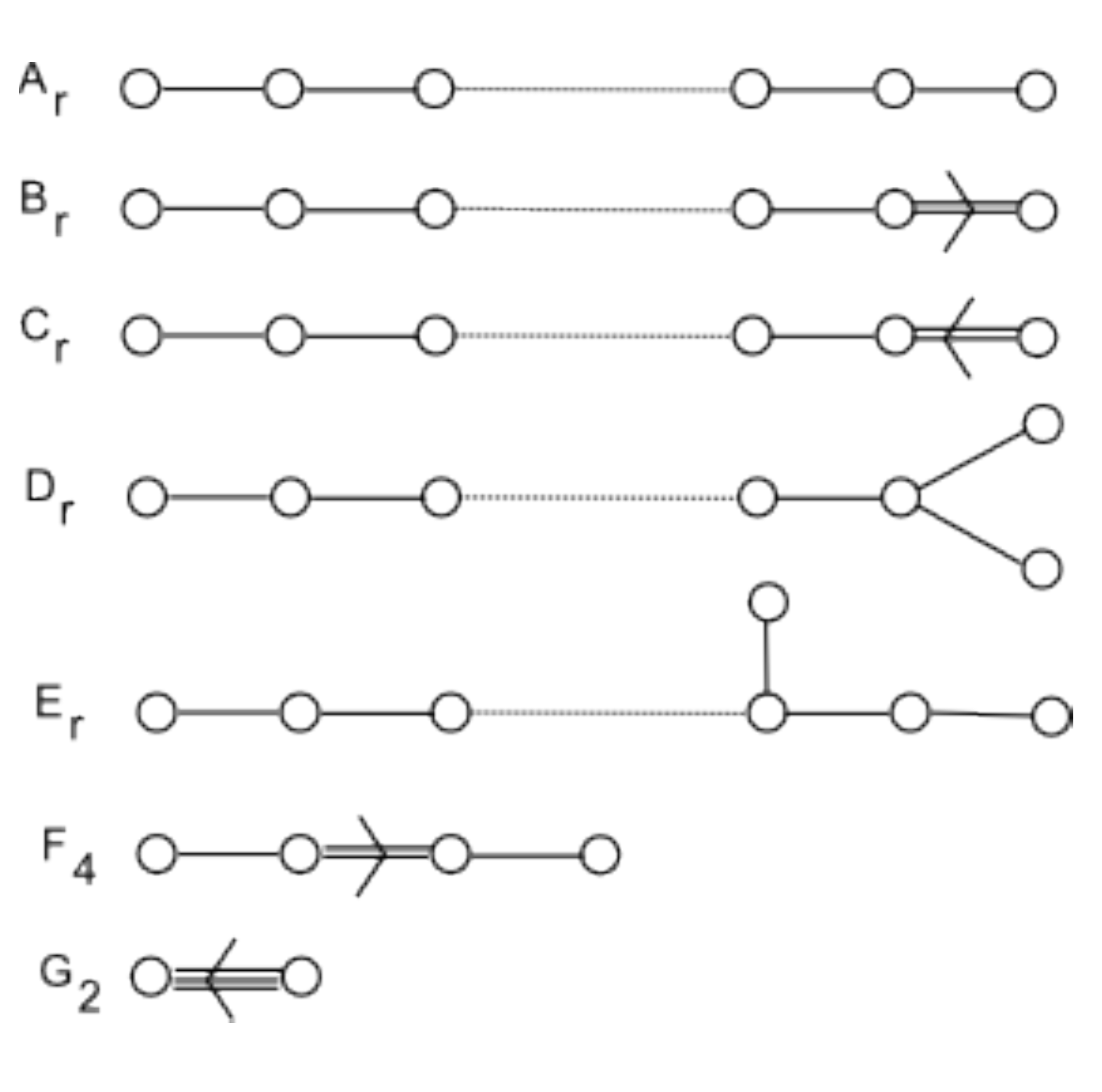}
\caption{Dynkin diagrams.  The subscript $r$ denotes the number of vertices.}
\end{center}
\end{figure}

We observe that there are only a small number of possible non-trivial graph automorphisms $\rho\colon \D \to \D$ of a Dynkin diagram $\D$.  Specifically, for the Dynkin diagrams $A_r, D_r, E_6, B_2, G_2, F_4$, there is a graph automorphism of order two, and for $D_4$ there is an additional graph automorphism of order three, and these are the only non-trivial graph automorphisms (up to conjugation, in the $D_4$ case).

\begin{definition}[Finite simple group of Lie type]\label{dfs} Let $\D$ be a Dynkin diagram, and let $\rho\colon \D \to \D$ be a graph automorphism of order $d$ (thus $d \in \{1,2,3\}$).  Let $\F_q$ be a finite field of order $q$ and characteristic $p$, and let $\k$ be an algebraically closed field\footnote{The exact choice of $\k$ is not terribly important, but it will be technically convenient to use an uncountable field $\k$ here, rather than the algebraic closure of $\F_q$, in order to easily use Theorem \ref{sec3-key-prop}.} containing $\F_q$.  Let $\G(\k)$ be a connected, absolutely almost simple algebraic group associated with the Dynkin diagram $\D$, so that $\rho\colon \G(\k) \to \G(\k)$ also acts\footnote{In the case when $\rho$ is the non-trivial automorphism of $B_2$ or $F_4$, the automorphism of $\G(\k)$ only exists in characteristic two; similarly, if $\rho$ is the non-trivial automorphism of $G_2$, the automorphism of $\G(\k)$ only exists in characteristic three; see \cite[Chapter 12]{carter}.} as an automorphism of $\G(\k)$ that fixes $\G(\F_q)$. Let $\tau\colon \F_q \to \F_q$ be a field automorphism of $\F_q$.  This field automorphism then induces a group automorphism $\tau\colon \G(\F_q) \to \G(\F_q)$ of $\G(\F_q)$ that commutes with $\rho$.  Set $\sigma \coloneqq  \tau \rho$, and suppose that $\sigma$ also is of order\footnote{If $\rho$ is trivial, this forces $\tau$ to be trivial also.  If $\rho$ is a non-trivial automorphism on $A_r$, $D_r$, or $E_6$, this forces $q = \tilde q^d$ for some integer $\tilde q$, and $\tau$ to be (up to conjugation) the Frobenius map $x \mapsto x^{\tilde q}$.  If $\rho$ is the non-trivial automorphism on $F_4$ or $G_2$, this forces $q = p \tilde q^d$ for some integer $\tilde q$, and $\tau$ to be (up to conjugation) the Frobenius map $x \mapsto x^{\tilde q}$; see \cite[14.1]{carter}.} $d$.  Let $\G(\F_q)^\sigma \coloneqq  \{ g \in \G(\F_q): \sigma(g) = g\}$ be the fixed points of $\G(\F_q)$ with respect to this automorphism.  Let $\tilde G \coloneqq  [\G(\F_q)^\sigma, \G(\F_q)^\sigma]$ be the derived group, and let $Z(\tilde G)$ be the centre of $\tilde G$.  Let $G$ be the quotient group $G \coloneqq  \tilde G/Z(\tilde G)$.  If this group is simple\footnote{It turns out that $G$ will be simple except for a finite number of exceptions, and specifically $A_1(2)$, $A_1(3)$, ${}^2 A_2(4)$, and ${}^2 B_2(2)$; see \cite[Theorem 2.2.7]{gls3}.  But our results are only non-trivial in the asymptotic regime when $q$ is large, so these exceptional cases are of no interest to us.}, we call it a \emph{finite simple group of Lie type}.  If we quotient $\tilde G$ by some subgroup of $Z(\tilde G)$, we obtain a perfect central extension of $G$, which we call a \emph{finite quasisimple group of Lie type}.  For instance $G$ and $\tilde G$ are finite quasisimple groups of Lie type.

We define the \emph{rank} $r$ of $G$ to be the rank of $\G$, that is to say the dimension of the maximal torus of $\G$.  We refer to the order $d$ of $\rho$ (or $\sigma$) as the \emph{twist order} $d$.  The group $G$ itself will be denoted ${}^d \D_r(q)$, or simply $\D_r(q)$ if the twist order $d$ is $1$.

The group $\G(\k)$ will be referred to as the \emph{mother group} of $G$, and $\tilde G$ will be the \emph{bounded cover} of $G$.
\end{definition}

It turns out that finite simple groups of Lie type can be organised into three families:

\begin{enumerate}
\item \emph{Untwisted groups} $A_r(q)$, $B_r(q)$, $C_r(q)$, $D_r(q)$, $E_6(q)$, $E_7(q)$, $E_8(q)$, $F_4(q)$, $G_2(q)$, in which $\rho$ is trivial, and $\sigma=\tau$ is the Frobenius field automorphism $x \mapsto x^q$ associated with a finite field $\F_q$ of characteristic $p$, and $\k$ will be its algebraic closure.  For instance, the projective special linear group $G = A_r(q) = \PSL_{r+1}(\F_q)$ studied in the preceding section is of this form (with $\G = \SL_{r+1}$).
\item \emph{Steinberg groups ${}^2 A_r(\tilde q^2)$, ${}^2 D_r(\tilde q^2)$, ${}^2 E_6(\tilde q^2)$, ${}^3 D_4(\tilde q^3)$}, in which $\rho$ is a non-trivial graph automorphism of $\D = A_r, D_r, E_6$ of order $d=2,3$, $q = \tilde q^d$ is a $d^{\operatorname{th}}$ power, and $\tau$ is the Frobenius automorphism $x \mapsto x^{\tilde q}$. For instance, the projective special unitary group ${}^2 A_r(\tilde q^2) = \PSU_{r+1}(\F_{\tilde q^2})$ from \eqref{surq} is of this form, with $d=2$ and $\G = \SL_{r+1}$.  We also highlight for special mention the \emph{triality groups} ${}^3 D_4(\tilde q^3)$, which are the only group with a graph automorphism of order $3$, and which will need to be treated separately in our analysis.
\item \emph{Suzuki-Ree groups} ${}^2 B_2(2^{2k+1}), {}^2 F_4(2^{2k+1}), {}^2 G_2(3^{2k+1})$, in which $\rho$ is the non-trivial automorphism of $\D = B_2, F_4, G_2$ (assuming characteristic $p=2$ in the $B_2,F_4$ cases and characteristic $p=3$ in the $G_2$ case), $q = p \theta^2$ for some $\theta = p^k$, and $\tau$ is the Frobenius map $x \mapsto x^{\theta}$.  The groups ${}^2 B_2(2^{2k+1})$ are also referred to as \emph{Suzuki groups}, while ${}^2 F_4(2^{2k+1})$ and ${}^2 G_2(3^{2k+1})$ are referred to as \emph{Ree groups}.
\end{enumerate}

We refer to the Steinberg, Suzuki, and Ree groups collectively as \emph{twisted} finite simple groups of Lie type.  The distinction between the Steinberg groups and the Suzuki-Ree groups ultimately stems from the fact that the former groups have Dynkin diagrams $\D$ from the ADE family, so that their roots all have the same length, whereas the latter groups have diagrams in which the roots have two different possible lengths.

\begin{remark}\label{diff}  We remark that our notation here is slightly different from that in \cite{carter} or \cite{gls3}.  In \cite{gls3}, the group that we would call ${}^d \D(q)$ is instead denoted ${}^d \D(q^{1/d})$ (in particular, with the convention in \cite{gls3}, the ``$q$'' parameter becomes irrational in the Suzuki-Ree cases).  Also, in \cite{gls3} the group $\tilde G$ is not taken to be the derived group of $\G(\F_q)^\sigma$, but is instead taken to be the group $O^{p'}(\G(\F_q)^\sigma)$ generated by the elements of $\G(\F_q)^\sigma$ of order a power of $p$, where $p$ is the characteristic of $\F_q$.  However, the quotient group $\G(\F_q)^\sigma / O^{p'}(\G(\F_q)^\sigma)$ acts faithfully on $O^{p'}(\G(\F_q)^\sigma)$ with an action generated by diagonal automorphisms, and is thus abelian (see \cite[Lemmas 2.5.7, 2.5.8]{gls3}), while the group $O^{p'}(\G(\F_q)^\sigma)$ is almost always perfect, with the only exceptions being $A_1(2)$, $A_1(3)$, ${}^2 A_2(4)$, ${}^2 B_2(2)$, ${}^2 G_2(3)$, and ${}^2 F_4(2)$ (see \cite[Theorem 2.2.7]{gls3}).  Thus, outside\footnote{For instance, the Tits group would be considered a finite simple group ${}^2 F_4(2)$ of Lie type under our conventions, whereas in \cite{gls3} it would be considered an index two subgroup of ${}^2 F_4(2^{1/2})$.}
 of finitely many exceptions, we have $O^{p'}(\G(\F_q)^\sigma) = \tilde G$ and so our notation coincides with that of \cite{gls3} except for replacing $q^{1/d}$ with $q$.  In \cite{carter}, the notation ${}^d \D(q)$ is used instead of ${}^d \D(q^{1/d})$ (thus matching our notation and not that of \cite{gls3}), but $\tilde G$ is instead replaced by the subgroup $G_1$ of $\G(\F_q)^\sigma$ generated by the intersection of that group with the unipotent subgroups $U,V$ of $G$ generated by the positive and negative roots; however the group $G_1$ can be shown to be identical with $O^{p'}(\G(\F_q)^\sigma)$ (see \cite[Theorem 2.3.4]{gls3}).  As we are only interested in the asymptotic regime when $q$ is large, the finite number of exceptions between the conventions here and that in \cite{carter}, \cite{gls3} will not be relevant, except for the fact that we index the finite simple groups by $q$ instead of $q^{1/d}$.
\end{remark}

Note that even after fixing the Dynkin diagram and the field $\k$, there is some flexibility in selecting the group $\G$; for
 instance, with the Dynkin diagram $A_n$, one can take $\G = \PGL_{r+1}$ or $\G = \SL_{r+1}$, leading to two slightly different bounded covers $\tilde G = \PGL_{r+1}(q)$ or $\tilde G = \SL_{r+1}(q)$ for the same simple group $G = \PSL_{r+1}(q)$.  But up to isomorphism, there are only $O(1)$ possible choices for $\G$ and hence for the bounded cover $\tilde G$; see \cite[2.2]{gls3}.  The group $\G$ can always be taken to be a linear algebraic group i.e. an algebraic subgroup of $\GL_m(\k) \subseteq \Mat_m(\k) \cong k^{m^2}$ for some $m$.  Furthermore, using the adjoint representation, we see that the complexity of $\G$ (as viewed as a subvariety of $\GL_m(\k)$) is also $O(1)$ (i.e. the complexity remains uniformly bounded in the field size $q$).  In particular, we can take $m=O(1)$.    In most cases we will be able to work with the adjoint representation, but in Section \ref{class} we used instead the tautological representation of $\SL_{r+1}(\k)$ on $\k^{r+1}$, and when dealing with some subcases of the analysis of the triality groups ${}^3 D_4(q)$ in Section \ref{d4-sec} it turns out to be convenient to similarly use a relatively low-dimensional representation (eight-dimensional, in this particular case).  The main feature one needs for the linear representation is that it be faithful, and that the algebraic group $\G$ has complexity $O(1)$, i.e. it is bounded uniformly with respect to the characteristic.

It is known that in all cases we have the inequality
$$|\G(\F_q)^\sigma/\tilde G| \times |Z(\tilde G)| \leq r+1$$
(and for Dynkin diagrams other than $A_r$, the left-hand side is in fact at most $4$) regardless of the choice of representation; see \cite[2.2]{gls3}.  In particular, we see that\footnote{Actually, if one insists on using the adjoint representation, then the centre $Z(\tilde G)$ is always trivial; see \cite[Theorem 2.2.6(c)]{gls3}.}
$$ |Z(\tilde G)| = O(1)$$
and so $\tilde G$ is indeed a bounded cover of $G$ (and is also a bounded index subgroup of $\G(k)^\sigma$).  In particular, $|G|$ and $|\tilde G|$ are comparable.  Because of this, we will in practice be able to lift our analysis up from $G$ to $\tilde G$ without difficulty.

Throughout the paper we encourage the reader to have in mind the following diagram (with all arrows here denoting inclusions except for the far left arrow, which is a quotient).
\[  \begin{CD}
\tilde G @>>> \G(\F_q)^\sigma  @>>> \G(\F_{q}) @>>> \GL_m(\F_q) @>>> \Mat_m(\F_q) \\
@VVV               @.                @VVV            @VVV            @VVV  \\
G @.                @.   \G(\k)     @>>> \GL_m(\k)   @>>> \Mat_m(\k).
\end{CD}\]
Furthermore, the three groups $G, \tilde G, \G(\F_q)^\sigma$ on the left of this diagram should be viewed as ``morally equivalent'', while the group $\tilde G$ should be viewed as a ``sufficiently Zariski-dense'' subgroup of the linear, bounded complexity algebraic mother group $\G(\k)$.  (This type of algebro-geometric viewpoint is of the same type as that used in, for example, \cite{larsen-words}.)

Since $\tilde G \subset \G(\F_{q})\subset \Mat_m(\F_{q})$ and $m=O(1)$, we have the crude upper bound
\begin{equation}\label{lang-weil}
|G| \leq |\tilde G| \ll q^{O(1)}.
\end{equation}
As such, any gain of the form $q^{-\kappa}$ in our arguments can be replaced with $|G|^{-\kappa}$ (after adjusting $\kappa>0$ slightly).

In the arguments of the previous section, a key role was played by the Schwartz-Zippel lemma (Lemma \ref{Schwartz}).  We will need a more general form of this lemma:

\begin{proposition}[Schwartz-Zippel lemma, general case]\label{sufficiently-zdense-prop}  Let $G={}^d \D(q)$ be a finite simple group of Lie type and of rank $r$, associated to the finite field $\F_q$ and with twist order $d$.  Let $\tilde G \subset \G(\k)^\sigma \subset \G(\k) \subset \Mat_m(\k)$ be as in the above discussion, with $\G$ being a linear group of complexity $O(1)$.
\begin{itemize}
\item[(i)] If $P\colon \Mat_m(\k) \to \k$ is a polynomial \textup{(}over $\k$\textup{)} of degree $D \geq 1$ that does not vanish identically on $\G(\k)$, then
\begin{equation}\label{Schwartz-1-gen}
 |\{ a \in \tilde G: P(a) = 0 \}| \ll D q^{-1/d} |\tilde G|.
\end{equation}
\item[(ii)] Similarly, if $P\colon \Mat_m(\k) \times \Mat_m(\k) \to \k$ is a polynomial \textup{(}over $\k$\textup{)} of degree $D \geq 1$, which does not vanish identically on $\G(\k) \times \G(\k)$, then
\begin{equation}\label{Schwartz-2-gen}
|\{ (a,b) \in \tilde G \times \tilde G: P(a,b) = 0 \}| \ll D q^{-1/d} |\tilde G|^2.
\end{equation}
\end{itemize}
\end{proposition}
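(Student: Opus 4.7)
The plan is to extend the proof of Lemma~\ref{Schwartz} (which handled the untwisted case $\G = \SL_m$) to the twisted setting; the improved factor $q^{-1/d}$ in place of $q^{-1}$ will reflect the fact that the $\sigma$-fixed points of $\G(\F_q)$ are parameterized over a ``field of size $\tilde q = q^{1/d}$''. Part~(ii) will follow from part~(i) by the same Fubini-type reduction used in the proof of Lemma~\ref{Schwartz}(ii), so I focus on part~(i). First I would fix a $\sigma$-stable Borel subgroup $\B$ with a $\sigma$-stable maximal torus $\T$ and a $\sigma$-fixed representative $\w_0 \in N_\G(\T)$ of the longest Weyl element (such a choice exists because $\rho$ preserves a pinning of $\G$ compatible with the $\F_q$-structure). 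Then the large Bruhat cell $\B \w_0 \B = \U \T \w_0 \U$ (where $\U$ is the unipotent radical of $\B$) is $\sigma$-stable and Zariski-dense in $\G$; uniqueness of the Bruhat factorization gives
$$ (\B \w_0 \B)(\F_q)^\sigma = \U(\F_q)^\sigma \cdot \T(\F_q)^\sigma \cdot \w_0 \cdot \U(\F_q)^\sigma, $$
and a Lang--Steinberg count yields $|\U(\F_q)^\sigma| = \tilde q^{\dim \U}$ and $|\T(\F_q)^\sigma| \sim \tilde q^{\dim \T}$, so the large cell accounts for all but an $O(\tilde q^{-1})$ fraction of $|\tilde G| \sim \tilde q^{\dim \G}$. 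It therefore suffices to bound the number of zeros of $P$ in the large cell.

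The heart of the argument is to parameterize $(\B \w_0 \B)(\F_q)^\sigma$ by $\dim \G$ coordinates in a field of size $\tilde q$ and apply the classical Schwartz--Zippel lemma (Lemma~\ref{basic-schwartz-zippel}). In the Steinberg cases, the graph automorphism $\rho$ partitions the positive roots into orbits $O_i$ of sizes $s_i \mid d$; the $\sigma$-fixed elements of $\prod_{\alpha \in O_i} \U_\alpha$ form a copy of $\F_{\tilde q^{s_i}}$ via the embedding $z_i \mapsto (z_i, \tau(z_i), \ldots, \tau^{s_i-1}(z_i))$. Picking an $\F_{\tilde q}$-basis $\{e_k\}$ of each $\F_{\tilde q^{s_i}}$ yields a parameterization of $(\B\w_0\B)(\F_q)^\sigma$ by $\dim \G$ coordinates $c$ in $\F_{\tilde q}$, in which every matrix entry of $u_1 h \w_0 u$ is a polynomial in $c$ of degree bounded independently of $q$ (using $c^{\tilde q} = c$). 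Substituting into $P$ produces a polynomial $Q(c)$ of degree $O(D)$, and $Q$ is not identically zero provided $P$ does not vanish on $\G(\k)$: extended to $c \in \k^{\dim \G}$, the substitution is a birational change of coordinates on the large cell, because the relevant Vandermonde matrix $(e_k^{\tilde q^{j-1}})_{j,k}$ is invertible by the normal basis theorem. Classical Schwartz--Zippel over $\F_{\tilde q}$ then produces $O(D) \tilde q^{\dim \G - 1} = O(D q^{-1/d}) |\tilde G|$ zeros in the large cell, and the smaller Bruhat cells contribute a further $O(\tilde q^{\dim\G - 1})$, which is absorbed.

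The main obstacle is the Suzuki--Ree case, where $\tilde q = q^{1/d}$ is not the cardinality of an actual subfield of $\F_q$ (for instance $q^{1/2}$ when $q = 2^{2k+1}$) and the ``graph automorphism'' $\rho$ is only an isogeny whose square is the characteristic Frobenius. There I would instead parameterize $\U(\F_q)^\sigma$ by $\dim \U / d$ free parameters in $\F_q$, expressing the remaining root coordinates via $\theta$-th powers (where $\theta$ is the integer with $\theta^d \sim q$); the pullback $Q$ then has degree $O(D \theta)$ in $\dim \G / d$ variables over $\F_q$, and Schwartz--Zippel produces $O(D \theta)\, q^{\dim \G/d - 1} \sim D q^{-1/d} |\tilde G|$ zeros, since $\theta/q \sim q^{-1/d}$. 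Verifying the non-vanishing of $Q$ under this non-linear parameterization and handling the Chevalley commutator signs so that the $\sigma$-fixed unipotent subgroup admits the expected polynomial description will require some care, but these are the same ingredients that appear in Steinberg's original analysis of the Suzuki and Ree groups and should go through without essentially new ideas.
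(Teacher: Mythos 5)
Your overall plan---reduce to the large Bruhat cell, parameterize the $\sigma$-fixed points of the cell by a bounded number of coordinates over a smaller domain, and apply the classical Schwartz--Zippel lemma---is the same as the paper's, and your Suzuki--Ree argument matches the paper's treatment almost verbatim. The variation you propose in the Steinberg case (reparameterize each piece of $U^1$ by an $\F_{\tilde q}$-basis up front, so the pullback polynomial has degree $O(D)$) is a genuinely cleaner route than the paper's for the cases it covers: the paper instead keeps the ``twisted'' parameterization $t\mapsto(t,t^{\tilde q},\dots)$ with pullback of degree $O(D\tilde q^{d-1})$ and argues non-vanishing by a base-$\tilde q$ uniqueness-of-digits observation. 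Both give the same bound. (A small attribution nitpick: the matrix $(e_k^{\tilde q^{j-1}})_{j,k}$ is a Moore matrix, and its invertibility is exactly the statement that $\{e_k\}$ is $\F_{\tilde q}$-linearly independent; the normal basis theorem is not needed.)

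However, there is a genuine gap in your description of $U^1=\U(\F_q)^\sigma$ in the Steinberg case. You assert that $\rho$ partitions $\Phi^+$ into orbits $O_i$, that each $\prod_{\alpha\in O_i}\X_\alpha$ is $\sigma$-stable, and that its $\sigma$-fixed subset is a copy of $\F_{\tilde q^{s_i}}$. This fails whenever $\alpha,\rho(\alpha)$ form a two-element orbit with $\alpha+\rho(\alpha)$ also a root, which occurs (for instance) in ${}^2A_r$ with $r$ even. In that case $\X_\alpha$ and $\X_{\rho(\alpha)}$ do not commute, and the $\sigma$-fixed part of $\X_\alpha\X_{\rho(\alpha)}$ is trivial: writing $\sigma(x_\alpha(t)x_{\rho(\alpha)}(s))$ in standard order produces an extra factor $x_{\alpha+\rho(\alpha)}(\pm N_{\alpha,\rho(\alpha)}\tau(s)\tau(t))$, so $\sigma$-invariance forces $st=0$. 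The correct decomposition (Carter, Prop.~13.6.3) runs over a coarser partition $\Sigma$ of $\Phi^+$ into sets $S$ that are unions of $\rho$-orbits, and here $S=\{\alpha,\rho(\alpha),\alpha+\rho(\alpha)\}$ with
$$
X^1_S=\bigl\{\,x_\alpha(t)\,x_{\rho(\alpha)}(t^{\tilde q})\,x_{\alpha+\rho(\alpha)}(u)\ :\ t,u\in\F_q,\ u+u^{\tilde q}=-N_{\alpha,\rho(\alpha)}\,t\,t^{\tilde q}\,\bigr\},
$$
a $3$-dimensional nonabelian group cut out by a quadratic constraint, not a direct product of finite fields. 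Your ``bounded-degree $\F_{\tilde q}$-coordinates'' philosophy can still be made to work for this $S$---the paper does exactly that, via the explicit linear change of variables $(t,\bar t,u)=(a+ib,\,a-ib,\,-N(a^2-ib^2)/2+ic)$ for $a,b,c\in\F_{\tilde q}$ (with a characteristic-$2$ variant)---but it does not follow from the orbit-by-orbit decomposition you describe, and the case must be identified and handled explicitly.
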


We prove this proposition in Section \ref{non-conc-sec-untwist}.  Note that \eqref{Schwartz-1-gen} implies in particular that $\tilde G$ is ``sufficiently Zariski-dense'' in $\G$, in the sense that any polynomial over $\k$ that vanishes on $\tilde G$ but not on $\G(\k)$ must have degree $\gg q^{1/d}$.

With these preliminaries, we may now give the proof of Theorem \ref{mainthm}, except in the triality case $G = {}^3 D_4(q)$ which, as we shall see, requires separate treatment at some parts of the argument due to the high twist order $d=3$ in this case.

Henceforth we fix $G$, and let $\tilde G \subset \G(\F_q)^\sigma \subset \G(\F_{q}) \subset \G(\k) \subset \GL_m(\k)$ be as in the above discussion, with $\G$ being a linear algebraic group of complexity $O(1)$.

By Proposition \ref{machine}, we need to establish Proposition \ref{quasi}, Proposition \ref{prod}, and Proposition \ref{nonc} for general finite simple groups $G$ of Lie type.  We begin with the quasirandomness claim.

\begin{proof}[Proof of Proposition \ref{quasi}] In \cite{landazuri-seitz, seitz-zal}, it is shown that all non-trivial irreducible projective representations of $\tilde G/Z(\tilde G)$ have
dimension at least $|\tilde G|^{\beta}$ for some $\beta > 0$ depending only on the rank, which implies the analogous claim for irreducible linear representations of $\tilde G$. As noted in Section \ref{class}, we do not need the full strength of these results; it can be shown (see e.g. \cite[Theorem 4.1]{lubotzky}) that with the exception of the Suzuki group case $G = {}^2 B_2(2^{2k+1})$, the groups $\tilde G$ all contain a copy of either $\SL_2(\F_{\tilde q})$ or $\PSL_2(\F_{\tilde q})$ for some $\tilde q \gg q^c$, and furthermore that this copy and its conjugates generate all of $\tilde G$.  Thus one can reduce to the case of either Suzuki groups or $\PSL_2(\F_q)$, both of which can be treated by \cite[Lemma 4.1]{landazuri-seitz}.
\end{proof}

Now we turn to the product estimate, Proposition \ref{prod}.  In this generality the result is due to Pyber and Szab\'o \cite{pyber-szabo}. However it may also be deduced from the main result of \cite{bgt}, which was stated in the preceding section as Theorem \ref{mainthm-preciseform}.  Indeed, the proof is almost identical to the proof in the $A_r(q)$ case:

\begin{proof}[Proof of Proposition \ref{prod}]
Let $\delta>0$, and let $\delta'>0$ be sufficiently small depending on $\delta$ and $r$.  As in the preceding section, we may assume that $|\tilde G|$ (and thus $q$) is sufficiently large depending on $\delta,r$.

Let $A$ be a $|\tilde G|^{\delta'}$-approximate subgroup of $\tilde G$ with
\begin{equation}\label{gfd-again}
 |\tilde G|^\delta \leq |A| \leq |\tilde G|^{1-\delta}.
 \end{equation}
We apply Theorem \ref{mainthm-preciseform} in $\G(\k)$ with $K \coloneqq  |\tilde G|^{\delta'}$ and $M=O(1)$.  We conclude that one of the options (i), (ii), (iii) is true.  Exactly as in the preceding section, Option (ii) is ruled out from the lower bound of $|A|$ in \eqref{gfd}, and we are done if Option (iii) holds, so we may assume that Option (i) holds.  Applying \eqref{Schwartz-1-gen}, we conclude that
$$ |\langle A \rangle| \ll q^{-1/d} |\tilde G|,$$
and thus (if $q$ is large enough) $A$ does not generate $\tilde G$. The claim follows.
\end{proof}

In view of Proposition \ref{machine}, to prove Theorem \ref{mainthm} it thus suffices to establish Proposition \ref{nonc}.  To do this, we mimic the arguments from the previous section.  As before, we can pass from $G$ to the bounded cover $\tilde G$, and take $\k$ to be an uncountable algebraically closed field containing $\F_{q}$, and set $n \coloneqq  2 \lfloor c_0 \log |\tilde G| \rfloor$ for some small constant $c_0>0$.  We begin by generalising Proposition \ref{sob}.

\begin{lemma}[Maximal subgroups of simple algebraic groups]\label{max}  Suppose that $H < \tilde G$ is a proper subgroup. Then one of the following statements hold.
\begin{enumerate}
\item \textup{(Structural case)} $H$ lies in a proper algebraic subgroup of $\G$ of complexity $O(1)$.
\item \textup{(Subfield case)} Some conjugate of $H$ is contained in $\G(\F_{q'})$ for some proper subfield $\F_{q'}$ of $\F_q$ \textup{(}thus $q' = q^{1/j}$ for some $j>1$\textup{)}.
\end{enumerate}
\end{lemma}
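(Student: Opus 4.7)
The plan is to reduce the statement to the classification of maximal subgroups of finite (quasi)simple groups of Lie type, and then to show that every class in that classification fits into one of the two conclusions.

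First, we reduce to the case where $H$ is maximal in $\tilde G$: any proper subgroup is contained in a maximal one, and membership in either of the two listed possibilities is upward closed. Next, recall that $\tilde G$ is a bounded central extension of $G$, so maximal proper subgroups of $\tilde G$ project to either all of $G$ (in which case $H$ lies in $Z(\tilde G)\cdot H_0$ for some maximal $H_0$, which is automatically contained in a proper algebraic subgroup since $Z(\tilde G)$ is finite of bounded order) or to a maximal proper subgroup of $G$. Thus it suffices to classify the maximal proper subgroups of $G$ itself and lift the classification back to $\tilde G$.

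The main step is to appeal to the known classification of maximal subgroups of finite simple groups of Lie type. For classical $G$, this is Aschbacher's theorem \cite{asch}, which partitions maximal subgroups into the geometric classes $\mathcal{C}_1,\ldots,\mathcal{C}_8$ together with the almost simple class $\mathcal{S}$. For exceptional $G$, one uses the analogous Liebeck–Seitz classification \cite{liebeck-seitz}. Each geometric class $\mathcal{C}_i$ with $i\neq 5$ arises as the (group of $\F_q$-points of the) $\sigma$-fixed points of a proper closed subgroup $\HH\le \G$ stabilising an algebraic structure (subspace, direct sum or tensor decomposition, imprimitive system, field extension structure of opposite parity, classical form, etc.); the complexity of $\HH$ depends only on the type of the structure and hence only on the rank $r$, putting us in the structural case. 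The class $\mathcal{C}_5$ consists precisely of subfield subgroups, i.e.\ (conjugates of) $\G(\F_{q'})\cap \tilde G$ for $\F_{q'}$ a proper subfield of $\F_q$, which is the subfield case.

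The one class requiring genuine work, and which I expect to be the main obstacle, is the almost simple class $\mathcal{S}$: \emph{a priori} such an $H$ need not sit inside a bounded-complexity proper algebraic subgroup, and it need not be a subfield subgroup either. Here we invoke the theorem of Larsen and Pink \cite{larsen-pink} (supplemented by Liebeck–Seitz in the exceptional case): any finite almost simple subgroup $H$ of $\G(\overline{\F_q})$ of Lie type in the same characteristic is, up to bounded index, contained in a conjugate of $\G(\F_{q'})$ for some subfield $\F_{q'}\subseteq \F_q$ (putting us in the subfield case, noting that a bounded-index overgroup remains inside a proper algebraic subgroup once $q$ is large), while subgroups of Lie type in a different characteristic, as well as alternating and sporadic subgroups, have order bounded in terms of $r$ alone and therefore are contained in a proper algebraic subgroup of bounded complexity (e.g.\ a centraliser of a non-central element, or any proper algebraic subgroup containing a bounded finite subgroup), giving the structural case. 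Finally, the proper-subfield condition in (ii) forces $q'=q^{1/j}$ for some integer $j>1$ since finite subfields of $\F_q$ have this shape, completing the dichotomy. Throughout we allow implied constants to depend on the rank $r$, which is consistent with the $O(1)$ complexity bound in (i).
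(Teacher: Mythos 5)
Your proposal takes essentially the same route as the paper: reduce to the known classification of maximal subgroups via Aschbacher \cite{asch} for classical groups and Liebeck--Seitz \cite{liebeck-seitz} for exceptional ones, deal with the geometric classes by identifying them with $\sigma$-fixed points of bounded-complexity algebraic subgroups (plus the subfield class), and handle the residual almost simple class $\mathcal{S}$ via Larsen--Pink \cite{larsen-pink} together with order bounds such as those in \cite{landazuri-seitz}. The paper's proof is a terse citation-dump of exactly these sources (plus \cite{steinberg-yale} for the same-characteristic Lie-type socle case), so you have reconstructed the intended argument.

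Two of the steps you \emph{add} are shaky, though neither threatens the main line. First, your treatment of the lift from $G$ to $\tilde G$ is garbled: if a maximal $H<\tilde G$ projected onto all of $G$, the correct conclusion is not ``$H$ lies in $Z(\tilde G)\cdot H_0$, hence is structural'' but rather that $H$ cannot exist at all, since $\tilde G$ is perfect: $H\cdot Z(\tilde G)=\tilde G$ forces $\tilde G=[\tilde G,\tilde G]=[H Z(\tilde G),H Z(\tilde G)]\leq H$, a contradiction. Second, the parenthetical ``a bounded-index overgroup remains inside a proper algebraic subgroup once $q$ is large'' is asserted without justification; making the passage from a bounded-index normal subgroup inside $\G(\F_{q'})$ (as Larsen--Pink actually delivers) to $H$ itself being subfield or structural requires an extra argument about normalisers or about the maximality of $H$ in the classification, which the paper elides by citing \cite{steinberg-yale} and \cite{liebeck-seitz} for this class directly.
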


\begin{proof}
Much more detailed results than this can be established using CFSG.
In particular, by the main results of \cite{asch} for classical groups and
Liebeck-Seitz \cite{liebeck-seitz} for exceptional groups, all maximal subgroups are known aside
from almost simple subgroups.   If the almost simple groups are finite groups of Lie
type, then by the representation theory of such groups in the classical case \cite{steinberg-yale}
or by \cite{liebeck-seitz}, they will fall into one of the two cases above.
 If the almost simple groups are not of Lie
type in the same characteristic as $\tilde G$, then there is a bound on their
order (e.g. by \cite{landazuri-seitz}).   The result also follows\footnote{In the case when $q = p^a$ for some bounded $a$, one could also use the results of Nori \cite{nori}.} by \cite[Theorem 0.5]{larsen-pink}, which is independent of CFSG.


\end{proof}


As in the preceding section, we classify the proper subgroups $H$ of $\tilde G$ into structural and subfield subgroups using the above lemma.  Then our task is to establish the analogues of \eqref{structural-case} and \eqref{subfield-case}, namely

\begin{align}\nonumber
 \P_{a,b \in \tilde G}( \P_{w \in W_{n,2}}(w(a,b) \in H) \leq |\tilde G|^{-\gamma} &\hbox{ for all structural } H < \tilde G ) \\  &= 1 - O(|\tilde G|^{-\gamma}),\label{structural-case-2}
\end{align}
and
\begin{align}\nonumber
 \P_{a,b \in \tilde G}( \P_{w \in W_{n,2}}(w(a,b) \in H) \leq |\tilde G|^{-\gamma} &\hbox{ for all subfield } H < \tilde G ) \\ &= 1 - O(|\tilde G|^{-\gamma}).\label{subfield-case-2}
\end{align}

We begin with the proof of \eqref{structural-case-2}.  Arguing exactly as in the preceding section, it will suffice to obtain the analogue of \eqref{wawa-2}, namely
\begin{equation}\label{wawa-3}
 \P_{a,b \in \tilde G}( w(a,b), w'(a,b) \in H \hbox{ for some structural } H < \tilde G ) \ll |\tilde G|^{-3\gamma}.
\end{equation}

As before, the next step is to invoke Proposition \ref{crit} for some sufficiently large $N=O(1)$.   This gives a subvariety $X_N(\k)$ of $\G(\k) \times \G(\k)$ of complexity $O(1)$, such that $(x,y) \in X_N$ whenever $x, y$ both lie in the same structural subgroup of $\tilde G$, and conversely $x,y$ lie in a proper algebraic subgroup of $\G(\k)$ of complexity $O(1)$ whenever $(x,y) \in X_N(\k)$.  Thus it will suffice to show that
$$ |\Sigma_{w,w'} \cap (\tilde G \times \tilde G)| \ll |\tilde G|^{2-3\gamma},$$
where
$$ \Sigma_{w,w'} \coloneqq  \{ (a,b) \in \G \times \G: (w(a,b),w'(a,b)) \in X_N \}.$$
By using Theorem \ref{sec3-key-prop} as in the preceding section, we know that $\Sigma_{w,w'}(\k)$ is a \emph{proper} subvariety of $\G(\k) \times \G(\k)$.  As $w,w'$ have length $O(n)$, we see that the complexity of this variety is also $O(n)$. Applying \eqref{Schwartz-2-gen}, we conclude that
$$ |\Sigma_{w,w'}(\k) \cap (\tilde G \times \tilde G)| \ll n^{O(1)} q^{-1/d} |\tilde G|^2$$
and the claim follows from \eqref{lang-weil} and the logarithmic size $n = O(\log |\tilde G|)$ of $n$.

It remains to establish the subfield case \eqref{subfield-case-2}.
As in the previous section, it suffices to show that
\begin{equation}\label{abg}
 \P_{a,b \in \tilde G} (\gamma_i(w(a,b)) \in \F_{q'} \hbox{ for all } 1 \leq i \leq m ) \ll |\tilde G|^{-3\gamma}
\end{equation}
for all proper subfields $\F_{q'}$ of $\F_q$ and words $w \in W_{n,2}$ with $w(e_1,e_2) \neq 1$.  Using the Schwartz-Zippel type estimate \eqref{Schwartz-2-gen} as in the previous section, we see that
$$ \P_{a,b \in \tilde G} (\gamma_i(w(a,b)) = x_i ) \ll n^{O(1)} q^{-1/d},$$
whenever $1 \leq i \leq m$ and $x_i \in \F_{(q')^d} \backslash \gamma_i(1)$.  Also, since $\G$ cannot consist entirely of unipotent elements, the argument from the preceding section also gives
$$ \P_{a,b \in \tilde G}((w(a,b)-1)^m = 0) \ll n^{O(1)} q^{-1/d}$$
and so we can bound the left-hand side of \eqref{abg} by $O( n^{O(1)} q' q^{-1/d} )$.

We now split into several cases, depending on the value of the twist order $d$.  We first consider the easiest case, namely the untwisted case when $d$ is equal to $1$.  Since $q' \leq q^{1/2}$, the claim then follows from \eqref{lang-weil} by choosing $c_0$ and $\gamma$ small enough.

Now suppose that $G$ is twisted, but is not a triality group ${}^3 D_4(q)$, so that $d$ is equal to $2$.  Then the above arguments give the claim \eqref{subfield-case-2} in this case so long as we restrict attention to subfield subgroups $H$ associated to subfields $\F_{q'}$ of index three or greater in $\F_q$, so that $q' \leq q^{1/3}$.  This leaves the subfield subgroups associated to a subfield $\F_{q^{1/2}}$ of index $2$.  Fortunately, in those cases, the subfield subgroups turn out to also be structural subgroups, and thus can be treated by \eqref{structural-case-2}:

\begin{lemma}\label{subfield-to-struct}
If $G = {}^2 \D(q)$ is a twisted group that is not a triality group, and $\F_{q'}$ is a subfield of $\F_q$ of index $2$, then $\tilde G \cap \G(\F_{q'})$ is contained in a proper subvariety of $\G$ of complexity $O(1)$.  In particular, every subfield subgroup of $\tilde G$ associated to $\F'$ is also a structural subgroup.
\end{lemma}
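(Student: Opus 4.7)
The plan is to exploit the identity $\sigma = \tau\rho$ together with the observation that $\F_{q'}$ is exactly the fixed field of $\tau$ on $\F_q$. First I would dispose of the Suzuki-Ree cases: for $G$ one of ${}^2B_2(q)$, ${}^2F_4(q)$, ${}^2G_2(q)$ one has $q = p\theta^2 = p^{2k+1}$, an odd power of the characteristic, so $\F_q$ has no subfield of index $2$ at all and the statement is vacuous. This leaves only the Steinberg non-triality cases ${}^2A_r$, ${}^2D_r$, ${}^2E_6$, where by Definition \ref{dfs} we have $d=2$, $q = \tilde q^2$, and $\tau$ is the entrywise Frobenius $x \mapsto x^{\tilde q}$, whose fixed field on $\F_q$ is precisely $\F_{\tilde q} = \F_{q^{1/2}} = \F_{q'}$.

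Next I would run the key three-line computation. Let $g \in \tilde G \cap \G(\F_{q'})$. Since every entry of $g$ lies in $\F_{q'}$, we have $\tau(g) = g$. On the other hand, $g \in \tilde G \subseteq \G(\F_q)^\sigma$ gives $\tau(\rho(g)) = \sigma(g) = g$; applying $\tau^{-1}$ to both sides (and using $\tau(g) = g$) yields $\rho(g) = g$. Hence
\[
  \tilde G \cap \G(\F_{q'}) \;\subseteq\; \G^{\rho}(\k) \coloneqq \{g \in \G(\k) : \rho(g) = g\}.
\]

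Finally, I would verify that $\G^\rho$ is a proper closed algebraic subgroup of $\G$ of complexity $O(1)$. The complexity bound is automatic: $\rho$ is a polynomial morphism on $\Mat_m(\k)$ of degree $O(1)$ (in fact it extends to an algebraic involution of $\GL_m$ coming from the pinned graph automorphism), so the equations $\rho(g) = g$ cut out $\G^\rho$ as a subvariety of $\G$ defined by finitely many polynomial equations of bounded degree. Properness follows because $\rho$ acts non-trivially on $\G$ (it is induced by a non-trivial Dynkin diagram automorphism of the semisimple group $\G$), so the differential $d\rho$ is a non-trivial involution of the Lie algebra $\Lie(\G)$; its $(+1)$-eigenspace is a proper subspace, and $\Lie(\G^\rho)$ is contained in it. This forces $\dim \G^\rho < \dim \G$, so $\G^\rho$ is a proper subvariety. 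For the ``in particular'' clause, any subfield subgroup $H$ is $\tilde G$-conjugate to a subgroup of $\tilde G \cap \G(\F_{q'})$, hence contained in the corresponding conjugate of $\G^\rho$, which is again a proper algebraic subgroup of $\G$ of complexity $O(1)$; this is exactly the structural property.

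The only genuine point requiring care is the assertion that $\G^\rho$ is a proper subvariety, which is standard (either via the Lie algebra argument above, or by appealing to Steinberg's theorem on fixed-point subgroups of semisimple automorphisms of connected reductive groups); in small characteristic one should simply note that the Dynkin-level automorphism $\rho$ remains non-trivial on $\G$ for all the groups ${}^2A_r$, ${}^2D_r$, ${}^2E_6$ under consideration, so no characteristic-dependent pathology intervenes.
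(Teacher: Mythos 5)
Your proof is correct and follows essentially the same route as the paper: dispose of the Suzuki–Ree cases by the form of $q$, deduce $\rho$-invariance of $\tilde G \cap \G(\F_{q'})$ from $\sigma = \tau\rho$ and $\tau$-fixedness, and observe that $\G^\rho$ is a proper closed subvariety of bounded complexity. The only (minor) additions are your Lie-algebra argument for properness and the slightly different justification for excluding Suzuki–Ree, both of which the paper treats more briefly.
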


\begin{proof}  As $q = (\tilde q)^2$ is a perfect square, $G$ cannot be a Suzuki-Ree group, thus the only remaining possibilities are Steinberg groups with $d=2$ and $\D = A_r$, $D_r$, or $E_6$.  In these cases, the field automorphism $\tau\colon \F_q \to \F_q$ is the Frobenius map $x \mapsto x^{\tilde q}$ that fixes $\F_{\tilde q}$.  In particular, $\G(\F_{\tilde q})$ is fixed by $\tau$; since $\tilde G$ is fixed by $\rho \tau$, we conclude that $\tilde G \cap \G(\F_{\tilde q})$ is fixed by the graph automorphism $\rho$, thus we have $\tilde G \cap \G(\F_{\tilde q}) \subset \G(\k)^\rho$, where $\G(\k)^\rho$ is the subvariety of $\G(\k)$ fixed by $\rho$.  Since we are in the simply laced case $\D = A_r, D_r, E_6$, the roots all have the same length, and the action on $\rho$ can be defined on $\G(\k)$ for any field $\k$ as an algebraic map of complexity $O(1)$; see \cite[12.2]{carter}.
As $\rho$ is non-trivial, we conclude that $\G(\k)^\rho$ is a proper subvariety of $\G(\k)$ of complexity $O(1)$, and the claim follows.
\end{proof}

This concludes the proof of \eqref{subfield-case-2} in the case when $G$ is twisted but not a triality group.  The triality group case $G = {}^3 D_4(q)$ does not seem to be fully treatable by the above arguments, and we will present this case separately using an \emph{ad hoc} argument in Section \ref{d4-sec}.

\section{Schwartz-Zippel estimates}\label{non-conc-sec-untwist}

In this section we establish the Schwartz-Zippel bounds in Proposition \ref{sufficiently-zdense-prop}. We already proved instances of this proposition in the $A_r(q)$ case (see Lemma \ref{Schwartz}) and the Suzuki case (see \cite[Lemma 4.2.]{suzuki}). We prove here the general case. Once again a suitable parametrization of the big Bruhat cell $\G$ will be key to the proof.
  
Let $d \in \{1,2,3\}$ and $\k$ an algebraically closed field of characteristic $p$. As above $q$ denotes a power of $p$. 

\begin{definition}[Schwartz-Zippel property]  Let $V$ be an affine variety over $\k$ of complexity $O(1)$, and let $A$ be a finite subset of $V$.  We say that $(A,V)$ has the \emph{Schwartz-Zippel property} (w.r.t $q$ and with constant $c>0$) if one has
$$ |\{ a \in A: P(a) = 0 \}| \leq c D q^{-1/d} |A|$$
whenever $P$ is a polynomial on $V$ of degree $D$ that does not vanish identically on $V$.
\end{definition}

The constant $c>0$ will be allowed to depend on the complexity of $V$, but not on $q$. Our task is thus to show that $(\tilde G, \G)$ and $(\tilde G \times \tilde G, \G \times \G)$ have the Schwartz-Zippel property with respect to all $q$ and for some fixed constant $c>0$ independent of $q$.  To this end, we will rely frequently on the following simple facts that will allow us to reduce the task of verifying the Schwartz-Zippel property for a complicated pair $(A,V)$ to simpler pairs $(A',V')$. In the lemma below we fix $q$, keeping in mind that the constants $c>0$ from the above definition are not allowed to depend on $q$.

\begin{lemma}[Basic facts about the Schwartz-Zippel property]\label{prol} \ \
\begin{itemize}
\item[(i)]  If $(A_1,V_1)$ and $(A_2,V_2)$ have the Schwartz-Zippel property, then so does $(A_1 \times A_2, V_1 \times V_2)$ \textup{(}with slightly worse constants $c$\textup{)}.
\item[(ii)]  Let $Q = Q_1/Q_2\colon V \to W$ be a rational map between affine varieties with dense image, where $Q_1,Q_2$ are polynomials of degree $O(1)$ with $Q_2$ never vanishing on $V$, and let $A$ be a finite subset of $V$.  Suppose that all the preimages $\{ a \in A: Q(a)=b\}$ for $b \in Q(A)$ have the same cardinality.  Then if $(A, V)$ has the Schwartz-Zippel property, $(Q(A),W)$ does also.
\item[(iii)]  Suppose that $V$ is a Zariski-dense subvariety of $W$, $B$ is a finite subset of $W$, and $A$ is a subset of $B \cap V$ with $|B \backslash A| \ll q^{-1/d} |B|$.  Then $(A,V)$ has the Schwartz-Zippel property if and only if $(B,W)$ has the Schwartz-Zippel property.
\end{itemize}
\end{lemma}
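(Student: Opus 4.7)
The plan is to prove the three parts in order, each by a short direct argument; the only mild subtleties are combinatorial and arise from bookkeeping of constants, so I will keep track of how many times a constant $c$ gets multiplied.

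For part (i), I would use the standard two-variable Schwartz--Zippel slicing argument. Given a polynomial $P$ on $V_1\times V_2$ of degree $D$ not identically vanishing, I view $P$ as a polynomial in the $V_1$-coordinates whose coefficients are polynomials of degree at most $D$ on $V_2$. At least one of these coefficients, call it $Q$, is not identically zero on $V_2$, so by the Schwartz--Zippel property for $(A_2,V_2)$ the set $A_2' \coloneqq \{a_2\in A_2:Q(a_2)=0\}$ has size at most $cDq^{-1/d}|A_2|$. For every $a_2\in A_2\setminus A_2'$, the polynomial $P(\cdot,a_2)$ is a nonzero polynomial of degree $\le D$ on $V_1$, so the Schwartz--Zippel property for $(A_1,V_1)$ bounds the number of bad $a_1\in A_1$ by $cDq^{-1/d}|A_1|$. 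Summing over the two cases (the bad $a_2$'s contribute at most $|A_1|$ each) gives $2cDq^{-1/d}|A_1\times A_2|$, as required.

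For part (ii), given a polynomial $P$ on $W$ of degree $D$ not identically vanishing on $W$, I would form the pullback $\tilde P \coloneqq Q_2^D \cdot (P\circ (Q_1/Q_2))$, which, after clearing denominators, is a genuine polynomial on $V$ of degree $O(D)$ (the implied constant depending on the degrees of $Q_1,Q_2$, which are $O(1)$). Since $Q_2$ never vanishes on $V$ and $Q(V)$ is Zariski-dense in $W$, the polynomial $\tilde P$ cannot vanish identically on $V$; by the Schwartz--Zippel property for $(A,V)$, the preimage $\{a\in A:P(Q(a))=0\}$ has size at most $cD'q^{-1/d}|A|$. Using the uniform-fiber hypothesis that each fiber of $Q|_A$ over $Q(A)$ has the same cardinality $k$, we have $|A|=k|Q(A)|$ and $|\{b\in Q(A):P(b)=0\}|=k^{-1}|\{a\in A:P(Q(a))=0\}|$, from which the desired bound on $(Q(A),W)$ follows.

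For part (iii), I would handle the two implications symmetrically, using that $V$ and $W$ have the same ring of polynomial functions modulo vanishing on $V$ (since $\overline V=W$). In the forward direction, a polynomial $P$ of degree $D$ on $W$ not identically vanishing on $W$ restricts to a polynomial on $V$ not identically vanishing on $V$ (by Zariski density), so Schwartz--Zippel on $(A,V)$ bounds the vanishing set in $A$, and then $|\{b\in B:P(b)=0\}|\le|\{a\in A:P(a)=0\}|+|B\setminus A|\le (c+C)Dq^{-1/d}|B|$. In the backward direction, a polynomial $P$ on $V$ of degree $D$ is the restriction of a polynomial of the same degree on ambient affine space, which restricts to a polynomial on $W$; non-vanishing on $V$ forces non-vanishing on $W$ by Zariski density. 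Then Schwartz--Zippel on $(B,W)$ gives the bound against $|B|$, and the closeness $|B\setminus A|\le Cq^{-1/d}|B|$ gives $|B|\le 2|A|$ once $q$ is large enough that $Cq^{-1/d}\le 1/2$, completing the transfer; for the finitely many small $q$ not covered, the Schwartz--Zippel property is trivial since the constant $c$ may be chosen large.

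The main obstacle I expect is not substantive but cosmetic: making sure the constant $c$ at each stage remains absolute (depending only on the complexities of the varieties involved), and, in part (iii), handling the regime of small $q$ where $q^{-1/d}$ is not small by absorbing everything into the constant. The algebraic content in each part is a one-line observation (slicing, pullback, density), and no deep tool is required.
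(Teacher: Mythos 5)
Parts (ii) and (iii) of your proposal are essentially correct and run along the same lines as the paper's proof: for (ii) you clear denominators to get a genuine polynomial and use the uniform-fibre hypothesis exactly as the paper does, and for (iii) you observe non-vanishing transfers between $V$ and $W$ by Zariski density and absorb the error via $|B\setminus A|\ll q^{-1/d}|B|$. (Two small remarks: in the backward direction of (iii) you do not even need density to pass non-vanishing from $V$ to $W$, since $V\subseteq W$; and the absorption of small $q$ into the constant $c$ is fine.)

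However, your argument for part (i) has a genuine gap. You write $P(x,y)=\sum_\alpha c_\alpha(y)\,x^\alpha$, pick a coefficient $Q=c_\alpha$ not identically zero on $V_2$, and then assert that for every $a_2$ with $Q(a_2)\neq 0$ the slice $P(\cdot,a_2)$ is ``a nonzero polynomial on $V_1$.'' But $Q(a_2)\neq 0$ only ensures $P(\cdot,a_2)$ is nonzero as a polynomial on the ambient affine space; since $V_1$ is in general a \emph{proper} subvariety, $P(\cdot,a_2)$ may nonetheless vanish identically on $V_1$, in which case you cannot invoke the Schwartz--Zippel property for $(A_1,V_1)$, and indeed \emph{all} of $A_1$ becomes ``bad'' for that $a_2$. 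For a concrete counterexample take $V_1=\{x_1x_2=1\}\subset\k^2$, $V_2=\k$, and $P(x_1,x_2,y)=(x_1x_2-1)+x_1y$: this does not vanish on $V_1\times V_2$, yet the coefficient of $x_1x_2$ is the constant $1$ (so $Q=1$ is a legitimate choice, with $A_2'=\emptyset$), while $P(\cdot,0)=x_1x_2-1$ vanishes identically on $V_1$. One could patch this by reducing $P$ in $\k[V_1][y]$ and choosing a coefficient of the reduced form, but this is not what you wrote and brings in coordinate-ring technicalities. The paper avoids all of this by instead fixing a witness $(v_1,v_2)\in V_1\times V_2$ with $P(v_1,v_2)\neq 0$ and slicing at $v_2$: then $P(\cdot,v_2)$ is guaranteed nonvanishing on $V_1$ (witnessed by $v_1$), Schwartz--Zippel for $(A_1,V_1)$ bounds the bad $a_1$'s, and for each good $a_1$ the slice $P(a_1,\cdot)$ is nonvanishing on $V_2$ (witnessed by $v_2$), so Schwartz--Zippel for $(A_2,V_2)$ finishes the count. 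Your slicing direction and witness selection should be replaced by this.
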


\begin{proof}  The claim (i) follows by repeating the derivation of \eqref{Schwartz-2-gen} from \eqref{Schwartz-1-gen} in the proof of Lemma \ref{Schwartz}.  Indeed, if $P$ is a polynomial of degree at most $D$ that does not vanish on $V_1 \times V_2$, then we have $v_1 \in V_1, v_2 \in V_2$ for which $P(v_1,v_2) \neq 0$.  As $(A_1,V_1)$ has the Schwartz-Zippel property, we see that
$$ |\{ a_1 \in A_1: P(a_1,v_2) = 0 \}| \ll D q^{-1/d} |V_1|,$$
while for any $a_1 \in A_1$ with $P(a_1,v_2) \neq 0$, we see from the Schwartz-Zippel property of $(A_2,V_2)$ that
$$ |\{ a_2 \in A_2: P(a_1,a_2) = 0 \}| \ll D q^{-1/d} |V_2|.$$
Summing over $a_1 \in A_1$, we obtain the claim.

To prove (ii), let $P$ be a polynomial on $W$ of degree at most $D$ that does not vanish identically on $W$, and hence on the dense subset $Q(V)$.  Then $P \circ Q$ takes the form $R/Q_2^l$ for some polynomial $R$ on $V$ of degree $O(D)$ that does not vanish identically on $V$, and some natural number $l$.  If $(A,V)$ has the Schwartz-Zippel property, we conclude that
$$ | \{ a \in A: P(Q(a)) = 0 \}| = | \{ a \in A: R(a) = 0 \}| \ll D q^{-1/d} |A|.$$
Since all the preimages of $Q(A)$ in $A$ have the same cardinality, we conclude that
$$ | \{ b \in Q(A): P(b) = 0 \}| \ll D q^{-1/d} |Q(A)|$$
as required.

To prove (iii), suppose first that $(A,V)$ has the Schwartz-Zippel property, and $P$ is a polynomial of degree $D$ on $W$ that does not vanish identically on $W$.  Then $P$ does not vanish identically on $V$ either, as $V$ is Zariski-dense.  We conclude that
$$ | \{ a \in A: P(a) = 0 \}| \ll D q^{-1/d} |A|.$$
Since $|B \backslash A| \ll q^{-1/d} |B|$, we conclude that
$$ | \{ b \in B: P(b) = 0 \}| \ll D q^{-1/d} |B|$$
as required.  The converse implication is established similarly.
\end{proof}

By Lemma \ref{prol}(i) we see that to prove Proposition \ref{sufficiently-zdense-prop}, we only need to show that $(\tilde G, \G)$ has the Schwartz-Zippel property.

The claim is trivial when the field order $q$ is bounded, so we may assume that $q$ is sufficiently large (which will allow us to avoid some degenerate cases when $q$ is small).

As in the case of the special linear groups $\tilde G = \SL_m(\F_q)$ considered in Section \ref{class}, the main strategy here is to exploit the Bruhat decomposition to parameterise (the large Bruhat cell of) $\tilde G$ as rational combinations of a bounded number of coordinates in ``flat'' domains such as $\F_q$ or $\F_q^\times$, for which the (ordinary) Schwartz-Zippel lemma may be easily applied.  As it turns out, the argument from Section \ref{class} may be adapted without difficulty for the untwisted groups $G = \D(q)$, and also works with only a small amount of modification for the Steinberg groups ${}^2 A_r(\tilde q^2)$, ${}^2 D_r(\tilde q^2)$, ${}^2 E_6(\tilde q^2)$, ${}^3 D_4(\tilde q^3)$ and Suzuki-Ree groups ${}^2 B_2(2^{2k+1}), {}^2 F_4(2^{2k+1}), {}^2 G_2(3^{2k+1})$, the main difference in the latter cases being that the coordinates either essentially take values in $\F_{\tilde q}$ rather than $\F_q$, or involve polynomials that are ``twisted'' by the Frobenius map $x \mapsto x^{\tilde q}$ (in the Steinberg cases) or $x \mapsto x^\theta$ (in the Suzuki-Ree cases).  Fortunately, these twisted polynomials can still be handled\footnote{In principle, we could invoke the results of Hrushovski \cite{hrush-twist} here as was done in \cite{gamburdHSSV}; the model-theoretic arguments in that paper do not seem to readily give bounds that are linear (or at least polynomial) in the degree $D$, which is essential for our application.  In any event, the twisted polynomials we are reduced to studying are simple enough that they can be easily controlled by hand.} by the basic Schwartz-Zippel estimate in Lemma \ref{basic-schwartz-zippel}, at the cost of reducing the gain of $O(1/q)$  to $O(1/\tilde q) = O(q^{-1/d})$ or $O(\theta/q) = O(q^{-1/d})$.  This type of strategy was already used for the Suzuki groups ${}^2 B_2(2^{2k+1})$ in \cite{suzuki}, and it turns out that the other Suzuki-Ree groups ${}^2 F_4(2^{2k+1}), {}^2 G_2(3^{2k+1})$ can be handled in a similar fashion.   In all of these cases we will rely heavily on the parameterisations of $\tilde G$ given in the text of Carter \cite{carter}, together with many uses of Lemma \ref{prol} to reduce to working with ``one-parameter'' subgroups of $\tilde G$ or $\G$.

It is possible to treat all three cases (untwisted, Steinberg, and Suzuki-Ree) of Proposition \ref{sufficiently-zdense-prop} in a unified manner, but for pedagogical purposes we shall treat these cases in increasing order of difficulty.\vspace{11pt}

\textsc{The untwisted case.} We begin with the untwisted case $d=1$, where we may basically adapt the arguments for the special linear group from Section \ref{class}.  Here we have $\tilde G = \G(\F_q)$, with $\G$ a Chevalley group.  As such (see e.g. \cite[Chapters 4-8]{carter}), we can find algebraic subgroups $\B, \N, \T, \U$ of $\G$, in which the \emph{maximal torus} $\T$ is abelian, $\N$ contains $\T$ as a finite index normal subgroup, $\U$ is a group of unipotent matrices that are normalised by $\T$, and $\B = \T\U = \U\T$ is the \emph{Borel subgroup}.  The finite group $W \coloneqq  \N/\T$ is known as the \emph{Weyl group} of $\G$, and is of order $O(1)$.  We then have the decomposition
$$ \G = \bigsqcup_{\w \in W} \U \T n_\w \U_\w^-$$
where for each $\w \in W$, $n_\w$ is an (arbitrarily chosen) representative of $\w$ in $\N$, and $\U_\w^-$ is a certain algebraic subgroup of $\U$ (normalised by $\T$) which we will discuss in more detail later, with every $g \in \G$ having a unique decomposition
$$ g = u_1 h n_\w u$$
with $u_1 \in \U$, $h \in \T$, $\w \in W$, $u \in \U_\w^-$; see \cite[Corollary 8.4.4]{carter}.  (Note that in \cite{carter}, the maximal torus is denoted $H$ instead of $\T$.)  This decomposition descends to the field $\F_q$, so that
$$ \G(\F_q) = \bigsqcup_{\w \in W} \U(\F_q) \T(\F_q) n_\w \U_\w^-(\F_q)$$

The unipotent group $\U$ can be decomposed further.  There is a finite totally ordered set $\Phi^+$ of cardinality $O(1)$ (the set of \emph{positive roots}) associated to $\G$, and for each element $\alpha$ in $\Phi^+$, there is an injective algebraic homomorphism $x_\alpha\colon \k \to \G(\k)$ (where $\k$ is viewed as an additive group) of complexity $O(1)$ which is also defined over $\F_q$.  The exact construction of $\Phi^+$ and the $x_\alpha$ will not be important to us, but see \cite[5.1]{carter} for details.  The image of $x_\alpha$ is thus a one-dimensional algebraic subgroup of $\G$ which we will denote $\X_\alpha$.  The group $\U$ can then be parameterised as
\begin{equation}\label{factor}
 \U = \prod_{\alpha \in \Phi^+} \X_\alpha
\end{equation}
where the product is taken in increasing order, and furthermore each element $u$ of $\U$ has a unique representation in the form
$$ u = \prod_{\alpha \in \Phi^+} \X_\alpha(u_\alpha)$$
with $u_\alpha \in \k$; see \cite[Theorem 5.3.3]{carter}.  This factorisation descends to $\F_q$, thus if $u \in \U(\F_q)$ then the coordinates $u_\alpha$ lie in $\F_q$, and conversely.  In particular, we see that $|\U(\F_q)| = q^{|\Phi^+|}$.

For each word $\w \in W$, the group $\U_\w^-$ mentioned earlier can be factorised as
$$ \U_\w^- \coloneqq  \prod_{\alpha \in \Psi_\w} \X_\alpha$$
for some subset $\Psi_\w$ of $\Phi^+$; see \cite[8.4]{carter}.  In particular, $|\U_\w^-(\F_q)| = q^{|\Psi_\w|}$.  There is a unique element $\w_0$ of $W$, which we call the \emph{long word} with the property that $\Psi_\w = \Phi^+$; see \cite[Proposition 2.2.6]{carter}.  In particular, we have $|\U_\w^-(\F_q)| = O(q^{-1} |\U(\F_q)|)$ for all $\w \neq \w_0$, and so the large Bruhat cell $\B(\F_q) n_{\w_0} \B(\F_q) = \U(\F_q) \T(\F_q) n_{\w_0} \U(\F_q)$ occupies most of $\tilde G$:
\begin{equation}\label{gbwb-2}
 |\tilde G| = (1+O(1/q)) |\B(\F_q) n_{\w_0} \B(\F_q)|.
\end{equation}
This of course generalises \eqref{gbwb}.  A similar argument shows that $\B n_{\w_0} \B$ is Zariski-dense in $\G$.  Thus, by Lemma \ref{prol}(iii), to show that $(\tilde G, \G)$ has the Schwartz-Zippel property, it suffices to establish the Schwartz-Zippel property for the pair
$$(\B(\F_q) n_{\w_0} \B(\F_q), \B n_{\w_0} \B) =  (\U(\F_q) \T(\F_q) n_{\w_0} \U(\F_q),  \U \T n_{\w_0} \U ).$$
Composing with the map $(u_1,h,u) \mapsto u_1 h n_{\w_0} u$, which is a polynomial map of degree $O(1)$, it thus suffices by Lemma \ref{prol}(ii) to show that the pair
$$
(\U(\F_q) \times \T(\F_q) \times \U(\F_q), \U \times \T \times \U )
$$
has the Schwartz-Zippel property.  By Lemma \ref{prol}(i), it thus suffices to show that $(\U(\F_q),\U)$ and $(\T(\F_q),\T)$ have the Schwartz-Zippel property.

By using the factorisation \eqref{factor} to parameterise $\U$ and $\U(\F_q)$ (using polynomial maps of degree $O(1)$) and Lemma \ref{prol}(ii), we see that the Schwartz-Zippel property for $(\U(\F_q),\U)$ will follow from the Schwartz-Zippel property for $(\F_q^{\Phi^+}, \k^{\Phi^+})$; but this latter property follows from Lemma \ref{basic-schwartz-zippel}.  Now we turn to the Schwartz-Zippel property for $(\T(\F_q),\T)$.  The abelian algebraic group $\T$ is generated by a family $(\HH_\alpha)_{\alpha \in \Pi}$ of commuting one-parameter subgroups
$$ \HH_\alpha(\k) \coloneqq  \{ h_\alpha(\lambda): \lambda \in \k^\times \}$$
where $h_\alpha\colon \k^\times \to \G(\k)$ is a homomorphism (viewing $\k^\times$ as a multiplicative group), with $h_\alpha(t)$ being a polynomial of degree $O(1)$ divided by a monomial in $t$, also of degree $O(1)$, and $\alpha$ ranges over a set $\Pi$ of cardinality $O(1)$ (the set of fundamental roots).    The exact construction of $h_\alpha$ and $\Pi$ will not be important to us, but see \cite[7.1]{carter} for details.  The factorisation can be localised to $\F_q$, thus
\begin{equation}\label{tfqh}
 \T(\F_q) = \prod_{\alpha \in \Pi} \HH_\alpha(\F_q)
\end{equation}
and
$$ \HH_\alpha(\F_q) \coloneqq  \{ h_\alpha(\lambda): \lambda \in \F_q^\times \}.$$
The product decomposition in \eqref{tfqh} is not unique, but as all groups here are abelian, every element in $\T(\F_q)$ has the same number of representations as a product of elements in $\HH_\alpha(\F_q)$.  Thus by Lemma \ref{prol}(ii) and Lemma \ref{prol}(i), to establish the Schwartz-Zippel property for $(\T(\F_q),\T)$, it suffices to do so for each $(\HH_\alpha(\F_q), \HH_\alpha)$ with $\alpha \in \Pi$.
By another application\footnote{Strictly speaking, Lemma \ref{prol}(ii) does not quite apply here because $h_\alpha(\lambda)$ is rational instead of polynomial, with a denominator that is a monomial in $\lambda$ of degree $O(1)$, but the proof of Lemma \ref{prol}(ii) still applies after clearing denominators, since $\lambda$ is non-vanishing on $\F_q^\times$.} of Lemma \ref{prol}(ii), it suffices to show that $(\F_q^\times, \k^\times)$ has the Schwartz-Zippel property; by Lemma \ref{prol}(iii), it suffices to establish this property for $(\F_q,\k)$.  But this again follows from Lemma \ref{basic-schwartz-zippel}.\vspace{11pt}

\textsc{The Steinberg case.} Now we adapt the previous argument to establish \eqref{Schwartz-1-gen} the case of Steinberg groups.  The arguments here will work to some extent for the Suzuki-Ree groups as well; we will indicate the point where the two cases diverge.

As in the untwisted case, the algebraic group $\G$ contains a Borel subgroup $\B$, a maximal torus $\T$, a unipotent group $\U$, and a group $\N$ containing $\T$ as a finite index subgroup.  In the twisted case we also have the automorphism $\sigma\colon \G(\F_q) \to \G(\F_q)$.  We then form the groups
\begin{align*}
U^1 &\coloneqq  \{ u \in \U(\F_q): \sigma(u) = u\} \\
T^1 &\coloneqq  \T(\F_q) \cap \tilde G \\
N^1 &\coloneqq  \N(\F_q) \cap \tilde G \\
W^1 &\coloneqq  N^1 / T^1.
\end{align*}
Then $W^1$ can be shown to be a finite group of size $O(1)$ (see \cite[13.3]{carter}).  As in the untwisted case, we have a decomposition
$$ \tilde G = \bigsqcup_{\w \in W^1} U^1 T^1 n_\w (U_\w^-)^1$$
where $(U_\w^-)^1$ is a certain subgroup of $U^1$, where $n_\w$ is an (arbitrarily chosen) representative of $\w$ in $N$, and with every $g \in G$ having a unique representation of the form
$$ g = u_1 h n_\w u$$
with $u_1 \in U^1$, $h \in T^1$, $\w \in W^1$, and $u \in (U_\w^-)^1$; see\footnote{Recall from Remark \ref{diff} that the groups defined in \cite{carter} agree with the ones defined here for all but finitely many exceptions, which we may ignore as our results are asymptotic in nature.} \cite[Proposition 13.5.3]{carter}.

As before, the unipotent group $U^1$ can be decomposed further. The set $\Phi_+$ can be partitioned in a certain way into a collection $\Sigma$ of disjoint subsets $S$ of $\Phi_+$ in such a way that
\begin{equation}\label{uxs}
U^1 = \prod_{S \in \Sigma} X^1_S
\end{equation}
for certain commuting abelian subgroups $X^1_S$ of $\prod_{\alpha \in S} \X_\alpha(\F_q)$, with each element of $U^1$ having a unique representation as such a product; furthermore, we have
$$ (U_\w^-)^1 = \prod_{S \in \Sigma_\w} X^1_S$$
for some subset of $\Sigma_\w$, with the long word $\w_0$ (which can be viewed as an element of $W_1$ as well as $W$) being the unique element of $W_1$ for which $\Sigma_{\w_0} = \Sigma$; see \cite[Proposition 13.6.1]{carter}.  The groups $X^1_S$ are in fact the fixed points of $\sigma$ in $\prod_{\alpha \in S} \X_\alpha(\F_q)$ (see \cite[Lemma 13.5.1]{carter}), and are described explicitly in \cite[Proposition 13.6.3]{carter}.  One consequence of this description is that
$$ |X^1_S| \gg q^{1/d}$$
for each $S \in \Sigma$.  As a consequence we see that
$$ |(U_\w^-)^1| \ll q^{-1/d} |U^1|$$
for all $\w \neq \w_0$, so as before the large Bruhat cell $U^1 T^1 n_{\w_0} U^1$ occupies most of $\tilde G$:
\begin{equation}\label{tgg}
 |\tilde G| = (1 + O(q^{-1/d})) |U^1 T^1 n_{\w_0} U^1|.
\end{equation}
To show the Schwartz-Zippel property for $(\tilde G, \G)$, it thus suffices by Lemma \ref{prol}(iii) (and the Zariski-density of $\U \T n_{\w_0} \U$ in $\G$, as noted in the previous section) to establish this property for the pair
$$ ( U^1 T^1 n_{\w_0} U^1, \U \T n_{\w_0} \U ).$$
Using Lemma \ref{prol}(i), (ii) as in the untwisted case, it thus suffices to establish the Schwartz-Zippel property for the pairs $(U^1, \U)$ and $(T^1,\T)$.

We begin with the Schwartz-Zippel property for $(U^1,\U)$. Splitting $U^1$ using \eqref{uxs}, and using the corresponding decomposition
$$\U = \prod_{S \in \Sigma} \prod_{\alpha \in S} \X_\alpha$$
of $\U$, we see from Lemma \ref{prol}(i), (ii) that it suffices to establish the Schwartz-Zippel property for the pairs
$$ (X_S^1, \prod_{\alpha \in S} \X_\alpha(\k))$$
for each $S \in \Sigma$.

Thus far our arguments have made no distinction between the Steinberg and Suzuki-Ree cases.  Now we specialise to the Steinberg case (so $q = \tilde q^d$ for some $d=2,3$ and $\tau(x) \coloneqq  x^{\tilde q}$) in order to describe the sets $X_S^1$ more explicitly.  By \cite[Proposition 13.6.3]{carter} (or \cite[Theorem 2.4.1]{gls3}), one is in one of the following four cases:
\begin{itemize}
\item[(i)] $S = \{\alpha\}$, and $X^1_S = \{ x_\alpha(t): t \in \F_{\tilde q} \}$.
\item[(ii)] $S = \{ \alpha, \overline{\alpha}\}$, $d=2$ and $X^1_S = \{ x_\alpha(t) x_{\overline{\alpha}}(t^{\tilde q}): t \in \F_q \}$.
\item[(iii)] $S = \{ \alpha, \overline{\alpha}, \overline{\overline{\alpha}}\}$, $d=3$, and $X^1_S = \{ x_\alpha(t) x_{\overline{\alpha}}(t^{\tilde q}) x_{\overline{\overline{\alpha}}}(t^{\tilde q^2})\}$.
\item[(iv)] $S = \{ \alpha, \rho(\alpha), \alpha+\rho(\alpha)\}$, $d=2$, and
$$ X^1_S = \{ x_\alpha(t) x_{\rho(\alpha)}(t^{\tilde q}) x_{\alpha+\rho(\alpha)}( u ): t,u \in \F_q; u + u^{\tilde q} = - N_{\alpha,\rho(\alpha)} t t^{\tilde q} \},$$
where $N_{\alpha,\rho(\alpha)}$ is an integer.
\end{itemize}

In case (i), we see from Lemma \ref{prol}(ii) that it suffices to establish the Schwartz-Zippel property for $(\F_{\tilde q}, \k)$; but this follows from Lemma \ref{basic-schwartz-zippel}.

In cases (ii) and (iii), we again apply Lemma \ref{prol}(ii) and reduce to the task of establishing the Schwartz-Zippel property for
\begin{equation}\label{sz-2}
 ( \{ (t, t^{\tilde q}): t \in \F_q \}, \k^2 )
\end{equation}
in the $d=2$ case and
\begin{equation}\label{sz-3}
( \{ (t, t^{\tilde q}, t^{\tilde q^2}): t \in \F_q \}, \k^3 )
\end{equation}
in the $d=3$ case.  We will just establish \eqref{sz-3}, as \eqref{sz-2} is similar.  Let $P$ is a non-vanishing polynomial of degree $D$ on $\k^3$.  Our task is to show that
\begin{equation}\label{quiet}
|\{ t \in \F_q: P( t, t^{\tilde q}, t^{\tilde q^2} ) = 0 \}| \ll D \tilde q^{-1} q.
\end{equation}
If $D \geq \tilde q$ then the bound is trivial, so suppose that $D < \tilde q$.  Then as $P$ is non-vanishing and of degree $D$, we see that $P( t, t^{\tilde q}, t^{\tilde q^2} )$, viewed as a polynomial in $t$, is also non-vanishing and of degree at most $D \tilde q^2 = D \tilde q^{-1} q$.  The claim then follows from Lemma \ref{basic-schwartz-zippel}.

Finally, in case (iv), we have $d=2$, so that $q = \tilde q^2$. We abbreviate $t^{\tilde q}$ as $\overline{t}$, and $N_{\alpha,\rho(\alpha)}$ as $N$.  We again apply Lemma \ref{prol}(ii) and reduce to establishing the Schwartz-Zippel property for
$$
( \{ (t, \overline{t}, u): t, u \in \F_q; u + \overline{u} = - N t \overline{t} \}, \k^3 ).$$
If the characteristic $p$ is not $2$, we can reparameterise the triple $(t,\overline{t},u)$ as
$$ (t,\overline{t},u) = (a+ib, a-ib, - N (a^2-ib^2)/2 + i c)$$
for $a,b,c \in \F_{\tilde q}$, where $i$ is any non-zero element of $\F_q$ with $\overline{i} = -i$.  By Lemma \ref{prol}(ii), we reduce to establishing the Schwartz-Zippel property for $( \F_{\tilde q}^3, \k^3 )$; but this follows from Lemma \ref{basic-schwartz-zippel}.  If instead the characteristic $p$ is $2$, we have the alternate parameterisation
$$ (t,\overline{t},u) = (a+\omega b, a+\omega b+b, N (a^2+ab+\omega\overline{\omega} b^2)+ c)$$
for $a,b,c \in \F_{\tilde q}$, where $\omega$ is an element of $\F_q$ with $\omega+\overline{\omega}=1$, and the claim follows from Lemma \ref{prol}(ii) and Lemma \ref{basic-schwartz-zippel} as before.  This concludes the demonstration of the Schwartz-Zippel property for $(U^1,\U)$ in the Steinberg case.

Finally, we establish the Schwartz-Zippel property for $(T^1,\T)$.  We recall that the abelian group $\T$ is generated by the commuting subgroups
$$ \HH_\alpha(\k) \coloneqq  \{ h_\alpha(\lambda): \lambda \in \k^\times \}$$
for $\alpha \in \Pi$, for some rational maps $h_\alpha\colon \k^\times \to \G(\k)$ with $h_\alpha(t)$ a polynomial of degree $O(1)$ divided by a monomial in $t$ of degree $O(1)$, where $\Pi$ is a finite set of cardinality $O(1)$.

It turns out that the Dynkin graph automorphism $\rho$ acts on $\Pi$ as a permutation of order $d$; see \cite[13.7]{carter}.  Let $\Gamma$ be the orbits of $\rho$ on $\Pi$, thus each element $J$ of $\Gamma$ is either a singleton $\{\alpha\}$, a pair $\{ \alpha,\overline{\alpha})\}$ (if $d=2$), or a triplet $\{ \alpha,\overline{\alpha}, \overline{\overline{\alpha}}\}$ if $d=3$.  One can then show that $T^1$ is generated by the groups $H_J^1$ for $J \in \Gamma$, where
\begin{align*}
H_{\{\alpha\}}^1 &\coloneqq  \{ h_\alpha(t): t \in\F_{\tilde q}^\times \} \\
H_{\{\alpha,\overline{\alpha}\}}^1 &\coloneqq  \{ h_\alpha(t) h_{\overline{\alpha}}(t^{\tilde q}): t \in \F_q^\times \} \\
H_{\{\alpha,\overline{\alpha},\overline{\alpha}\}}^1 &\coloneqq  \{ h_\alpha(t) h_{\overline{\alpha}}(t^{\tilde q}) h_{\overline{\overline{\alpha}}}(t^{\tilde q^2}): t \in \F_q^\times \};
\end{align*}
see (the proof of) \cite[Theorem 13.7.2]{carter} or \cite[Theorem 2.4.7]{gls3}.  Note that each $H_J^1$ is a subgroup of the group $\HH_J$ generated by the $\HH_\alpha$ for $\alpha \in J$.  By Lemma \ref{prol}(i) and Lemma \ref{prol}(ii), we thus see that to establish the Schwartz-Zippel property for $(T^1,\T)$, it suffices to do so for the pairs $(H^J_1, \HH_J)$ for each $J \in \Gamma$.  By Lemma \ref{prol}(ii) (and clearing denominators, as in the previous section), it suffices to establish the Schwartz-Zippel property for the pair
$$ (\F_{\tilde q}^\times, \k^\times),$$
as well as the pair
$$ ( \{ (t,t^{\tilde q}): t \in \F_q^\times \}, (\k^\times)^2 )$$
for $d=2$ and the pair
$$ ( \{ (t,t^{\tilde q}, t^{\tilde q^2}): t \in \F_q^\times \}, (\k^\times)^3 )$$
for $d=3$.  But these follow from Lemma \ref{basic-schwartz-zippel}, \eqref{sz-2}, \eqref{sz-3}, and Lemma \ref{prol}(iii).\vspace{11pt}

\textsc{The Suzuki-Ree case.} Now we establish \eqref{Schwartz-1-gen} for Suzuki-Ree groups, thus $G = {}^2 \D(p \theta^2)$ for some $p=2,3$ and some $\theta = p^k$ (so in particular $\theta$ is comparable to $q^{-1/d}$), with the field automorphism $\tau$ given by the Frobenius map $x \mapsto x^\theta$.
By the arguments given for Steinberg groups, it suffices to establish the Schwartz-Zippel property for $(U^1,\U)$ and $(T^1,\T)$.

We begin again with $(U^1,\U)$.  By the arguments given for Steinberg groups, it suffices to establish the Schwartz-Zippel property for
the pairs
$$ (X_S^1, \prod_{\alpha \in S} \X_\alpha(\k))$$
for each $S \in \Sigma$.  By \cite[Proposition 13.6.3]{carter} or \cite[Theorem 2.4.5]{gls3}, one is in one of the following three cases:
\begin{itemize}
\item[(i)] $S = \{\alpha, \overline{\alpha} \}$ has cardinality $2$, and $X^1_S = \{ x_\alpha(t^\theta) x_{\overline{\alpha}}(t): t \in \F_q \}$.
\item[(ii)] $S = \{a, b, a+b, 2a+b \}$ has cardinality $4$, and
$$X^1_S = \{ x_a(t^\theta) x_b(t) x_{a+b}(t^{\theta+1}+u) x_{2a+b}(u^{2\theta}): t,u \in \F_q \}.$$
\item[(iii)] $S = \{a,b,a+b,2a+b,3a+b,3a+2b\}$ has cardinality $6$, and
\begin{align*} X^1_S = \{ x_a(t^\theta) & x_b(t) x_{a+b}(t^{\theta+1}+u^\theta) \times \\ & \times x_{2a+b}(t^{2\theta+1}+v^\theta) x_{3a+b}(u) x_{3a+2b}(v): t,u,v \in \F_q \}.\end{align*}
\end{itemize}

By Lemma \ref{prol}(ii), it suffices to establish the Schwartz-Zippel property for the pairs
\begin{equation}\label{poc1}
( \{ (t^\theta, t): t \in \F_q \}, \k ),
\end{equation}
\begin{equation}\label{poc2}
( \{ (t^\theta, t, t^{\theta+1}+u, u^{2\theta}): t,u \in \F_q \}, \k^4 ),
\end{equation}
and
\begin{equation}\label{poc3}
( \{ (t^\theta, t, t^{\theta+1}+u^\theta, t^{2\theta+1}+v^\theta, u, v): t,u \in \F_q \}, \k^6 ).
\end{equation}
The Schwartz-Zippel property for \eqref{poc1} is proven analogously to \eqref{sz-2} (or \eqref{sz-3}) and is omitted.  To prove \eqref{poc2}, we first parameterise $\k^4$ as $(x,y,xy+z, w)$ and reduce (by Lemma \ref{prol}(ii)) to showing the Schwartz-Zippel property for
$$( \{ (t^\theta, t, u, u^{2\theta}): t,u \in \F_q \}, \k^4 ).$$
Let $P$ be a polynomial of degree at most $D$ that does not vanish on $\k^4$; our task is to show that
$$ |\{ (t,u) \in \F_q^2: P( t^\theta,t,u,u^{2\theta}) = 0 \}| \ll D q^{-1/2} q^2.$$
This claim is trivial if $D \geq \theta/10$, so suppose that $D < \theta/10$.  Then $P( t^\theta,t,u,u^{2\theta})$ is a polynomial function of $t,u$ of degree $O(D\theta) = O(Dq^{1/2})$ that does not vanish identically, and the claim follows from Lemma \ref{basic-schwartz-zippel}.

In a similar fashion, after parameterising $\k^6$ as $(x,y,xy+z,x^2y+w,u,v)$ and applying Lemma \ref{prol}(ii), we see that to establish the Schwartz-Zippel property for \eqref{poc3} it suffices to do so for
$$ ( \{ (t^\theta, t, u^\theta, v^\theta, u, v): t,u \in \F_q \}, \k^6 ),$$
which can be done by the same sort of argument used to establish \eqref{poc2}.

Finally, we need to establish the Schwartz-Zippel property for $(T^1,\T)$.  Recall that $\T$ is generated by $\HH_\alpha$ for $\alpha \in \Pi$.  In the Suzuki-Ree cases, it turns out that $\Pi$ splits into pairs $\{ \alpha, \overline{\alpha}\}$ (consisting of one long root and one short root), and $T^1$ is generated by the finite abelian groups
$$ \{ h_\alpha(t) h_{\overline{\alpha}}(t^{\lambda(\alpha) \theta}): t \in \F_q^\times \}$$
as $\{\alpha,\overline{\alpha}\}$ range over these pairs, where $\lambda(\alpha)$ is either $1$ or $p$ (depending on whether the root $\alpha$ is short or long); see \cite[Theorem 13.7.4]{carter} or \cite[Theorem 2.4.7]{gls3}.  By Lemma \ref{prol}(ii) (and clearing denominators), it thus suffices to establish the Schwartz-Zippel property for the pairs
$$ ( \{ (t, t^\theta): t \in \F_q^\times \}, (\k^\times)^2 \}$$
and
$$ ( \{ (t, t^{p\theta}): t \in \F_q^\times \}, (\k^\times)^2 \}.$$
But this can be proven by the same methods used to prove \eqref{poc1} (or \eqref{sz-2}, \eqref{sz-3}).

\section{The triality case}\label{d4-sec}

In this section we complete the proof of Theorem \ref{mainthm} in the case when $G$ is a Tits-Steinberg triality group $G = {}^3D_4(q)$, where $q = \tilde q^3$.

We will use the mother group $\G = \SO_8$, which of course has Dynkin diagram $D_4$.  Rather than use the adjoint representation, we will use the tautological representation
$$ \G \subset \GL_8 \subset \Mat_8$$
(see e.g. \cite[\S 11.3]{carter} for the details of this tautological representation).
In particular, $\tilde G$ is now viewed as a subgroup of $\GL_8(\F_q)$.  For sake of concreteness, one could take the quadratic form defining $\SO_8$ to be $x_1 x_5 + x_2 x_6 + x_3 x_7 + x_4 x_8$, so that the Lie algebra $\mathfrak{so}_8$ consists of those $8 \times 8$ matrices of the form
$$ \begin{pmatrix} A & S_1 \\ S_2 & -A^T \end{pmatrix}$$
where $A, S_1, S_2$ are $4 \times 4$ matrices with $S_1,S_2$ skew-symmetric.

An inspection of the arguments of Section \ref{sec1a} (using the $d=3$ case of the Schwartz-Zippel bounds in Proposition \ref{sufficiently-zdense-prop}) reveals that one only needs to establish the subfield non-concentration bound \eqref{subfield-case-2}, and furthermore that this bound is already established by those arguments in the event that $H$ comes from a subfield $\F_{q'}$ of index greater than three.  Thus it only remains to control the subfields of index two and three.

By repeating the proof of Lemma \ref{subfield-to-struct}, we have

\begin{lemma}\label{subfield-to-struct-triality} Let $G = {}^3 D_4(q)$ be a triality group.
If $\F_{q'}$ is a subfield of $\F_q$ of index $3$ \textup{(}i.e. $q'^3=q$\textup{)}, then $\tilde G \cap \G(\F_{q'})$ is contained in a proper subvariety of $\G$ of complexity $O(1)$.
\end{lemma}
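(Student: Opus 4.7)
The plan is to adapt the proof of Lemma \ref{subfield-to-struct} essentially verbatim, noting that the only properties of $\rho$ used there were that (a) it has order $d$, (b) the composition $\sigma=\tau\rho$ defines $\tilde G$ as a (derived) fixed-point group, (c) the relevant subfield is the fixed field of $\tau$, and (d) $\D$ is simply laced so that $\rho$ extends to an algebraic map on $\G(\k)$ of complexity $O(1)$. All of these conditions hold for ${}^3D_4(q)$: the Dynkin diagram $D_4$ is simply laced, $\rho$ is the triality graph automorphism of order $3$, $q=\tilde q^3$, and $\tau\colon\F_q\to\F_q$ is the Frobenius $x\mapsto x^{\tilde q}$ which fixes exactly $\F_{\tilde q}=\F_{q'}$.

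First I would argue that if $g\in \tilde G\cap \G(\F_{\tilde q})$, then $\tau(g)=g$ because $g\in \G(\F_{\tilde q})$, and $\sigma(g)=\tau\rho(g)=g$ because $g\in \tilde G\subseteq \G(\F_q)^\sigma$. Composing these gives $\rho(g)=g$, so
\[
\tilde G\cap \G(\F_{\tilde q})\subseteq \G(\k)^{\rho},
\]
where $\G(\k)^{\rho}\coloneqq\{g\in\G(\k):\rho(g)=g\}$ is the algebraic fixed-point set of $\rho$ acting on the mother group.

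Next I would invoke the fact that since $D_4$ is simply laced, the triality automorphism $\rho$ is realised on $\G(\k)$ over any algebraically closed field $\k$ as an algebraic endomorphism of complexity $O(1)$; see \cite[12.2]{carter}. Thus the fixed-point set $\G(\k)^{\rho}$ is a Zariski-closed subvariety of $\G(\k)$ whose complexity is bounded independently of $q$. Since $\rho$ acts non-trivially on $\G$ (indeed, it permutes the three outer nodes of the $D_4$ diagram in a $3$-cycle, so its fixed subgroup is a proper subgroup, of type $G_2$), the subvariety $\G(\k)^{\rho}$ is a proper subvariety of $\G(\k)$, and we obtain the desired conclusion.

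There is no real obstacle here, since the triality case is perfectly parallel to the index-$2$ Steinberg case; the only point worth double-checking is the definability of $\rho$ as an algebraic morphism of $\G$ with complexity $O(1)$ uniformly in the characteristic, which is supplied by the simply-laced hypothesis exactly as in Lemma \ref{subfield-to-struct}. The additional step of promoting ``$H$ is contained in a conjugate of $\tilde G\cap\G(\F_{\tilde q})$'' to ``$H$ lies in a proper algebraic subgroup of complexity $O(1)$'' is then immediate by taking the same conjugate of $\G(\k)^{\rho}$, exactly as in the analogous final sentence of Lemma \ref{subfield-to-struct}.
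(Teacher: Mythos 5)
Your proposal is correct and matches the paper's argument exactly: the paper proves this lemma by the single line ``by repeating the proof of Lemma \ref{subfield-to-struct},'' and your write-up is precisely that adaptation, correctly noting that $D_4$ is simply laced (so $\rho$ is an algebraic morphism of complexity $O(1)$ over any $\k$), that $\tau$ and $\rho$ commute so $\tilde G\cap\G(\F_{q'})\subseteq\G(\k)^\rho$, and that $\G(\k)^\rho$ is a proper closed subvariety since $\rho$ is non-trivial.
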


Thus we can dispose of the contribution of subfields $\F_{q'}$ of index three by using the structural bound \eqref{structural-case-2} that has already been established.  This leaves only the subfields $\F_{q'}$ of index two, thus we have
$$
q = \tilde q^3 = (q')^2$$
and so we can find a power $q_0$ of the characteristic $p$ such that
\begin{equation}\label{q-form}
q = q_0^6; \quad \tilde q = q_0^2; \quad q' = q_0^3.
\end{equation}

For any $x \in \Mat_8(\k)$, let $\gamma_1(x),\ldots,\gamma_8(x) \in \k$ be the coefficients of the characteristic polynomial of $x$.
Let $X \subset \GL_8(\k)$ denote the set of matrices $x \in \GL_8(\k)$ such that $\gamma_1(x),\dots,\gamma_8(x) \in \F_{q'}$, where $\gamma_1(x) = \tr(x),\gamma_2(x),\dots,\gamma_7(x),\gamma_8(x) = \det(x)$ are the coefficients of the characteristic polynomial of $x$.  Then $X$ contains every subfield group of $\tilde G$ associated to $\F_{q'}$, and so it will suffice to show that
$$ \P_{a,b \in \tilde G; w \in W_{n,2}} (w(a,b) \in X ) \ll |\tilde G|^{-3\gamma}$$
where $n = 2 \lfloor c_0 \log |\tilde G|\rfloor$ for some sufficiently small $c_0>0$.

The first step is to pass from $\tilde G$ to the large Bruhat cell $U^1 T^1 n_{\w_0} U^1$ that was introduced in Section \ref{non-conc-sec-untwist}.
 In view of \eqref{tgg}, it will suffice to show that
$$ \P_{a,b \in U^1 T^1 n_{\w_0} U^1; w \in W_{n,2}} (w(a,b) \in X ) \ll |\tilde G|^{-3\gamma}$$

We will view $U^1 T^1 n_{\w_0} U^1$ as a ``sufficiently Zariski-dense'' finite subset of $\U \T n_{\w_0} \U$.  Now we divide the words $w \in W_{n,2}$ into two categories.  Let us say that $w$ is \emph{degenerate} if one has $w(a,b) \in X$ for all $a,b \in U^1 T^1 n_{\w_0} U^1$, and \emph{non-degenerate} otherwise.

We first dispose of the degenerate case.  We will need two key lemmas.  The first shows that $\tilde G$ contains a simpler subgroup $H$ which can be used as a proxy for $\tilde G$ for the purposes of excluding degeneracy:

\begin{lemma}[Good embedded subgroup]\label{subgp}  There exists a subgroup $H$ of $\tilde G$ with the following properties:
\begin{itemize}
\item[(i)] $H$ is isomorphic to the central product $\SL_2(\F_{q}) \circ \SL_2(\F_{\tilde q})$ of $\SL_2(\F_{q})$ and $\SL_2(\F_{\tilde q})$, i.e. the quotient of the direct product $\SL_2(\F_{q}) \times \SL_2(\F_{\tilde q})$ by the diagonally embedded common centre of $\SL_2(\F_q)$ and $\SL_2(\F_{\tilde q})$ \textup{(}which is trivial in even characteristic and is the two-element group $\{ (+1,+1), (-1,-1)\}$ on odd characteristic\textup{)}.
\item[(ii)]  $H$ has  large intersection with the Bruhat cell $U^1 T^1 n_{\w_0} U^1$ in the sense that
\begin{equation}\label{sl2-prop-1}
|H\setminus U^1 T^1 n_{\w_0} U^1| \leq |H|/2.
\end{equation}
\item[(iii)] $H$ mostly avoids $X$ in the sense that
\begin{equation}\label{sl2-prop-2}
|H \cap X| \ll |H|^{1 - c}
\end{equation}
for some absolute constant $c>0$.
\end{itemize}
\end{lemma}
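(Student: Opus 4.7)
My plan is to realize $H$ as the $\sigma$-fixed subgroup of a triality-stable $A_1^4$-subsystem subgroup of $\G = \SO_8$, in which three of the four $\SL_2$-factors are cyclically permuted by the graph automorphism $\rho$ and the fourth is fixed. In the standard realization of the $D_4$ root system in $\R^4$, the four pairwise-orthogonal roots $\pm(e_1 - e_2), \pm(e_3 - e_4), \pm(e_3 + e_4), \pm(e_1 + e_2)$ generate an $A_1^4$ subsystem. The triality $\rho$, acting on the simple roots by the $3$-cycle $\alpha_1 \to \alpha_3 \to \alpha_4 \to \alpha_1$ and fixing $\alpha_2$, cyclically permutes $e_1 - e_2, e_3 - e_4, e_3 + e_4$ and fixes the highest root $e_1 + e_2 = \alpha_1 + 2\alpha_2 + \alpha_3 + \alpha_4$. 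Labelling the corresponding $\SL_2$-subgroups of $\G$ as $L_1, L_3, L_4$ (cycled by $\rho$) and $L_2$ (fixed), the $\sigma$-fixed subgroup of $L \coloneqq L_1 L_2 L_3 L_4$ decomposes as the product of the twisted diagonal $\{(g, \tau g, \tau^2 g) : g \in \SL_2(\F_q)\} \cong \SL_2(\F_q)$ inside $L_1 \times L_3 \times L_4$ (using $\tau^3 = 1$ on $\F_q = \F_{\tilde q^3}$), together with the subgroup $L_2(\F_q)^\tau = \SL_2(\F_{\tilde q})$. The two factors commute (orthogonal roots) and share the common centre $\{\pm 1\}$ in odd characteristic, so $H \coloneqq L^\sigma \cong \SL_2(\F_q) \circ \SL_2(\F_{\tilde q})$; perfectness of $H$ (for $\tilde q$ large) places $H$ inside $\tilde G = [\G(\F_q)^\sigma, \G(\F_q)^\sigma]$, establishing (i).

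For (ii), arrange the embedding so that the maximal torus and root groups of $L$ align with a $\sigma$-stable Borel subgroup of $\G$ compatible with the Bruhat cell $U^1 T^1 n_{\w_0} U^1$. The complement $Y \coloneqq \G \setminus \U \T n_{\w_0} \U$ of the large Bruhat cell in $\G$ is a proper closed subvariety of bounded complexity. The Zariski-closure $\overline{H}$ of $H$ in $\G$ is isogenous to $\SL_2 \times \SL_2$; one verifies directly (by exhibiting an explicit element of $H$ lying in the big Bruhat cell) that $\overline{H} \not\subset Y$, so $\overline{H} \cap Y$ is a proper subvariety of $\overline{H}$. Applying Proposition \ref{sufficiently-zdense-prop} to each of the two $\SL_2$-factors of $\overline{H}$ (both untwisted) and combining via Lemma \ref{prol}(i) gives $|H \cap Y| \ll \tilde q^{-1} |H|$, which is at most $|H|/2$ for $\tilde q$ sufficiently large.

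For (iii), restrict the tautological $8$-dimensional representation $V$ of $\G = \SO_8$ to $L$. Since the weights $\pm e_1, \pm e_2$ of $V$ are the non-trivial weights of $L_1, L_2$ and $\pm e_3, \pm e_4$ are the non-trivial weights of $L_3, L_4$, a direct weight-space computation yields
\[
 V|_{L_1 \times L_2 \times L_3 \times L_4} \;\cong\; (V_2 \boxtimes V_2 \boxtimes V_1 \boxtimes V_1) \,\oplus\, (V_1 \boxtimes V_1 \boxtimes V_2 \boxtimes V_2),
\]
where $V_2, V_1$ denote the standard and trivial $\SL_2$-representations. Writing $h = (g_q, g_{\tilde q}, \tau g_q, \tau^2 g_q) \in H$ with $a \coloneqq \tr(g_q) \in \F_q$ and $b \coloneqq \tr(g_{\tilde q}) \in \F_{\tilde q}$, we obtain $\tr_V(h) = ab + a^{\tilde q + \tilde q^2}$. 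Using \eqref{q-form}, the condition $\tr_V(h) \in \F_{q'}$ is equivalent to $\tr_V(h)^{q_0^3} = \tr_V(h)$, which after applying $b^{q_0^2} = b$ and $a^{q_0^6} = a$ reduces to
\[
 ab - a^{q_0^3} b^{q_0} \;=\; a^{q_0 + q_0^5} - a^{q_0^2 + q_0^4}.
\]
For generic $a \in \F_q$, the right-hand side is non-zero (its vanishing locus is a finite subgroup of $\F_q^\times$ of size $O(q_0^3)$), and the left-hand side, viewed as an $\F_{q_0}$-linear function of $b \in \F_{\tilde q}$, is generically injective (failing on at most $O(q_0^4)$ values of $a$). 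Thus the total number of pairs $(a, b)$ with $\tr_V(h) \in \F_{q'}$ is $O(q) = O(q_0^6)$, and each pair corresponds to at most $O(q^2 \tilde q^2) = O(q_0^{16})$ elements of $H$ (by the sizes of trace-fibres in $\SL_2(\F_q) \times \SL_2(\F_{\tilde q})$), yielding $|H \cap X| \ll q_0^{22}$ versus $|H| \asymp q_0^{24}$. This proves (iii) with $c = 1/12$.

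The principal obstacle is step (iii): carrying out the weight decomposition of $V|_L$, computing $\tr_V(h)$ explicitly, and verifying non-triviality of the resulting polynomial constraint on $(a, b)$ on a Zariski-dense subset of $\F_q \times \F_{\tilde q}$. A secondary subtlety is the treatment of the degenerate $a$ (where $a^{q_0 + q_0^5} = a^{q_0^2 + q_0^4}$): for these the equation $ab - a^{q_0^3} b^{q_0} = 0$ may admit up to $q_0$ solutions $b \in \F_{\tilde q}$, but their total contribution $O(q_0^3 \cdot q_0) = O(q_0^4)$ is absorbed into the overall $O(q)$ bound on bad pairs. If one worries about the non-regular-semisimple strata of $H$, these constitute only an $O(1/q)$ fraction of $H$ and are easily absorbed into error terms.
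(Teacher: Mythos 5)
Your construction follows the same blueprint as the paper's: take the $\sigma$-fixed points of a triality-stable $A_1^4$ subsystem subgroup of $\SO_8$ in which three orthogonal $A_1$'s are cyclically permuted by $\rho$ and one is fixed, so that the fixed points form a central product $\SL_2(\F_q)\circ\SL_2(\F_{\tilde q})$ (the first from the twisted diagonal, the second from the $\rho$-fixed factor). You use a different but equally valid $A_1^4$: the simple roots $e_1-e_2,\,e_3-e_4,\,e_3+e_4$ plus the highest root $e_1+e_2$, whereas the paper uses $e_2\pm e_3,\,e_1\pm e_4$ with the fixed fundamental root $e_2-e_3$ playing the role of your $e_1+e_2$. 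Your explicit weight-space decomposition and trace formula $\tr_V(h)=ab+a^{\tilde q+\tilde q^2}$ come out identical to the paper's (unsurprisingly, as the two $A_1^4$ subsystems are $W$-conjugate). For (iii), where the paper closes with a one-line field-theoretic observation ($|\lambda\F_{\tilde q}\cap(\F_{q'}+c)|\le|\F_{\tilde q}\cap\F_{q'}|=q^{1/6}$ for $\lambda\ne 0$), you instead write out the $\F_{q'}$-membership condition explicitly, Frobenius-linearize in $q_0$, and bound the solution count by hand; this is correct but rather more laborious, and the degenerate-$a$ accounting needs a little care (the kernel of your $\F_{q_0}$-linear map in $b$ has size $q_0$, not $q_0^2$, for $a\ne 0$, which is what makes the total come out $O(q)$).

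The main place where you deviate from the paper is (ii), and this is also where the write-up is thinnest. The paper verifies (ii) by explicit Bruhat arithmetic: $n_{\w_0}$ is chosen in $H$, one computes $|H\cap U^1|=q\tilde q$ and $|H\cap T^1|$ (using that $H$ has index $|Z(\tilde G)\cap\ldots|$ in the centralizer of its central involution, via \cite[4.5.1]{gls3}), and then multiplies. You instead invoke the Schwartz-Zippel machinery for $(H,\overline{H})$ against the complement $Y$ of the big Bruhat cell. This is workable but you have left unaddressed: (a) why the polynomial cutting $Y\cap\overline{H}$ out of $\overline{H}$ has bounded degree; (b) the claimed exhibition of an element of $H$ in the cell; (c) the fact that the $\SL_2(\F_q)$ factor of $H$ sits inside $\G$ as a \emph{twisted diagonal} in $S_\beta\times S_{\bar\beta}\times S_{\bar{\bar\beta}}$, so that a degree-$D$ polynomial on $\overline{H}\subset\G$ pulls back to degree $O(D\tilde q^2)$ in $g$, giving Schwartz-Zippel gain only $O(\tilde q^{-1})$, not $O(q^{-1})$, on that factor — you assert the right final bound $O(\tilde q^{-1})|H|$ but the route to it is not the naive untwisted application of Proposition \ref{sufficiently-zdense-prop} that your phrasing suggests. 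The paper's counting argument sidesteps all of this. So: same idea, same key computation, a slicker close in (iii) is available, and your (ii) needs the twisting subtlety spelled out before it is airtight.
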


\begin{proof}  We use an explicit description of the root system for $D_4$.  Namely, we can take the set $\Phi$ of roots to be the set
$$ \Phi \coloneqq  \{ \pm e_i \pm e_j: 1 \leq i < j \leq 4 \}$$
in $\R^4$, with $e_1,\ldots,e_4$ being the standard basis for $\R^4$, with fundamental roots
$$ \Pi \coloneqq  \{ e_1-e_2, e_2-e_3, e_3-e_4, e_3+e_4\};$$
see e.g. \cite[3.6]{carter}.  We can take the triality map $\rho$ to be a map that cyclically permutes the three fundamental roots $e_1-e_2, e_3-e_4, e_3+e_4$ in that order (e.g. $\rho(e_1-e_2) = e_3-e_4$) while leaving $e_2-e_3$ unchanged.  Among the orbits of this map on $\Phi$ include the singleton orbit $\{ \alpha\}$ with $\alpha \coloneqq  e_2-e_3$ and the tripleton orbit $\{ \beta, \overline{\beta}, \overline{\overline{\beta}}\}$ with $\beta \coloneqq  e_2 + e_3$, $\overline{\beta} \coloneqq  \rho(\beta) = e_1 + e_4$, $\overline{\overline{\beta}} \coloneqq  \rho^2(\beta) = e_1-e_4$.

To each root $a \in \Phi$ we have a one-parameter root group $\X_a(\k) = \{ x_a(t): t \in \k\}$, with two root groups $X_a, X_b$ commuting if $a+b$ is neither zero nor a root (see \cite[Theorem 4.2.1]{carter}).  Using the particular representation of $\SO_8$ described above, we have the explicit formulae
\begin{align*}
 x_{e_i-e_j}(t) &= 1 + t
\begin{pmatrix} e_{ij} & 0 \\ 0 & - e_{ji} \end{pmatrix} \\
 x_{e_j-e_i}(t) &= 1 + t
\begin{pmatrix} e_{ji} & 0 \\ 0 & - e_{ij} \end{pmatrix} \\
 x_{e_i+e_j}(t) &= 1 + t
\begin{pmatrix} 0 & e_{ij} - e_{ji} \\ 0 & 0 \end{pmatrix} \\
x_{-e_i-e_j}(t) &= 1 + t
\begin{pmatrix} 0 & 0 \\ e_{ji} - e_{ij} & 0  \end{pmatrix}
\end{align*}
for $1 \leq i < j \leq 4$, where $e_{ij}$ is the elementary $4 \times 4$ matrix with an entry of $1$ at the $(i,j)$ position and zero elsewhere; see \cite[11.2]{carter}.

If we let $\SS_a$ be the group generated by $\X_a$ and $\X_{-a}$, then $\SS_a$ is isomorphic to $\SL_2$ (see \cite[Chapter 6]{carter}).  Observe that $\pm a \pm b$ is neither zero nor a root when $a,b$ are distinct elements of $\{ \alpha, \beta, \overline{\beta}, \overline{\overline{\beta}}\}$.  Thus $\SS_\alpha$, $\SS_\beta$, $\SS_{\overline{\beta}}$, and $\SS_{\overline{\overline{\beta}}}$ all commute with each other.

The group $\tilde G$ contains the subgroups
$$ X_{\pm \alpha}^1 \coloneqq  \{ x_{\pm \alpha}(t): t \in \F_{\tilde q} \}$$
and
$$ X_{\pm \{\beta,\overline{\beta},\overline{\overline{\beta}}\}}^1 \coloneqq  \{ x_{\pm \beta}(t) x_{\pm \overline{\beta}}(t^{\tilde q})
 x_{\pm \overline{\overline{\beta}}}(t^{\tilde q^2}): t \in \F_q \} $$
for either fixed choice of sign $\pm$; see \cite[Proposition 13.6.3]{carter}.  Let $S_\alpha^1$ be the subgroup of $\tilde G$ generated by $X_{\pm \alpha}^1$, and similarly let $S_{\{\beta,\overline{\beta},\overline{\overline{\beta}}\}}^1$ be the subgroup of $\tilde G$ generated by $X_{\pm \{\beta,\overline{\beta},\overline{\overline{\beta}}\}}^1$.  Then from the preceding discussion, $S_\alpha^1$ is isomorphic to $\SL_2(\F_{\tilde q})$ and $S_{\{\beta,\overline{\beta},\overline{\overline{\beta}}\}}^1$ is isomorphic to $\SL_2(\F_q)$, and furthermore these two groups commute with each other, and so can only intersect in their common centre, whose order $C$ is equal to $2$ in odd characteristic and $1$ in even characteristic.
Set $H$ to be the group generated by both $S_\alpha^1$ and $S_{\{\beta,\overline{\beta},\overline{\overline{\beta}}\}}^1$, then $H$ is a central product of $\SL_2(\F_q)$ and $\SL_2(\F_{\tilde q})$.

If $q$ is even, then $H$ is a direct product of the two subgroups.  If $q$ is odd, then
$H$ is contained in the centraliser of an involution $z$, namely the non-trivial central element of $H$ (in the above concrete representation, one has $z = \operatorname{diag}(1,-1,-1,1,1,-1,-1,1)$).  Moreover, it follows
from \cite[4.5.1]{gls3} that $H$ has index $2$ in the centralizer.  In particular,
$[T^1:H \cap T^1]=C$ where $C$ is defined as above.

Now we verify (ii).  From (i) we have
$$ |H| = (\frac{1}{C} + O(1/\tilde q)) \tilde q^{12}$$
so it will suffice to show that
$$ |(H \cap U^1) (H \cap T^1 n_{\w_0})(H \cap  U^1)| = (\frac{1}{C} + O(1/\tilde q)) \tilde q^{12}.$$
The representative $n_{\w_0}$ of $\w_0$ can be chosen\footnote{For instance, we may take it to be the antidiagonal matrix with entries $1,-1,-1,1,1,-1,-1,1$.} to lie in $H$, so that
$$|H \cap T^1n_{\w_0}| = |H \cap T^1| = \frac{1}{C} |T^1|$$
and thus
$$ |H \cap T^1| = (\frac{1}{C} + O(1/\tilde q)) \tilde q^4.$$
Since $H \cap U^1$ is a maximal unipotent subgroup of $H$,
$|H \cap U^1| = q \tilde q = \tilde q^4$, giving the claim.

Now we verify (iii).  Let $\pi\colon \SL_2(\F_q) \times \SL_2(\F_{\tilde q}) \to H$ be the obvious surjective homomorphism.  It suffices to show that
$$ |\{ (x,y) \in \SL_2(\F_q) \times \SL_2(\F_{\tilde q}): \tr(\pi(x,y)) \in \F_{q'} \}| \ll q^{-1/6} |\SL_2(\F_q)| |\SL_2(\F_{\tilde q})|,$$
where we view $\pi(x,y)$ as an element of $\Mat_8(\F_q)$ in order to take an eight-dimensional trace.   We claim the formula
$$
\tr(\pi(x,y)) = a b + a^{\tilde q} a^{\tilde q^2}$$
where $a = \tr(x) \in \F_q$ is the two-dimensional trace of $x$, and similarly $b = \tr(y) \in \F_{\tilde q}$ is the two-dimensional trace of $y$.  This follows by noting the $8$-dimensional representation
restricted to $H$ is a direct sum of two irreducible $4$-dimensional representations (which are precisely
the eigenspaces of the central involution if $q$ is odd), arising from $\operatorname{span}(f_2,f_3,f_6,f_7)$ and $\operatorname{span}(f_1,f_4,f_5,f_8)$, where $f_1,\ldots,f_8$ is the standard basis for $\k^8$.   The first irreducible composition factor is just
the tensor product of the two natural two dimensional representations of the $\SL_2$ factors.  The other irreducible is the fixed space of $\SL_2(\tilde q)$ and is the tensor product of the two nontrivial Frobenius twists of the natural two dimensional module for $\SL_2(q)$.

Observe that each trace $a \in \F_q$ is attained by $O(q^2)$ values of $x$, and each trace $b \in \F_{\tilde q}$ is similarly attained by $O(\tilde q^2)$ values of $y$.  So it suffices to show that
$$ |\{ (a,b) \in \F_q \times \F_{\tilde q}: ab + a^{\tilde q} a^{\tilde q^2} \in \F_{q'}  \}| \ll q^{-1/6} q \tilde q.$$
We may delete the contribution of the case $a=0$ as it is certainly acceptable.  Now note that as $\tilde q = q^{1/3}$ and $q' = q^{1/2}$, any non-zero dilate of $\F_{\tilde q}$ and any translate of $\F_{q'}$ can meet in a set of size at most $|\F_{\tilde q} \cap \F_{q'}| = q^{1/6}$.  We thus see that for fixed non-zero $a$, there are at most $q^{1/6} = q^{-1/6} \tilde q$ choices of $b \in \F_{\tilde q}$ for which $ab+a^{\tilde q} a^{\tilde q^2} \in \F_{q'}$.  The claim follows.
\end{proof}

Next, we need to show a version of Theorem \ref{mainthm} for $H$.

\begin{lemma}[Expansion in good subgroup]\label{expand-good}  Let $H \coloneqq  \SL_2(\F_{q}) \circ \SL_2(\F_{\tilde q})$, and let $a,b$ be chosen uniformly at random from $H$.  Then with probability $1-O(|H|^{-\delta})$, $\{a,b\}$ is $\eps$-expanding in $H$ for some absolute constants $\eps,\delta>0$.
\end{lemma}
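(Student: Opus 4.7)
The plan is to verify the three hypotheses of the Bourgain-Gamburd machine (Proposition \ref{machine}) for $H$. Write $\pi_1 \colon H \to \PSL_2(\F_{q})$ and $\pi_2 \colon H \to \PSL_2(\F_{\tilde q})$ for the projections obtained by quotienting out one $\SL_2$-factor together with the common centre, so that $H/Z(H) \cong \PSL_2(\F_q) \times \PSL_2(\F_{\tilde q})$, $|H| \asymp \tilde q^{12}$, and a uniformly random pair $(a,b) \in H$ pushes forward under $(\pi_1,\pi_2)$ to an essentially uniform pair in each factor, with the two factors independent.

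Quasirandomness is immediate from the bounds of \cite{landazuri-seitz}: every non-trivial irreducible complex representation of $H$ factors, through $\PSL_2(\F_q) \times \PSL_2(\F_{\tilde q})$, as a tensor product $\rho_1 \otimes \rho_2$ with at least one $\rho_i$ non-trivial, and such a $\rho_i$ has dimension $\gg \tilde q \gg |H|^{c}$ for some absolute $c > 0$. For the product theorem, let $A \subseteq H$ be a $|H|^{\delta'}$-approximate subgroup of intermediate size; the projections $\pi_i(A)$ are $|H|^{O(\delta')}$-approximate subgroups of the two $\PSL_2$ factors, and the Ruzsa-type inequalities $|\pi_i(A)| \leq |A| \leq |\pi_1(A)| \cdot |\pi_2(A)|$ force at least one of $\pi_1(A), \pi_2(A)$ to be of intermediate size in its factor. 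Proposition \ref{prod} applied to $\SL_2$ then forces that projection to generate a proper subgroup of the corresponding $\PSL_2$, whence $A$ generates a proper subgroup of $H$.

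The bulk of the work lies in the non-concentration estimate. The key structural input is a classification of proper subgroups $K < H$ via Goursat's lemma applied to $H/Z(H) \cong \PSL_2(\F_q) \times \PSL_2(\F_{\tilde q})$: either (a) $\pi_1(K)$ is proper in $\PSL_2(\F_q)$, (b) $\pi_2(K)$ is proper in $\PSL_2(\F_{\tilde q})$, or (c) both projections are surjective, in which case $K$ corresponds to an isomorphism between non-trivial quotients of the two factors. Since both factors are simple (for $\tilde q$ large) and $|\PSL_2(\F_q)| \ne |\PSL_2(\F_{\tilde q})|$, case (c) is impossible. Hence every proper $K < H$ satisfies (a) or (b), and for any word $w$ one has $\mu^{(n)}(K) \leq \max\{ \mu_1^{(n)}(\pi_1(K)), \mu_2^{(n)}(\pi_2(K)) \}$, where $\mu_i^{(n)}$ denotes the pushforward of $\mu^{(n)}$ under $\pi_i$. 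The non-concentration estimate for $H$ thus reduces, via a union bound over the two factors, to the non-concentration estimate for random pairs in $\SL_2(\F_q)$ and $\SL_2(\F_{\tilde q})$, which is Proposition \ref{nonc} in the $A_1$ case already established in Section \ref{sec1a}. (The length $n = O(\log|H|)$ needed here is compatible with the lengths $O(\log q)$ and $O(\log \tilde q)$ supplied by that proposition, since $\log|H| \asymp \log \tilde q \asymp \log q$.)

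The main obstacle is the Goursat classification step, specifically ruling out case (c); once this is in hand, everything else reduces to the already-proven statements for each $\SL_2$-factor. Case (c) could in principle be subtle (for instance if the two factors shared a common subfield quotient), but here it is immediately excluded by the simplicity of each $\PSL_2$ and the strict inequality $|\PSL_2(\F_q)| \ne |\PSL_2(\F_{\tilde q})|$ coming from $q = \tilde q^3 \ne \tilde q$.
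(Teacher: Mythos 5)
Your quasirandomness and non-concentration steps are sound, but the product-theorem step has a genuine gap, and it is precisely the gap that forces the paper to take a different route. You try to verify hypothesis (ii) of Proposition \ref{machine} directly for $H$, by projecting an intermediate-size approximate group $A$ to the two $\PSL_2$ factors and claiming that at least one projection is of intermediate size in its factor. This is false. Write $|G_1| = |\PSL_2(\F_q)| \asymp \tilde q^9$ and $|G_2| = |\PSL_2(\F_{\tilde q})| \asymp \tilde q^3$, so $|H| \asymp \tilde q^{12}$. Take $A_1 \subset \hat G_1$ a symmetric set of size $5$ that generates $\hat G_1$ (the image of $\SL_2(\F_q)$ in $H$), and set $A = A_1 \cdot \hat G_2$. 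Since $\hat G_1$ and $\hat G_2$ commute, $A^2 = A_1^2 \cdot \hat G_2$, which is covered by $|A_1^2| \leq 25$ translates of $A$, so $A$ is a $25$-approximate subgroup, hence a $|H|^{\delta'}$-approximate subgroup for every $\delta'>0$ once $\tilde q$ is large. Its size $|A| \asymp \tilde q^3 \asymp |H|^{1/4}$ is intermediate, yet $\langle A\rangle = \hat G_1 \hat G_2 = H$. Here $\pi_1(A)$ has size $O(1)$ (too small for Proposition \ref{prod} to apply) and $\pi_2(A)$ is all of $G_2$, so neither projection is of intermediate size; your Ruzsa inequality $|A|\leq|\pi_1(A)||\pi_2(A)|$ only yields $\delta \leq \tfrac34\alpha_1 + \tfrac14\alpha_2$, which is consistent with $\alpha_1 = 0$, $\alpha_2 = 1$. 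In short, hypothesis (ii) of Proposition \ref{machine} is simply false for central products, as the paper notes at the start of Section \ref{semisimple}.

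This is exactly why the paper does not invoke Proposition \ref{machine} verbatim for $H$. Its proof of Lemma \ref{expand-good} is a one-line appeal to Theorem \ref{mainthm2}, whose proof (in Section \ref{semisimple}, built on Proposition \ref{nocommonfactor}) replaces the product theorem by the weaker Theorem \ref{semisimpleproduct}: either $|A|\geq |H|/K^C$, or $A$ has density $\geq K^{-C}$ in some coset $xH'$ of a proper subgroup $H'$. In your counterexample the second alternative holds with $H' = \hat G_2$. The adapted Bourgain-Gamburd argument then turns such coset-concentration into a violation of the non-concentration estimate $\mu^{(m)}(H') \leq |H|^{-\kappa}$ for $m = O(\log|H|)$, closing the loop. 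So the repair for your argument is not to strengthen the projection analysis but to weaken the desired product theorem and use non-concentration twice (once to seed Phase I and once to kill the concentrated-in-a-coset alternative in Phase II); that is what Section \ref{semisimple} and Appendix \ref{bg-app} carry out.

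Two smaller points. First, the displayed inequality $\mu^{(n)}(K) \leq \max\{\mu_1^{(n)}(\pi_1(K)),\mu_2^{(n)}(\pi_2(K))\}$ should be a $\min$ (each coordinate projection gives an upper bound, and you then take the better one, which is the $\min$); the $\max$ appears correctly only after you take the supremum over all proper $K$ and observe, via Goursat, that at least one projection is always proper. Second, Goursat's lemma should be applied after first noting that $H$ is perfect, so every proper $K<H$ maps to a proper subgroup of $H/Z(H)\cong \PSL_2(\F_q)\times\PSL_2(\F_{\tilde q})$; without that remark, the reduction to $H/Z(H)$ is not automatic in odd characteristic.
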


\begin{proof} This is a special case of Theorem \ref{mainthm2} proved in the next section. Note that there is no circular argument here, because none of the two simple factors of $H$ are of the $D_4$ type. 
\end{proof}

We can now deal with the degenerate case:

\begin{lemma}\label{deg}  If $n \geq C_1 \log |H|$ for a sufficiently large absolute constant $C_1$, then we have
$$ \P_{w \in W_{n,2}}(w \hbox{ degenerate}) \ll |\tilde G|^{-3\gamma}.$$
\end{lemma}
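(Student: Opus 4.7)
\textbf{Proof plan for Lemma \ref{deg}.} The strategy is to use the subgroup $H = \SL_2(\F_q)\circ\SL_2(\F_{\tilde q})$ provided by Lemma \ref{subgp} as a ``probe'' for degeneracy: degeneracy on the big Bruhat cell forces $w(a,b)\in X$ for a positive fraction of pairs $(a,b)\in H\times H$, and then expansion inside $H$ combined with the smallness $|H\cap X|\ll|H|^{1-c}$ will let us conclude.

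First I would use Lemma \ref{subgp}(ii) to convert degeneracy into a condition on $H\times H$. If $w\in W_{n,2}$ is degenerate, then by definition $w(a,b)\in X$ for every $a,b\in U^1 T^1 n_{\w_0}U^1$. Since \eqref{sl2-prop-1} guarantees that $|H\cap U^1 T^1 n_{\w_0}U^1|\geq |H|/2$, independence of $a$ and $b$ gives
$$\P_{a,b\in H}(w(a,b)\in X)\ \geq\ \P_{a,b\in H}(a,b\in U^1 T^1 n_{\w_0}U^1)\ \geq\ \tfrac14$$
for every degenerate $w$. Averaging over $w$ and swapping the order of expectation via Fubini, it therefore suffices to prove
$$ \P_{a,b\in H;\,w\in W_{n,2}}(w(a,b)\in X\cap H)\ \ll\ |\tilde G|^{-3\gamma}.$$

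Next I would condition on the expansion of $\{a,b\}$ in $H$. By Lemma \ref{expand-good} (which is non-circular, since the simple factors of $H$ are of type $A_1$, not $D_4$), a random pair $a,b\in H$ is $\eps$-expanding in $H$ with probability $1-O(|H|^{-\delta})$, for some absolute $\eps,\delta>0$. On the exceptional event I use the trivial bound. On the good event $\{a,b\}$ generates $H$ and the standard rapid-mixing consequence of expansion \eqref{mung}, applied inside $H$, yields $\|\mu_{a,b}^{(n)}-1\|_{L^\infty(H)}\leq |H|^{-10}$ as soon as $n\geq C_1\log|H|$, where $\mu_{a,b}=\tfrac14(\delta_a+\delta_{a^{-1}}+\delta_b+\delta_{b^{-1}})$ (note that $W_{n,2}$ is the set of \emph{non-reduced} words of length exactly $n$, so evaluating a uniformly random word in $W_{n,2}$ at $(a,b)$ produces exactly the distribution $\mu_{a,b}^{(n)}$). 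Combining this with Lemma \ref{subgp}(iii),
$$\P_{w\in W_{n,2}}(w(a,b)\in X\cap H)\ =\ \frac{\mu_{a,b}^{(n)}(X\cap H)}{|H|}\ \leq\ (1+|H|^{-10})\frac{|X\cap H|}{|H|}\ \ll\ |H|^{-c}.$$
Summing the two cases gives $\P_{a,b\in H;\,w\in W_{n,2}}(w(a,b)\in X)\ll |H|^{-\min(c,\delta)}$.

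Finally, I would verify the hypothesis and convert to the right exponent of $|\tilde G|$. Lemma \ref{subgp}(i) and direct computation give $|H|\asymp \tilde q^{12}=q^{4}$, whereas $|\tilde G|=|{}^3D_4(q)|$ is a fixed polynomial in $q$ of degree $28$, so $|H|\gg |\tilde G|^{1/7}$; hence $|H|^{-\min(c,\delta)}\ll|\tilde G|^{-3\gamma}$ if $\gamma>0$ is chosen sufficiently small, and the requirement $n\geq C_1\log|H|$ is met by our choice $n=2\lfloor c_0\log|\tilde G|\rfloor$ once $c_0$ is chosen compatibly (small enough for the arguments of Section \ref{sec1a}, but large enough that $2c_0\log|\tilde G|\geq C_1\log|H|$, which is consistent since $\log|H|\ll\log|\tilde G|$). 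The only genuinely non-routine step is the first one: extracting a quantitative $H\times H$-statement from the \emph{a priori} pointwise-everywhere degeneracy on the Bruhat cell. That step is handled cleanly by property (ii) of Lemma \ref{subgp}, and the rest is a direct application of Bourgain--Gamburd-type mixing inside the much smaller group $H$.
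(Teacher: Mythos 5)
Your proof is correct and rests on exactly the same two ingredients as the paper's: Lemma \ref{subgp} (the good subgroup $H$ with its large intersection \eqref{sl2-prop-1} with the Bruhat cell and the smallness \eqref{sl2-prop-2} of $H\cap X$) and Lemma \ref{expand-good} (expansion of random pairs in $H$). The route is, however, slightly more roundabout than the paper's. You convert pointwise degeneracy into the averaged statement $\P_{a,b\in H}(w(a,b)\in X)\geq\frac14$ via \eqref{sl2-prop-1}, apply Markov/Fubini to bound $\P_w(w\text{ degenerate})$ by $4\,\P_{a,b,w}(w(a,b)\in X)$, and then split into the good (expanding) and exceptional $(a,b)$. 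The paper shortcuts all of this by observing that, since by Lemma \ref{expand-good} a proportion $1-o(1)$ of pairs in $H$ are $\eps$-expanding and by \eqref{sl2-prop-1} a proportion $\geq 1/4$ lie in $U^1T^1n_{\w_0}U^1\times U^1T^1n_{\w_0}U^1$, there exists a \emph{single} pair $(a_0,b_0)\in H\times H$ with both properties; then degeneracy of $w$ forces $w(a_0,b_0)\in X$, so $\P_w(w\text{ degenerate})\leq\P_w(w(a_0,b_0)\in X)\ll|H|^{-c}\ll|\tilde G|^{-c'}$ by rapid mixing and \eqref{sl2-prop-2}, with no averaging over $(a,b)$ and no exceptional set. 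Your version costs only a constant factor and the harmless extra error $O(|H|^{-\delta})$, so it works; the paper's is just tidier. One small slip at the end: $|\tilde G|=|{}^3D_4(q)|\asymp\tilde q^{28}$ where $q=\tilde q^3$, so as a power of $q$ this is $\asymp q^{28/3}$, not a degree-$28$ polynomial in $q$; consequently $|H|\asymp q^4\asymp|\tilde G|^{3/7}$. Since $3/7>1/7$, your stated bound $|H|\gg|\tilde G|^{1/7}$ and hence the final conversion to a power of $|\tilde G|$ remain valid.
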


\begin{proof}  We may take $q$ to be large.  By Lemma \ref{expand-good} and \eqref{sl2-prop-1}, we may find $a_0,b_0 \in H$ such that $\{a_0,b_0\}$ is $\eps$-expanding in $H$ for some absolute constant $\eps>0$, and such that $a_0, b_0$ both lie in the Bruhat cell $U^1 T^1 n_{\w_0} U^1$.  If we let $\mu_{a_0,b_0} \coloneqq  \frac{1}{4} (\delta_{a_0} + \delta_{b_0} + \delta_{a_0^{-1}} + \delta_{b_0^{-1}})$, then from the rapid mixing formulation of expansion (as discussed in the introduction) we have the uniform distribution
\[ \Vert \mu_{a_0,b_0}^{(n)} - {\bf u}_H \Vert_{L^{\infty}(H)} \leq |H|^{-10}\]
for any $n \geq C_1 \log |H|$. and a sufficiently large absolute constant $C_1$, where ${\bf u}_H$ is the uniform distribution of $u$.
For such $n$, this bound and \eqref{sl2-prop-2} imply the estimate
\[ \P_{w \in W_{n,2}} (w(a_0,b_0) \in X) \ll |H|^{-c} \ll |\tilde G|^{-c'}\]
for some absolute constants $c,c'>0$, which clearly implies the required bound.
\end{proof}

In view of the above lemma, to conclude the proof of Theorem \ref{mainthm} in the triality case it suffices to show that
$$\P_{a,b \in U^1 T^1 n_{\w_0} U^1} (w(a,b) \in X ) \ll |\tilde G|^{-3\gamma}.$$
for any non-degenerate word $w \in W_{n,2}$, and for $n$ of the form $2 \lfloor C_2 \log |G| \rfloor$ for some sufficiently large $C_2$.

Fix $w$.  The condition that $w(a,b)$ lies in $X$ is equivalent to the eight equations
\[ \gamma_j(w(a,b))^{q'} - \gamma_j(w(a,b)) = 0, \qquad j = 1,\dots,8.\]
By assumption, at least one of these equations does not hold identically for $a,b \in U^1 T^1 n_{\w_0} U^1$. Let $j \in \{1,\dots,8\}$ be such that $\gamma_j(w(a,b))^{q'} - \gamma_j (w(a,b))$ is not identically zero on $U^1 T^1 n_{\w_0} U^1$.  It will then suffice to show that
$$\P_{a,b \in U^1 T^1 n_{\w_0} U^1} (\gamma_j(w(a,b))^{q'} - \gamma_j (w(a,b)) = 0) \ll |\tilde G|^{-3\gamma}.$$

As $w$ has length at most $n$, we know that $\gamma_j(w(a,b))$ is a polynomial combination (over $\F_q$) of $a,b$ of degree $O(n)$.  Unfortunately, the operation of raising to the power $q'$ increases this degree to the unacceptably high level of $O(nq')$.  To get around this difficulty we will reparameterise in terms of a smaller field $\F_{q_0}$ than $\F_q$ in order to linearise the Frobenius map $x \mapsto x^{q'}$.

We turn to the details.  Recall from the treatment of the Steinberg groups in Section \ref{non-conc-sec-untwist} that all the products in $U^1 T^1 n_{\w_0} U^1$ are distinct.  Also, we can express $U^1$ as the product of $O(1)$ groups $X_S$, each of which take the form
$$ \{ x_\alpha(t): t \in \F_{\tilde q} \}$$
or
$$ \{ x_\alpha(t) x_{\overline{\alpha}}(t^{\tilde q}) x_{\overline{\overline{\alpha}}}(t^{\tilde q^2}): t \in \F_q \},$$
and that all the products in this decomposition of $U^1$ are distinct, and the $x_\alpha$ are polynomial maps of degree $O(1)$.  Similarly, the finite abelian group $T^1$ is generated by groups $H_J^1$, each of which take the form
$$\{ h_\alpha(t): t \in\F_{\tilde q}^\times \} $$
or
$$ \{ h_\alpha(t) h_{\overline{\alpha}}(t^{\tilde q}) h_{\overline{\overline{\alpha}}}(t^{\tilde q^2}): t \in \F_q^\times \}$$
where the $h_\alpha$ are rational maps of degree $O(1)$ whose denominators are monomials.  The maps $t \mapsto t^{\tilde q}$, $t \mapsto t^{\tilde q^2}$ are of high degree when viewed as polynomials over $\F_q$, but if one instead views $\F_q$ as a three-dimensional vector space over $\F_{\tilde q}$ (thus $\F_q \equiv \F_{\tilde q}^3$ and $\F_q^\times \equiv \F_{\tilde q}^3 \backslash \{0\}$), then these maps become linear over $\F_{\tilde q}$.
Using the above factorisations, we can thus parameterise $U^1 T^1 n_{\w_0} U^1$ in terms of $O(1)$ coordinates, drawn from $\F_{\tilde q}$,  $\F_{\tilde q}^\times$, and $\F_{\tilde q}^3 \backslash\{0\}$, using rational maps whose numerators and denominators are polynomials of degree $O(1)$, with the denominators non-vanishing on the domain of the parameters.  This parameterisation is not quite unique (because the products of elements from $H_J^1$ can collide), but each element of $U^1 T^1 n_{\w_0} U^1$ has the same number of representations by such a parameterisation.  Taking products, we see that we can parameterise an element $(a,b)$ of $U^1 T^1 n_{\w_0} U^1 \times U^1 T^1 n_{\w_0} U^1$ by $O(1)$ coordinates from $\F_{\tilde q}$, $\F_{\tilde q}^\times$, and $\F_{\tilde q}^3 \backslash \{0\}$ using the above rational maps, and with each $(a,b)$ having the same number of representations.

From \eqref{q-form} we can view $\F_{\tilde q}$ as a two-dimensional vector space over $\F_{q_0}$, so that we can view the above parameterisation of $(a,b) \in U^1 T^1 n_{\w_0} U^1 \times U^1 T^1 n_{\w_0} U^1$ as being in terms of a bounded number $s_1,\ldots,s_l$ of variables in $\F_{q_0}$, omitting some coordinate hyperplanes associated to the missing origin in $\F_{\tilde q}^\times$ or $\F_{\tilde q}^3 \backslash \{0\}$.  Let $E \subset \F_{q_0}^l$ be the domain of the parameters $(s_1,\ldots,s_l)$ (thus $E$ is $\F_{q_0}^l$ with some coordinate hyperplanes removed).  Observe that the Frobenius map $x \mapsto x^{q'}$ on $\F_q$, while having large degree when viewed as a polynomial over $\F_q$, becomes linear over $\F_{q_0}$ when $\F_q$ is viewed as a (six-dimensional) vector space over $\F_{q_0}$.  From this, we see that the quantity
$$ \gamma_j(w(a,b))^{q'} - \gamma_j (w(a,b))$$
is a rational function $P(s_1,\ldots,s_l)/Q(s_1,\ldots,s_l)$ of the parameters $s_1,\ldots,s_l$, with numerator $P$ and denominator $Q$ being polynomials of degree $O(n)$, and the denominator $Q$ non-vanishing on $E$.  It will thus suffice to show that
\begin{equation}\label{doo}
\P_{(s_1,\ldots,s_l) \in E}(P(s_1,\ldots,s_l) = 0) \ll |\tilde G|^{-3\gamma}.
\end{equation}
(Here we use the fact that each $(a,b)$ is parameterised by the same number of tuples $(s_1,\ldots,s_l)$.)  On the other hand, since $\gamma_j(w(a,b))^{q'} - \gamma_j (w(a,b))$ is non-vanishing for at least one $(a,b) \in \gamma_j(w(a,b))^{q'} - \gamma_j (w(a,b))$, $P$ is non-vanishing on $E$.  Applying Lemma \ref{basic-schwartz-zippel} (and noting that $|E|$ is comparable to $q_0^l$) we conclude that
$$\P_{(s_1,\ldots,s_l) \in E}(P(s_1,\ldots,s_l) = 0) \ll n q_0^{-1},$$
and the claim \eqref{doo} follows from \eqref{lang-weil} and the choice of $n$.  This concludes the proof of Theorem \ref{mainthm} in the triality group case.

\section{Products of finite simple groups}\label{semisimple}

In this last section, we extend the main result of the paper to products of a bounded number of finite simple (or quasisimple) groups of Lie type of bounded rank.  Here, by \emph{product} we mean either a direct product, or an almost direct product, that is to say a quotient of a direct product by a central subgroup.  In particular, this includes central product $\SL_2(\F_q) \circ \SL_2(\F_{\tilde q})$ of the quasisimple groups $\SL_2(\F_q), \SL_2(\F_{\tilde q})$ that appears in Section \ref{d4-sec}.

The Bourgain-Gamburd method applies here again, but needs to be appropriately modified. For instance, the product theorem (Proposition \ref{prod}) above is no longer true in a direct product of groups $G\coloneqq G_1 \times G_2$, because approximate subgroups of the form $A=A_1 \times A_2$ with for instance $A_1$ small in $G_1$ and $A_2$ large in $G_2$ may generate $G$ but be neither small nor large in $G$.   The ``correct'' product theorem for such groups is as follows.

\begin{theorem}[Product theorem for semisimple groups]\label{semisimpleproduct} Let $G$ be a product of finite simple (or quasisimple) groups of Lie type and suppose that $A$ a $K$-approximate subgroup of $G$. Then either $|A| \geq |G|/K^C$, or there is a proper subgroup $H$ of $G$ and $x \in G$ such that $|A \cap xH| \geq |A|/K^C$, where $C>0$ is a constant depending only on the rank of $G$.
\end{theorem}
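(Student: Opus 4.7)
The plan is to proceed by induction on the number $n$ of simple factors in the almost direct product decomposition $G = (G_1 \times \cdots \times G_n)/Z$, where each $G_i$ is quasisimple of Lie type of bounded rank. Since $|Z|=O(1)$, lifting $A$ to a $K$-approximate subgroup of the direct product $G_1 \times \cdots \times G_n$ and absorbing $Z$ into the constant $C$, we may assume $G = G_1 \times \cdots \times G_n$ is a genuine direct product. The base case $n=1$ is the dichotomy form of the Helfgott--Pyber--Szab\'o product theorem (a direct consequence of Proposition \ref{prod}): a $K$-approximate subgroup $A$ of a single quasisimple $G_1$ either has $|A| \leq K^{O(1)}$, has $|A| \geq |G_1|/K^{O(1)}$, or is contained in a proper subgroup of $G_1$; in each of the three cases the conclusion of the theorem is immediate.

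For the inductive step, let $\pi \colon G \to G_n$ be the projection onto the last factor and apply the base case to the $K$-approximate subgroup $\pi(A) \subseteq G_n$. If $\pi(A)$ is concentrated in a coset of a proper subgroup $H_n < G_n$, then using the standard fact that all non-empty fibers of $\pi|_A$ have size within a factor $K^{O(1)}$ of one another (a consequence of the approximate-subgroup property, via Pl\"unnecke--Ruzsa), we deduce that $A$ is concentrated in a coset of the proper subgroup $\pi^{-1}(H_n) < G$. Otherwise $|\pi(A)| \geq |G_n|/K^{O(1)}$, and we invoke the standard Ruzsa-type fibration lemma to extract the set $A' \coloneqq (A \cdot A^{-1}) \cap \ker(\pi)$, which is an $O(K^{O(1)})$-approximate subgroup of $\ker(\pi) = G_1 \times \cdots \times G_{n-1}$ of size $|A'| \gg |A|/(K^{O(1)} |\pi(A)|)$. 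Applying the inductive hypothesis to $A'$ in the $(n-1)$-factor product $\ker(\pi)$, we either obtain $|A'| \geq |\ker(\pi)|/K^{O(1)}$---which combined with $|\pi(A)| \geq |G_n|/K^{O(1)}$ yields $|A| \geq |G|/K^{O(1)}$ and finishes the argument---or we find a proper subgroup $H' < \ker(\pi)$ and a coset $yH'$ with $|A' \cap yH'| \geq |A'|/K^{O(1)}$.

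In this last case, promoting the single-coset concentration of $A'$ inside $\ker(\pi)$ to an honest concentration of $A$ in a coset of a proper subgroup of $G$ is the main obstacle. The plan is a popularity/pigeonhole argument on the $\geq |A|^2/|\pi(A)|$ pairs $(a, a') \in A \times A$ satisfying $\pi(a) = \pi(a')$: each such pair contributes $a(a')^{-1}$ to $A'$, and fiber-uniformity of $A$ ensures that a $\gg 1/K^{O(1)}$ fraction of these pairs in fact satisfy $a(a')^{-1} \in yH'$. Double-counting on the second coordinate then produces some $a^* \in A$ with $|\{a \in A : a(a^*)^{-1} \in yH'\}| \geq |A|/K^{O(1)}$, and rewriting $yH' a^* = (ya^*) H$ with $H \coloneqq (a^*)^{-1} H' a^*$ a conjugate of $H'$---hence a proper subgroup of $\ker(\pi)$, a fortiori of $G$---completes the concentration conclusion. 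The delicate point of this step is controlling the multiplicity of representations $r(h) = |A \cap hA|$ for $h \in A'$: the Ruzsa fibration lemma by itself only gives lower/upper bounds on $|A'|$ and on its doubling, and it is the fiber-uniformity of the ambient approximate subgroup $A$ that converts the first-moment concentration $|A' \cap yH'| \geq |A'|/K^{O(1)}$ into the required uniform concentration over colliding pairs. A minor additional subtlety is that when several of the factors $G_i$ are pairwise isomorphic, proper subgroups of $\ker(\pi)$ produced by the induction may be of subdirect (``diagonal'') type by Goursat's lemma rather than honest products of subgroups of individual factors; this causes no difficulty, as the popularity argument above treats an arbitrary proper subgroup on equal footing.
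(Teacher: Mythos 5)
Your approach (project to the last factor $G_n$, form the kernel slice $A' = AA^{-1}\cap\ker\pi$, apply induction to $A'$, then lift the concentration back to $A$) is genuinely different from the paper's, which never invokes a fibration lemma: after arranging by induction that some bounded power $A^m$ projects onto each factor, the paper examines $A^{4m}\cap G_n$ directly and splits on whether this intersection is trivial --- in which case Goursat's lemma produces a graph homomorphism $\alpha\colon \ker\pi\to G_n$ and a proper subgroup containing $A^m$ --- or nontrivial, in which case conjugating a nontrivial element by $A^m$ yields a full conjugacy class of $G_n$, and bounded powers of a conjugacy class exhaust $G_n$, forcing $A^{O(m)}=G$.

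Your final ``popularity'' step, however, fails for a structural reason. The subgroup $H=(a^*)^{-1}H'a^*$ you produce is still contained in $\ker\pi$ (normality of $\ker\pi$), so any coset $xH$ lies inside a single $\pi$-fibre; by the very fibre-uniformity you invoke, $|A\cap xH|\ll K^{O(1)}|A|/|\pi(A)|$. But you are in the case $|\pi(A)|\geq |G_n|/K^{O(1)}$, so the claimed bound $|A\cap xH|\geq |A|/K^{O(1)}$ is impossible once $|G_n|$ is large. Tracing the count: the number of pairs $(a,a')\in A\times A$ with $\pi(a)=\pi(a')$ is at most $K^{O(1)}|A|^2/|\pi(A)|$, so the double-count over $a'$ can only give $|\{a:a(a^*)^{-1}\in yH'\}|\gg |A|/(K^{O(1)}|\pi(A)|)$ for some $a^*$ --- a factor $|\pi(A)|$ has been dropped. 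Fixing this would require the target subgroup to contain $G_n$ (e.g.\ $H'G_n$), but ``$A'$ concentrates in a coset of $H'$'' does not lift to that statement: each translated fibre $a_b^{-1}(A\cap\pi^{-1}(b))$ is a large subset of $A'$, yet distinct fibres may sit over distinct cosets of $H'G_n$. There is also a second gap: when the genuine obstruction is a diagonal (graph) subgroup --- precisely the case Goursat detects --- the slice $A'=A^2\cap\ker\pi$ is \emph{small} rather than concentrated, and your induction as written does not catch it. This is the case the paper's trivial/nontrivial dichotomy for $A^{4m}\cap G_n$ is designed to handle.
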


\begin{proof}
All constants will depend only on the rank of $G$. Clearly (up to passing to a finite quotient) we may assume that $G$ is a direct product of finite quasisimple groups of Lie type. We will prove by induction on the rank of $G$ the slightly stronger statement that either $A^m=G$ for some constant $m$, or there is a proper subgroup $H$ such that $|A \cap xH| > |A|/K^C$ for some coset $xH$ of $H$. So suppose $G \simeq G_1 \times G_2$, where $G_2$ is a simple group. By induction, we may assume that, for some constant $m$, $A^m$ projects onto $G_1$ and onto $G_2$ (otherwise a large part of $A$ will be contained in a coset of the pull-back of a proper subgroup of either $G_1$ or $G_2$).

We view $G_1,G_2$ as commuting subgroups of $G$.  Suppose $A^{4m} \cap G_2$ is non trivial, so that it contains some element $x \neq 1$. Then for some larger but still bounded $m'$ we must have $G_2 \subset A^{m'}$. Indeed, for every $g_2 \in G_2$ there is $g_1 \in G_1$ such that $g_1g_2 \in A^m$ and thus
$$ g_2 x g_2^{-1} = (g_1 g_2) x (g_1 g_2)^{-1} \in A^m A^{4m} A^m = A^{6m},$$
so that $A^{6m}$ contains a full conjugacy class of $G_2$. But since the rank of $G_2$ is bounded, it is known (and follows as well from the product theorem, Proposition \ref{prod}) that for some constant $M$, $\mathcal{C}^M=G_2$ for every non trivial conjugacy class $\mathcal{C}$ of $G_2$. It follows that $A^{6mM}$ contains $G_2$ and hence $A^{12mM}=G$.

Hence we may assume that $A^{4m} \cap G_2$ is trivial. But then (arguing as in Goursat's lemma), this means that for every $g_1 \in G_1$, there is one and only one $g_2 \in G_2$ such that $g_1g_2 \in A^m$. Let $\alpha \colon G_1 \to G_2$ be the map thus defined. The hypothesis $A^{4m} \cap G_2$ is trivial similarly implies that $\alpha$ is a group homomorphism. This means that $A^m$ is contained in the proper subgroup of $G$ defined by $\{g_1\alpha(g_1): g_1 \in G_1\}$ and we are done.
\end{proof}

\begin{remark}  In many important special cases, one can embed $G$ in a linear group $\GL_n(\F)$ for some finite field $\F$ and $n$ bounded in terms of the rank of $G$.  In this case, we may also establish Theorem \ref{semisimpleproduct} using the results of Pyber and Szab\'o.  Indeed, \cite[Theorem 10]{pyber-szabo} asserts that if $A$ is a $K$-approximate subgroup of $\GL_n(\F)$, then there are normal subgroups $L\leq \Gamma$ of $\langle A \rangle$ such that $L$ is solvable, $\Gamma \subset A^6L$ and $A$ is covered by $K^m$ cosets of $\Gamma$, where $m$ depends only on $n$ (hence on the rank of $G$). Now in Theorem \ref{semisimpleproduct} we may assume that $\langle A \rangle = G$, since otherwise we could take $H \coloneqq  \langle A \rangle$.  Hence $L$, being normal and solvable in $G$ must be central and bounded in cardinality. Now if $\Gamma=G$, then $\Gamma \subset A^6L$ implies that $|A| \geq |G|/O(K^5)$, while if $\Gamma$ is a proper subgroup, then the fact that $A$ is covered by $K^m$ cosets of $\Gamma$ implies that $|A \cap x\Gamma| \geq |A|/K^m$ for at least one cosets $x\Gamma$ and thus Theorem \ref{semisimpleproduct} follows in this case.
\end{remark}

Using Theorem \ref{semisimpleproduct}, we can now give the following extension of Theorem \ref{mainthm} to product groups.

\begin{theorem}[Random pairs expand in semisimple groups]\label{mainthm2} Suppose that $G$ is a product of finite simple \textup{(}or quasisimple\textup{)} groups of Lie type and that $a, b \in G$ are selected uniformly at random. Let $S$ be the smallest simple factor of $G$. Then with probability at least $1-C|S|^{-\delta}$, the pair $\{a,b\}$ is  $\eps$-expanding
for some $C,\eps,\delta> 0$ depending only on the rank of $G$.
\end{theorem}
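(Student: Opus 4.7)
The plan is to apply a variant of the Bourgain-Gamburd machine (Proposition~\ref{machine}) in which the product theorem is replaced by Theorem~\ref{semisimpleproduct}. Two modifications to Appendix~\ref{bg-app} are needed. First, when Theorem~\ref{semisimpleproduct} produces concentration in a coset $xH$ of a proper subgroup (rather than in $H$ itself), the standard bound $\mu^{(2n)}(xHx^{-1}) \geq \mu^{(n)}(xH)^2$ reduces the task to non-concentration in conjugates of proper subgroups, i.e.\ in proper subgroups. Second, the quasirandomness hypothesis $\dim \rho \geq |G|^\kappa$ is weaker than what is directly available: representations nontrivial on every factor do have dimension at least $|G|^\beta$ (so Proposition~\ref{quasi} applies), but representations factoring through a proper subset of factors $I \subsetneq \{1,\dots,k\}$ are handled inductively by invoking Theorem~\ref{mainthm2} for the sub-product $\prod_{i\in I} G_i$. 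The base case $k=1$ is Theorem~\ref{mainthm}. In the inductive step, after quotienting by a central subgroup, I may assume $G = G_1 \times \cdots \times G_k$ is a direct product of quasisimple groups of Lie type with $k$ bounded by the rank.

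The main task is then the non-concentration estimate: with probability $1 - O(|S|^{-\delta})$ over random $(a,b) \in G$, one has $\sup_{H < G} \mu^{(n)}(H) \leq |G|^{-\gamma}$ for some $n = 2\lfloor c_0 \log|G|\rfloor$. By iterated Goursat and quasi-simplicity of the factors, any proper $H < G$ falls into one of two cases: (a) $H$ is contained in a \emph{slab} $G_1 \times \cdots \times H_\ell \times \cdots \times G_k$ with $H_\ell < G_\ell$ proper; or (b) there exist $i \neq j$ with $G_i/Z(G_i) \cong G_j/Z(G_j) \eqqcolon G_0$ such that the projection $\pi_{ij}(H)$ is contained (modulo centres) in the graph $H_\phi = \{(g,\phi(g)) : g \in G_0\}$ of some isomorphism $\phi \in \Aut(G_0)$. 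Case (a) is handled by Proposition~\ref{nonc} applied to the uniform random pair $(a_\ell, b_\ell) \in G_\ell$.

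The core difficulty is case (b). The naive union bound of the Schwartz-Zippel estimate from Proposition~\ref{sufficiently-zdense-prop} over all $|\Aut(G_0)| = \Theta(|G_0|)$ isomorphisms $\phi$ fails, since the per-$\phi$ gain $q^{-1/d}$ is too weak to overcome the count. The proposed resolution is to observe that $w(a,b)$ lies in $\pi_{ij}^{-1}(H_\phi)$ for \emph{some} $\phi$ iff the projections $\overline{w(a_i,b_i)}$ and $\overline{w(a_j,b_j)}$ to $G_0$ lie in a common $\Aut(G_0)$-orbit. Since $|\operatorname{Out}(G_0)| = O(1)$ and the conjugation-invariant ring of $G_0$ is generated by polynomials of bounded degree (characteristic polynomial coefficients in the classical cases), the $\Aut(G_0)$-invariant ring is likewise finitely generated in bounded degree, and the equi-orbit condition carves out a \emph{single} proper subvariety of $G_0 \times G_0$ of complexity $O(1)$. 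Pulling back via $(a,b) \mapsto (\overline{w(a_i,b_i)}, \overline{w(a_j,b_j)})$ and applying Proposition~\ref{sufficiently-zdense-prop} yields $\mathbb{P}_{(a,b)}(w(a,b) \in \bigcup_\phi \pi_{ij}^{-1}(H_\phi)) \ll n^{O(1)} q^{-1/d}$ for each nontrivial $w$; properness of the pullback variety is verified by specialising $(a_i,b_i)$ to a strongly dense pair (Theorem~\ref{sec3-key-prop}) with $w(a_i,b_i)$ regular semisimple and $(a_j,b_j) = (e,e)$, so that the images lie in different $\Aut(G_0)$-orbits. Averaging over $w \in W_{n,2}$ and applying Markov's inequality completes the argument.

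The hard part is precisely case (b): uniformly handling all $\phi \in \Aut(G_0)$ at once. The invariant-theoretic repackaging of ``lying in a common $\Aut(G_0)$-orbit'' as a single bounded-complexity polynomial condition is the key new idea needed beyond the tools of the simple-group case, and leverages crucially the boundedness of $\operatorname{Out}(G_0)$ for simple groups of Lie type of fixed rank.
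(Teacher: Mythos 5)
Your high-level strategy agrees with the paper's: use the Bourgain-Gamburd machine with the modified product theorem (Theorem~\ref{semisimpleproduct}), handle quasirandomness inductively (the paper does this in the proof of Proposition~\ref{nocommonfactor}), reduce non-concentration via Goursat to a ``slab'' case and a ``diagonal'' case, and handle the slab case by the simple-group non-concentration estimate. The gap is in your treatment of the diagonal case, and it is a genuine one.

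You assert that $|\operatorname{Out}(G_0)| = O(1)$ for a finite simple group of Lie type $G_0$ of bounded rank, and from this deduce that the $\Aut(G_0)$-invariant polynomial ring is finitely generated in bounded degree, so that the equi-orbit condition is a single proper subvariety of complexity $O(1)$. But $|\operatorname{Out}(G_0)|$ is \emph{not} bounded: for $G_0$ over $\F_q$ with $q=p^f$, the field automorphism group has order $f = \log_p q$, which grows with $q$. Consequently, the $\Aut(G_0)$-invariant ring is \emph{not} generated in bounded degree. Concretely, the conjugation-invariants (characteristic polynomial coefficients $\gamma_i$) are of bounded degree, but the Frobenius $x \mapsto x^p$ acts on them by $\gamma_i \mapsto \gamma_i^p$, and no bounded-degree polynomial in $\gamma_1,\dots,\gamma_m$ can be invariant under this unbounded cyclic action; any purported generator would have to absorb an $f$-th ``trace'' of the Frobenius, whose degree is of order $p^{f-1}$. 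So there is no single bounded-complexity variety of the kind your argument requires, and the Schwartz-Zippel step as you have set it up does not go through.

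The paper (Proposition~\ref{productconcentration}) sidesteps this by \emph{not} trying to work $\Aut$-invariantly. It observes that every $\Aut(\overline{S})$-orbit in $\overline{S}$ is a union of at most $O(\log|S|)$ conjugacy classes (boundedly many graph and diagonal automorphisms, but $O(\log q)$ field automorphisms), and then union-bounds over conjugacy classes. For each fixed conjugacy class $C$ and each non-trivial word $w$, the set $\{(s_1,s_2): w(s_1,s_2) \in C\}$ is cut out by the bounded-degree conditions $\gamma_i(w(s_1,s_2)) = \gamma_i(c)$ and has complexity $O(n)$; it is proper because the word map is dominant (Borel) and $C$ lies in a proper subvariety. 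Schwartz-Zippel gives a gain of $q^{-1/d}$ per class, which easily absorbs the $O(\log|S|)$ union-bound factor. Your construction via strong density to verify properness is not needed; the conjugacy-class version is both simpler and correct. Replacing your ``single $\Aut$-invariant variety'' step with this polylogarithmic union bound over conjugacy classes would repair the argument and bring it essentially into line with the paper's.
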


We will deduce this result from our main theorem, Theorem \ref{mainthm}. To prove Theorem \ref{mainthm2} for a particular $G$, one only needs to know Theorem \ref{mainthm} for those simple factors $S$ contained in $G$. Thus our proof of Theorem \ref{mainthm} for the Triality groups ${}^3 D_4(q)$ in Section \ref{d4-sec} is not circular, because we need Theorem \ref{mainthm2} only in the case that $G$ is a central product of $\SL_2(\F_q)$ and $\SL_2(\F_{\overline{q}})$.

The proof of Theorem \ref{mainthm} will split into two cases. We will first establish Proposition \ref{nocommonfactor} below, which asserts that a generating set of $G$ is expanding if and only if its projection to its primary components are expanding. Then we will treat separately the case of a power of a single finite simple group of Lie type. Theorem \ref{mainthm2} will then follow immediately.

\begin{proposition}\label{nocommonfactor} Let $r \in \N$ and $\eps>0$. Suppose $G=G_1G_2$, where $G_1$ and $G_2$ are products of at most $r$ finite simple \textup{(}or quasisimple\textup{)} groups of Lie type of rank at most $r$. Suppose that no simple factor of $G_1$ is isomorphic to a simple factor of $G_2$. If $x_1=x_1^{(1)}x_1^{(2)},\ldots, x_k=x_k^{(1)}x_k^{(2)}$ are chosen so that $\{x_1^{(1)},\ldots,x_k^{(1)}\}$ and $\{x_1^{(2)},\ldots,x_k^{(2)}\}$ are both $\eps$-expanding generating subsets in $G_1$ and $G_2$ respectively, then $\{x_1,\ldots,x_k\}$ is $\delta$-expanding in $G$ for some $\delta=\delta(\eps,r)>0$.
\end{proposition}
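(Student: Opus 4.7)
The plan is to prove the proposition via a representation-theoretic analysis of the convolution operator $T = T_{\{x_i\}}$ on $L^2(G)$. By Peter-Weyl, $L^2(G)$ decomposes as a direct sum of isotypic components indexed by $\widehat{G}$, and $T$ respects this decomposition, acting on the $\rho$-block as $T_\rho := \frac{1}{2k}\sum_i (\rho(x_i)+\rho(x_i^{-1}))$ on $V_\rho$. Expansion of $\{x_1,\ldots,x_k\}$ at rate $\delta$ is thus equivalent to the uniform bound $\|T_\rho\|_{\operatorname{op}} \leq 1-\delta$ over all non-trivial $\rho \in \widehat{G}$. Since $G = G_1G_2$ is an almost direct product, every $\rho \in \widehat G$ pulls back via the canonical surjection $G_1 \times G_2 \twoheadrightarrow G$ (whose kernel is central, of bounded size) to an irreducible representation of $G_1\times G_2$, which by the standard identification $\widehat{G_1\times G_2} = \widehat{G_1}\times \widehat{G_2}$ takes the form $\rho = \rho_1 \boxtimes \rho_2$ with $\rho_j \in \widehat{G_j}$. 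Under this identification $T_\rho$ acts on $V_{\rho_1}\otimes V_{\rho_2}$ as
\[ T_\rho = \frac{1}{2k}\sum_i \left( \rho_1(x_i^{(1)})\otimes \rho_2(x_i^{(2)}) + \rho_1(x_i^{(1)})^{-1}\otimes \rho_2(x_i^{(2)})^{-1}\right).\]

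The argument splits into three cases by the triviality of $\rho_1,\rho_2$. If $\rho_2$ is trivial and $\rho_1$ is not, the second tensor factor collapses and $T_\rho$ becomes exactly the convolution operator for $\{x_i^{(1)}\}$ acting on the non-trivial irreducible $V_{\rho_1}$ of $G_1$; the hypothesised $\eps$-expansion of $\{x_i^{(1)}\}$ in $G_1$ then yields $\|T_\rho\|_{\operatorname{op}} \leq 1-\eps$ immediately, and the case where $\rho_1$ is trivial is symmetric. The substantive case, where both $\rho_1$ and $\rho_2$ are non-trivial, I plan to handle via a trace estimate: for $n = O(\log |G|)$,
\[ \operatorname{tr}(T_\rho^{2n}) = \E_{w \in W_{2n,k}}\bigl[\chi_{\rho_1}(w(x^{(1)}))\,\chi_{\rho_2}(w(x^{(2)}))\bigr],\]
and the self-adjointness of $T_\rho$ gives $\|T_\rho\|_{\operatorname{op}}^{2n} \leq \operatorname{tr}(T_\rho^{2n})$. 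The marginal character averages $\E_w \chi_{\rho_j}(w(x^{(j)}))$ are each bounded in absolute value by $d_{\rho_j}(1-\eps)^{2n}$ by the $\eps$-expansion of $\{x_i^{(j)}\}$ in $G_j$, so the key task is to show that the joint expectation factors into these marginals up to negligible error.

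The main obstacle will be precisely this decoupling, since the two characters depend on the same random word $w$ and a naive application of Cauchy-Schwarz yields only the trivial bound. The no-common-simple-factor hypothesis enters decisively here: by Goursat's lemma, any proper subgroup of $G_1\times G_2$ that surjects onto both factors must arise from an isomorphism between non-trivial quotients of subgroups of $G_1$ and of $G_2$, and any such isomorphism would produce a common simple composition factor, contradicting the hypothesis. Consequently the joint random walk $(w(x^{(1)}),w(x^{(2)}))$ cannot be trapped in a proper subgroup of $G_1\times G_2$. To convert this structural fact into a uniform quantitative estimate on $\operatorname{tr}(T_\rho^{2n})$, my plan is to expand $\rho\otimes\rho^*$ as $(\rho_1\otimes\rho_1^*)\boxtimes(\rho_2\otimes\rho_2^*)$, decompose each $\rho_j\otimes\rho_j^* = \mathbf{1} \oplus \bigoplus_{\sigma_j \neq \mathbf 1}\sigma_j$, and estimate the contribution of each tensor component $\sigma_1\boxtimes\sigma_2$ separately: pieces with at least one trivial side are controlled by the expansion in one factor (as in the easy cases above), while the Goursat constraint bounds the remaining ``both non-trivial'' contributions. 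Combining these bounds yields $\|T_\rho\|_{\operatorname{op}} \leq 1-\delta$ with $\delta = \delta(\eps, r)>0$, as required.
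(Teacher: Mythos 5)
Your setup — block-decomposing $T$ over $\widehat G = \widehat{G_1} \times \widehat{G_2}$, disposing of the blocks where one of $\rho_1,\rho_2$ is trivial via the hypothesised expansion in the corresponding factor, and isolating the ``both non-trivial'' blocks as the substantive case — is correct and parallels the structure of the problem. The gap is in the resolution of that substantive case, and it is a real one, not a deferred technicality. Cauchy–Schwarz on the joint character average gives
\[
\bigl|\operatorname{tr}(T_\rho^{2n})\bigr| = \bigl|\E_w\,\chi_{\rho_1}(w(x^{(1)}))\,\chi_{\rho_2}(w(x^{(2)}))\bigr| \leq \prod_{j=1}^2\Bigl(\E_w\,|\chi_{\rho_j}(w(x^{(j)}))|^2\Bigr)^{1/2} = \prod_{j=1}^2\operatorname{tr}\bigl(T_{\rho_j\otimes\rho_j^*}^{2n}\bigr)^{1/2},
\]
and since $\rho_j\otimes\rho_j^*$ always contains the trivial representation, each factor on the right is $\geq 1$, so the bound you get approaches $1$ from above as $n\to\infty$ but can never go below $1$. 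That is far weaker than what you need: to conclude $\|T_\rho\|_{\operatorname{op}}\leq 1-\delta$ for some $n = O(\log|G|)$, you would need $\operatorname{tr}(T_\rho^{2n}) \leq |G|^{-c}$, not $\leq 1+o(1)$. (The bound $1+o(1)$ only gives $\|T_\rho\|_{\operatorname{op}} \leq (1+o(1))^{1/(2n)}$, which tends to $1$.) Moreover, your plan to estimate the pieces $\sigma_1\boxtimes\sigma_2$ of $\rho\otimes\rho^*$ ``separately, with Goursat controlling the both-non-trivial contributions'' is circular: the pieces with both $\sigma_j$ non-trivial are exactly the same type of object whose operator norm you are trying to bound, so no progress is made. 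The Goursat constraint is purely structural — it tells you the joint walk generates $G$ — and there is no direct representation-theoretic mechanism in your proposal that converts this into the super-polynomial trace decay that expansion requires.

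The paper does not attempt any such direct spectral decoupling. It runs the full Bourgain--Gamburd machine on $G$ itself, and the no-common-factor hypothesis enters in Phase II, not in a character computation. Concretely: by induction on rank it reduces to the case where the extremal representation $\rho$ is faithful modulo the centre, so that quasirandomness $\dim\rho\gg|G|^\beta$ holds; Phase~I uses expansion in the larger factor $G_1$ to get $\|\mu^{(n)}\|_{L^2} \ll |G|^{1/2-\kappa}$; if $|G_2|\leq|G_1|^{\beta/5}$ this is already strong enough; otherwise Phase~II applies the weighted Balog--Szemer\'edi--Gowers lemma and the modified product theorem for semisimple groups (Theorem~\ref{semisimpleproduct}) to obtain a proper subgroup $H<G$ in which $\mu^{(2m)}$ concentrates, and \emph{then} Goursat's lemma forces some projection $\pi_{i_0}(H)$ to be proper in $\overline{G_{i_0}}$, which contradicts the $\eps$-expansion of $\{\pi_{i_0}(x_i)\}$ in $\overline{G_{i_0}}$ via an explicit $L^2$ estimate and the bounded-index property of proper subgroups of simple groups of Lie type of bounded rank. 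The crucial ingredient missing from your proposal is precisely this approximate-group / non-concentration phase: the quantitative conversion of the Goursat structural constraint goes through the product theorem and the covering lemma, not through a tensor decomposition of $\rho\otimes\rho^*$.
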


\begin{remark} The assumption that $G_1$ and $G_2$ have no common simple factor is necessary in this proposition. Indeed consider for example the case when $G_1 \simeq G_2$ are finite simple groups both isomorphic to (say) $\PSL_2(\F_q)$, and choose $\eps$-expanding pairs $(a_1,b_1)$ in $G_1$ and $(a_2,b_2)$ in $G_2$ such that $a_2=\phi(a_1)$ and $b_2=\phi(b_1)$ for some group isomorphism $\phi \colon G_1 \to G_2$. Then $(a_1a_2,b_1b_2)$ does not generate $G=G_1 \times G_2$, hence cannot $\delta$-expand for any fixed $\delta>0$ as $q$ tends to $\infty$. The key point here is that under the assumption of the proposition, Goursat's lemma forces $x_1,\ldots,x_k$ to generate $G$.
\end{remark}

\begin{proof}  We follow the Bourgain-Gamburd method (see Appendix \ref{bg-app}) and proceed by induction on the rank of $G$. Recall (see Definition \ref{specex}) that the $x_i$'s give rise to the averaging operator $T$. Let $\rho$ be an irreducible component of the representation of $G$ associated to the eigenvalue of $T$ with largest modulus (different from $1$). Let $H=\ker \rho$. It is a normal subgroup of $G$, and hence is either central and bounded, or contains a quasisimple non-abelian subgroup of $G$. In the latter case, the quotient $\rho(G)\simeq G/H$ will have strictly smaller rank than $G$ and will satisfy the same assumptions as $G$. Namely $\rho(G)=\rho(G_1) \cdot \rho(G_2)$ and both $\{\rho(x_1)^{(1)},\ldots,\rho(x_k)^{(1)}\}$ and $\{\rho(x_1)^{(2)},\ldots,\rho(x_k)^{(2)}\}$ are $\eps$-expanding in $\rho(G_1)$ and $\rho(G_2)$ respectively (indeed all irreducible components of $\ell^2(\rho(G_i))$ occur already in $\ell^2(G_i)$). By induction hypothesis, we are then done. In the former case, $\rho$ is of the form $\sigma_1 \otimes \ldots \otimes \sigma_m$, where the $\sigma_i$'s are non trivial irreducible representations of the $m$ quasisimple normal subgroups $S_i$ of $G\simeq S_1\ldots S_m$. In particular in view of the quasirandomness of quasisimple groups of Lie type (Proposition \ref{quasi}) there is $\beta=\beta(r)>0$ such that $\dim \sigma_i \gg |S_i|^\beta$ for all $i=1,\ldots, m$, and thus 

\[ \dim \rho \geq \prod_1^m \dim \sigma_i \gg \prod_1^m |S_i|^\beta \geq |G|^\beta.\]

We are thus in a position to apply the Bourgain-Gamburd method, as adapted to the product setting in \cite{bv}. Without loss of generality, we may assume that $|G_1| \geq |G_2|$. Let $\overline{G_i}$ be the quotient of $G_i$ modulo its center. Then we have epimorphisms $\pi_i \colon G \to \overline{G_i}$. Let $\mu$ be the probability measure associated to $\{x_1,\ldots,x_k\}$ as in Definition \ref{specex}. It follows from our assumption that $\{\pi_1(x_1),\ldots,\pi_1(x_k)\}$ is $\eps$-expanding in $\overline{G_1}$. This implies that
$$\|\pi_1(\mu)^n - 1\|_2 \leq (1-\eps)^n \|\pi_1(\mu)\|_2 \leq (1-\eps)^n |G_1|^{1/2}$$ for all $n \in \N$. However since $\overline{G_1}$ is a quotient of $G$, we have
 $$\frac{1}{|G|}\|\mu^n\|_2^2 \leq \frac{1}{|\overline{G_1}|} \|\pi_1(\mu)^n\|_2^2$$
 since the probability of return to the identity in $G$ is at most the probability of return to the identity in $\overline{G_1}$.  Therefore, since $|\overline{G_1}| \gg |G|^{1/2}$, for any $\kappa \in (0,\frac{1}{4})$ we have
\begin{equation}\label{decay}
\|\mu^n\|_2 \leq \frac{|G|^{1/2}}{|\overline{G_1}|^{1/2}}(1+e^{-\eps n}|\overline{G_1}|^{1/2}) \ll |G|^{1/2 - \kappa}
\end{equation}
 if $n \geq \frac{\kappa}{\eps}\log |G|$, securing Phase I of the Bourgain-Gamburd method (see Appendix \ref{bg-app}).

Using the fact, proved in the first paragraph above, that $\dim \rho \gg |G|^\beta$, and in view of Phase III of the Bourgain-Gamburd method, we are left to show that for some $n \leq C_0 \log |G|$ (where $C_0$ depends only on $\eps$ and $r$) $\|\mu^n\|_2  \ll |G|^{\beta/10}$.

Suppose first that $|G_2| \leq |G_1|^{\beta/5}$. Then $\frac{|G|}{|\overline{G_1}|} \ll |G|^{\beta/5}$ and estimate $(\ref{decay})$ above already shows that
$$\|\mu^n\|_2  \ll |G|^{\beta/10}$$
if $n \geq \frac{1/2-\beta/5}{\eps}\log |G|$. We are therefore done in this case.

We may then assume that $|G_2| \geq |G_1|^{\beta/5}$ and proceed with Phase II of the Bourgain-Gamburd method. This requires the modified product theorem, Theorem \ref{semisimpleproduct}. Going through Phase II as described in Appendix \ref{bg-app} and using the weighted Balog-Szemer\'edi-Gowers lemma proved in Appendix \ref{bsg-app-sec}, we obtain (keeping the same notation) a $|G|^{\eta}$-approximate group $A$, where $\eta=c\beta$ for some small constant $c>0$, and an integer $m=O_c(\log |G|)$ such that
\begin{equation}\label{largemass}
\mu^m(xA) \geq |G|^{-\eta}
\end{equation}
for some $x \in G$ and
 $$|A| \leq |G|^{1+\eta}/\|\mu^m\|_2^2.$$
As in Appendix \ref{bg-app} we examine the various possibilities for $A$ given by the product theorem (Theorem \ref{semisimpleproduct}) applied to $A$ with $K=|G|^{\eta}$. If $|A|\geq |G|/K^C$, then the last displayed equation yields
$$\|\mu^m\|_2^2 \leq |G|^{\beta/10}$$
if $c$ is chosen small enough. So we are done in this case.

We are going to rule out the other cases. According to Theorem \ref{semisimpleproduct}, there must exist a proper subgroup $H$ of $G$ and $y \in G$ such that $|A\cap yH| \geq |A|/K^C$. We seek a contradiction. By Ruzsa's covering lemma (see e.g. \cite[Lemma 3.6]{tao-noncommutative}), $A$ is contained in at most $K^C$ left cosets of $H$. In view of $(\ref{largemass})$ this implies that
$$\mu^m(x_0H) \geq |G|^{-(1+C)\eta}$$
for at least one coset $x_0H$ of $H$. Since $\mu$ is symmetric, this implies
\begin{equation}\label{largemass2}
\mu^{2m}(H) \geq |G|^{-2(1+C)\eta}
\end{equation}
However $\mu^{2m}(H)$ is non-increasing as a function of $m$, hence this bound holds for every smaller $m$.

We only now make use of the assumption that $G_1$ and $G_2$ have no common simple factor: this assumption implies that every proper subgroup of $G$ projects to a proper subgroup of either $\overline{G_1}$ or of $\overline{G_2}$. This is an instance of Goursat's lemma. So for some $i_0$ equal to $1$ or $2$, we have that $\overline{H}\coloneqq \pi_{i_0}(H)$ is proper in $\overline{G_{i_0}}$.

By assumption $|G_1|\geq |G_2| \geq |G_1|^{\beta/5}$. This means that $|G| \leq |G_1|^2$ and $|G| \leq |G_2|^{1+5/\beta}$, so $|G|\leq |G_i|^{D}$ for each $i=1,2$ if we take $D=1+5/\beta$. Hence $(\ref{largemass2})$ implies
\begin{equation}\label{cont1}
\pi_{i_0}(\mu)^{2n}(\overline{H}) \geq |\overline{G_{i_0}}|^{2D(1+C)\eta}
\end{equation}
for all $n \leq m=O_c(\log |G|)$ and in particular for $n$ of size $O(\frac{1}{\eps}\log |\overline{G_{i_0}}|)$. However as we will see, this is in conflict with the assumption that $\pi_{i_0}(\mu)$ is $\eps$-expanding in $\overline{G_{i_0}}$.  Indeed this assumption implies that for $n = O(\frac{1}{\eps}\log |\overline{G_{i_0}}|)$,
$$\|\pi_{i_0}(\mu)^n - 1\|_{L^2(\overline{G_{i_0}})} \leq e^{-\eps n} | \overline{G_{i_0}}|^{1/2} \leq 1$$
hence
\begin{equation}\label{cont2}
\pi_{i_0}(\mu)^{2n}(\overline{H}) \leq |\overline{H}|\pi_{i_0}(\mu)^{2n}(1) = \frac{|\overline{H}|}{|\overline{G_{i_0}}|} \|\pi_{i_0}(\mu)^{n}\|_{L^2(\overline{G_{i_0}})}^2 \leq 2\frac{|\overline{H}|}{|\overline{G_{i_0}}|}.
\end{equation}
However, it follows\footnote{One can also establish this claim from quasirandomness, Proposition \ref{quasi}, by considering the quasiregular representation associated to a proper subgroup.} from the classification of maximal subgroups of simple groups of Lie type (see Proposition \ref{max} above) that every proper subgroup of a product $L$ of finite simple or quasisimple groups of Lie type has index at least $|L|^{\gamma}$, where $\gamma>0$ is a constant depending only on the rank of $L$. Now (\ref{cont1}) and (\ref{cont2}) are incompatible if $\eta$ is small enough. This gives the desired contradiction and ends the proof of Proposition \ref{nocommonfactor}.
\end{proof}

\bigskip

We are now ready for the proof of Theorem \ref{mainthm2}.

We first prove the result in the special case when all simple factors of $G$ are isomorphic to a single simple group. If $G$ is simple itself, then this is our main theorem, Theorem \ref{mainthm}. In fact we proved the result for the quasisimple bounded cover group $\tilde G$ sitting above $G$ and remarked at the beginning of Section \ref{outlinesection} that it implies the expansion result for $G$. So we have in fact proved the result for all quasisimple finite groups of Lie type. For the same reason it is enough to prove the result in the case when $G$ is a direct (as opposed to almost direct) product of say $r$ copies of a single quasisimple group $S$.

We now pass to the case when $G=S^r$ and $r \geq 2$ and $S$ is quasisimple. The proof follows again the Bourgain-Gamburd method (see Proposition \ref{machine}). Quasirandomness (item (i) in Proposition \ref{machine}) holds in our case, because any non trivial representation of $G$ must be non trivial on some factor $S$ of $G$, hence must have dimension at least $|S|^\beta = |G|^{\beta/r}$, where $\beta>0$ depends only on the rank of $S$. As mentioned at the beginning of this section, the product theorem (item (ii) in Proposition \ref{machine}) does not hold as such, but as we will see Theorem \ref{semisimpleproduct} will serve as a replacement. Item (iii), that is the non-concentration estimate, was the main part of our proof of Theorem \ref{mainthm}. See Proposition \ref{nonc} above. Let us now show that it holds as well in our case, when $G=S^r$.

We recall a standard group-theoretic lemma.

\begin{lemma} Let $S_1,\ldots,S_r$ be perfect groups for some $r \geq 2$, and let $H$ be a proper subgroup of $G \coloneqq  S_1 \times \ldots \times S_r$.  Then there exists $1 \leq i < j \leq r$ such that the projection of $H$ to $S_i \times S_j$ is also proper.
\end{lemma}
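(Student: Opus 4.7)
The plan is to prove the contrapositive by induction on $r$: if every pair-projection $\pi_{i,j}(H) \leq S_i \times S_j$ is surjective, then $H = G$. The base case $r = 2$ is tautological, so assume $r \geq 3$ and that the statement holds for $r-1$. The image $\pi_{1,\ldots,r-1}(H) \leq S_1 \times \cdots \times S_{r-1}$ inherits surjective pair-projections from $H$, so by induction it equals all of $A \coloneqq S_1 \times \cdots \times S_{r-1}$. Hence $H$ surjects onto $A$, and it also surjects onto $B \coloneqq S_r$ (for instance via $\pi_{1,r}$).

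View $H$ as a subgroup of $A \times B$ projecting onto both factors, and apply Goursat's lemma: this produces normal subgroups $N_A \triangleleft A$, $N_B \triangleleft B$, and an isomorphism $\phi \colon A/N_A \to B/N_B$ such that $H = \{(a,b) \in A \times B : \phi(a N_A) = b N_B\}$. Set $Q \coloneqq B/N_B$; as a quotient of the perfect group $S_r$, it is itself perfect. It suffices to show $Q = 1$, for then $N_A = A$ and $N_B = B$, forcing $H = A \times B = G$ and contradicting the hypothesis that $H$ is proper.

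Let $\psi \colon A \to Q$ denote the composition $a \mapsto \phi(a N_A)$, and decompose it as $\psi(s_1, \ldots, s_{r-1}) = \psi_1(s_1) \cdots \psi_{r-1}(s_{r-1})$, where $\psi_j \colon S_j \to Q$ are homomorphisms whose images pairwise commute (since the factors $S_j$ commute in $A$). For each $j \leq r-1$, the surjectivity of $\pi_{j,r}(H) = S_j \times S_r$ translates, via the Goursat description, into the statement that the restriction of $\psi$ to $\{a \in A : a_j = 1\}$ is surjective onto $Q$; equivalently $\prod_{i \neq j} \psi_i(S_i) = Q$. Now fix any index $j_0$: the subgroup $\psi_{j_0}(S_{j_0})$ commutes with every $\psi_i(S_i)$ for $i \neq j_0$ and therefore with all of $Q$, so it lies in $Z(Q)$. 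But $\psi_{j_0}(S_{j_0})$ is the homomorphic image of the perfect group $S_{j_0}$, hence is itself perfect; a perfect subgroup of the abelian group $Z(Q)$ must be trivial. Varying $j_0$ yields $\psi_j = 1$ for all $j$, so $\psi$ is trivial and $Q = 1$, completing the induction.

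The only genuine subtlety is the translation from ``$\pi_{j,r}(H)$ is surjective'' to ``$\psi|_{\{a_j = 1\}}$ is surjective'', but this is immediate from the Goursat parameterisation: a pair $(s_j, s_r)$ lies in $\pi_{j,r}(H)$ precisely when some $a \in A$ with $a_j = s_j$ satisfies $\phi(a N_A) = s_r N_B$, and surjectivity over all $s_r$ for fixed $s_j = 1$ is exactly surjectivity of $\psi$ on $\{a \in A : a_j = 1\}$. The conceptual heart of the argument is the collapse of a perfect subgroup contained in $Z(Q)$; everything else is the Goursat setup and the inductive bookkeeping.
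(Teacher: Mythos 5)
Your proof is correct, and it takes a genuinely different route from the paper's. The paper argues directly: from the surjectivity of $\pi_{1,2}(H)$ and $\pi_{1,3}(H)$, it finds elements of $H$ of the form $(s,1,*,\ldots)$ and $(s',*,1,*,\ldots)$, takes their commutator to clear the second and third coordinates simultaneously, and invokes perfectness of $S_1$ to conclude $\pi_{1,2,3}(H) \supseteq S_1 \times 1 \times 1$; iterating coordinate by coordinate gives $S_1 \times 1 \times \cdots \times 1 \subseteq H$, and symmetry finishes. Your argument instead sets up a clean induction on $r$, invokes Goursat's lemma to parameterise $H$ by a perfect quotient $Q = S_r/N_B$ together with a surjection $\psi\colon S_1\times\cdots\times S_{r-1}\to Q$, and then kills $Q$ by observing that each factor image $\psi_{j_0}(S_{j_0})$ is simultaneously central (because the remaining factors already fill $Q$) and perfect, hence trivial. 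The paper's version is more hands-on and perhaps shorter; yours isolates the structural reason the lemma holds (a perfect group cannot sit inside the centre of a group it generates together with its centraliser) and connects more transparently to the Goursat-style analysis used elsewhere in that section of the paper. Both hinge essentially on perfectness of the $S_i$, just deployed at different points: the paper to generate a whole factor via commutators, you to annihilate a central perfect image.
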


\begin{proof} Suppose for contradiction that all projections from $H$ to $S_i \times S_j$ are surjective.  In particular, the projections of $H$ to $S_1 \times S_2$ and $S_1 \times S_3$ contain $S_1 \times \{1\}$ and $S_1 \times \{1\}$ respectively.  Taking commutators and using the hypothesis that $S_1$ is perfect, we conclude that the projection of $H$ to $S_1 \times S_2 \times S_3$ contains $S_1 \times \{1\} \times \{1\}$.  Iterating this, we conclude that $H$ itself contains $S_1 \times \{1\} \times \ldots \times \{1\}$, and similarly for permutations.  But then $H$ must be all of $S_1 \times \ldots \times S_n = G$, a contradiction.
\end{proof}

In view of this lemma, we see that every proper subgroup of $S^r$ has at least one of its $\binom{r}{2}$ projections to $S^2$ proper.  From this and the union bound, we see that to prove the non-concentration bound for all $r \geq 2$, it suffices to do so in the $r=2$ case.

It thus remains to verify the $r=2$ case.  In this case, Goursat's lemma tells us that $H$ projects onto a proper subgroup of either $S_1$ or $S_2$, or it is contained in the pull-back modulo the center of a ``diagonal'' subgroup of the form $\{(s,\alpha(s)), s \in \overline{S}\}$, where $\overline{S}$ is the simple quotient of $S$. In the former case, we can apply the non-concentration estimate proved earlier in this paper in the quasisimple case. The latter case reduces to proving the non-concentration estimate for random pairs in $\overline{S} \times \overline{S}$ for the subgroups of the form $H_\alpha\coloneqq \{(s,\alpha(s)), s \in \overline{S}\}$. This is dealt with in Proposition \ref{productconcentration} below.

So we have now established item (iii) of Proposition \ref{machine} for almost all (i.e. $\gg |G|^2(1-Cr^2/|S|^\delta)$) pairs $(a,b)$. We also checked that item (i) holds. As already mentioned item (ii), the product theorem, does not hold as such in $G=S^r$. Let us show how Theorem \ref{semisimpleproduct} can serve as a replacement. The product theorem is only used in Phase II of the Bourgain-Gamburd method; see Appendix \ref{bg-app}. Keeping the same notation as in this appendix, and setting $\mu=\mu_{a,b}$ (as in Definition \ref{specex}) and picking $\eta>0$ a small constant to be specified later, we obtain a $|G|^{\eta}$-approximate subgroup $A$ of $G$ such that
\begin{equation}\label{largema}
\mu^m(xA) \geq |G|^{-\eta}
\end{equation}
for some $m=O_\eta(\log|G|)$ and some $x \in G$ and
 $$|A| \leq |G|^{1+\eta}/\|\mu^m\|_2^2.$$
The modified product theorem (Theorem \ref{semisimpleproduct}) tells us that either $|A| \geq |G|^{1-C\eta}$, in which case we are done by the same argument as in Appendix \ref{bg-app}, or there is a proper subgroup $H$ of $G$ which intersects a translate of $A$ in a large subset. By the Ruzsa covering lemma (see e.g. \cite[Lemma 3.6]{tao-noncommutative}), this implies that $xA$ is contained in at most $|G|^{C\eta}$ cosets of $H$, and hence (by $(\ref{largema})$) that at least one of these cosets, say $x_0H$ satisfies
$$\mu^m(x_0H) \geq |G|^{-(1+C)\eta}.$$
and thus
$$\mu^{2m}(H) \geq |G|^{-2(1+C)\eta}.$$
Since $m \mapsto \mu^{2m}(H)$ is non-increasing, this holds for all smaller $m$'s and thus provides a contradiction to the non-concentration estimate once $\eta$ is chosen small enough. This ends the proof of Theorem \ref{mainthm2} in the case when $G$ has all its simple factors in the same isomorphism class.

The general case of the theorem (i.e. when $G$ has possibly several non-isomorphic simple factors) is now an easy consequence of Proposition \ref{nocommonfactor} above. Indeed $G$ can be written as $G=S_1^{r_1} \cdot \ldots S_k^{r_k}$, where the $S_i$'s are pairwise non-isomorphic quasisimple groups of bounded rank, and the $r_i$'s are bounded. We know that there is $\eps,\delta,C>0$ depending only on the rank on $G$ such that one may choose at least $|S_i^{r_i}|^2(1-C|S_i|^{-\delta})$ pairs $(a_i,b_i)$ in $S_i^{r_i}$ which generate $S_i^{r_i}$ and are $\eps$-expanding. In view of Proposition \ref{nocommonfactor} applied repeatedly, we get that all pairs $(a_1\cdot \ldots \cdot a_k, b_1\cdot \ldots \cdot b_k)$ are $\eps'$-expanding generating pairs in $G$, where $\eps'$ depends only on the rank of $G$. This makes at least $|G|^2 \prod_1^k (1-C|S_i|^{-\delta})$ pairs, hence at least $|G|^2 (1 - Ck|S|^{-\delta})$ pairs. This ends the proof of Theorem \ref{mainthm2}, conditionally to the proof of Proposition \ref{productconcentration} which we are now ready to give.\vspace{11pt}

As promised we now turn to the proof of the non-concentration estimate for random pairs in a group of the form $S \times S$, where $S$ is a finite simple group of Lie type.

\begin{proposition}\label{productconcentration} Let $S$ be a finite simple group of Lie type. There are constants $\Lambda,\kappa, C,\delta>0$ depending only on the rank of $S$ such that for at least $|G|^2(1-C/|G|^\delta)$ pairs $(a,b)$ in $G\coloneqq S \times S$, the probability measure $\mu_{a,b}=\frac{1}{4}(\delta_a + \delta_{a^{-1}} +  \delta_b + \delta_{b^{-1}})$ satisfies the non-concentration estimate 
\[ \sup_{H < G}\mu_{a,b}^{(n)}(H) < |G|^{-\kappa}\] for some $n \leq \Lambda \log |G|$, where the supremum is over all proper subgroups $H < G$.
\end{proposition}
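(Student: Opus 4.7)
The plan is to apply Goursat's lemma to classify the proper subgroups of $G = S \times S$ and treat each resulting class separately, reducing whenever possible to the non-concentration estimate for $S$ already supplied by Theorem \ref{mainthm}. Since $S$ is a finite simple non-abelian group, every proper subgroup $H < G$ either has $\pi_i(H)$ a proper subgroup of $S$ for one of the coordinate projections $\pi_i \colon G \to S$, or else takes the form of a graph $H_\alpha \coloneqq \{(s, \alpha(s)) : s \in S\}$ for some $\alpha \in \mathrm{Aut}(S)$.

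In the \emph{projection-proper} case, $H$ is contained in $\pi_1(H) \times S$ (or symmetrically in $S \times \pi_2(H)$), so $\mu_{a,b}^{(n)}(H) \leq \mu_{a_1, b_1}^{(n)}(\pi_1(H))$. Since $(a_1, b_1)$ is uniformly distributed in $S \times S$ and Theorem \ref{mainthm} has already been established for $S$, Proposition \ref{nonc} applied to the single factor supplies the required non-concentration for all such $H$ outside a set of $(a, b)$ of measure $O(|S|^{-\gamma}) = O(|G|^{-\gamma/2})$.

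For the \emph{diagonal} case $H = H_\alpha$, the key observation is that the map $(a, b) \mapsto (a', b') \coloneqq ((a_1, \alpha^{-1}(a_2)), (b_1, \alpha^{-1}(b_2)))$ is a measure-preserving bijection on $G^2$ satisfying $w(a', b') \in \Delta \Leftrightarrow w(a, b) \in H_\alpha$, where $\Delta$ is the identity diagonal. Consequently, for every threshold $\lambda > 0$ one has $\P_{(a,b)}[\mu_{a,b}^{(n)}(H_\alpha) \geq \lambda] = \P_{(a,b)}[\mu_{a,b}^{(n)}(\Delta) \geq \lambda]$, so it is enough to bound the concentration probability for $\Delta$ and then take a union bound over $\alpha \in \mathrm{Aut}(S)$. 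Markov's inequality then reduces this to controlling $\E_{(a,b) \in G^2}[\mu_{(a,b)}^{(n)}(\Delta)] = \E_w \P_{(a,b)}[w(a_1,b_1) = w(a_2,b_2)]$. For each non-trivial word $w$, the variety $V_w \coloneqq \{(a_1, b_1, a_2, b_2) \in \G^4 : w(a_1, b_1) = w(a_2, b_2)\}$ is a proper subvariety of $\G^4$ by Borel's dominance theorem for word maps (the same ingredient used in the subfield case of Section \ref{sec1a}), so the Schwartz--Zippel estimate of Proposition \ref{sufficiently-zdense-prop} applies and yields a power-saving decay in $q$.

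The principal obstacle will be ensuring that this Schwartz--Zippel estimate is sharp enough to beat the $|\mathrm{Aut}(S)| \asymp |G|^{1/2+o(1)}$ factor from the union bound over $\alpha$, which is non-trivial especially in the twisted cases $d \in \{2, 3\}$ where a single defining equation for $V_w$ only yields a bound of order $O(n q^{-1/d})$. To close this gap one should exploit the fact that $V_w$ has codimension $\dim \G$ in $\G^4$ by iterating the Schwartz--Zippel argument, and/or invoke the doubling inequality $\mu^{(2n)}(H) \geq \mu^{(n)}(H)^2$ from the remark following Proposition \ref{nonc} to amplify the decay exponent at the cost of increasing the walk length by a constant factor. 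Some additional care will be needed for the Frobenius field automorphisms inside $\mathrm{Aut}(S)$, whose high polynomial degree could naively spoil Schwartz--Zippel, but these can be accommodated by the same change-of-field trick used in the triality argument of Section \ref{d4-sec}.
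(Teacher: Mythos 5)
Your Goursat decomposition, the reduction of the projection\mbox{-}proper case to Theorem~\ref{mainthm} for the single factor $S$, and the measure\mbox{-}preserving change of variables carrying $H_\alpha$ to the identity diagonal $\Delta$ are all correct, and the first two steps match the paper. However, the diagonal case as you have set it up has a genuine gap: a union bound over $\alpha\in\Aut(S)$ cannot be made to close, and the two remedies you sketch do not repair it.

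Here is the quantitative obstruction. After Markov's inequality you must control
\[
|\Aut(S)|\cdot |G|^\kappa \cdot \E_{w\in W_{n,2}}\, \P_{a,b\in S^2}\bigl(w(a_1,b_1)=w(a_2,b_2)\bigr).
\]
The variety $V_w\subset\mathbf{S}^4$ has codimension $\dim\mathbf{S}$ (Borel dominance), hence has on the order of $|\tilde S|^3$ rational points; the collision probability is therefore of order $n^{O(1)}/|S|$ and cannot be made smaller, even by a perfect full\mbox{-}codimension count. On the other hand $|\Aut(S)| \asymp |S|\log|S|$, so the product is $\asymp |G|^\kappa\, n^{O(1)}\log|S|$, which \emph{grows} with $|S|$ rather than decaying. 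This is already fatal in the untwisted case $d=1$; for twisted groups the single\mbox{-}polynomial Schwartz--Zippel bound of Proposition~\ref{sufficiently-zdense-prop} only gains $q^{-1/d}$ per equation, so the deficit is larger still. The doubling inequality $\mu^{(2n)}(H)\geq\mu^{(n)}(H)^2$ runs the wrong way (it can only \emph{lower} the exponent you can claim for $\mu^{(n)}$), and iterating Schwartz--Zippel---aside from not being automatic, since Proposition~\ref{sufficiently-zdense-prop} is a codimension\mbox{-}one statement and one must exhibit a transversal set of defining polynomials---cannot beat the intrinsic size of $V_w$, which is $1/|S|$.

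The paper avoids the union bound over $\alpha$ altogether. It applies Markov directly to $\sup_\alpha\mu^{(n)}_{a,b}(H_\alpha)$, observing that $w(a,b)\in H_\alpha$ for some $\alpha\in\Aut(S)$ is equivalent to $w(a_2,b_2)\in\Aut(S)\cdot w(a_1,b_1)$; and that because $\operatorname{Inn}(S)$ preserves conjugacy classes while $|\operatorname{Out}(S)|=O(\log|S|)$ (boundedly many graph and diagonal automorphisms, $O(\log q)$ field automorphisms), the orbit $\Aut(S)\cdot g$ is contained in at most $O(\log|S|)$ conjugacy classes. This replaces the $|\Aut(S)|$\mbox{-}fold union bound by an $O(\log|S|)$\mbox{-}fold sum, and a single application of the codimension\mbox{-}one Schwartz--Zippel estimate to the bounded\mbox{-}degree variety $\{(s_1,s_2):w(s_1,s_2)\in C\}\subset\mathbf{S}\times\mathbf{S}$ for a conjugacy class $C$ then suffices. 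You should replace the per\mbox{-}$\alpha$ union bound with this orbit/conjugacy\mbox{-}class reduction; once that is in place, the rest of your argument (Kesten to discard near\mbox{-}trivial words, Schwartz--Zippel for the remaining ones) goes through.
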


\begin{proof} Write $a=(a_1,a_2)$ and $b=(b_1,b_2)$ in $G=S\times S$. According to the main result of this paper, there are $\kappa,\delta,C>0$ depending only on the rank of $S$ and at least $|S|^2(1-C/|S|^\delta)$ pairs $(a_1,b_1)$ such that
\[ \sup_{H < S}\mu_{a_1,b_1}^{(n)}(H) < |S|^{-\kappa},\]
 where the supremum is over all proper subgroups $H < S$. The same holds for the second factor. By Goursat's lemma, given that $S$ is simple, every proper subgroup of $G$ must either project to a proper subgroup of one of the two $S$ factors, or be of the form $H_\alpha\coloneqq \{(s,\alpha(s)), s \in S\}$, where $\alpha \in \Aut(S)$ is an automorphism of $S$. This yields at least $|G|^2(1-2C/|S|^\delta)$ pairs $(a,b)$ in $G$ such that
\[ \sup_{H < G, H\neq H_\alpha}\mu_{a,b}^{(n)}(H) < |S|^{-\kappa}= |G|^{-\kappa/2},\]
 where the supremum is over all proper subgroups $H < G$ not of the form $H_\alpha$ for some $\alpha \in \Aut(S)$.

To handle the subgroups $H_\alpha$, we will proceed as before, using the Schwartz-Zippel estimates from Proposition \ref{sufficiently-zdense-prop} and Borel's dominance theorem \cite{borel-dom, larsen-words}. Note that if $a,b$ are such that for some $\alpha \in \Aut(S)$
$$\mu_{a,b}^{(n)}(H_\alpha) \geq |S|^{-\kappa},$$
then, setting $A\coloneqq \Aut(S)$,
$$\P_{w\in W_{n,2}}( w(b_1,b_2) \in A\cdot w(a_1,a_2)) \geq  |S|^{-\kappa},$$
where $W_{n,2}$ is the set of (non-reduced) words in two letters of length exactly $n$.

Following a similar line of argument as in the treatment of the structural case in Section \ref{class}, using Markov's inequality and Fubini we have

\begin{align*}\P_{a,b \in G}(\sup_{H_\alpha < G, \alpha \in A} & \mu_{a,b}^{(n)}(H_\alpha) \geq |S|^{-\kappa}) \\ & \leq
\P_{a,b \in G}(\P_{w\in W_{n,2}}( w(a_2,b_2) \in A\cdot w(a_1,b_1)) > |S|^{-\kappa}) \\ &
\leq |S|^\kappa \E_{a,b \in G}(\P_{w\in W_{n,2}}( w(a_2,b_2) \in A\cdot w(a_1,b_1)))\\ &
\leq |S|^\kappa \E_{w\in W_{n,2}}( \P_{a,b\in G}( w(a_2,b_2) \in A\cdot w(a_1,b_1))).
\end{align*}

Some of the words in $W_{n,2}$ become trivial in the free group. However by Kesten's bound (see Lemma \ref{noncom}) at most an exponentially small fraction of them do so, hence at most $e^{-cn}=|S|^{-c\Lambda}$, where $c>0$ is an absolute constant. Taking $\kappa>0$ very small, we can therefore ignore the contribution of these words and focus on those that do not vanish when reduced in the free group.

It is known (see \cite[Theorem 30]{steinberg-yale} or \cite[p. 211]{carter}) that every automorphism of $S$ is a product of an inner automorphism, a field automorphism, a graph automorphism and a so-called diagonal automorphism. Since the rank of $S$ is bounded, there are only boundedly many graph and diagonal automorphisms. The number of field automorphisms is $O(\log q)=O(\log|S|)$. It follows that every $A$-orbit in $S$ is contained in at most $O(\log|S|)$ conjugacy classes. Therefore for all $n \leq \Lambda \log |G|$ we have

\begin{align*}\P_{a,b \in G}(\sup_{H_\alpha < G, \alpha \in A} & \mu_{a,b}^{(n)}(H_\alpha) \geq |S|^{-\kappa}) \\ & \ll
 |S|^{\kappa + 2\Lambda \log 4} \log|S| \sup_{w\in W^*_{n,2}} \sup_{C \subset S} \P_{a,b\in G}( w(a_2,b_2) \in C),
\end{align*}
where the second supremum is taken over all conjugacy classes in $S$, and $W^*_{n,2}$ is the set of non-trivial reduced words of length at most $n$ in the free group. Thus it suffices to show that there is a positive $\delta_0>0$ depending only on the rank of $S$ such that for every conjugacy class $C$ and every $w \in W^*_{n,2}$ we have
$$\P_{a_2,b_2\in S}( w(a_2,b_2) \in C) \leq \frac{1}{|S|^{\delta_0}}.$$
The preimage in the bounded cover $\tilde S$ of a conjugacy class in $S$ consists of boundedly many conjugacy classes of $\tilde S$. Hence it is enough to prove the above for $\tilde S$ in place of $S$. Now this follows directly from the Schwarz-Zippel estimate of Proposition \ref{sufficiently-zdense-prop} after we note that the subvarieties of $\mathbf{S} \times \mathbf{S}$ defined by $\{(s_1,s_2) ; w(s_1,s_2) \in C\}$ have degree at most $O(n)=O(\log|S|)$ uniformly in the choice of $C$ and $w$. 
\end{proof}

\appendix

\section{A weighted Balog-Szemer\'edi-Gowers theorem}\label{bsg-app-sec}

The aim of this section is to establish  a weighted version of the Balog-Szemer\'edi-Gowers theorem \cite{balog,gowers-4aps}, which will be needed for the proof of Proposition \ref{machine} in Appendix \ref{bg-app}. The precise statement is as follows.  We use the usual $L^p(G)$ norms
$$ \|f\|_{L^p(G)} \coloneqq  (\E_{x \in G} |f(x)|^p)^{1/p}$$
for $1 \leq p < \infty$, with the usual convention
$$ \|f\|_{L^\infty(G)} \coloneqq  \sup_{x \in G} |f(x)|.$$

\begin{proposition}[Weighted BSG]\label{weighted-bsg}
Let $K \geq 1$. Suppose that $\nu : G \rightarrow \R_{\geq 0}$ is a probability measure on some finite group $G$ which is symmetric in the sense that $\nu(x) = \nu(x^{-1})$ and which satisfies $\Vert \nu \ast \nu \Vert_{L^2(G)} \geq \frac{1}{K}\Vert \nu \Vert_{L^2(G)}$, where convolution is defined in \eqref{convdef}.
Then there is a $O(K^{O(1)})$-approximate subgroup $H$ of $G$ with $|H| \ll K^{O(1)} |G|/\Vert \nu \Vert_{L^2(G)}^2$ and an $x \in G$ such that $\nu(Hx) \gg K^{-O(1)}$.
\end{proposition}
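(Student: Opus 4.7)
The plan is to reduce this weighted statement to the set-theoretic non-commutative Balog--Szemer\'edi--Gowers theorem (as in \cite{tao-noncommutative}), which asserts that if a finite set $A \subset G$ has multiplicative energy $E(A,A) \coloneqq |\{(a,b,c,d) \in A^4 : ab=cd\}| \geq |A|^3/K$, then there is a $K^{O(1)}$-approximate subgroup $H$ with $|H| \ll K^{O(1)}|A|$ and some $x \in G$ with $|A \cap xH| \gg K^{-O(1)}|A|$. Granting this, the weighted version is a pigeonhole reduction to a single ``bulk'' level set of $\nu$.

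First I would unfold the hypothesis into a weighted energy estimate. A direct computation, using $\nu(x)=\nu(x^{-1})$, gives
\[ \|\nu*\nu\|_{L^2(G)}^2 = \frac{1}{|G|^3}\sum_{g_1 g_2 = g_3 g_4}\nu(g_1)\nu(g_2)\nu(g_3)\nu(g_4), \]
so the hypothesis is equivalent to a lower bound on the weighted multiplicative energy of $\nu$ by $K^{-2}|G|^3 \|\nu\|_{L^2(G)}^2$.

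Second, let $M \coloneqq \|\nu\|_{L^2(G)}^2$ and dyadically decompose $\nu = \sum_j \nu_j$ with $\nu_j \coloneqq \nu\cdot\mathbf{1}_{A_j}$ and $A_j \coloneqq \{x : 2^j M < \nu(x) \leq 2^{j+1}M\}$. Levels with $2^j \ll |G|^{-C}$ contribute negligibly to $\|\nu\|_{L^2}$ and to the weighted energy (by the trivial $L^1$ bound $\sum_x \nu(x) = |G|$), while levels with $2^j M \gg |G|$ are empty; thus only $O(\log|G|)$ relevant levels remain. Expanding the weighted quadruple sum and pigeonholing over levels, I can find indices $(j_1,j_2,j_3,j_4)$ such that the asymmetric energy $E_{\mathrm{asym}}(A_{j_1},\dots,A_{j_4}) \coloneqq |\{(a_1,\dots,a_4) \in A_{j_1}\times\dots\times A_{j_4} : a_1 a_2 = a_3 a_4\}|$ is large. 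A Cauchy--Schwarz (or asymmetric-to-symmetric) step, together with the symmetry $A_j = A_j^{-1}$ inherited from $\nu = \nu^\vee$, reduces to a single level $A \coloneqq A_j$ satisfying
\[ E(A,A) \gg (\log|G|)^{-O(1)} K^{-O(1)} |A|^3, \qquad \frac{2^{2j} M^2 |A|}{|G|} \gg (\log|G|)^{-O(1)} M. \]
The second inequality, saying the level $A$ carries non-negligible $L^2$-mass, is what we extract from the pigeonhole choice.

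Third, apply the set-theoretic non-commutative BSG to $A$ with $\tilde K \coloneqq K(\log|G|)^{O(1)}$, producing a $\tilde K^{O(1)}$-approximate subgroup $H$ with $|H| \ll \tilde K^{O(1)}|A|$ and $x \in G$ with $|A \cap xH| \gg \tilde K^{-O(1)}|A|$. Transferring back to $\nu$, the pointwise lower bound $\nu \geq 2^j M$ on $A$ gives
\[ \nu(xH) \geq \frac{2^j M\,|A \cap xH|}{|G|} \gg \tilde K^{-O(1)} \frac{2^j M\,|A|}{|G|} \gg \tilde K^{-O(1)}, \]
where the last inequality uses the mass estimate from Step~2 (note $2^j M \leq 1$ since $\nu$ is $L^\infty$-bounded by $|G|$ and we rescaled by $M = \|\nu\|_{L^2}^2$, and together with the mass bound $2^{2j}M|A|/|G| \gtrsim 1$ this gives $2^j M |A|/|G| \gtrsim 2^{-j}/M \gtrsim 1$ after absorbing log factors). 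Similarly, $|H| \ll \tilde K^{O(1)}|A| \ll \tilde K^{O(1)}|G|/M$.

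The main obstacle is the logarithmic loss incurred in the dyadic pigeonholing. For the intended application (Proposition \ref{machine}) one has $K = |G|^{O(\delta')}$, so $(\log|G|)^{O(1)} \leq K^{o(1)}$ and the loss is absorbable into the $K^{O(1)}$ slack of the conclusion. If this absorption were unacceptable, a cleaner workaround is to bypass the dyadic step entirely by working with the thresholded set $A \coloneqq \{x : \nu(x) > cM\}$, which already captures a positive proportion of the $L^2$-mass of $\nu$, and establishing the symmetric energy bound for $A$ directly from the hypothesis via an $L^\infty$-truncation of $\nu$; this was the route implicitly followed in \cite{suzuki} and earlier in \cite{bourgain-gamburd}.
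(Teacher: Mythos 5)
Your dyadic-pigeonholing argument proves a genuinely weaker statement than the one claimed, and you know it: the $(\log|G|)^{O(1)}$ loss means you obtain the conclusion with $\tilde K = K(\log|G|)^{O(1)}$ in place of $K$, whereas the proposition's conclusion has only polynomial-in-$K$ factors. For the intended application to Proposition \ref{machine} this is harmless since $K = |G|^{\delta'}$, but as a proof of the proposition as stated it has a gap. There is a further technical issue you gloss over: after pigeonholing onto a 4-tuple of levels $(j_1,\dots,j_4)$ you have a \emph{quadripartite} energy bound, and passing from this to the symmetric energy $E(A,A)$ of a single level $A_j$ by Cauchy--Schwarz requires care (and additional losses) when the sets $A_{j_i}$ have very different sizes; the ``asymmetric-to-symmetric'' step is not as routine as you make it sound.

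The paper's actual proof is essentially the ``cleaner workaround'' you only sketch at the end, but with the crucial feature that the truncation thresholds depend on $K$ rather than on $|G|$, which is exactly what eliminates both issues above. Concretely: with $M := 10K$ and $\delta := (100K^2)^{-1}$, one writes $\nu = \nu_1 + \tilde\nu + \nu_2$, where $\nu_1 := \nu\,\mathbf{1}_{\nu \geq M\|\nu\|_{L^2}^2}$ is small in $L^1$ (by Markov), $\nu_2 := \nu\,\mathbf{1}_{\nu \leq \delta\|\nu\|_{L^2}^2}$ is small in $L^2$, and $\tilde\nu$ is the middle piece. Young's inequality propagates these smallness bounds through the convolution to give $\|\tilde\nu * \tilde\nu\|_{L^2} \gtrsim K^{-1}\|\nu\|_{L^2}$ and $\|\tilde\nu\|_{L^2} \gtrsim K^{-O(1)}\|\nu\|_{L^2}$. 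Since on $A := \mathrm{Supp}(\tilde\nu)$ the ratio $\nu(x)/\|\nu\|_{L^2}^2$ is pinned between $\delta$ and $M$ --- a range of size $O(K^3)$, not $O(|G|)$ --- one has $\tilde\nu \approx \mu_A$ up to $K^{O(1)}$ multiplicative factors, so $\|\mu_A*\mu_A\|_{L^2} \gg K^{-O(1)}\|\mu_A\|_{L^2}$, i.e.\ $E(A,A) \gg K^{-O(1)}|A|^3$. Then the set-theoretic noncommutative BSG is applied to the single set $A$ and the conclusion is transferred back to $\nu$ exactly as you propose. In short: by choosing the thresholds proportional to powers of $K$ rather than dyadically, the paper gets a single bulk level set in one step, with $K^{O(1)}$ losses only and no multi-level pigeonholing. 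Your approach would be perfectly serviceable for a qualitative result, but the paper's is the one that actually delivers the stated quantitative dependence.
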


\emph{Remark. } It would be possible to formulate a version of this proposition in which $\nu$ is not symmetric, or even with two different measures $\nu,\nu'$ having comparable $L^2(G)$-norms. We do not do this here, since Proposition \ref{weighted-bsg} is all that is required for our applications here. Propositions of this type are not new, appearing for example in the work of Bourgain-Gamburd \cite{bourgain-gamburd} and the paper of Varj\'u. It is possible to extend this result to infinite groups $G$ if one removes the normalisation on the counting measure, but we will not need this extension here.

\begin{proof} It will be convenient to adopt some notational conventions for this proof.  If $A \subseteq G$ is a set, write $\mu_A$ for the uniform measure on $A$, defined by $\mu_A(x) = |G|/|A|$ if $x \in A$ and $\mu_A(x) = 0$ otherwise.  We write $X \lessapprox Y$ or $Y \gtrapprox X$ as an abbreviation for $X \ll K^{O(1)} Y$, and $X \approx Y$ as an abbreviation for $X \lessapprox Y \lessapprox X$.

Set $\delta \coloneqq  \frac{1}{100K^2}$ and $M \coloneqq  10K$ (say), and define
\[ \nu_1 \coloneqq  \nu 1_{\nu \geq M\Vert \nu\Vert_{L^2(G)}^2}\] and
\[ \nu_2 \coloneqq  \nu 1_{\nu \leq \delta\Vert \nu\Vert_{L^2(G)}^2}\]and
\[ \tilde \nu \coloneqq  \nu 1_{\delta \Vert \nu \Vert_{L^2(G)}^2 < \nu < M \Vert \nu \Vert_{L^2(G)}^2} = \nu - \nu_{1} - \nu_{2}.\]
We note that $\nu_{1}$ is small in $L^1(G)$: indeed
\begin{equation}\label{v-large}
\begin{split}
\Vert \nu_{1} \Vert_{L^1(G)}  &\coloneqq  \E_{x \in G} \nu_{1}(x) \\
&\leq \E_{x \in G} \nu(x) \frac{\nu(x)}{M \Vert \nu \Vert_{L^2(G)}^2} \\
&\leq \frac{1}{10K}.
\end{split}
\end{equation}
In contrast, $\nu_{2}$ is small in $L^2(G)$, since
\begin{align*}
\E_{x \in G} \nu_{2}(x)^2 &= \E_{x \in G} \nu(x)^2 1_{\nu(x) \leq \delta\Vert \nu\Vert_{L^2(G)}^2} \\
&\leq \delta \Vert \nu \Vert_{L^2(G)}^2 \E_{x \in G} \nu(x) \\
&= \delta \Vert \nu \Vert_{L^2(G)}^2.
\end{align*}
Therefore
\begin{equation}\label{v-small}
\Vert \nu_{2} \Vert_{L^2(G)} \leq \frac{1}{10K} \Vert \nu \Vert_{L^2(G)}.
\end{equation}
Recall \emph{Young's inequality}, an instance of which is the bound $\Vert f \ast g \Vert_{L^2(G)} \leq \Vert f \Vert_{L^1(G)} \Vert g \Vert_{L^2(G)}$ (this is also easily established from Minkowski's inequality). Starting from the assumption that $\Vert \nu \ast \nu \Vert_{L^2(G)} \geq \frac{1}{K} \Vert \nu \Vert_{L^2(G)}$, we may apply Young's inequality and \eqref{v-large}, \eqref{v-small}, noting that $\nu, \nu_{1},\nu_{2}$ are all symmetric, to obtain
\[ \Vert \nu_* \ast \nu_{1} \Vert_{L^2(G)}, \Vert \nu_* \ast \nu_2\Vert_{L^2(G)} \leq \frac{1}{10K} \Vert \nu \Vert_{L^2(G)}\]
for $\nu_* = \nu,\nu_1,\nu_2$, and hence by the triangle inequality
\begin{equation}\label{nu-tilde-est}
\Vert \tilde \nu \ast \tilde \nu \Vert_{L^2(G)}  \geq \frac{1}{2K}\Vert \nu \Vert_{L^2(G)} \gtrapprox \Vert \nu \Vert_{L^2(G)}.\end{equation}
Using another application of Young's inequality, together with the bound $\Vert \tilde \nu \Vert_{L^1(G)} \leq \Vert \nu \Vert_{L^1(G)} = 1$, we thus have
\begin{equation}\label{l2-lower}\Vert \tilde \nu \Vert_{L^2(G)} \gtrapprox \Vert \nu\Vert_{L^2(G)}.\end{equation}
Setting $A \coloneqq  \Supp(\tilde \nu)$ and noting that \[ \nu(x) \approx \Vert \nu \Vert_{L^2(G)}^2\] uniformly in $x \in A$, it follows easily that $\mu_A(x) \approx \nu(x)$ uniformly in $x$. From \eqref{nu-tilde-est} it hence follows that
\[ \Vert \mu_A \ast \mu_A \Vert_{L^2(G)} \gtrapprox \Vert \mu_A \Vert_{L^2(G)},\] which means that the \emph{multiplicative energy} (that is, the number of solutions to $a_1 a_2^{-1} = a_3 a_4^{-1}$) of $A$ is $\gtrapprox |A|^3$. Applying the (noncommutative) Balog-Szemer\'edi-Gowers theorem for sets \cite{tao-noncommutative}, we find that there is a $O(K^{O(1)})$-approximate group $H$ together with some $x \in G$ such that
\[ |A \cap Hx| \gtrapprox \max(|A|, |H|).\] This implies that
\[ \nu(Hx) \geq \tilde\nu(Hx) \gtrapprox \mu_A(Hx) \gtrapprox 1.\] From \eqref{l2-lower} we also have
\[ |H| \lessapprox |A| = \frac{|G|}{\Vert \mu_A \Vert_{L^2(G)}^2} \lessapprox \frac{|G|}{\Vert \tilde\nu \Vert_{L^2(G)}^2} \lessapprox \frac{|G|}{\Vert \nu \Vert_{L^2(G)}^2},\] thereby confirming the proposition.\end{proof}

\section{Proof of the Bourgain-Gamburd reduction}\label{bg-app}

The purpose of this section is to prove Proposition \ref{machine}.  Thus, we fix a finite group $G$ and a symmetric set $S \subset G$ of $k$ generators that obey the quasirandomness, product theorem, and non-concentration estimate hypotheses of Proposition \ref{machine} for some choices of parameters $\kappa,\Lambda,\delta'()$.  Henceforth we allow all constants in the asymptotic notation to depend on these parameters. Our goal is then to obtain a uniform lower bound on the expansion of $S$.

Consider the convolution operator $T_S : L^2(G) \rightarrow L^2(G)$ on the Hilbert space $L^2(G)$ given by $Tf \coloneqq  f \ast \mu_S$. As stated in the introduction, it will suffice to establish the rapid mixing property
\[ \Vert \mu^{(n)} - {\bf u}_G \Vert_{L^\infty(G)} \leq |G|^{-10}\]
for some $n = O(\log |G|)$.

In \cite{bourgain-gamburd}, Bourgain and Gamburd verify this mixing property, using the three ingredients detailed in the statement of Proposition \ref{machine}. This in turn is undertaken in three phases, described below, corresponding to the ``evolution'' of the convolution power $\mu^{(n)}$ as the time $n$ increases. The aim is to show that $\mu^{(n)}$ becomes more and more spread out.  This ``spread'' will be measured by the smallness of the $L^2$ norm
\[\Vert \mu^{(n)} \Vert_{L^2(G)} \coloneqq  (\E_{g \in G} \mu^{(n)}(g)^2 )^{1/2}.\]
Note from Young's inequality that we have the monotonicity property
\[ |G|^{1/2} \geq \Vert \mu^{(1)} \Vert_{L^2(G)} \geq \Vert \mu^{(2)} \Vert_{L^2(G)} \geq \dots \geq 1.\]

\emph{Phase I.} The aim here is to show that, by time $n \sim C_0 \log |G|$, the measure $\mu^{(n)}$ has become at least reasonably spread out in the sense that \[ \Vert \mu^{(n)} \Vert_{L^2(G)} \leq |G|^{1/2 - \kappa/2}.\]
The main tool here will be the non-concentration hypothesis.

\emph{Phase II.}  The aim here is to show that, by some later time $n \sim C_1 \log |G|$, the measure $\mu^{(n)}$ is extremely spread out in the sense that \[ \Vert \mu^{(n)} \Vert_{L^2(G)} \leq |G|^{\kappa/10}.\]
The main tool here will be the product theorem hypothesis.

\emph{Phase III.} Finally, we show that at some still later time $n \sim C_2 \log |G|$, we have the desired bound
\[ \Vert \mu^{(n)} - 1 \Vert_{L^\infty(G)} \leq |G|^{-10}.\]
The main tool here will be the quasirandomness hypothesis.

The constants $0 < C_0 < C_1 < C_2$ will depend only on $k,\kappa,\Lambda$, and the function $\delta'(\cdot)$.\vspace{11pt}

\emph{Remark.} Bourgain and Gamburd organise things slightly differently, deducing a spectral gap directly from the roughly uniform measure resulting from Phase II rather than the highly uniform one resulting from Phase III. To achieve this one must use the fact that all eigenvalues of $T_S$ occur with high multiplicity, a fact which follows easily from the quasirandomness of $G$. This form of the argument stems from an idea of Sarnak and Xue \cite{sarnak-xue}. Which argument one prefers is definitely a matter of taste, and on some not-especially-deep level they are equivalent.

We turn now to the discussion of the three phases in turn. Throughout the discussion that follows, the parameters $k, \kappa,\Lambda$ are as in the statement of Proposition \ref{machine}.\vspace{11pt}

\emph{Phase I.} Recall that our task is to show that there is some $C_0$ such that, if $n \geq C_0 \log |G|$, we have $\Vert \mu^{(n)} \Vert_{L^2(G)} \leq |G|^{1/2 - \kappa/2}$. This is actually a very simple task given the non-concentration estimate (iii), applied to the special case $H = \{\id\}$. This implies that $\Vert \mu^{(n)} \Vert_{L^\infty(G)} \leq |G|^{1 - \kappa}$, and so by a trivial instance of H\"older's inequality
\begin{equation}\label{conv-smooth}
 \Vert \mu^{(n)} \Vert_{L^2(G)} \leq \Vert \mu^{(n)} \Vert_{L^\infty(G)}^{1/2} \Vert \mu^{(n)} \Vert_{L^1(G)}^{1/2} = \Vert \mu^{(n)} \Vert_{L^\infty(G)}^{1/2} \leq |G|^{1/2 - \kappa/2},
 \end{equation}
as required.\vspace{11pt}

\emph{Phase II.} This is the heart of the argument in some sense. Suppose that $\nu$ is a symmetric probability measure on $G$ such that \begin{equation}\label{non-conc}\nu(xH) \leq |G|^{-\kappa}\end{equation} for all cosets $xH$ of proper subgroups $H <  G$. We begin by noting that this automatically implies the same estimate for any convolution $\nu \ast \mu$, and in particular we have
\begin{equation}\label{nu-m-est} \nu^{(m)}(xH) \leq |G|^{-\kappa}\end{equation} for all $m$. Indeed, note that for any $x_0$ we have
\begin{align*} \sup_x \nu(x H) & \geq \E_x \mu(x) \E_y \nu(y) 1_H (x_0^{-1}x y) \\ & = \E_x \mu(x) \E_z \nu(x^{-1} x_0 z) 1_H(z) \\ & = \E_z (\E_x \mu(x) \nu(x^{-1} z))1_{x_0H}(z) \\ & = (\mu \ast \nu)(x_0 H),\end{align*}
and so
\[ \sup_x \nu(x H) \geq \sup_x (\mu \ast \nu)(xH).\]
The non-concentration estimate, item (iii) of Proposition \ref{machine}, states that some measure $\nu = \mu^{(n)}$ satisfies \eqref{non-conc}, for some $n \leq \Lambda \log |G|/2$.  Indeed, if $\mu^{(n)}(xH) > |G|^{-\kappa}$ for some coset $xH$, then by symmetry we also have
$\mu^{(n)}(Hx^{-1}) > |G|^{-\kappa}$, and hence by convolution $\mu^{(2n)}(H) > |G|^{-2\kappa}$, which will contradict (iii) for a suitable choice of $n$. By the conclusion \eqref{conv-smooth} of Phase I and the monotonicity of the $L^2(G)$ norms $\Vert \mu^{(n)} \Vert_{L^2(G)}$ we may, by increasing $\Lambda$ to $O_{\Lambda,\kappa}(1)$ if necessary, assume that $\nu$ additionally satisfies
\begin{equation}\label{ell2-upper} \Vert \nu \Vert_{L^2(G)} \leq |G|^{1/2 - \kappa/2}.\end{equation}

To establish Phase II we need only show that there is $m = O_{\kappa, k,\Lambda}(\log |G|)$ such that $\Vert \nu^{(m)} \Vert_{L^2(G)} \leq |G|^{\kappa/10}$. Let $\eta > 0$ be a quantity to be chosen later, depending on $\kappa, k,\Lambda$. Consider the sequence of measures $\nu$, $\nu^{(2)}$, $\nu^{(4)},\dots, \nu^{(2^{m_0})}$. If $m_0 = O_{\eta}(1)$ is large enough then by the pigeonhole principle there is some $j$ such that
\[ \Vert \nu^{(2^{j+1})} \Vert_{L^2(G)} \geq |G|^{-\eta} \Vert \nu^{(2^j)} \Vert_{L^2(G)}.\]
Writing $\mu \coloneqq  \nu^{(2^j)}$, this of course implies that
\[ \Vert \mu \ast \mu \Vert_{L^2(G)} \geq |G|^{-\eta} \Vert \mu \Vert_{L^2(G)}.\]
We now apply a weighted version of the noncommutative Balog-Szemer\'edi-Gowers theorem from additive combinatorics, stated and proved in Appendix \ref{bsg-app-sec}. This tells us that there is a $|G|^{C\eta}$-approximate group $A \subseteq G$ with
\begin{equation}\label{a-bound}|A| \leq |G|^{1 + C\eta}/\Vert \mu \Vert_{L^2(G)}^2\end{equation} and some $x \in G$ such that
\begin{equation}\label{a-bd-2}\mu(xA) \geq |G|^{-C\eta}.\end{equation}
As a consequence of the Product Theorem (item (ii) in the list of hypotheses of Proposition \ref{machine}), one of the following three alternatives must hold provided that $\eta$ is chosen sufficiently small:
\begin{enumerate}
\item $|A| < |G|^{\kappa/2}$;
\item $|A| > |G|^{1 - \kappa/10}$;
\item $A$ generates a proper subgroup of $G$.
\end{enumerate}
We shall also assume, since this will be required below, that $\eta < c\kappa$ for some small constant $c > 0$. We now examine the above three possibilities in turn.

If (i) holds then, by \eqref{a-bd-2} and the pigeonhole principle, there is some element $x \in G$ with \[ \mu(x) \geq |G|^{-C\eta - \kappa/2} \geq |G|^{-\kappa}.\] This contradicts the non-concentration hypothesis  (with $H = \{\id\}$ being the trivial subgroup).

If (ii) holds then, by \eqref{a-bound}, we obtain \[ \Vert \mu \Vert_{L^2(G)} \leq |G|^{C \eta + \kappa/20} \leq |G|^{\kappa/10},\] which is the desired outcome of Phase II.

Finally, suppose that (iii) holds. Then, by \eqref{a-bd-2}, we have \[ \mu(xH) \geq \mu(xA) \geq |G|^{-C\eta} \geq |G|^{-\kappa},\] where $H = \langle A\rangle$ is the group generated by $A$, and by assumption $H$ is proper, and so once again the non-concentration estimate is violated. This concludes the analysis of Phase II. \vspace{11pt}

\emph{Phase III.} Our task here is to prove the following lemma.

\begin{lemma}\label{phase3} Suppose that $\nu$ is a probability measure on $G$ with $\Vert \nu \Vert_{L^2(G)} \leq |G|^{\kappa/100}$, such as the one output by Phase II above. Then for sufficiently large $m = O_{\kappa}(1)$, we have
\begin{equation}\label{to-prove} \Vert \nu^{(m)} - 1 \Vert_{L^\infty(G)} \leq |G|^{-10}.\end{equation}
\end{lemma}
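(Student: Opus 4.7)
\medskip

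\noindent\textbf{Proof plan.} The plan is to prove this by a classical ``trace trick'' argument (due to Sarnak--Xue \cite{sarnak-xue}), which is the only place in the Bourgain--Gamburd machine where the quasirandomness hypothesis (i) enters in a serious way. The structure of the argument has three steps: an $L^\infty \to L^2$ reduction, a spectral bound on the convolution operator via Peter--Weyl and quasirandomness, and an iteration.

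First I would reduce the desired $L^\infty$ bound to an $L^2$ bound. Writing $\nu_0 \coloneqq \nu - 1$ (a mean-zero function), one checks that $1 * \nu_0 = \nu_0 * 1 = 0$ (since convolution with the constant function $1$ just replaces a function by its mean), and hence by expanding $\nu^{(m)} = (1+\nu_0)^{*m}$ one obtains the clean identity $\nu^{(m)} - 1 = \nu_0^{(m)}$. Writing the last convolution factor separately and applying Cauchy--Schwarz gives
\[
\|\nu^{(m)} - 1\|_{L^\infty(G)}
\;\leq\; \|\nu\|_{L^2(G)}\,\|\nu^{(m-1)} - 1\|_{L^2(G)}
\;\leq\; |G|^{\kappa/100}\,\|\nu^{(m-1)} - 1\|_{L^2(G)}.
\]
So it suffices to show $\|\nu^{(m-1)} - 1\|_{L^2(G)} \leq |G|^{-11}$, and by the identity above this in turn reduces to bounding $\|T_\nu^{m-2}|_{L^2_0(G)}\|_{\mathrm{op}}^{m-2}$, where $T_\nu: f \mapsto f * \nu$ is the convolution operator and $L^2_0(G)$ is the orthogonal complement of the constants.

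The core of the argument is the operator-norm bound on $T_\nu|_{L^2_0(G)}$. Here I would invoke the Peter--Weyl decomposition of $L^2(G)$ into isotypic components $V_\pi \otimes V_\pi^*$, one for each irreducible representation $\pi$ of $G$. The operator $T_\nu$ respects this decomposition and acts on the $\pi$-isotypic piece (up to natural identification) as $\pi(\nu) \otimes \mathrm{Id}_{V_\pi^*}$, where $\pi(\nu) \coloneqq \sum_x \nu(x)\pi(x)$. In particular, each singular value of $\pi(\nu)$ appears with multiplicity exactly $d_\pi$ in the spectrum of $T_\nu T_\nu^*$ restricted to the $\pi$-isotypic component. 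By the quasirandomness hypothesis, $d_\pi \geq |G|^\kappa$ for every nontrivial $\pi$, and hence every eigenvalue of $T_\nu T_\nu^*|_{L^2_0(G)}$ occurs with multiplicity at least $|G|^\kappa$. Comparing the top singular value with the full trace then gives, for any even integer $k \geq 2$,
\[
\bigl\|T_\nu|_{L^2_0(G)}\bigr\|_{\mathrm{op}}^{2k}
\;\leq\; |G|^{-\kappa}\,\mathrm{Tr}\!\left((T_\nu T_\nu^*)^k\big|_{L^2_0(G)}\right)
\;\leq\; |G|^{-\kappa}\,(\tilde\nu * \nu)^{(k)}(e),
\]
where $\tilde\nu(x) \coloneqq \nu(x^{-1})$ and I have used the general identity $\mathrm{Tr}(T_f) = f(e)$ on $L^2(G)$ with the normalization in the paper.

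Finally, I would estimate the trace. The measure $\sigma \coloneqq \tilde\nu * \nu$ is symmetric, so $\sigma^{(k)}(e) = \|\sigma^{(k/2)}\|_{L^2(G)}^2$ for even $k$, and by Young's inequality $\|\sigma^{(k/2)}\|_{L^2(G)} \leq \|\sigma\|_{L^2(G)} \leq \|\nu\|_{L^2(G)} \leq |G|^{\kappa/100}$. Combining this with the previous display gives
\[
\bigl\|T_\nu|_{L^2_0(G)}\bigr\|_{\mathrm{op}}^{2k}
\;\leq\; |G|^{-\kappa + \kappa/50}
\;=\; |G|^{-49\kappa/50}
\]
for any fixed even $k \geq 2$. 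Iterating this bound $j$ times (that is, raising to the $j$-th power), we may drive $\|T_\nu|_{L^2_0(G)}\|_{\mathrm{op}}^{2kj}$ below $|G|^{-11}$ as soon as $j \geq 12 \cdot \tfrac{50}{49\kappa}$. Choosing $m - 2 = 2kj = O_\kappa(1)$ with $k,j$ as above then yields \eqref{to-prove}. The main obstacle is really conceptual rather than technical: one must correctly exploit the multiplicity structure of the Peter--Weyl decomposition so that quasirandomness can be leveraged through the trace, since the trivial operator-norm estimate $\|T_\nu\|_{\mathrm{op}} \leq 1$ is by itself far too weak to produce any decay.
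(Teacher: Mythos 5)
Your proof is correct, but takes a genuinely different route from the paper's. The paper applies the Babai--Nikolov--Pyber mixing inequality \eqref{bnp} as a black box: convolving two probability measures on a quasirandom group contracts $\|\cdot - 1\|_{L^2(G)}$ by a factor $d_{\min}(G)^{-1/2} \leq |G|^{-\kappa/2}$, iterating this a bounded number of times drives $\|\nu^{(m_0)}-1\|_{L^2(G)}$ below $|G|^{-5}$, and a single Cauchy--Schwarz then finishes. You instead carry out the Sarnak--Xue trace argument explicitly: decompose $L^2(G)$ by Peter--Weyl, observe that every eigenvalue of $T_\nu T_\nu^*$ on $L^2_0(G)$ occurs with multiplicity at least $|G|^\kappa$, bound the top singular value against the trace $\mathrm{Tr}\big((T_\nu T_\nu^*)^k\big) = (\tilde\nu * \nu)^{(k)}(e)$, and control that trace by Young's inequality together with the $L^2$ smallness of $\nu$. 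This yields the explicit operator-norm bound $\|T_\nu|_{L^2_0(G)}\|_{\mathrm{op}} \leq |G|^{-c\kappa}$ for some $c>0$ depending only on $\kappa$, from which the lemma follows by raising to a bounded power. Your argument is essentially Bourgain and Gamburd's original organisation; the paper itself remarks, just before detailing the three phases, that the two formulations are on some level equivalent (the proof of the BNP inequality uses the same high-multiplicity observation). Yours is more self-contained; the paper's is shorter at the price of quoting BNP as a black box. One small typo: in the $L^\infty\to L^2$ reduction you write $\|T_\nu^{m-2}|_{L^2_0(G)}\|_{\mathrm{op}}^{m-2}$, but you mean $\|T_\nu|_{L^2_0(G)}\|_{\mathrm{op}}^{m-2}$.
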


\begin{proof}
We use the quasirandomness of $G$, and in particular an inequality of Babai, Nikolov and Pyber \cite{babai-nikolov-pyber}, related to earlier work of Gowers \cite{gowers}. The inequality states\footnote{Note that Babai, Nikolov and Pyber use the counting measure on $G$ rather than the normalised counting measure as we do. This is why we omit a factor of $\sqrt{n}$ from their estimate.} that for any probability measures $\nu_1, \nu_2$ on $G$ we have
\begin{equation}\label{bnp}
 \Vert \nu_1 \ast \nu_2 - 1 \Vert_{L^2(G)} \leq \sqrt{\frac{1}{d_{\min}(G)}} \Vert \nu_1 - 1 \Vert_{L^2(G)} \Vert \nu_2 - 1 \Vert_{L^2(G)},
 \end{equation}
  where $d_{\min}(G)$ is the smallest dimension of a nontrivial representation (over $\C$) of $G$. By assumption we have $d_{\min}(G) \geq |G|^{\kappa}$, and so for any probability measure $\nu$ we have
\[ \Vert \nu^{(2)} - 1 \Vert_{L^2(G)} \leq |G|^{-\kappa/2} \Vert \nu - 1 \Vert_{L^2(G)}.\]
Applying this repeatedly, after $O_{\kappa}(1)$ convolutions we will end up with some $m_0 = O_{\kappa}(1)$ such that
\[ \Vert \nu^{(m_0)} - 1 \Vert_{L^2(G)}  \leq |G|^{-5}.\]
Finally, a single application of the Hausdorff-Young inequality allows us to conclude that
\begin{align*} \Vert \nu^{(2m_0)} - 1 \Vert_{L^\infty(G)} & = \Vert (\nu^{(m_0)} - 1) \ast (\nu^{(m_0)} - 1) \Vert_{L^\infty(G)} \\ & \leq \Vert \nu^{(m_0)} - 1 \Vert_{L^2(G)} \leq |G|^{-10},\end{align*}
as required.
\end{proof}

\section{A locally commutative subgroup of the affine group of the plane}\label{affine-sec}

In this section, we consider the problem of finding a strongly dense free subgroup in the group of (special) affine transformations of the plane, i.e. the group $\G(k)=k^2\rtimes \SL_2(k) $. This is a non-semisimple but perfect algebraic group.  The purpose of this appendix is to establish the existence of strongly dense free subgroups in this group, as this will be needed to obtain strongly dense free subgroups of $\Sp_4$ in the next section.

More precisely, we establish the following.

\begin{theorem}[Strongly dense affine groups]\label{affdense} Let $k$ be a field which is not locally finite\footnote{A field $k$ is \emph{locally finite} if every finite subset of $k$ generates a finite subfield.  In particular, locally finite fields are necessarily positive characteristic and countable.}.  Then $\G(k)\coloneqq k^2\rtimes \SL_2(k)$ contains a strongly dense free subgroup on two generators, i.e. a free subgroup such that every non-abelian proper subgroup is Zariski-dense in $\G$.
\end{theorem}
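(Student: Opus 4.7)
The plan is to lift a free strongly dense pair in $\SL_2(k)$ to $\G(k) = V \rtimes \SL_2(k)$ (with $V = k^2$) by generically choosing the translation parts, then use a classification of proper algebraic subgroups of $\G$ to reduce strong density to avoiding a single simple obstruction, namely shared fixed points in $V$.

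First I would classify the non-solvable proper closed subgroups of $\G$. Let $\pi\colon \G\to \SL_2$ be the projection, and for closed $H\leq \G$ set $H_V \coloneqq H\cap V$ and $H_S \coloneqq \pi(H)$. If $\overline{H_S}=\SL_2$, then $H_V$ is $\SL_2$-stable in $V$, so by irreducibility of the standard representation $H_V\in\{0,V\}$: the first case gives $H=\G$, and the second makes $H^0$ a conjugate of $\SL_2$, which a normalizer computation pins down as the stabilizer $\G^{v_0}$ of a unique $v_0\in V$. If $\overline{H_S}\subsetneq \SL_2$, then $H_S$ lies in a Borel or torus-normalizer and so $H$ is solvable. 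Conclusion: the non-solvable proper closed subgroups of $\G$ are exactly the point stabilizers $\G^{v_0}$, $v_0\in V$.

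Next I would invoke the $\SL_2$-analogue of Theorem \ref{sec3-key-prop} for $k$ not locally finite (classical, since every proper connected subgroup of $\SL_2$ is solvable, reducing strong density to a Tits-alternative/general-position construction) to produce $\alpha,\beta\in \SL_2(k)$ generating a free strongly dense subgroup $F$. Setting $a \coloneqq (v_a,\alpha)$ and $b \coloneqq (v_b,\beta)$ for $v_a,v_b\in V$ yet to be chosen, freeness of $F$ forces $\pi\colon\langle a,b\rangle\to F$ to be an isomorphism (any $w(a,b)$ in the kernel would give $w(\alpha,\beta)=1$ and hence $w=1$ by freeness). Thus $\langle a,b\rangle$ is free of rank two and two of its elements commute iff their projections do. So if $x,y\in\langle a,b\rangle$ do not commute, $\overline{\langle \pi(x),\pi(y)\rangle}=\SL_2$ by strong density of $F$, whence $\overline{\langle x,y\rangle}$ is non-solvable; by the classification it is either $\G$ (what we want) or some $\G^{v_0}$, the latter happening precisely when $x$ and $y$ share a common fixed point in $V$.

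It remains to choose $(v_a,v_b)$ so that no pair of non-commuting $x,y\in\langle a,b\rangle$ has a common fixed point in $V$. For each pair of reduced words $(w_x,w_y)$ with non-commuting images in $F$, the shared-fixed-point condition is the linear-in-$(v_a,v_b)$ system $(I-\pi(x))v_0=v_x$, $(I-\pi(y))v_0=v_y$, where $v_x,v_y$ are explicit $k$-linear combinations of $v_a,v_b$ with matrix coefficients polynomial in $\pi(a)^{\pm 1},\pi(b)^{\pm 1}$. I would verify (starting from the template $w_x=a, w_y=b$, where the condition reads $v_b=(I-\beta)(I-\alpha)^{-1}v_a$ whenever $I-\alpha,I-\beta$ are invertible, and bootstrapping to general pairs using non-commutativity of $\pi(x),\pi(y)$) that this cuts out a proper closed subvariety $B_{w_x,w_y}\subset V\times V$. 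The main obstacle is then to choose $(v_a,v_b)$ outside the countable union $\bigcup_{w_x,w_y} B_{w_x,w_y}$ when $k$ may be countable: I would resolve it by base-changing to a large algebraically closed extension $\k/k$ where a generic choice is immediate and then descending via a specialization argument exploiting the hypothesis that $k$ is not locally finite, i.e., that $k$ contains transcendentals over the finitely-generated subfield over which $\alpha,\beta$ and the countably many $B_{w_x,w_y}$ are defined, giving sufficient room in $V(k)$ to realize the generic point.
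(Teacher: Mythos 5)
Your classification of non-solvable proper closed subgroups of $\G$ as point stabilizers matches the paper's Lemma \ref{freeg} (though the step from ``$H\cap V$ trivial, $\pi(H)$ dense in $\SL_2$'' to ``$H^0$ conjugate to $\SL_2$'' is precisely the vanishing of $H^1(\SL_2,\overline{k}^2)$, which needs to be cited or proved --- the paper does this, including the delicate characteristic-$2$ case). Your reduction to the locally-commutative condition (no pair of non-commuting elements of the free group with a common fixed point in $V$) also lines up with the paper's reformulation as Proposition \ref{loc-commut}. The divergence, and the gap, is in how you propose to realize such a free subgroup.

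The genericity-plus-specialization step does not work at the stated level of generality, and the reason is not merely that a detail is omitted. You write that ``not locally finite'' means $k$ contains transcendentals over the finitely generated subfield over which $\alpha,\beta$ and the varieties $B_{w_x,w_y}$ are defined. This is false: take $k=\Q$ or $k=\F_p(t)$. Both are not locally finite, but to get a free pair $\alpha,\beta$ in $\SL_2(k)$ one already needs all of $k$ (e.g., $\SL_2(\F_p)$ is finite), so the finitely generated subfield carrying the data \emph{is} $k$, and there is nothing to specialize from. In $\F_p(t)^4$ (or $\Q^4$) a countable union of proper subvarieties defined over the base can and does cover all rational points (every point lies on a rational hyperplane), so ``choose $(v_a,v_b)$ generically'' has no content here. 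Passing to a large extension $\k$ and asserting a descent is where the argument would actually have to live, and you have given no mechanism; in particular, a single point in $\k^4\setminus\bigcup B_i$ does not specialize to a $k$-point outside $\bigcup B_i$ in general. The paper avoids this entirely: it reduces to $k$ finitely generated, embeds it densely in a local field $K$, and builds the free pair by an explicit ping-pong construction ($a$ a large-ratio diagonal map, $b=hah^{-1}$ for generic $h$), using the geometry of the ping-pong table plus a combinatorial fact about free groups (every non-abelian subgroup, after conjugation, produces three distinct terminal letters) to rule out common fixed points. That construction is not a genericity argument, and something of that sort is genuinely needed when $k$ is countable. Your plan, as written, proves the result only when $k$ is large enough that the subvariety-avoidance step is available --- a strictly weaker statement.
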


For the rest of this section, $k$ and $\G$ are as in Theorem \ref{affdense}.  We begin with the following classification of Zariski closures of free subgroups of $\G$.

\begin{lemma}\label{freeg}  Let $F$ be a free group on two generators in $\G$, and let $\HH$ be the Zariski closure of $F$.  Then either $\HH$ is equal to all of $\G$, or there exists an element $x\in k^2$ such that $\HH$ is equal to the stabiliser $\Stab(x) \coloneqq  \{ g \in \G(k): gx = x \}$ of $x$ \textup{(}using the obvious action of the affine group $\G$ on the plane $k^2$\textup{)}.
\end{lemma}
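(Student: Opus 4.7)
The plan is to exploit the semidirect product structure of $\G = k^2 \rtimes \SL_2$ via the quotient map $\pi \colon \G \to \SL_2$.  Let $N \coloneqq \HH \cap k^2$, a closed normal subgroup of $\HH$, and let $\overline{\HH} \coloneqq \pi(\HH)$, which is the Zariski closure of $\pi(F)$ in $\SL_2$.  The argument would proceed by first forcing $\overline{\HH} = \SL_2$, then pinning down $N$, and finally classifying $\HH$ in each of the resulting cases.

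First I would show that $\overline{\HH} = \SL_2$.  Suppose not; then $\overline{\HH}$ is a proper algebraic subgroup of $\SL_2$, hence virtually solvable, since every proper algebraic subgroup of $\SL_2$ is either finite or contained up to finite index in a Borel subgroup.  Because the derived series of the abstract group $\overline{\HH}(k)$ is contained in the algebraic derived series of $\overline{\HH}$, every abstract subgroup of $\overline{\HH}(k)$ is itself virtually solvable.  Thus there is a finite index subgroup $F_1 \leq F$ whose image under $\pi$ is solvable, fitting into a short exact sequence
\[ 1 \to F_1 \cap k^2 \to F_1 \to \pi(F_1) \to 1 \]
with abelian kernel and solvable quotient, so $F_1$ is solvable.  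But by the Nielsen--Schreier theorem, any finite index subgroup of a rank-$2$ free group is free of rank at least $2$, hence non-solvable, a contradiction.

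Next I would pin down $N$.  Since $N$ is closed and normalised by $\HH$, and hence by its Zariski closure, it is a closed $\SL_2$-stable subgroup of the standard representation $k^2$.  The standard representation is irreducible in every characteristic in the sense that its only closed $\SL_2$-stable subgroups are $0$ and $k^2$, so $N \in \{0, k^2\}$.  If $N = k^2$ then $\HH \supseteq k^2$ and $\pi(\HH) = \SL_2$, forcing $\HH = \G$ and yielding the first alternative of the lemma.

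It remains to treat the case $N = 0$.  Here $\pi$ restricts to a bijective morphism $\HH \to \SL_2$, so $\HH$ is the image of an algebraic section $s \colon \SL_2 \to \G$ of the form $s(A) = (c(A), A)$, where $c \colon \SL_2 \to k^2$ is an algebraic map satisfying the $1$-cocycle identity $c(AB) = c(A) + A c(B)$.  A direct computation shows that $\Stab(x) = \{(x - Ax, A) : A \in \SL_2\}$, so these correspond exactly to the coboundaries $c_x(A) \coloneqq x - Ax$.  The remaining, and main, technical point is therefore to establish that every algebraic cocycle $c$ is a coboundary, i.e.\ that $H^1_{\operatorname{alg}}(\SL_2, k^2) = 0$ for the standard representation.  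This is a standard vanishing result for the rational cohomology of the simply connected group $\SL_2$ on its Weyl module, and can alternatively be verified by hand by choosing $A_0 \in \SL_2(k)$ with $I - A_0$ invertible (possible since $k$, being non-locally-finite, is infinite), setting $x \coloneqq (I - A_0)^{-1} c(A_0)$, and then using the cocycle identity together with Zariski density to show $c = c_x$ on all of $\SL_2$.  With this in hand, $\HH = \Stab(x)$, completing the proof.
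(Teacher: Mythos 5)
Your strategy matches the paper's step by step: project to $\SL_2$ and rule out a virtually solvable image, intersect with the normal $k^2$ and use irreducibility of the standard module to reduce to the case of an algebraic section $\SL_2 \to \G$, and then appeal to the vanishing of $H^1(\SL_2, k^2)$. The primary route you invoke (rational cohomology vanishing for the simply connected group on its Weyl module) is exactly what the paper cites from Jantzen, and the reduction is correct.

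However, the ``by hand'' alternative you sketch for the vanishing is not right as stated. For a generic $A_0$ with $I - A_0$ invertible, setting $x := (I - A_0)^{-1} c(A_0)$ only yields $c(A_0) = c_x(A_0)$. The locus $V := \{A \in \SL_2 : c(A) = x - Ax\}$ is indeed a closed subgroup (this follows from the cocycle identity), so it contains the Zariski closure of $\langle A_0 \rangle$; but that is at most a one-dimensional torus or unipotent subgroup, and nothing forces $V$ to be all of $\SL_2$ by any density argument applied to a single generic $A_0$. What actually makes the paper's elementary argument close is the choice of the \emph{central} involution $A_0 = -I$ (hence the hypothesis $\ch k \neq 2$): the lift $s(-I)$ is then central in $\HH$, so its unique fixed point is preserved by its centralizer, i.e.\ by all of $\HH$. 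One can rephrase this in your language: the rational map $A \mapsto (I - A)^{-1} c(A)$ is constant on each maximal torus (by the commuting computation), and since every maximal torus contains $-I$, the map is globally constant --- but this still hinges on $-I \neq I$. In characteristic $2$ one must genuinely fall back on the rational cohomology result, as the paper notes (and as is harmless for the application to $\Sp_4$ in characteristic $3$). A further small point, which the paper also passes over in one clause, is that a bijective morphism $\HH \to \SL_2$ of algebraic groups is not automatically an isomorphism in positive characteristic; one should observe that $\Lie(\HH) \cap k^2$ is an $\SL_2$-submodule of the standard module, hence $0$ or $k^2$, and the latter would make $\HH^0$ solvable, so the differential is injective and the map is separable.
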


\begin{proof}  We may assume without loss of generality that $\HH$ is proper.  We project $\HH$ to $\SL_2$ under the quotient map from $\G$ to $\SL_2$.  This is a closed subgroup of $\SL_2$, and is thus either virtually solvable or all of $\SL_2$.  In the former case, $\HH$ is virtually solvable and thus cannot contain a free group; and so $\HH$ projects to all of $\SL_2$.

Now consider the intersection of $\HH$ with $k^2$. Since $\HH$ projected surjectively onto $\SL_2$, $\HH \cap k^2$ must be a closed subgroup of $k^2$ that is normalised by the $\SL_2$ action, and is thus either trivial or all of $k^2$.  In the latter case, we have $\HH = \G$, so we may assume that $\HH \cap k^2 = \{0\}$, and so the projection from $\HH$ to $\SL_2$ is an isomorphism of algebraic groups.  In particular, $\HH$ induces a class in the first cohomology group of $\SL_2$.  However we have the following well-known lemma. 

\begin{lemma}[Vanishing of cohomology] The first cohomology group of $\SL_2$ acting on its natural module $\overline{k}^2$ is trivial.
\end{lemma}

\begin{proof} For the convenience of the reader, we provide a short proof when $\ch(k) \neq 2$ and we refer the reader to
 \cite[II.4.13]{RAG} for the general case (which is not needed for our application to $\Sp_4$ in characteristic $3$). We need to show that any closed subgroup $\HH$ of $\G$ which maps isomorphically to $\SL_2$ under the quotient homomorphism stabilizes a point in $\overline{k}^2$, and is thus conjugate to the stabilizer of $0$ in $\overline{k}^2$. Since $\SL_2$ has a non trivial centre $\{\pm 1\}$, $\HH$ must contain an element of order $2$. The eigenvalues of the linear part of this element must be equal to $-1$, hence this element must fix a (unique) point in $\overline{k}^2$. This fixed point must therefore be preserved by the centralizer of this involution, hence by all of $\HH$. We are done.
\end{proof}

It follows immediately that $\HH$ and our free subgroup must fix a point in $\overline{k}^2$. Since fixed points of elements of $\G(k)$ are defined over $k$, they must lie in $k^2$, and the claim follows.
\end{proof}

As a consequence of Lemma \ref{freeg}, Theorem \ref{affdense} may now be reformulated as follows.

\begin{proposition}\label{loc-commut} Let $k$ be a field which is not locally finite.  Then there exists a non-abelian free subgroup on two generators in $\G(k)$ whose action on $k^2$ is \emph{locally commutative} in the sense that the stabilizer of every point in $k^2$ is commutative.
\end{proposition}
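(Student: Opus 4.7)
The plan is to realize the desired free subgroup of $\G(k)=k^2\rtimes\SL_2(k)$ as a lift of a carefully chosen free subgroup of $\SL_2(k)$, choosing the translation parts so as to force local commutativity via a transcendence argument. Since $k$ is not locally finite, it contains an element $t$ transcendental over the prime field $F\subseteq k$ (in characteristic zero one may pass to $k(T)$ for a formal variable $T$ and specialise at the end if necessary). First fix $A,B\in\SL_2(F[t,t^{-1}])$ generating a free subgroup of rank two, by a standard ping-pong argument applied to two hyperbolic matrices of non-constant trace with disjoint attracting and repelling loci on $\P^1$, and subject to the additional generic condition $\tr(w(A,B))\ne 2$ for every non-trivial $w\in F_2$; each trace condition is a non-trivial polynomial identity in $t$ and so is automatic since $t$ is transcendental over $F$. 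Set $a=(v_a,A)$ and $b=(v_b,B)$ in $\G(k)$ for vectors $v_a,v_b\in k^2$ to be chosen below. The group $F\coloneqq \langle a,b\rangle$ is then free of rank two, since its image under $\pi\colon\G\to\SL_2$ is the free group $\langle A,B\rangle$.

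Because $1$ is not an eigenvalue of any non-trivial $w(A,B)$, each element $w(a,b)\in F$ has a unique fixed point
\[
x_w(v_a,v_b)\coloneqq (I-w(A,B))^{-1}\,T_w(v_a,v_b)\in k^2,
\]
where the translation part $T_w$ is $k$-linear in $(v_a,v_b)$. A direct computation shows that two elements $w(a,b),w'(a,b)\in F$ sharing a common fixed point commute in $\G(k)$ if and only if their linear parts $w(A,B)$ and $w'(A,B)$ commute in $\SL_2(k)$. Thus the local commutativity required by Proposition~\ref{loc-commut} reduces to the assertion that $x_w(v_a,v_b)\ne x_{w'}(v_a,v_b)$ for every pair $w,w'\in F_2$ whose images in $\SL_2$ fail to commute.

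The key algebraic step is to show that for each such non-commuting pair $(w,w')$, the $k$-linear map $(v_a,v_b)\mapsto x_w-x_{w'}$ from $k^4$ to $k^2$ is not the zero map. Writing it as $P^{w,w'}_a\,v_a+P^{w,w'}_b\,v_b$ for matrix-valued expressions $P^{w,w'}_a,P^{w,w'}_b$ in $A,B,A^{-1},B^{-1}$, I would argue by induction on the total length of $w$ and $w'$, using the recursion $T_{uw}=T_u+u(A,B)T_w$, that the simultaneous vanishing of both coefficient matrices as elements of $\Mat_2(F(t))$ forces a non-trivial polynomial identity in $A(t)$ and $B(t)$; such an identity is incompatible with the freeness of $\langle A(t),B(t)\rangle$ over $F(t)$ and with the hypothesis $[w(A,B),w'(A,B)]\ne 1$. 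This inductive non-vanishing verification is the principal technical obstacle of the proof.

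Finally, we pick $v_a,v_b\in k^2$ depending polynomially on $t$, for instance $v_a(t)=(t^{N_1},t^{N_2})^T$ and $v_b(t)=(t^{N_3},t^{N_4})^T$ for a suitably rapidly increasing sequence $N_1<N_2<N_3<N_4$ that prevents accidental cancellation between the four coordinate contributions in $P^{w,w'}_a(t)v_a(t)+P^{w,w'}_b(t)v_b(t)$. By the key algebraic step, each bad condition $x_w=x_{w'}$ then becomes a non-zero polynomial equation in $t$ with coefficients in $F$, and since $t$ is transcendental over $F$, none of these countably many equations can be satisfied. Consequently our choice of $(v_a,v_b)\in k^4$ simultaneously avoids every bad linear subspace, yielding the required strongly dense free subgroup of $\G(k)$.
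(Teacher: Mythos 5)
Your approach is genuinely different from the paper's. The paper reduces to $k$ finitely generated and infinite, embeds $k$ densely in a local field $K$, and constructs $a,b$ by an \emph{explicit metric ping-pong} in $K^2$: $a$ is a diagonal dilation by $L^{\pm 10}$ for $|L|$ large, $b=hah^{-1}$ for a generic $h$, and the argument is entirely geometric, culminating in the combinatorial fact (Lemma~\ref{three-letters}) that a non-abelian subgroup of $F_2$ has, after conjugation, words ending in at least three distinct letters, which together with a norm-dilation lemma forces distinct fixed points. In contrast, you propose to lift a free subgroup of $\SL_2$ to the affine group and use a transcendence/genericity argument to keep fixed points apart. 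Your conceptual reduction is correct: two elements of $\G$ with a common fixed point commute iff their linear parts do, so local commutativity is precisely the requirement that $x_w \neq x_{w'}$ whenever $w(A,B)$ and $w'(A,B)$ fail to commute. But the proposal has two genuine gaps.

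First, you identify the ``key algebraic step'' --- showing that for each non-commuting pair $(w,w')$ the $k$-linear map $(v_a,v_b)\mapsto x_w-x_{w'}$ is not identically zero --- as ``the principal technical obstacle of the proof,'' and then do not carry it out. The sketch (``induction on total length\ldots forces a non-trivial polynomial identity incompatible with freeness'') is not a proof; you would need to control simultaneously the two $2\times 2$ coefficient blocks $P^{w,w'}_a$ and $P^{w,w'}_b$, and the recursion $T_{uw}=T_u+u(A,B)T_{w'}$ mixes them in a way that does not obviously reduce to a freeness statement in $\SL_2$. (The claim is plausible --- e.g.\ for $w=ab$, $w'=ba$ the $v_b$-coefficient of $x_{w'}-x_w$ is $(A^{-1}-I)(I-AB)^{-1}A\neq 0$ --- but a full induction must be given.)

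Second, even granting the key step, the final specialisation argument breaks down. You propose a \emph{fixed} choice $v_a=(t^{N_1},t^{N_2})$, $v_b=(t^{N_3},t^{N_4})$ with $N_1<\dots<N_4$ chosen ``to prevent accidental cancellation.'' But the entries of $P^{w,w'}_a(t)$, $P^{w,w'}_b(t)$ are rational functions in $t$ whose degrees grow without bound as $|w|+|w'|\to\infty$, so no fixed spacing of exponents can rule out cancellation for all pairs of words. The nonvanishing of the linear map gives you a proper $k$-subspace of $k^4$ to avoid for each pair $(w,w')$; avoiding the countable union of all of them is automatic when $k$ is uncountable, but $k$ may be countable (and of transcendence degree zero over its prime field, e.g.\ a number field, in which case the opening claim that ``$k$ contains an element $t$ transcendental over the prime field'' is simply false). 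Your parenthetical ``pass to $k(T)$ and specialise'' acknowledges the problem but does not solve it: one would still need to show that the countable bad set of specialisations is avoidable in a possibly countable $k$, which requires a separate argument (e.g.\ the reduction to a finitely generated $k$ densely embedded in a local field, as the paper does, or a height/Hilbert-irreducibility argument). As written the proof does not establish Proposition~\ref{loc-commut}.
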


When $k=\R$, this proposition  was first proven by K. Sato in \cite{sato}, and answered a long-standing open problem (see \cite[Problem 19.c, p.233]{wagon} and \cite{sato}) regarding paradoxical decompositions of the affine plane. Indeed, as is well-known (see \cite[Ch. 4]{wagon}), Proposition \ref{loc-commut} implies that one can duplicate the plane $k^2$ by affine maps using only four pieces. In other words, one can write $\k^2=A_1 \cup B_1 \cup A_2 \cup B_2$ a partition of $\k^2$ into disjoint (nonmeasurable) sets and find affine maps $g_1,g_2 \in \k^2\rtimes \SL_2(\k) $ such that $\R^2=A_1 \ \cup g_1B_1=A_2 \cup g_2B_2$.

Sato's proof used a direct computation in the spirit of Hausdorff's original 1914 argument \cite{hausdorff} for the existence of a free subgroup of $\SO_3(\R)$. We will present here a different argument, based on a ping-pong argument, which is valid in arbitrary characteristic.

We now begin the proof of Proposition \ref{loc-commut}. Since fixed points of elements in $\G(k)$ are defined over $k$, we may assume that $k$ is a finitely-generated infinite field. It is well-known that every infinite finitely-generated field embeds in some local field $K$ (i.e. $\R$, $\C$, a finite extension of the field of $p$-adic numbers $\Q_p$, or a field of Laurent series over a finite field $\F_q((t))$) in such a way that $k$ is dense in $K$. Let $|\cdot|$ be an absolute value on $K$ defining the topology of $K$.

We parameterize any given element $g \in \G(k) = k^2 \rtimes \SL_2(k)$ as $g = (c(g), \ell(g))$, thus $c(g) \in k^2$ is the translation part of $g$, $\ell(g) \in \SL_2(k)$ is the linear part, and the action on $k^2$ is given by
$$
g\cdot x=\ell(g) x +c(g).
$$
If $\ell(g)$ is not unipotent (i.e. does not have $1$ as eigenvalue), then $g$ fixes a unique point $x(g)$ on $k^2$ given by the formula $x(g)\coloneqq (1-\ell(g))^{-1}c(g)$.

We perform the following ping-pong type construction.  First, we choose a generic element $h \in \G(k)$, and then choose an element $L \in k$ with $|L|$ sufficiently large depending on $h$ (in particular, we will require $|L| > 1$).  Let $a \in \G(k)$ be the affine map
$$ a \cdot (x,y) \coloneqq  (L^{10} x, L^{-10} y).$$
Thus, $x(a)=0$, and the two $a$-invariant lines of $K^2$ are the $x$-axis and the $y$-axis.  If we introduce the norm $\| (x,y) \| \coloneqq  \max(|x|,|y|)$ on $K^2$ and define the regions
\begin{align*}
U_a^- &\coloneqq \{(x,y) \in K^2 : \|(x,y)\| <L^{-1} \textnormal{ or }  |x| > |L| |y| \}\\
U_{a^{-1}}^- &\coloneqq \{(x,y) \in K^2 : \|(x,y)\| <L^{-1} \textnormal{ or }  |y| > |L| |x| \}\\
U_a^+ &\coloneqq \{(x,y) \in K^2 :  |y| > \max( |L| |x|, |L| ) \}\\
U_{a^{-1}}^+ &\coloneqq \{(x,y) \in K^2 : |x| > \max( |L| |y|, |L| ) \}
\end{align*}
then we observe that we have the inclusions
\begin{align*}
U_a^+ &\subset U_{a^{-1}}^- \\
U_{a^{-1}}^+ &\subset U_{a}^- \\
a(K^2 \setminus U_a^-) &\subset U_a^+\\
a^{-1}(K^2 \setminus U_{a^{-1}}^-) &\subset U_{a^{-1}}^+.
\end{align*}
Furthermore, for $L$ large enough, we have the norm dilation property
$$ \| a p \| > \|p\|$$
for all $p \in K^2 \backslash U_a^-$
and similarly
$$ \| a^{-1} p \| > \|p\|$$
for all $p \in K^2 \backslash U_{a^{-1}}^-$.

Now we define the conjugate $b \coloneqq  h a h^{-1} \in \G(k)$ of $a$ by $h$.  As we chose $h$ to be generic, the fixed point $x(b) = h(x(a)) = h(0)$ of $b$ wil be distinct from that of $a$; furthermore, the two $b$-invariant lines (i.e. the images of the $x$ and $y$ axes under $h$) will not contain $x(a)$ or be parallel to either of the two $a$-invariant lines, and vice versa.  If we then set
\begin{align*}
U_b^- &\coloneqq  h(U_a^-),\\
U_{b^{-1}}^-&\coloneqq  h(U_{a^{-1}}^-),\\
U_b^+&\coloneqq  h(U_a^+), \\
U_{b^{-1}}^+&\coloneqq  h(U_{a^{-1}}^+)
\end{align*}
then by conjugation we have the inclusions
\begin{align*}
U_b^+ &\subset U_{b^{-1}}^- \\
U_{b^{-1}}^+ &\subset U_{b}^- \\
b(K^2 \setminus U_b^-) &\subset U_b^+\\
b^{-1}(K^2 \setminus U_{b^{-1}}^-) &\subset U_{b^{-1}}^+
\end{align*}
and the norm dilation property
$$ \| b p \| > \|p\|$$
for all $p \in K^2 \backslash U_b^-$
and similarly
$$ \| b^{-1} p \| > \|p\|$$
for all $p \in K^2 \backslash U_{b^{-1}}^-$.

Finally, we define the regions
\begin{align*}
\Omega^- &\coloneqq U_a^- \cup U_{a^{-1}}^- \cup U_b^- \cup U_{b^{-1}}^-.\\
\Omega^+ &\coloneqq U_a^+ \cup U_{a^{-1}}^+ \cup U_b^+ \cup U_{b^{-1}}^+ \subset \Omega^-.\\
\end{align*}
Note that for $L$ large enough, the four regions $U_a^+, U_{a^{-1}}^+, U_b^+, U_{b^{-1}}^+$ in $\Omega^+$ are disjoint.  Also,
since by construction $x(a)$ (resp. $x(b)$) is not on the $b$-invariant lines (resp. $a$-invariant lines), taking $|L|$ large enough, we may also arrange so that the six intersections $U_a^- \cap U_{a^{-1}}^-$, $U_a^- \cap U_b^-$, $U_a^- \cap U_{b^{-1}}^-$, $U_{a^{-1}}^- \cap U_{b^{-1}}^-$, $U_{a^{-1}}^- \cap U_b^-$, and $U_{b^{-1}}^- \cap U_b^-$ are disjoint, and thus any given point of the plane belongs to at most $2$ regions of the form $U_u^-$, where $u$ is one of the four letters $a,a^{-1},b,b^{-1}$; see Figure \ref{fig1}.

\begin{figure}\label{fig1}
\begin{center}
\includegraphics[scale=0.3]{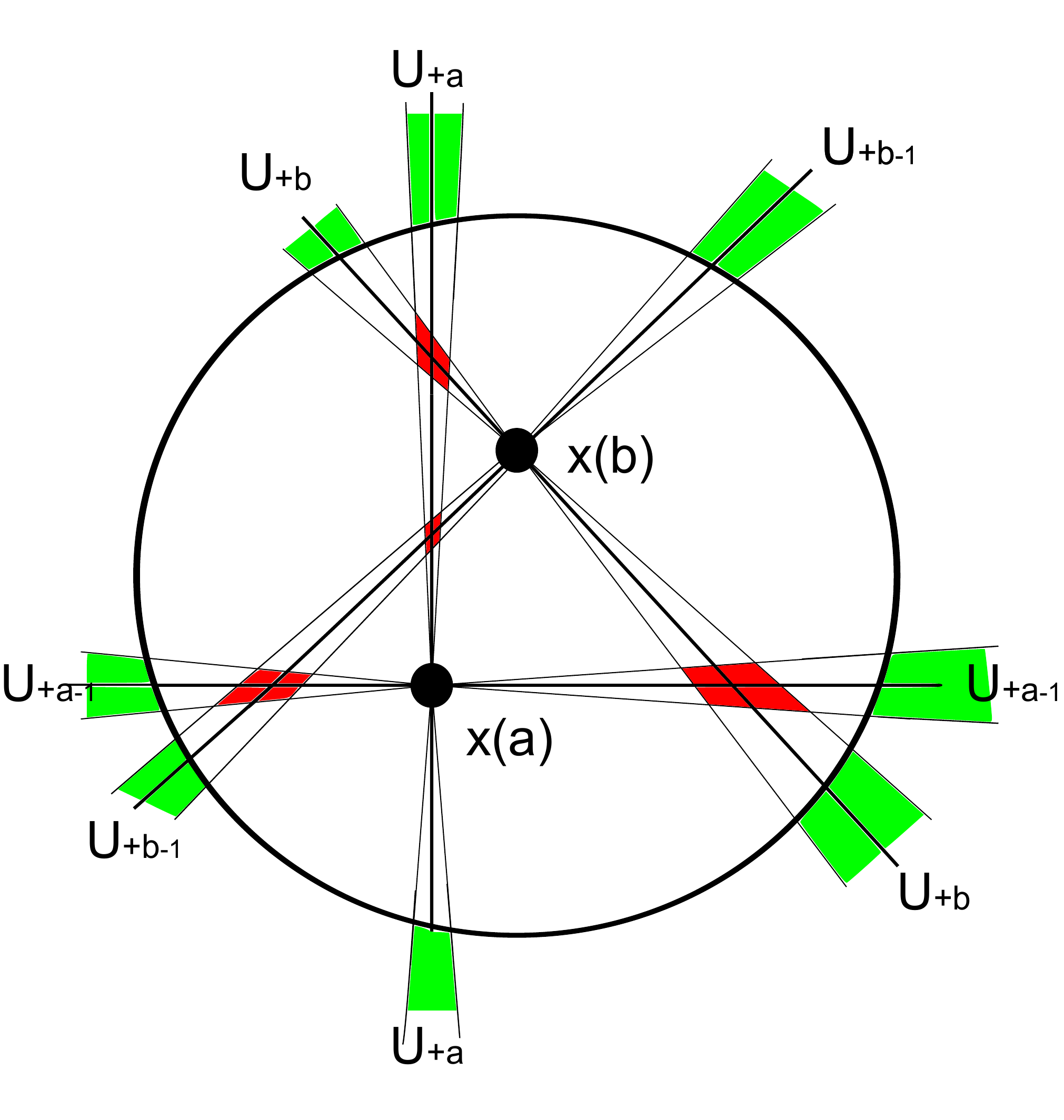}
\caption{The ping-pong table}
\end{center}
\end{figure}

This is a classical ping-pong situation:

\begin{lemma}[Ping-pong lemma]\label{free} The transformations $a$ and $b$ generate a free subgroup.
\end{lemma}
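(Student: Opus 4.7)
The plan is a direct ping-pong argument based on the norm-dilation property. Given any non-trivial reduced word $w = u_n u_{n-1} \cdots u_1$ in the letters $\{a^{\pm 1}, b^{\pm 1}\}$ (so $u_i u_{i-1} \neq 1$ for $i \geq 2$), I would exhibit a basepoint $p_0 \in K^2$ with $\|w \cdot p_0\| > \|p_0\|$, which forces $w$ to act non-trivially on $K^2$, and hence $w \neq 1$ in $\G(k)$.

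First I would pick $p_0 \in K^2 \setminus \Omega^-$, i.e., outside all four repelling regions $U_u^-$. Then, writing $p_i \coloneqq u_i u_{i-1} \cdots u_1 p_0$, I would prove by induction on $i$ that $p_{i-1} \notin U_{u_i}^-$, $p_i \in U_{u_i}^+$, and $\|p_i\| > \|p_{i-1}\|$. The base case $i = 1$ is immediate from $p_0 \notin U_{u_1}^-$ combined with the inclusion $u_1(K^2 \setminus U_{u_1}^-) \subset U_{u_1}^+$ and the norm-dilation estimate $\|u_1 p_0\| > \|p_0\|$. For the inductive step, $p_{i-1} \in U_{u_{i-1}}^+$, and since $w$ is reduced one has $u_i \neq u_{i-1}^{-1}$. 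The crucial input I would need is the ping-pong compatibility $U_u^+ \cap U_v^- = \emptyset$ whenever $u, v \in \{a^{\pm 1}, b^{\pm 1}\}$ and $v \neq u^{-1}$, which together with $p_{i-1} \in U_{u_{i-1}}^+$ yields $p_{i-1} \notin U_{u_i}^-$; the same inclusion and dilation arguments then complete the step. Iterating, $\|p_n\| > \|p_0\|$, hence $p_n \neq p_0$ and $w \neq 1$.

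The main obstacle is verifying the ping-pong compatibility $U_u^+ \cap U_v^- = \emptyset$ for $v \neq u^{-1}$. When $u$ and $v$ involve the same generator (say $u = v = a$), this is a direct coordinate computation: a point with $|y| > \max(|L||x|, |L|)$ satisfies both $\|(x,y)\| > L^{-1}$ and $|L||y| > |x|$, so it avoids both the small-ball part and the wedge part of $U_a^-$. When $u$ and $v$ involve different generators (say $u = a$, $v \in \{b, b^{-1}\}$), one uses that $U_b^\pm = h(U_a^\pm)$: the generic choice of $h$ ensures that the $h$-images of the $a$-invariant axes are transverse to the $a$-invariant axes themselves, and taking $|L|$ large enough creates the needed quantitative scale separation between the narrow cone defining $U_u^+$ (points of norm $\gtrsim |L|$ close to a $u$-invariant line) and both the wedge part (a narrow cone around a $v$-invariant line) and the small-ball part (a ball of radius $\sim L^{-1}$ centred at $h(0)$) of $U_v^-$. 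This is the only place where the quantitative conditions on $h$ and $|L|$ arranged during the construction are genuinely exploited; the rest of the argument is formal.
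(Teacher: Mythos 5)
Your proof is correct and follows essentially the same ping-pong strategy as the paper: you induct along the letters of the reduced word, pushing a basepoint $p_0 \notin \Omega^-$ successively through the attracting regions, and conclude non-triviality of $w(a,b)$ (the paper reads off $w(a,b)p_0 \in U_c^+ \subset \Omega^-$, while you instead read off the strict norm increase; these are two faces of the same argument). One small point in your favour: you are right to isolate the compatibility $U_u^+ \cap U_v^- = \emptyset$ for $v \neq u^{-1}$ as the essential geometric input that makes the induction run --- the paper's phrase ``from the inclusions given above and an easy induction'' somewhat understates this, since the displayed inclusions $U_u^+ \subset U_{u^{-1}}^-$ together with the ``at most two $U^-$-regions'' disjointness do not by themselves yield it, and it must indeed be secured from the genericity of $h$ and the largeness of $|L|$ exactly as you describe.
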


\begin{proof} Let $w$ be a non-trivial reduced word in the free group $F_2$ on two generators.  It suffices to show that the action of $w(a,b)$ on $K^2$ is non-trivial.  However, from the inclusions given above and an easy induction on the length $|w|$ of $w$, we see that $w(a,b)$ maps $K^2 \setminus \Omega^-$ to $U_c^+ \subset \Omega^+\subset \Omega^-$, where $c \in \{a,b,a^{-1},b^{-1}\}$ is the first symbol of $w$, and the claim follows.
\end{proof}

Next, we use the norm dilation property to locate the fixed point of various words $w(a,b)$.  For any non-trivial reduced word $w \in F_2$, let $E(w) \in \{a,b,a^{-1},b^{-1}\}$ denote the last letter of $w$ (i.e. $E(w)$ is the unique $u \in \{a,b,a^{-1},b^{-1}\}$ such that $|wu^{-1}| < |w|$).

\begin{lemma} If $w$ is a reduced word in the free group $F_2$ and if $u = E(w)$, then the fixed point of $w(a,b)$ must belong to $U_u^-$.
\end{lemma}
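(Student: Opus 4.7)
The plan is to argue by contradiction, assuming that $p$ is a fixed point of $w(a,b)$ with $p \notin U_u^-$ and deriving a strict norm increase $\|w(a,b)p\| > \|p\|$, which contradicts $w(a,b)p = p$. I would write $w = u_1 u_2 \cdots u_n$ with $u_n = u$ and follow the trajectory $p_{n+1} \coloneqq p$, $p_k \coloneqq u_k p_{k+1}$ for $k = n, n-1, \ldots, 1$, so that $p_1 = w(a,b)p$. The goal is then to prove by backward induction on $k \in \{1,\ldots,n\}$ that $p_k \in U_{u_k}^+$ and $\|p_k\| > \|p_{k+1}\|$, and then chain the resulting strict inequalities to contradict $p_1 = p = p_{n+1}$.

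For the base case $k = n$ I would just invoke the assumption $p \notin U_{u_n}^-$ together with the two properties of the ping-pong regions built above: the inclusion $u(K^2 \setminus U_u^-) \subset U_u^+$ gives $p_n \in U_{u_n}^+$, and the strict dilation $\|up\| > \|p\|$ for $p \notin U_u^-$ gives $\|p_n\| > \|p_{n+1}\|$. For the inductive step from $k+1$ to $k$, reducedness of $w$ gives $u_k \neq u_{k+1}^{-1}$, and then the disjointness $U_{u_{k+1}}^+ \cap U_{u_k}^- = \emptyset$ will promote the inductive hypothesis $p_{k+1} \in U_{u_{k+1}}^+$ into $p_{k+1} \notin U_{u_k}^-$; applying the same two ping-pong properties once more then delivers $p_k \in U_{u_k}^+$ and $\|p_k\| > \|p_{k+1}\|$.

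The one step that deserves care is the disjointness $U_v^+ \cap U_u^- = \emptyset$ whenever $u \neq v^{-1}$. This is not stated as such in the construction, but it is in fact the hidden content of the \emph{easy induction} used to prove Lemma \ref{free}: the inclusion $w(a,b)(K^2 \setminus \Omega^-) \subset U_c^+$ proved there requires, at each step, $u(U_v^+) \subset U_u^+$ whenever $v \neq u^{-1}$, and this is equivalent to $U_v^+ \subset K^2 \setminus U_u^-$. When $u = v$ the disjointness is a direct check from the explicit formulas for $U_a^{\pm}$ and $|L|>1$; when $u$ and $v$ come from different generators one uses that for $|L|$ sufficiently large $U_v^+$ sits in a narrow cone around a single eigendirection of $v$, far from the origin, while $U_u^-$ consists of a narrow cone around a different eigendirection together with a small ball around $x(a)$ or $x(b)$. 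Genericity of $h$ (which was arranged precisely to keep $x(a)$ off the $b$-invariant lines and $x(b)$ off the $a$-invariant lines) guarantees that the two cones are transverse, and the constraint $\|p\| > |L|$ built into $U_v^+$ excludes the small balls inside $U_u^-$. This geometric verification is the main technical point of the argument; once it is granted, the trajectory contradiction runs cleanly and yields $p \in U_u^-$ as required.
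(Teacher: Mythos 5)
Your proof is correct and is essentially the paper's argument, merely unpacked: the paper's terse claim that ``the norm keeps increasing every time we add a letter'' is exactly your backward induction tracking the trajectory through the ping-pong regions. You are also right to flag the disjointness $U_v^+ \cap U_u^- = \emptyset$ for $u \neq v^{-1}$ as an unstated but necessary ingredient (it is likewise what powers the ``easy induction'' proving Lemma \ref{free}); your genericity/large-$|L|$ sketch of it is sound, with the only small imprecision being that the transversality of the two invariant lines and the far-from-origin constraint in $U_v^+$ must be used together, not separately, to exclude the cone part of $U_u^-$ near the intersection point of the two lines.
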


\begin{proof} If $p \in K^2\setminus U_u^-$, then $\|w(a,b)p\|>\|p\|$, because this norm keeps increasing every time we add a letter. Hence $w(a,b)p \neq p$.  Taking contrapositives, we obtain the claim.
\end{proof}

Recall that each point in the plane belongs to at most $2$ regions of the form $U_u^-$, where $u$ is one of the four letters $a,a^{-1},b,b^{-1}$.  From this and the preceding lemma we conclude:

\begin{lemma}\label{fixed-point} Let $w_1$, $w_2$ and $w_3$ be three reduced words in $F_2$ and assume that their last letters $E(w_1), E(w_2), E(w_3)$ are all distinct. Then $w_1(a,b)$, $w_2(a,b)$ and $w_3(a,b)$ do not have a common fixed point in $K^2$.
\end{lemma}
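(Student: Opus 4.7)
The plan is to combine the preceding lemma on the location of fixed points with the geometric disjointness property that was built into the choice of the parameter $L$. Specifically, the preceding lemma tells us that the fixed point of $w_i(a,b)$, if it exists, must lie in $U_{E(w_i)}^-$ for $i=1,2,3$. Thus a common fixed point of $w_1(a,b)$, $w_2(a,b)$, $w_3(a,b)$ would have to lie in the triple intersection
\[
U_{E(w_1)}^- \cap U_{E(w_2)}^- \cap U_{E(w_3)}^-.
\]

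The main (and essentially only) thing to exploit is that, by the choice of $|L|$ sufficiently large, any point of $K^2$ lies in at most two of the four regions $U_a^-, U_{a^{-1}}^-, U_b^-, U_{b^{-1}}^-$. This was the disjointness property of the six pairwise intersections $U_u^- \cap U_v^-$ (for $u \neq v$ among the four letters) noted when setting up the ping-pong table; it rests on the fact that $x(a)$ does not lie on the two $b$-invariant lines and $x(b)$ does not lie on the two $a$-invariant lines, which is guaranteed by choosing $h$ generic.

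Since $E(w_1), E(w_2), E(w_3)$ are three distinct letters among $\{a,a^{-1},b,b^{-1}\}$, the above triple intersection is empty, and so no common fixed point can exist. There is no real obstacle here; the work has already been done in the geometric construction, and the proof amounts to a one-line appeal to the preceding lemma together with the ``at most two regions'' property.
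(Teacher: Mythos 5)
Your proof is correct and follows essentially the same approach as the paper: invoke the preceding lemma to place the fixed point of each $w_i(a,b)$ in $U_{E(w_i)}^-$, then use the fact (built into the choice of $|L|$ large) that every point of $K^2$ lies in at most two of the four regions $U_u^-$, so a triple intersection over three distinct letters is empty.
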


In order to exploit this lemma, we use following elementary fact from combinatorial group theory.

\begin{lemma}[Nonabelian subgroups of the free group]\label{three-letters} A subgroup $H$ of the free group $F_2$ is nonabelian if and only if there is $g \in F_2$ such that $E(gHg^{-1}) \coloneqq  \{ E(gkg^{-1}): k \in H \} $ has at least $3$ elements.
\end{lemma}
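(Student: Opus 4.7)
The ``if'' direction is elementary: every abelian subgroup of $F_2$ is cyclic, and for a cyclic subgroup $\langle x \rangle$ a direct calculation --- writing $x = u v u^{-1}$ with $v$ cyclically reduced --- shows that the last letters of the nonzero powers $x^n$ take at most two distinct values, so $|E(gHg^{-1})| \leq 2$ for every conjugate of an abelian $H$. Contrapositively, if some conjugate satisfies $|E(gHg^{-1})| \geq 3$ then $H$ is nonabelian.

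For the ``only if'' direction, suppose $H$ is nonabelian. I would first reduce to the case that $H$ is finitely generated and free of rank at least $2$ by replacing $H$ with $\langle h_1, h_2\rangle$ for any $h_1, h_2 \in H$ with $[h_1,h_2] \neq 1$; since $E(gH_0g^{-1}) \subseteq E(gHg^{-1})$ whenever $H_0 \leq H$, this reduction is harmless. The plan is then to use the Stallings folding construction to associate to $H$ a finite, connected, labelled graph $\Gamma$ with basepoint $\ast$, edge labels in $\{a,b\}$, and the folding property (at each vertex, at most one outgoing and at most one incoming edge for each label), such that reduced closed walks at $\ast$ correspond bijectively to the elements of $H$. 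After pruning leaves one obtains the core, still denoted $\Gamma$, in which every vertex has degree $\geq 2$. Since the first Betti number of $\Gamma$ equals the rank of $H$ and is thus at least $2$, the number of edges of $\Gamma$ exceeds the number of vertices, so the handshake lemma forces some vertex $v$ of $\Gamma$ to satisfy $\deg_\Gamma(v) \geq 3$.

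To finish, I would pick $g \in F_2$ corresponding to a reduced walk in $\Gamma$ from $\ast$ to $v$, so that the Stallings graph of $gHg^{-1}$ is $\Gamma$ with basepoint shifted to $v$. The set $E(gHg^{-1})$ then equals the set of labels (interpreted as elements of $\{a,a^{-1},b,b^{-1}\}$) of the final edges of reduced closed walks based at $v$, and by the folding property distinct edges at $v$ contribute distinct such labels. The main obstacle I anticipate is the ``realisability'' step: for each edge $e$ at $v$ one must exhibit a nontrivial reduced closed walk at $v$ whose final traversal is $e$ entering $v$. This should follow from the core property (every vertex has degree $\geq 2$) together with the connectedness of $\Gamma$: leave $v$ along an edge distinct from $e$ (possible since $\deg_\Gamma(v) \geq 3$), wander through $\Gamma$ without being forced to backtrack at any step (possible since each vertex has at least two incident edges), arrange by connectedness to arrive at the opposite endpoint of $e$ via an edge distinct from $e$, and finally traverse $e$ back into $v$. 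This yields $|E(gHg^{-1})| \geq \deg_\Gamma(v) \geq 3$, as required.
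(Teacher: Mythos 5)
Your approach is the same as the paper's: associate to $H$ the labelled graph $\widetilde{X}/H$ (your Stallings core is exactly its pruned version), use rank $\geq 2$ to force a vertex $v$ of degree $\geq 3$, shift the basepoint there by conjugating, and read off one last letter per edge-end at $v$. The ``if'' direction and the reduction to a finitely generated subgroup of rank two are both fine, and the latter is a small but welcome clarification the paper elides.

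However, the realisability sketch you give would fail in a case that actually occurs. You propose to leave $v$ along an edge distinct from $e$ and then, by connectedness, reach the other endpoint $w$ of $e$ without using $e$. But if $e$ is a \emph{bridge} of the core graph $\Gamma$ --- and bridges do occur, e.g.\ in the ``barbell'' core of a rank-two subgroup consisting of two disjoint cycles joined by a path --- then $v$ and $w$ lie in different components of $\Gamma$ with $e$ removed, so no such walk exists. The fix is to allow the walk to begin by crossing $e$: traverse $e$ from $v$ to $w$; since every vertex of $\Gamma$ has degree $\geq 2$, the side of the bridge containing $w$ contains a cycle, so follow a simple path from $w$ to a cycle, circulate it once, return along the same path to $w$, and recross $e$ into $v$. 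One checks directly that this closed walk is reduced, and its last step enters $v$ via $e$. In the non-bridge case a simple path from $v$ to $w$ avoiding $e$, followed by $e$, does the job. A second, smaller imprecision: you should count edge-\emph{ends} at $v$ rather than edges, since a loop at $v$ contributes two edge-ends and two distinct last letters; with that adjustment the folding property does yield $\geq \deg_\Gamma(v) \geq 3$ distinct last letters in $E(gHg^{-1})$.
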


Before giving the proof of this lemma, let us conclude the proof of Theorem \ref{affdense}.

\noindent \emph{Proof of Theorem \ref{affdense}}. We in fact give a proof of Proposition \ref{loc-commut}, which was seen to be equivalent to Theorem \ref{affdense} in the discussion following the statement of that theorem. Let $a,b,K$ be as in the preceding discussion. By Lemma \ref{free} we know that the subgroup $\langle a,b \rangle$ generated by $a,b$ is a free subgroup. Let $p \in K^2$ and let $H_p$ be the stabilizer of $p$ in the free subgroup $\langle a,b \rangle$. If $H_p$ were non abelian, then by Lemma \ref{three-letters} above, we could conjugate $H_p$ by some element $g \in \langle a ,b \rangle$ in such a way that $|E(gH_pg^{-1})| \geq 3$. However $gH_pg^{-1}=H_{g\cdot p}$, so $gH_pg^{-1}$ is a subgroup of $\langle a,b \rangle$ which has a fixed point in $K^2$ and contains three elements $w_1(a,b)$, $w_2(a,b)$ and $w_3(a,b)$ whose last letters are all distinct. This contradicts Lemma \ref{fixed-point} and concludes the proof.
\endproof

It remains to give a proof of the combinatorial group theory lemma. This can be done combinatorially by choosing two non-commuting elements $w_1$ and $w_2$ in $H$ and cyclically conjugating $w_1$ and $w_2$ until three different letters arise. Instead, we present an elegant geometric argument, which was suggested to us by Thomas Delzant.\vspace{11pt}

\begin{proof}[Proof of Lemma \ref{three-letters}] In one word: a graph whose fundamental group is not $\Z$ must have a vertex of degree at least $3$. Let us now explain this sentence. The free group $F_2$ is the fundamental group of the wedge $X$ of two circles. We can label the first circle by $a$ and the second by $b$ and draw an arrow on each circle. The universal cover $\widetilde{X}$ of $X$ is a tree with edges labeled by arrows and letters $a$ or $b$ and $F_2$ acts freely on it by graph automorphisms. Every vertex has two incoming edges and two outgoing edges. The quotient graph $\widetilde{X}/H$ is also a graph labeled in the same manner and its fundamental group is isomorphic to $H$. More precisely, if we fix the base point $x_0$ of $X$ (to be the intersection of the two circles), a lift $\widetilde{\widetilde{x_0}}$ in $\widetilde{X}$ and its projection $\widetilde{x_0}$ in $\widetilde{X}/H$, then homotopy classes of loops based at $\widetilde{x_0}$ are in one-to-one correspondence with elements of $H$. Non-backtracking paths (i.e. paths not passing through the same edge consecutively) from $\widetilde{x_0}$ to $\widetilde{x_0}$ correspond to elements of $H$ as one can read off the letters appearing in the words in $H$ as the sequence of letters read as one follows the path.

If $\widetilde{x_1}$ is another vertex of $\widetilde{X}/H$, then  the non-backtracking loops starting and ending at $\widetilde{x_1}$ are in one-to-one correspondence with the reduced words in $gHg^{-1}$, where $g$ is any element of the free group $F_2$ such that $g\cdot\widetilde{x_0}=\widetilde{x_1}$. Now the graph $\widetilde{X}/H$ may be pruned by deleting all branches with no loop: the resulting graph will have isomorphic fundamental group and in particular, if $H$ is non abelian, then the graph will not be homotopic to a circle and hence will admit a vertex, say $\widetilde{x_1}$, of degree at least $3$. Thus there will exist three non-backtracking paths around $\widetilde{x_1}$ with distinct last edge.
This means that when considering all reduced words appearing in $gHg^{-1}$, at least three different letters will arise as the last letter of a word. We are done. \end{proof}

\emph{Remark.} Our proof in fact shows slightly more than the claim of Theorem \ref{affdense}, namely that every finitely generated Zariski-dense subgroup $\Gamma$ of $\G(k)$ contains a strongly dense free subgroup. This answers positively Problem 1 in our previous paper \cite{bggt2} for the affine group of the plane.  This is because one can use the Zariski-density hypothesis to  locate elements $a,b$ in the group $\Gamma$ that obey the properties used in the above construction (namely, that they are conjugate with one large eigenvalue, and have distinct fixed points and distinct invariant lines).  We omit the details.

\section{Strongly dense subgroups of $\Sp_4$}\label{sp4-app}



In this section,  we complete the proof of the main theorem of \cite{bggt2} by proving the existence of strongly dense subgroups
of $\Sp_4(\k)$ when $\k$ is an uncountable algebraically closed field of characteristic $3$. This case had been left aside in \cite{bggt2}, because our proof for all other groups broke down in that particular case (and only in that case).  Actually, our arguments here will work in all characteristics other than $2$.  Our main tool will be the results of Appendix \ref{affine-sec}.  As in \cite{bggt2}, one can reduce to the case of finite transcendence degree.

Henceforth we fix an uncountable algebraically closed field $\k$ with characteristic not $2$.  Let $\G(\k)=\Sp_4(\k)$. Recall that a subset of $\G(\k)$ is called \emph{generic} if its complement is contained in at most countably many proper algebraic subvarieties.  We will need the following general fact:

\begin{lemma}\label{step3-lem}
Suppose that $\G(k)$ is a semisimple algebraic group and that $w,w' \in F_2$ are noncommuting words.
Then for generic $(a,b) \in \G(k) \times \G(k)$ the elements $w(a,b), w'(a,b)$ generate a group whose
closure $\overline{\langle w(a,b), w'(a,b)\rangle}$ is infinite and is either \textup{(i)}  $\G(k)$ or \textup{(ii)} a proper semisimple subgroup
$\HH < \G(k)$ with $\rk(\HH) = \rk(\G)$.
\end{lemma}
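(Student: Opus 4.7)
My plan is to split the lemma into the assertions that $\HH(a,b) \coloneqq \overline{\langle w(a,b), w'(a,b)\rangle}$ is (A) Zariski-infinite and of full rank, and (B) either $\G(k)$ or semisimple, with both verified generically via Borel-dominance arguments in the uncountable field $k$.

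For (A), since $w, w' \in F_2$ are non-trivial, Borel's dominance theorem \cite{borel-dom} makes the word maps $w, w'\colon \G(k)\times\G(k)\to\G(k)$ dominant. For each $n\geq 1$ the locus $\{g\in\G(k) : g^n = 1\}$ is a proper closed subvariety of $\G(k)$, as are the set of non-regular-semisimple elements and, for each non-trivial character $\chi$ in the character lattice of a maximal torus $\T$, the locus of elements whose image in $\T$ (up to conjugacy) satisfies $\chi = 1$. Taking pre-images under $w$ and $w'$ and unioning over the countably many $n$ and characters produces a meagre subset of $\G(k)\times\G(k)$; since $k$ is uncountable, this set cannot be all of $\G(k)\times\G(k)$. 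Outside this set, each of $w(a,b), w'(a,b)$ is regular semisimple of infinite order with multiplicatively independent eigenvalues, and consequently the Zariski closure of the cyclic subgroup it generates is the full maximal torus of $\G$ containing it. Hence generically the identity component $\HH^0(a,b)$ contains a maximal torus of $\G$, yielding both infiniteness and the full-rank property $\rk(\HH^0) = \rk(\G)$.

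For (B), the remaining obstructions to $\HH^0$ being either $\G$ or a proper semisimple subgroup of full rank are the non-triviality of the unipotent radical $R_u(\HH^0)$ or of the connected centre $Z(\HH^0)^0$. If $R_u(\HH^0) \neq 1$, then $\HH$ normalises a non-trivial unipotent subgroup and hence lies in a proper parabolic of $\G$; if $Z(\HH^0)^0 \neq 1$, then $\HH$ centralises a non-trivial torus and hence lies in a proper Levi. Since there are only finitely many conjugacy classes of proper parabolics and proper Levis in $\G$, the exceptional locus of pairs $(a,b)$ for which $w(a,b), w'(a,b)$ lie in a common proper parabolic or Levi is a finite union of closed subvarieties of $\G(k)\times\G(k)$ of complexity $O(1)$. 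To conclude it therefore suffices to show each such subvariety is proper, which is equivalent to exhibiting, for each conjugacy class $[\K]$, a single witnessing pair $(a_0, b_0)$ with $w(a_0,b_0), w'(a_0,b_0)$ not lying in any common conjugate of $\K$.

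The main obstacle will be producing these witnessing pairs, particularly in the parabolic case. For a proper Levi $\K = \mathbf{L}$, the plan is to fix $(a_0, b_0)$ so that $w(a_0, b_0)$ is regular semisimple with centraliser a fixed maximal torus $\T$; any proper Levi containing $w(a_0, b_0)$ must then contain $\T$, so there are only finitely many candidates, and avoiding all of them simultaneously is a generic condition on $w'(a_0, b_0)$ by Borel dominance applied to $w'$. For a proper parabolic $\K = \mathbf{P}$, I plan to induct on $\dim\G$ via the semisimple Levi factor of $\mathbf{P}$, which has strictly smaller dimension: invoking the already-established cases of Theorem \ref{sec3-key-prop} from \cite{bggt2} for every Levi factor arising in this way (in particular every Levi factor of a proper parabolic of $\Sp_4$ in characteristic $3$) yields a pair $(a_0, b_0)$ whose images in the Levi factor generate a strongly dense free subgroup; the non-commutativity of $w, w'$ in $F_2$, combined with the strong density, then ensures that $w(a_0, b_0), w'(a_0, b_0)$ cannot both be trapped in the original parabolic $\mathbf{P}$, providing the required witness and avoiding any circularity in the $\Sp_4$ case.
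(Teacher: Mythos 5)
Your split into (A) full rank and (B) semisimplicity is a reasonable shape for the argument, and part (A) — that generically $w(a,b)$ and $w'(a,b)$ are each regular semisimple and generate a Zariski-dense cyclic subgroup of a maximal torus, so that $\HH^0$ has full rank — is essentially sound over the uncountable field $k$. Part (B), however, has two genuine gaps, either of which is fatal as written.

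In the Levi case you write: ``fix $(a_0,b_0)$ so that $w(a_0,b_0)$ is regular semisimple with centraliser $\T$ \dots avoiding all of them simultaneously is a generic condition on $w'(a_0,b_0)$ by Borel dominance applied to $w'$.'' Once $(a_0,b_0)$ is fixed, $w'(a_0,b_0)$ is a single determined element — there is nothing left to vary. Borel dominance gives that each of $w,w'\colon\G^2\to\G$ is dominant \emph{separately}; it gives no control on the joint map $(w,w')\colon\G^2\to\G\times\G$, which for non-commuting $w,w'$ need not be dominant. The assertion that for generic $(a,b)$ the pair $w(a,b),w'(a,b)$ does not sit in a common proper Levi is, in essence, a special case of the statement you are trying to prove, so this step is circular rather than an application of dominance.

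In the parabolic case the proposed witness $(a_0,b_0)$ has its images in the Levi factor of a parabolic $\mathbf{P}$ generating a strongly dense free subgroup; but any such pair lies in $\mathbf{P}\times\mathbf{P}$, and therefore $w(a_0,b_0)$ and $w'(a_0,b_0)$ are \emph{both} elements of $\mathbf{P}$. That places $(a_0,b_0)$ inside the exceptional locus you are trying to escape, not outside it, so it is not a witness. Strong density of the projections to the Levi quotient controls Zariski closures modulo the unipotent radical; it does not prevent the two elements from lying in $\mathbf{P}$ itself.

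The source the paper actually cites for this lemma, \cite[Lemma 2.7]{bggt2}, resolves (B) by a device your outline omits: one shows that generically the commutator $[w(a,b),w'(a,b)]$ — and not just $w(a,b)$ and $w'(a,b)$ individually — also generates a Zariski-dense subgroup of a maximal torus. This forces the closure of $[\HH,\HH]$ to have full rank. Since the derived group of a proper parabolic $\mathbf{P}$, and likewise the derived group of a reductive group with positive-dimensional connected centre, has rank strictly less than $\rk(\G)$, this simultaneously rules out a non-trivial unipotent radical and a non-trivial central torus, with no need to produce a separate witnessing pair for each conjugacy class of parabolic and Levi. Incorporating this commutator ingredient is what is needed to repair (B).
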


\begin{proof}  See \cite[Lemma 2.7]{bggt2}.  The key is to show that generically, $w(a,b)$, $w'(a,b)$, and $[w(a,b),w'(a,b)]$ each generate a Zariski-dense subgroup of a maximal torus (see \cite[Lemma 2.6]{bggt2}).
\end{proof}

\begin{lemma}  \label{words}  Let $w_1, w_2$ be noncommuting words in $F_2$, the free group on two generators.
    Then the set of
$(a,b) \in \G(\k) \times \G(\k)$ with $\langle w_1(a,b), w_2(a,b)
\rangle$ Zariski-dense in $\G(\k)$  is generic.
\end{lemma}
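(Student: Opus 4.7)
By Lemma \ref{step3-lem}, for generic $(a,b) \in \G(\k)^2$ the closure $\overline{\langle w_1(a,b), w_2(a,b) \rangle}$ is either all of $\G = \Sp_4$ or a proper semisimple subgroup of full rank $2$. In characteristic $\neq 2$, the Borel--de Siebenthal classification identifies the unique conjugacy class of proper maximal-rank semisimple subgroups of $\Sp_4$ with $\Sp_2 \times \Sp_2 \cong \SL_2 \times \SL_2$, embedded as the stabilizer of a decomposition $\k^4 = V_1 \oplus V_2$ into non-degenerate symplectic planes. Let $\Sigma \subset \G(\k)^2$ denote the set of pairs $(a,b)$ for which $w_1(a,b)$ and $w_2(a,b)$ both lie in a common $\G(\k)$-conjugate of $\Sp_2 \times \Sp_2$; a standard argument shows $\Sigma$ is Zariski-closed in $\G(\k)^2$. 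Producing one pair outside $\Sigma$ will therefore suffice: $\Sigma$ is then a proper subvariety, its complement is Zariski-open and dense, and its intersection with the generic set of Lemma \ref{step3-lem} remains generic, forcing the closure to equal all of $\G$.

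To produce such a pair I invoke the affine group result (Theorem \ref{affdense}). Fix a non-zero vector $v \in \k^4$ and set $P \coloneqq \Stab_{\Sp_4}(v)$. A classical computation identifies $P$ with the six-dimensional subgroup $\SL_2 \ltimes H$, where $H$ is the three-dimensional Heisenberg group with one-dimensional centre $Z = Z(H)$, and where $\SL_2$ acts on $H/Z \cong \k^2$ via the standard two-dimensional representation. Consequently the quotient map $\pi\colon P \twoheadrightarrow P/Z$ identifies $P/Z$ with the affine group $\SL_2(\k) \ltimes \k^2$ covered by Theorem \ref{affdense}. That theorem supplies elements $\overline{a}_0, \overline{b}_0 \in P/Z$ generating a strongly dense free subgroup; I lift them arbitrarily to $a_0, b_0 \in P \subset \Sp_4(\k)$.

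For non-commuting $w_1, w_2 \in F_2$, the images $w_i(\overline{a}_0, \overline{b}_0)$ are non-commuting in $P/Z$ (since $\overline{a}_0, \overline{b}_0$ generate a free group) and, by strong density, generate a Zariski-dense subgroup of $P/Z$. Setting $H_0 \coloneqq \overline{\langle w_1(a_0, b_0), w_2(a_0, b_0) \rangle}$, the image $\pi(H_0)$ is a constructible subgroup of $P/Z$ containing a Zariski-dense subgroup, hence equal to all of $P/Z$ (using the classical fact that constructible subgroups of algebraic groups are closed). Since $Z$ is connected one-dimensional, $H_0^\circ \cap Z$ is either trivial or all of $Z$, so $H_0^\circ$ is isomorphic either to $P/Z \cong \SL_2 \ltimes \k^2$ or to the whole parabolic $P = \SL_2 \ltimes H$. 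In both cases the unipotent radical of $H_0^\circ$ is non-trivial and is acted upon non-trivially by the Levi $\SL_2$, through the standard representation on a two-dimensional abelian quotient.

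It remains to rule out $H_0^\circ \subseteq \HH'$ for any conjugate $\HH'$ of $\Sp_2 \times \Sp_2$. Any connected unipotent subgroup of the reductive rank-two group $\HH'$ is contained in some maximal unipotent $U' \times U'$, which is abelian of dimension exactly $2$. This excludes outright the case $H_0^\circ \cong \SL_2 \ltimes H$, since $H$ is non-abelian of dimension $3$. In the other case $H_0^\circ \cong \SL_2 \ltimes \k^2$, the radical $R_u(H_0^\circ) \cong \k^2$ must coincide, after $\HH'$-conjugation, with the maximal unipotent $U' \times U'$, and then the Levi $\SL_2 \subset H_0^\circ$ would normalize $U' \times U'$ inside $\HH'$; but the normalizer of $U' \times U'$ in $\HH'$ is the solvable Borel $B' \times B'$, which contains no copy of $\SL_2$, yielding the desired contradiction. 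The main subtlety of the argument is therefore this last step---the non-embeddability of $\SL_2 \ltimes_{\mathrm{std}} \k^2$ into the reductive rank-two group $\Sp_2 \times \Sp_2$---which however reduces cleanly to the solvability of Borel subgroups of rank-two reductive groups.
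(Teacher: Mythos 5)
Your overall strategy matches the paper's: reduce via Lemma \ref{step3-lem} to ruling out containment in a conjugate of $\Sp_2 \times \Sp_2$, then use Theorem \ref{affdense} inside the stabilizer $P = \Stab_{\Sp_4}(v)$ (the paper calls it $J$) and its quotient $P/Z$ to produce a witness. However, there is a real gap at the very first step, and it propagates through the rest.

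You assert that $\Sigma = \{(a,b): w_1(a,b), w_2(a,b)$ lie in a common conjugate of $\Sp_2\times\Sp_2\}$ is Zariski-closed, calling this ``a standard argument.'' It is not; it is most likely false. The conjugates of $\Sp_2 \times \Sp_2$ are indexed by the variety of \emph{nondegenerate} $2$-planes in $\k^4$, which is an open, non-projective subvariety of the Grassmannian $\Grass(2,4)$. Thus $\Sigma$ is the image of a projection of an incidence variety with non-proper fibers, and the usual properness argument does not apply. Indeed already for a \emph{single} element of $\Sp_4$, the set lying in some conjugate of $\Sp_2\times\Sp_2$ is dense but proper (regular unipotents stabilize no nondegenerate $2$-plane), hence not closed. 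Since your argument produces only a single pair $(a_0,b_0)$ outside the constructible set $\Sigma$, this does not show $\Sigma$ lies in a proper closed subvariety, which is what genericity requires.

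The paper handles this by replacing $\Sigma$ with the genuinely closed condition ``$\langle w_1(a,b), w_2(a,b)\rangle$ stabilizes \emph{some} $2$-plane,'' ranging over the full projective Grassmannian. But then your witness must stabilize no $2$-plane whatsoever, degenerate ones included. Your case analysis only excludes containment in a conjugate of $\Sp_2\times\Sp_2$; it does not rule out, say, containment in a parabolic stabilizing a degenerate plane. The paper closes this too, and more efficiently, via a Frattini-subgroup observation: since $J$ is perfect and $Z$ is central, $Z$ lies in the Frattini subgroup $\Phi(J)$, so the fact that $\overline{\langle w_1(a_0,b_0), w_2(a_0,b_0)\rangle}\cdot Z = J$ (from Theorem \ref{affdense}) already forces the closure to be \emph{all} of $J$; and $J = \Stab(v)$ visibly stabilizes no $2$-plane. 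This also shows that your ``Case 2'' ($Z \cap H_0^\circ$ trivial) never actually occurs, so the delicate $\SL_2\ltimes \k^2 \not\hookrightarrow \Sp_2\times\Sp_2$ argument --- which additionally carries some positive-characteristic subtleties around identifying $H_0^\circ$ with $P/Z$ via a bijective morphism --- is dispensable.
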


\begin{proof}   By Lemma \ref{step3-lem}, the set of pairs $(a,b)$
such that the Zariski closure of $\langle w_1(a,b), w_2(a,b) \rangle$ is semisimple of
rank $2$ is generic.  Since the only proper semisimple rank $2$ subgroups of $\G(\k)$
are the stabilizers of a nondegenerate $2$-space in $\k^4$, we see that either the result
holds or for {\it every} $(a,b)$,  $ \langle w_1(a,b), w_2(a,b) \rangle$
stabilizes some
$2$-dimensional subpace (in the natural $4$-dimensional representation).
This follows as in \cite[Lemma 3.4(i)]{bggt2}.

Let $J$ be the derived subgroup of the stabilizer of $1$-dimensional subspace.
So $J$ is the centralizer of a long root subgroup $Z$.    Let $Q$ be
the unipotent
radical of $J$.  Then $Z = [Q,Q]$ and $J/Z \cong \k^2\rtimes \SL_2(\k)$.
By Theorem \ref{affdense}, there exist $(a,b) \in J \times J$ so that
$Z \overline{\langle w_1(a,b), w_2(a,b) \rangle} = J$.  Since
$Z$ is contained in the Frattini subgroup of $J$, this implies that
$J=\overline{\langle w_1(a,b), w_2(a,b) \rangle}$.   However, $J$ leaves no invariant
no $2$-dimensional space, whence the result.
\end{proof}

Since there are only countably many pairs of words in $F_2$, we immediately obtain the following.

\begin{theorem} \label{sp4}  The set of
$(a,b) \in \G(\k) \times \G(\k)$ with  $\langle a, b \rangle$ free and strongly dense in $\Sp_4(\k)$
is generic.  In particular, at least one such pair $(a,b)$ exists.
\end{theorem}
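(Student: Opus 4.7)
The plan is to deduce Theorem \ref{sp4} from Lemma \ref{words} by a standard countable intersection argument, exploiting the fact that $\k$ is uncountable algebraically closed.

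First, I would unpack the definition of strong density in terms of the free group $F_2 = \langle e_1, e_2 \rangle$. A pair $(a,b) \in \G(\k) \times \G(\k)$ generates a free strongly dense subgroup precisely when (i) the evaluation map $w \mapsto w(a,b)$ from $F_2$ to $\G(\k)$ is injective (so $\langle a,b \rangle$ is free), and (ii) for every pair of noncommuting elements $w_1(a,b), w_2(a,b) \in \langle a,b\rangle$, the subgroup $\langle w_1(a,b), w_2(a,b)\rangle$ is Zariski-dense in $\G(\k)$. Since two elements of a free group commute if and only if they lie in a common cyclic subgroup, condition (ii) is equivalent to requiring Zariski density of $\langle w_1(a,b),w_2(a,b)\rangle$ for every pair of words $w_1, w_2 \in F_2$ that do not commute in $F_2$.

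Next, I would verify that freeness is a generic condition. For each nontrivial $w \in F_2$, the set $\Sigma_w(\k) \coloneqq \{(a,b) \in \G(\k) \times \G(\k) : w(a,b) = 1\}$ is a closed algebraic subvariety of $\G(\k) \times \G(\k)$. By Borel's dominance theorem (used as in the structural case in Section \ref{class}), or alternatively by exhibiting a single pair $(a,b)$ for which $w(a,b) \neq 1$ (which exists, e.g., because $\G(\k)$ contains nonabelian free subgroups via Tits's theorem), $\Sigma_w(\k)$ is a proper subvariety. Hence the set of $(a,b)$ for which $\langle a,b\rangle$ is free, namely the complement of $\bigcup_{w \neq 1} \Sigma_w(\k)$, is the intersection of the complements of countably many proper subvarieties, and is therefore generic.

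For condition (ii), I would invoke Lemma \ref{words} directly: for each pair of noncommuting words $w_1, w_2 \in F_2$, the set of $(a,b) \in \G(\k) \times \G(\k)$ with $\langle w_1(a,b), w_2(a,b)\rangle$ Zariski-dense in $\G(\k)$ is generic. Since $F_2$ is countable, the collection of noncommuting pairs $(w_1, w_2)$ is countable, and the intersection of countably many generic sets remains generic (the complement is still covered by countably many proper subvarieties). Intersecting with the generic set from the previous paragraph, we conclude that the set of $(a,b)$ such that $\langle a,b\rangle$ is free and strongly dense is generic. Finally, because $\k$ is uncountable algebraically closed, the irreducible variety $\G(\k) \times \G(\k)$ cannot be covered by countably many proper subvarieties (each such subvariety misses all but countably many points on a generic line), so a generic subset is nonempty, yielding at least one such pair $(a,b)$.

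The only nontrivial step is the appeal to Lemma \ref{words}, whose proof genuinely uses Theorem \ref{affdense} from Appendix \ref{affine-sec} to rule out the degenerate possibility that every pair $\langle w_1(a,b), w_2(a,b)\rangle$ stabilizes a common $2$-plane; the rest of the argument here is purely formal bookkeeping of countable intersections of generic sets.
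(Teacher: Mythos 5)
Your proposal is correct and follows essentially the same route as the paper: the paper's entire proof is the single remark that $F_2$ has only countably many pairs of words, so that Lemma \ref{words} immediately gives the result by a countable intersection of generic sets. You have simply spelled out the details (freeness via Borel dominance or Tits, the density condition via Lemma \ref{words}, and the nonemptiness of generic sets over an uncountable algebraically closed field), all of which the paper treats as implicit.
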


\section{Equivalence of one and two sided expansion}\label{equiv-sec}

The purpose of this appendix is to establish that ``combinatorial expansion implies spectral expansion''. Though this is not needed elsewhere in the paper, it may be of interest to readers. 

\begin{proposition}\label{spok}  Let $G$ be a finite group, let $k \geq 1$, and let $x_1,\ldots,x_k \in G$ be combinatorially expanding in the sense that
\begin{equation}\label{combo}
 |(A x_1 \cup A x_1^{-1} \cup A x_2 \cup A x_2^{-1} \cup \dots \cup A x_k \cup Ax^{-1}_k) \setminus A| \geq \eps' |A|
 \end{equation}
for every set $A \subseteq G$ with $|A| \leq |G|/2$, and some $\eps'>0$.  Suppose also that there does not exist an index two subgroup $H$ of $G$ which is disjoint from the $x_1,\ldots,x_k$.  Then $\{x_1,\ldots,x_k\}$ is $\eps$-expanding \textup{(}in the sense of Def. \ref{specex}\textup{)} for some $\eps>0$ depending only on $\eps',k$.
\end{proposition}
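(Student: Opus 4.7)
The plan is to establish uniform spectral bounds on both the top and bottom of the spectrum of the self-adjoint operator $T$ from Definition \ref{specex}. Since the trivial eigenvalue $1$ is separated from others, the goal is to show that all non-trivial eigenvalues $\lambda$ of $T$ satisfy $|\lambda| \leq 1 - \eps$ for some $\eps = \eps(\eps',k) > 0$. I will handle the top bound $\lambda_2 \leq 1 - \eps_1$ and the bottom bound $\lambda_{\min} \geq -1 + \eps_2$ separately, concluding with $\eps = \min(\eps_1, \eps_2)$. The top bound uses only combinatorial expansion, while the bottom bound is where the hypothesis on index-$2$ subgroups is crucial (it precisely rules out bipartiteness of the Cayley graph).

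For the top bound, the combinatorial expansion hypothesis \eqref{combo} is equivalent to the statement that the Cayley graph $\Cay(G, S)$, with $S \coloneqq \{x_1^{\pm 1}, \dots, x_k^{\pm 1}\}$, has Cheeger constant bounded below by $\eps'/(2k)$. The standard discrete Cheeger inequality (see e.g.\ \cite[Section 2]{hoory-linial-wigderson}) then yields $1 - \lambda_2 \gg (\eps'/k)^2$, giving $\lambda_2 \leq 1 - \eps_1$ with $\eps_1 = \eps_1(\eps', k) > 0$.

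For the bottom bound, I pass to the squared operator $T^2$, whose eigenvalues are $\lambda_i^2 \in [0, 1]$; bounding these uniformly away from $1$ suffices to bound $|\lambda_i|$ away from $1$. The operator $T^2$ is the convolution operator associated to $\mu \ast \mu$, supported on the product set $S \cdot S$, which contains the identity since $S$ is symmetric. I first verify that $S \cdot S$ generates $G$: the subgroup $\langle S \cdot S \rangle$ consists of all even-length products in $S$ and has index $1$ or $2$ in $\langle S \rangle = G$; in the index-$2$ case, $S$ would lie in the non-trivial coset, giving an index-$2$ subgroup of $G$ disjoint from $S$ and contradicting the hypothesis. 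Hence $\langle S \cdot S \rangle = G$. Next, I bootstrap the combinatorial expansion of $S$ to a combinatorial expansion of $S \cdot S$ with constant $c(\eps', k) > 0$: for $A \subseteq G$ with $|A| \leq |G|/2$, one shows $|(S \cdot S) A \setminus A| \geq c|A|$, using crucially that $e \in S \cdot S$ (so $(S \cdot S) A \supseteq A$) together with two applications of the $S$-expansion, with case analysis according to whether the intermediate set $SA$ has size at most $|G|/2$ (the other case being handled by passing to complements and using symmetry of the Cheeger condition). Applying the discrete Cheeger inequality to the weighted graph associated to the walk $T^2$ then gives $1 - \lambda_i^2 \gg c$ for every non-trivial eigenvalue, hence $|\lambda_i| \leq 1 - \eps_2$ with $\eps_2 = \eps_2(\eps', k) > 0$.

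The main technical obstacle is the bootstrap from combinatorial expansion of $S$ to that of $S \cdot S$: a naive telescoping fails because $A \not\subseteq SA$ in general (as $S$ need not contain $e$), so one cannot simply iterate the $S$-expansion. This difficulty is resolved by exploiting $e \in S \cdot S$ to force the automatic inclusion $A \subseteq (S \cdot S) A$, and by using the set $SA \setminus A$ (whose cardinality is at least $\eps' |A|$ by hypothesis) as a \emph{witness}: its image under a further application of $S$ provides a quantitatively controlled set of elements outside $A$, completing the expansion bound. Combining the two spectral gap estimates yields the desired $\eps$-expansion with $\eps = \min(\eps_1, \eps_2)$ depending only on $\eps'$ and $k$.
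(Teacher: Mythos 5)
The top-bound half of your argument (Cheeger applied to the $S$-Cayley graph) is fine, and your overall scaffolding — work with $T^2$, show $S\cdot S$ generates $G$ using the index-$2$ hypothesis, aim for a spectral gap for $T^2$ — is sensible. The problem is the crucial ``bootstrap'' claim, that combinatorial expansion of $S$ plus non-bipartiteness yields combinatorial expansion of $S\cdot S$ with constant $c(\eps',k)$, is asserted with a proof sketch that does not actually work, and this is precisely where all the difficulty of the proposition lies.

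Trace your sketch in the bipartite case, which your argument must distinguish but does not. Let $H \leq G$ have index two with $S \subset G \setminus H$, and take $A = H$, $|A| = |G|/2$. Then $SA = G\setminus H$, $|SA|=|G|/2$ (your Case 1), and $|SA\setminus A| = |G|/2 \geq \eps'|A|$, so the $S$-expansion hypothesis is satisfied for this $A$. But $S(SA\setminus A) = S(G\setminus H) = H = A$: the image of your ``witness'' set $SA\setminus A$ under a further application of $S$ lies \emph{entirely inside} $A$, and indeed $|S^2A\setminus A| = 0$. Nothing in your two-step expansion-plus-case-analysis argument invokes the non-bipartiteness hypothesis beyond the weak observation that $S\cdot S$ generates $G$; generation alone gives no uniform lower bound on $|(S\cdot S)A\setminus A|$, and the above computation shows the naive bootstrap genuinely fails. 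The statement you want to prove is true — it is in fact equivalent to the proposition, via Cheeger in both directions — but it cannot be obtained by iterating the $S$-expansion; you must inject the non-bipartiteness hypothesis in a structural way at this step.

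The paper's proof fills exactly this hole. It runs a contradiction: if $T$ had a nontrivial eigenvalue outside $[-1+\eps,1-\eps]$, then $T^2$ has one near $1$, and the Cheeger inequality produces a set $A$ with $|A|\leq |G|/2$ and $|S^2A\setminus A| = o(|A|)$. From there it combines the $S$-expansion hypothesis with a Freiman-type ``pivoting'' argument on the correlations $|A\cap Ag|$: one shows $|A|=(1/2-o(1))|G|$, that the set $\{g\in G: |A\cap Ag|\geq 0.9|A|\}$ is an index-$2$ subgroup $H$, that $A$ is within $o(|G|)$ of a coset of $H$, and finally (via $|sH\cap H|=o(|G|)$ for $s\in S$) that $S\subset G\setminus H$ — contradicting the hypothesis. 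This structural analysis of the almost-invariant set is the content that your bootstrap sketch is missing, and there does not seem to be a short two-step substitute for it.
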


In other words: if a Cayley graph is an expander graph, and is not bi-partite, then the averaging operator $T$ defined in Definition \ref{specex} has spectrum not only bounded away from $1$ but also from $-1$. This feature is quite special to Cayley graphs and does not hold for arbitrary regular graphs.

Note that the condition that an index two subgroup $H$ disjoint from $x_1,\ldots,x_k$ does not exist is necessary, since otherwise the convolution operator $T$ defined in Definition \ref{specex} has an eigenvalue at $-1$ with eigenfunction $1_H - 1_{G \backslash H}$, and the Cayley graph is bi-partite. It is not hard to show that this is in fact the only way that $T$ can attain an eigenvalue at $-1$ if $x_1,\ldots,x_k$ generates $G$.  When $G$ is a nonabelian simple group, there are no index two subgroups $H$, and so we see that spectral expansion and combinatorial expansion are essentially equivalent in this setting.  From the discrete Cheeger inequality (see \cite{alon,alon-milman,dodziuk}, or \cite[Prop. 4.2.4]{lubotzky-book}) the combinatorial expansion hypothesis already gives one side of $\eps$-expansion, in that the operator $T$ has spectrum in $[-1,1-\eps]$ for some $\eps>0$ depending only on $\eps',k$; the novelty in Proposition \ref{spok} is that spectrum can also be excluded in the interval $(-1,-1+\eps]$.  On the other hand, our argument relies heavily on the Cayley graph structure, whereas the discrete Cheeger inequality is valid for arbitrary regular graphs.

We now prove this proposition.  Let $G,k,x_1,\ldots,x_k,\eps'$ be as in the proposition, and let $\eps>0$ be a sufficiently small quantity (depending on $\eps',k$) to be chosen later.  Write $S \coloneqq  \{ x_1,\ldots,x_k,x_1^{-1},\ldots,x_k^{-1} \}$, thus \eqref{combo} tells us that
\begin{equation}\label{expando}
|SA \backslash A| \geq \eps' |A|
\end{equation}
whenever $A \subset G$ with $|A| \leq |G|/2$.  Since $|SA \backslash A|$ is, up to a multiplicative constant depending on $k$, the number of edges in the Cayley graph connecting $A$ to its complement, we also have the variant estimate
\begin{equation}\label{expando-2}
|SA \backslash A| \geq \eps'' |G \backslash A|
\end{equation}
when $|A| \geq |G|/2$, where $\eps''>0$ depends on $\eps$ and $k$.

We use $o(X)$ to denote a quantity bounded in magnitude by $c_{\eps',k}(\eps) X$, where $c_{\eps',k}(\eps)$ is a quantity depending only on $\eps',k,\eps$ that goes to zero as $\eps \to 0$ for all fixed $\eps',k$.  Suppose for contradiction that $T$ has a non-trivial eigenvalue outside of $[-1+\eps,1-\eps]$, so that $T^2$ has a non-trivial eigenvalue in $[(1-\eps)^2, 1]$.  Applying the discrete Cheeger inequality \cite{alon,alon-milman,dodziuk} to the (weighted) Cayley graph associated to $S^2$, we then can find a non-empty set $A \subset G$ with $|A| \leq |G|/2$ such that

$$ |S^2 A \backslash A| = o( |A| ).$$

In particular $|S(A \cup SA) \backslash (A \cup SA)| = o(|A|)$, which by \eqref{expando}, \eqref{expando-2} forces $|A \cup SA| = |G| - o(|A|)$ or $|A \cup SA| = o(|A|)$. The latter is not possible for $\eps$ small enough, hence $|A \cup SA| = (1-o(1)) |G|$; since $|SA| \leq |S^2 A| \leq |A| +o(|A|)$ and $|A| \leq |G|/2$, we conclude that $|A| = (1/2 - o(1))|G|$.  Also we have
$$ |S B \Delta B| = o(|G|)$$
for $B = A \cap SA$, which by another application of \eqref{expando} forces $|A \cap SA| = o(|G|)$.  In particular, for any $s \in S$, since $|sA| = |A| = (\frac{1}{2}-o(1))|G|$, we conclude that $sA$ is nearly the complement of $A$ in the sense that
\begin{equation}\label{soo}
 |sA \Delta (G \backslash A)| = o(|G|).
\end{equation}

Let $g \in G$ be arbitrary, then we also have
$$ |sAg \Delta (G \backslash Ag)| = o(|G|).$$
Thus if we write $A_g \coloneqq  A \cap Ag$ and $A'_g \coloneqq  G \backslash (A \cup Ag)$ then
$$ |sA_g \Delta A'_g| = o(|G|)$$
for all $s \in S$, and thus also by symmetry
$$ |sA'_g \Delta A_g| = o(|G|).$$
Thus we have
$$ |SB \Delta B| = o(|G|)$$
for $B$ equal to $A_g \cup A'_g$ and its complement, which by \eqref{expando} forces $|A_g \cup A'_g|$ to equal either $o(|G|)$ or $(1-o(1))|G|$.  Since $|A_g \cup A'_g| = 2 |A \cap Ag|$ and $|A| = (\frac{1}{2}-o(1)) |G|$, we arrive at the following dichotomy: for any $g \in G$, we either have
\begin{equation}\label{a1}
|A \cap Ag| \geq (1-o(1)) |A|
\end{equation}
or
\begin{equation}\label{a2}
|A \cap Ag| \leq o(1) |A|.
\end{equation}
We now use a ``pivoting'' argument similar to that in an old paper of Freiman \cite{freiman-small} (though the use of the ``pivot'' terminology originates in \cite{helfgott-sl3}).  Let $H$ denote the set of all $g \in G$ for which $|A \cap Ag| \geq 0.9 |A|$ (say) holds, which by the above dichotomy implies \eqref{a1} (if $\eps$ is small enough).  Clearly $H$ is symmetric and contains the identity.  Also, if $g, h \in H$, then by the triangle inequality we see that
$$ |A \cap Agh| \geq (1-o(1)) |A|$$
and so $gh \in H$.  Thus $H$ is a subgroup of $G$.  On the other hand, from the estimate
$$ |A|^2 = \sum_{g \in G} |A \cap Ag| = |A| |H| + o(|G|^2)$$
we see that
$$ |H| = (1/2 + o(1)) |G|$$
and hence the index of $H$ in $G$ is exactly $2$ (if $\eps$ is small enough).  Now note that
$$ \sum_{g \in H} |A \cap Ag| = |A \cap H|^2 + |A \backslash (A \cap H)|^2$$
and thus by \eqref{a1}
$$ |A \cap H|^2 + |A \backslash (A \cap H)|^2 = |A|^2 + o(|A|^2)$$
which we can rearrange as
$$ |A \cap H| |A \backslash (A \cap H)| = o( |G|^2 )$$
thus one either $|A \Delta Hg| = o(|G|)$ for one of the two cosets $Hg$ of $G$.    From \eqref{soo} one concludes that
$$ |sH \cap H| = o(|G|)$$
for all $s \in S$, which (for $\eps$ small enough) forces $S \subset G \backslash H$ since any two cosets of $H$ are either equal or disjoint.  But this contradicts the hypotheses of Proposition \ref{spok}, and the claim follows.\vspace{11pt}

\emph{Remark.} There is a slightly different way to prove Proposition \ref{spok} relying more on spectral theory than on combinatorial methods, which we sketch here.  The key observation is that once one has the expansion property \eqref{combo}, then there cannot be two orthogonal eigenfunctions $\phi, \psi$ in the region $[-1,-1+\eps]$ of the spectrum for $\eps$ sufficiently small, basically because the function $\phi \overline{\psi}$ would then have mean zero and be almost $T$-invariant, contradicting the Cheeger inequality\footnote{The argument is actually a bit more complicated than this, because $\phi, \psi$ are \emph{a priori} only bounded in $L^2$ and so $\phi \overline{\psi}$ is controlled only in $L^1$ instead of $L^2$, and one needs additional arguments related to the proof of the Cheeger inequality to address this, but we ignore this issue for the sake of the sketch.}.  Thus, if there is an eigenfunction $\phi$ in this region, then it must be real-valued (up to multiplication by scalars), and every right shift $\phi(\cdot g)$ of this eigenfunction is equal to either $\phi$ or $-\phi$.  This gives a homomorphism from $G$ to $\{-1,+1\}$ whose kernel $H$ is an index two subgroup, with $\phi$ being a multiple of $1_H - 1_{G \backslash H}$, and as $\phi$ is an eigenfunction with eigenvalue close to $-1$ one can then easily deduce that $x_1,\ldots,x_k \in G\backslash H$.

\textsc{Acknowledgments.} EB is supported in part by the ERC starting grant 208091-GADA.
BG was, for some of the period during which this work was being carried out, a Fellow at the Radcliffe Institute at Harvard. He is very happy to thank the Institute for providing excellent working conditions. More recently he has been supported by ERC starting grant 274938 \emph{Approximate Algebraic Structure and Applications}. RG is supported by NSF grant DMS-1001962 and Simons Foundation Fellowship  224965.
   He also thanks the Institute for Advanced Study, Princeton.
TT was supported by a grant from the MacArthur Foundation, by NSF grant DMS-0649473, the NSF Waterman award, and a Simons Investigator award during portions of this research.

The authors thank the anonymous referee for useful suggestions and corrections.

\end{document}